\DeclareMathOperator\supp{supp}
\newtheorem{thm}{Theorem}[section]
\newtheorem{corollary}[thm]{Corollary}
\newtheorem{lem}[thm]{Lemma}
\newtheorem{prop}[thm]{Proposition}
\theoremstyle{definition}
\newtheorem{defn}[thm]{Definition}
\newtheorem{example}[thm]{Example}
\newtheorem{assumption}[thm]{Assumption}
\theoremstyle{remark}
\newtheorem{rem}[thm]{Remark}
\newcommand\bC{\mathbb{C}}
\newcommand\bE{\mathbb{E}}
\newcommand\bL{\mathbb{L}}
\newcommand\fB{\mathbf{B}}
\newcommand\fR{\mathbf{R}}
\newcommand\fC{\mathbf{C}}
\newcommand\fZ{\mathbf{Z}}
\newcommand\fN{\mathbf{N}}
\newcommand\cB{\mathcal{B}}
\newcommand\cC{\mathcal{C}}
\newcommand\cD{\mathcal{D}}
\newcommand\cF{\mathcal{F}}
\newcommand\cH{\mathcal{H}}
\newcommand\cM{\mathcal{M}}
\newcommand\cP{\mathcal{P}}
\newcommand\cS{\mathcal{S}}
\newcommand\cT{\mathcal{T}}
\newcommand\cU{\mathcal{U}}
\newcommand\rF{\mathscr{F}}
\newcommand\rG{\mathscr{G}}
\newcommand\rH{\mathscr{H}}
\newcommand\rT{\mathscr{T}}
\DeclareMathOperator*{\esssup}{ess\,sup}
\newcommand\cbrk{\text{$]$\kern-.15em$]$}}
\newcommand\clbrk{\text{$[$\kern-.15em$[$}}
\newcommand\opar{\text{\,\raise.2ex\hbox{${\scriptstyle|}$}\kern-.34em$($}}
\newcommand\cpar{\text{$)$\kern-.34em\raise.2ex\hbox{${\scriptstyle |}$}}\,}
\newcommand{\mysection}[1]{\section{#1}
\setcounter{equation}{0}}
\begin{document}

\title[SPDEs with sign-changing time-measurable P-D operators]
{An existence and uniqueness theory to stochastic partial differential equations involving pseudo-differential operators driven by space-time white noise}

\author[J.-H. Choi]{Jae-Hwan Choi}
\address[J.-H. Choi]{Research Institute of Mathematics, Seoul National University, 1 Gwanak-ro, Gwanak-gu, Seoul 08826, Republic of Korea}
\email{choijh1223@snu.ac.kr}

\author[I. Kim]{Ildoo Kim}
\address[I. Kim]{Department of Mathematics, Korea University, 145 Anam-ro, Seongbuk-gu, Seoul, 02841, Republic of Korea}
\email{waldoo@korea.ac.kr}

\thanks{The authors were supported by the National Research Foundation of Korea(NRF) grant funded by the Korea government(MSIT) (No.2020R1A2C1A01003959)}

\subjclass[2020]{60H15, 35S10, 35D30}

\keywords{Stochastic partial differential equations, sign-changing Pseudo-differential operators, space-time white noise, Cauchy problem, Fractional Laplacian with a complex exponent}

\maketitle
\begin{abstract}
In this paper, we aim to develop a new weak formulation that ensures well-posedness for a broad range of stochastic partial differential equations with pseudo-differential operators whose symbols depend only on time and spatial frequencies. The main focus of this paper is to relax the conditions on the symbols of pseudo-differential operators and data while still ensuring that the stochastic partial differential equations remain well-posed in a weak sense. Specifically, we allow symbols to be random and remove all regularity and ellipticity conditions on them. As a result, our main operators include many interesting rough operators that cannot generate any regularity gain or integrability improvement from the equations.
In addition, our data do not need to be regular or possess finite stochastic moments.
\end{abstract}

\tableofcontents

\mysection{Introduction}

Research on stochastic partial differential equations (SPDEs) is now recognized as a significant field in mathematics, given its numerous intriguing applications across various sciences and the challenging mathematical problems it presents.
Much of the research in this area has focused on solving specific equations originating from scientific disciplines. 
For instance, extensive studies have been dedicated to filtering equations, KPZ equations, Anderson models, stochastic fluid dynamics, population dynamics, financial mathematics, and other related topics (\textit{cf}. \cite{LR 2017, Par 2021} and references therein).

The irregular effects of random noises typically hinder the derivation of these theories from (deterministic) partial differential equations (PDEs). As a result, many SPDEs are considered ill-posed within the traditional PDE framework. 
To overcome this challenge, new concepts have been developed to provide precise mathematical meanings for solutions of various SPDEs. 
Consequently, a broad array of mathematical tools are now available for studying these equations.
For instance, these tools include variational methods with monotone operators (\cite{Par 1975, KR 1981, LR 2015}), random field approach (\cite{Walsh 1986, khosh 2014}), semi-group methods (\cite{DZ 2014, NVW 2008}), $L_p$-analytic approach (\cite{Krylov 1999}), 
$H^\infty$-calculus (\cite{NVW 2012, NVW 2012-2}), regularity structures (\cite{FH 2020, Hairer 2014}), paracontrolled calculus (\cite{GIP 2015}), stochastic Calderón-Zygmund theory (\cite{KK 2020}), scaling limit approach (\cite{CSZ 2023}), and vector-valued harmonic analysis methods (\cite{Lori 2021, LV 2021}).

Most research focuses on solving SPDEs that are relevant to scientific applications and most mathematical models are simplified with nice operators and data. Consequently, these studies strive to understand solutions and equations in a practical sense rather than merely exploring possible mathematical generalizations. 
As a result, the main operators used in SPDEs, such as the Laplacian operator, the fractional Laplacian operator, and diffusion operators, are highly regular and well-behaved. 
These operators enhance the regularity of the solutions, enabling us to comprehend the equations and solutions in a robust manner, even when the input data is relatively rough due to randomness.

However, historically, there had been attempts to solve PDEs with general operators that do not generate regularity gains in weak formulations, particularly after Schwartz's distribution theory became prominent. Mathematicians discovered long ago that any linear partial differential equation with constant coefficients, such as
\begin{align}
\label{20240714 01}
\sum_{\alpha} c_\alpha D^\alpha u(x) = f(x) \quad x \in \fR^d,
\end{align}
is solvable for any distribution $f$ on $\fR^d$. 
Here, $\alpha$ is a multi-index, $c_\alpha$ is a complex number, and the summation is finite.
 For more details, see \cite{hor 1990, Komech 1994} and references therein.

We provide additional details with some heuristics from the aforementioned references. Formally, by taking the Fourier transform of both sides of \eqref{20240714 01}, we get
\begin{align}
										\label{20240714 02}
\psi(\xi) \cF[u](\xi) = \cF[f](\xi) \quad  x \in \fR^d,
\end{align}
where the symbol $\psi(\xi)$ is given by $\sum_{\alpha} c_\alpha \mathrm{i}^{|\alpha|} \xi^\alpha$.

Due to the Fourier inversion theorem, it suffices to solve \eqref{20240714 02} instead of \eqref{20240714 01}.
Since $\psi(\xi)$ is a polynomial, both sides of \eqref{20240714 02} can be understood as  weak functionals on a class of test functions according to the multiplication action, \textit{i.e.}
\begin{align}
								\label{20240715 10}
\langle \psi  \cF[u] , \varphi  \rangle 
:=\langle  \cF[u] , \psi  \varphi  \rangle
= \langle \cF[f], \varphi \rangle,
\end{align}
where $\varphi$ is a test function and $\langle \cdot, \cdot \rangle$ denotes a duality paring.

It is important to note that \eqref{20240714 02} loses the uniqueness property of a solution even as linear functionals unless the symbol $\psi$ satisfies additional conditions such as ellipticity and polynomial growth.
It is naturally expected since if both the symbol $\psi$ and data $\cF[f]$ are zero on a domain, then any value of $\cF[u]$ on this domain is enough to be a solution.
The breakdown of the uniqueness also can be comprehended through the following simple example. 
For any real number $c$, consider two functions $u_1(x)=e^{cx}$ and $u_2(x)=e^{-cx}$.
Then both $u_1$ and $u_2$ become solutions to
the equation 
\begin{align*}
u''(x)-c^2u(x)=0 \quad x \in \fR.
\end{align*}

At a glimpse, it also looks impossible that there exists a weak solution to \eqref{20240714 02} since 
if the symbol $\psi$ is zero but data $\cF[f]$ is non-zero in a domain, then any value of $\cF[u]$ in this domain cannot satisfy the equation. However, if $\psi$ is not identically zero, then the Lebesgue measure of the set of all zeros of $\psi$, $\psi^{-1}(0):= \{\xi \in \fR^d: \psi(\xi)=0\}$, becomes $0$ since $\psi$ is a non-zero polynomial. Thus, \eqref{20240714 02} is solvable in a weak sense and numerous methods had been developed to find solutions to \eqref{20240714 02} in a weak sense, thanks to the contributions of Ehrenpreis, Malgrange, Bernshtejn, Gel'fand, H\"ormander, Lojasiewicz, and others.
This problem is called ``the division problem" and was successfully solved a long time ago even though the uniqueness fails to hold.

This issue seems to extend to pseudo-differential operators by considering various symbols rather than just polynomials.
In other words, one can consider the equation
\begin{align}
										\label{20240715 01}
\cF^{-1}\left[\psi(x,\xi) \cF[u](\xi)\right] (x)  = f(x) \quad  x \in \fR^d,
\end{align}
 which is derived from \eqref{20240714 02} and the inverse Fourier transform.
Perturbing these types of operators with respect to $x$ without a very strong assumption is nearly impossible.
Therefore, we specifically consider the special case where the symbol $\psi$ is independent of $x$, since we aim to find a weaker condition on $\psi$ as possible.

Nonetheless, the equation 
\begin{align}
										\label{20240715 02}
\psi(-\mathrm{i}\nabla)u(x):=\cF^{-1}\left[\psi(\xi) \cF[u](\xi)\right] (x)  = f(x) \quad  x \in \fR^d,
\end{align}
is still impossible to solve even in a weak sense unless there exists a strong mathematical assumption on $\psi$.
Recall that all these classical theories were feasible since our symbol $\psi$ is a polynomial, \textit{i.e.} the main operator in \eqref{20240714 01} is a merely (partial) differential operator with constant coefficients. 
In particular, the multiplication action $\varphi \to \psi \varphi$ in \eqref{20240715 10} is well-defined since $\psi$ is a polynomial.
Additionally, the uniqueness of a solution does not even hold for polynomial symbols. 
Therefore, it is impossible to find a unique weak solution to \eqref{20240715 02} with a non-polynomial symbol unless strong conditions such as smoothness, polynomial growth, and uniform ellipticity appear (\textit{cf}. \cite[Chapters 12 and 13]{Krylov 2008}).
In other words, \eqref{20240715 02} is generally ill-posed.

However, if we reconsider the equation in the form of \eqref{20240714 02} by ignoring the inverse Fourier transform in \eqref{20240715 02}, it might seem solvable without smoothness on symbols if the set of all zeros of $\psi$ is a null set. 
Nonetheless, uniqueness is still unattainable without additional strong conditions. 
This vague idea might be developed into a rigorous mathematical theory, but to the best of our knowledge, no suitable references address this with non-regular symbols.

As a broader extension, one can also consider an evolutionary generalization of \eqref{20240715 02} with a family of pseudo differential operators $\psi(t,-\mathrm{i}\nabla)$ as follows:
\begin{align}
										\label{20240715 10}
u_t(t,x)=  \psi(t,-\mathrm{i}\nabla)u(t,x)+ f(t,x) \quad  (t,x) \in (0,T) \times \fR^d,
\end{align}
where $T \in (0,\infty]$,
\begin{align}
												\label{20240715 20}
\psi(t,-\mathrm{i}\nabla)u(t,x):=\cF_\xi^{-1}\left[\psi(t,\xi) \cF[u(t,\cdot)](\xi)\right] (x)
\end{align}
and both $d$-dimensional Fourier and inverse Fourier transforms $\cF$ and  $\cF_\xi^{-1}$ are taken only with respect to the space variable.

It might seem that our evolutionary equations are more complicate than stationary equations such as \eqref{20240715 02}.
However, if our symbol $\psi(t,\xi)$ is merely given by a polynomial only with respect to $\xi$ (uniformly for all $t$), then the equation \eqref{20240715 10} can be viewed as a special case of \eqref{20240715 02} (and thus a simple particular case of \eqref{20240715 01}) on $\fR^{d+1}$ by extending the symbol for all $(t,\xi) \in (-\infty, \infty) \times \fR^d =\fR^{d+1}$.

Consequently, there exists a distribution solution $u$ to \eqref{20240715 10} by applying classical theories on the $(d+1)$-dimensional Euclidean space if $\psi(t,\xi)= \sum_{\alpha} c^\alpha \mathrm{i}^{|\alpha|}\xi^\alpha$ and $f$ is a distribution on $\fR^{d+1}$, where the summation is finite. 

Moreover, we can even address SPDEs driven by space-time white noise, since this random noise can be interpreted as inhomogeneous data $f$ given by tempered distributions. However, classical theories do not guarantee the uniqueness of a weak solution, even for this uniformly well-behaved polynomial symbol. Additionally, there is no comprehensive theory for \eqref{20240715 01} if $\psi$ lacks smoothness.

In conclusion, it might be impossible to show the existence and uniqueness of a weak solution to \eqref{20240715 10} if the symbol $\psi(t,\xi)$ varies with respect to the time and lacks regularity and ellipticity.

Surprisingly, we recently discovered a unique solution to \eqref{20240715 10} in a weak sense, despite the symbols being non-elliptic and non-smooth (\cite{Choi Kim 2024}). 
Notably, the symbols can take on any values in the complex number system, which causes certain functions to display exponential growth. 
Specifically, the real parts of these symbols can simultaneously include zeros, positive, and negative values. 
This violates the conditions for any ellipticity and allows the kernels of \eqref{20240715 10} to exhibit exponential growth at infinity.
We succinctly describe these properties by saying that our symbols (or pseudo-differential operators) are {\bf sign-changing}.

The key idea behind this result involves using a special class of test functions that contain an approximation of the identity and extending the domain of the Fourier and inverse Fourier transforms to a larger class than that of all tempered distributions on $\fR^d$. 
Interestingly, this new class is neither the class of all distributions in $\fR^d$ nor does it have an inclusion relation with it. However, the new class and the class of all distributions are homeomorphic due to the newly extended Fourier transform. More details will be provided in the following section.

This new finding has prompted us to pursue further research. Specifically, our new primary objective is to investigate whether these novel concepts can be applied to SPDEs involving pseudo-differential operators driven by space-time white noise. We are writing this paper because we realized that all these ideas can indeed be extended to such SPDEs. A more precise mathematical formulation will be presented in the next section.

We conclude this introduction with a few thoughts. We are confident that our theory offers numerous advantages, even from an application standpoint, as many generators of two-parameter semi-groups can be expressed in the form of the operator in \eqref{20240715 20}. 
Specifically, many generators of Markov processes can be represented by these operators.

However, beyond practical applications, we view our research as a mathematical challenge in its own right. Our goal is to solve as general equations as possible, driven by the following question:
$$
\text{\emph{What is the most general operator that maintains well-posedness for linear SPDEs in a weak sense?}}
$$

Although we cannot claim that our results provide the ultimate answer to this question, we believe they are quite remarkable. 
Our findings show that all SPDEs involving pseudo-differential operators are well-posed in a certain weak sense, even though the symbols are irregular, exhibit exponential growth at infinity, lack any elliptic condition, and even are stochastic.

\vspace{2mm}
At last, we specify the notation used in the article.

\begin{itemize}
\item 
Let $\fN$, $\fZ$, $\fR$, $\fC$ denote the natural number system, the integer number system, the real number system, and the complex number system, respectively. For $d\in \fN$, $\fR^d$ denotes the $d$-dimensional Euclidean space. 
\item 
For $i=1,...,d$, a multi-index $\alpha=(\alpha_{1},...,\alpha_{d})$ with
$\alpha_{i}\in\{0,1,2,...\}$, and a function $g$, we set
$$
\frac{\partial g}{\partial x^{i}}=D_{x^i}g,\quad
D^{\alpha}g=D_{x^1}^{\alpha_{1}}\cdot...\cdot D^{\alpha_{d}}_{x^d}g,\quad |\alpha|:=\sum_{i=1}^d\alpha_i.
$$

\item 
We use $C_c^{\infty}(\fR^d)$ or $\cD(\fR^d)$ to denote the space of all infinitely differentiable (complex-valued) functions with compact supports.
$\cS(\fR^d)$ represents the Schwartz space in $\fR^d$ and the topology on $\cS(\fR^d)$ is generated by the Schwartz semi-norms $\sup_{x \in \fR^d} |x^\alpha (D^\beta f)(x)|$ for all multi-indexes $\alpha$ and $\beta$.  
$\cS'(\fR^d)$ is used to denote the dual space of $\cS(\fR^d)$, \textit{i.e.} $\cS'(\fR^d)$ is the space of all tempered distributions on $\fR^d$. 
Additionally, we assume that $\cS'(\fR^d)$ is a topological space equipped with the weak*-topology if there is no special remark about the topology on $\cS'(\fR^d)$.

\item 
Let $(X,\mathcal{M},\mu)$ be a measure space.
Assume that $f(t)$ and $g(t)$ are complex-valued $\cM$-measurable  functions on $X$.
We write
\begin{align}
									\label{20240806 50}
f(x)=g(x) \quad (a.e.)~ x \in X
\end{align}
or say that for almost every $x \in X$, $f$ and $g$ are equal 
iff there exists a measurable subset $\cT \subset X$ such that the measure $\mu(X \setminus \cT)$ is zero and $f(x)=g(x)$ for all $t \in \cT$.
Moreover, we say that a function $f(t)$ is defined $(a.e.)$ on a $X$ if there exists a measurable subset $\cT \subset X$ such that the measure $\mu(X \setminus \cT)=0$ and $f(t)$ is defined for all $t \in \cT$. 
Especially, if the measure $\mu$ is a probability measure, \textit{i.e.} $\mu(X)=1$, then we use $(a.s.)$ instead of $(a.e.)$. 
Furthermore, we also say that $f(x)=g(x)$ with probability one instead of \eqref{20240806 50} if $\mu$ is a probability measure.

\item
For $R>0$,
$$
B_R(x):=\{y\in\fR^d:|x-y|< R\}.
$$

\item 
For a measurable function $f$ on $\fR^d$, we denote the $d$-dimensional Fourier transform of $f$ by 
\[
\cF[f](\xi) := \frac{1}{(2\pi)^{d/2}}\int_{\fR^{d}} \mathrm{e}^{-\mathrm{i}\xi \cdot x} f(x) \mathrm{d}x
\]
and the $d$-dimensional inverse Fourier transform of $f$ by 
\[
\cF^{-1}[f](x) := \frac{1}{(2\pi)^{d/2}}\int_{\fR^{d}} \mathrm{e}^{ \mathrm{i} x \cdot \xi} f(\xi) \mathrm{d}\xi.
\]
We refer to $\xi$ as the frequency of $f$.

Moreover, for a function $f(t,x)$ defined on $(0,T)\times \fR^d$, we use the notation
\begin{align*}
\cF_x[f](\xi)
:=\cF[f(t,x)](\xi)
:=\cF[f(t,\cdot)](\xi)
=\frac{1}{(2\pi)^{d/2}}\int_{\fR^{d}} \mathrm{e}^{ -\mathrm{i} \xi \cdot x} f(t,x) \mathrm{d}x
\end{align*}
and it is called the Fourier transform of $f$ with respect to the space variable.
Additionally, we say that $\xi$ is a frequency with respect to the space variable.
In particular, we sometimes use the terminology ``frequency function" instead of the Fourier transform.

On the other hand, for the inverse Fourier transform of $f(t,\xi)$ with respect to the space variable, we use the notation
\begin{align*}
\cF_\xi^{-1}[f](x)
:=\cF^{-1}[f(t,\xi)](x)
:=\cF^{-1}[f(t,\cdot)](x)
=\frac{1}{(2\pi)^{d/2}}\int_{\fR^{d}} e^{  \mathrm{i} x \cdot \xi} f(t,\xi) \mathrm{d}\xi.
\end{align*}
More general Fourier and inverse Fourier transforms acting on linear functionals are discussed in Section \ref{main section}.
To simplify notation, we typically omit the subscripts indicated by $x$ and $\xi$.

\item 
We write $\alpha \lesssim \beta$ if there is a positive constant $N$ such that $ \alpha \leq N \beta$.

\item 
For $z\in\fC$, $\Re[z]$ denotes the real part of $z$, $\Im[z]$ is the imaginary part of $z$, and $\bar{z}$ is the complex conjugate of $z$.
\end{itemize}

\mysection{Settings and function classes}
											\label{main section}

We fix a finite stopping time $\tau$ and $d \in \fN$ throughout the paper. 
Here, $\tau$ and $d$ denote the (random) terminal time of the equation and the dimension of the space-variable, respectively.

Let $(\Omega,\rF,P)$ be a complete probability space,
 $\rG$ be a sub-$\sigma$-field of $\rF$, and $\rH$ be a sub-$\sigma$-algebra of $\rF \times \cB([0,\infty))$, where
 where $\cB([0,\infty))$ denote the Borel sets on $[0,\infty)$ (generated by the Euclidean norms).
 $\rG$ and $\rH$ provide measurability information for an initial data $u_0$ and a deterministic inhomogeneous data $f$, respectively.
Note that $\rG$ and $\rH$ can be arbitrary sub-$\sigma$-algebras of $\rF$ and $\rF \times \cB([0,\infty))$, respectively.
For notational convenience, we fix 
$\rG$ and $\rF$ throughout the paper, but it is important to note that all results can be applied to different sub-$\sigma$-algebras due to their generality.

Additionally, let $\{\rF_{t},t\geq0\}$ be an increasing filtration of
$\sigma$-fields $\rF_{t}\subset\rF$, each of which contains all
$(\rF,P)$-null sets. By  $\cP$ we denote the predictable
$\sigma$-algebra generated by $\{\rF_{t},t\geq0\}$ and  we assume that
on $\Omega$ there exist  independent one-dimensional Wiener
processes (Brownian motions) $B^{1}_{t},B^{2}_{t},...$, each of which is a Wiener
process relative to $\{\rF_{t},t\geq0\}$.

Now we consider a complex-valued $\cP \times \cB(\fR^d)$-measurable function  $\psi(t,\xi)$  defined on $\Omega \times [0,\infty) \times \fR^d$ and denote
\begin{align}
									\label{psi defn}
\psi(t,-\mathrm{i}\nabla)u(t,x):=\cF^{-1}\left[\psi(t,\cdot)\cF[u](t,\cdot)\right](x)
\end{align}
in the whole paper, where $\cB(\fR^d)$ denotes the Borel sets on $\fR^d$ generated by the standard Euclidean norm.
The operator $\psi(t,-\mathrm{i}\nabla)$ is called a (time-measurable) pseudo-differential operator
and the function $\psi(t,\xi)$ is called {\bf the symbol} of the operator $\psi(t,-\mathrm{i}\nabla)$.

We will discuss the domain of the operator $f \mapsto \psi(t,-\mathrm{i}\nabla)f$ and properties of the symbol $\psi(t,\xi)$ to ensure the operator is well-defined later. 
In simple terms, the operator can be defined based on the identity from Plancherel's theorem, which is provided in Definition \ref{defn psi operator} below.
It is important to note that our symbol can be random, sign-changing, and does not require any regularity condition.

In this paper, we study the existence and uniqueness of a weak solution $u$ to the SPDEs with the pseudo-differential operator $\psi(t,-\mathrm{i} \nabla)$ driven by space-time white noise as follows:
\begin{align}
								\notag
&\mathrm{d}u=\left(\psi(t,-\mathrm{i}\nabla)u(t,x) + f(t,x)\right) \mathrm{d}t + h(t,x) \mathrm{d}\fB_t,\quad &(t,x) \in (0,\tau] \times \fR^d,\\
&u(0,x)=u_0(x),\quad & x\in \fR^d,
								\label{main eqn}
\end{align}
where $\mathrm{d}\fB_t$ denotes a space-time white noise, \textit{i.e.} $\fB_t$ is a cylindrical Wiener process on $L_2(\fR^d)$.

The equation conceals the sample point $\omega \in \Omega$ as it typically does.
Here, our solutions are weak with respect to an analytic point of view, \textit{i.e.} a solution $u$ satisfies \eqref{main eqn} 
as a linear functional (complex-valued linear function) defined on a class of test functions.  
This class of test functions is selected as a dense subspace of the Schwartz class, as defined in Definition \ref{defn fourier distribution}, rather than the usual $C_c^\infty(\fR^d)$ or the Schwartz class itself, to manage the positive part of the symbol $\psi$. The precise definition of our solution is provided in Definition \ref{space weak solution}. 
Furthermore, it is important to note that our solutions are stochastically strong in the sense that we solve \eqref{main eqn} after fixing a probability space where noises $ \mathrm{d}\fB_t$ can be constructed. 

It is well-known that the space-white noise $\mathrm{d}\fB_t$ has a representation so that 
\begin{align}
										\label{20240525 01}
\mathrm{d}\fB_t = \eta_k \mathrm{d}B_t^k,
\end{align}
where $\{\eta_k : k \in \fN\}$ is an orthonomal basis of $L_2(\fR^d)$ and $\{B^k_t: k \in \fN\}$ is a sequence of independent one-dimensional  (real-valued) Brownian motions.
In particular, we fix an orthonomal basis  $\{\eta_k : k \in \fN\}$  so that $\eta_k \in \cS(\fR^d)$ for all $k \in \fN$;
This type of orthonomal basis can be easily constructed based on the Hermite polynomials. 
We consider the space-time white noise constructed from \eqref{20240525 01} throughout the paper. 
Due to this construction, at least formally, the term $h(t,x) \mathrm{d}\fB_t$ is equal to
$h(t,x) \eta_k(x) \mathrm{d}B_t^k$, where Einstein's summation convention is used, \textit{i.e.} the summation notation is hidden for the repeated index, which implies
\begin{align*}
h(t,x) \eta_k(x) \mathrm{d}B_t^k = h(t,x)\sum_{k=1}^\infty \eta_k(x) \mathrm{d}B_t^k.
\end{align*}
This connection naturally raises the necessity of handling sequence-valued functions and stochastic processes.
Specifically, $l_2$-valued stochastic processes are importantly treated to classify stochastic inhomogeneous data, where
$l_2$ denotes the class of all (complex-valued) sequences $a=\left(a^1,a^2,a^3,\ldots  \right)$ such that
\begin{align*}
|a|_{l_2} := \left(\sum_{k=1}^\infty |a^k|^2\right)^{1/2} < \infty.
\end{align*}

Recall that our main goal is to find the weakest possible conditions on the symbol $\psi$ and data $u_0$, $f$, and $h$ to ensure the existence of a unique weak solution to \eqref{main eqn}. 
Especially, our goal is to eliminate all regularity conditions on the symbol $\psi$ and the data $u_0$, $f$, and $h$.
More specifically, we seek appropriate local integrability conditions on the symbol $\psi$ and data $u_0$, $f$, and $h$ with respect to the spatial frequencies to make \eqref{main eqn} well-posed. 
All functions in the paper are complex-valued if there is no special mention about ranges of functions.

We present our main assumptions on the symbol $\psi(t,\xi)$ in the next section to ensure our weak well-posedness. 
Initially, we discuss defining our main operator $\psi(t,\xi)$ without imposing strong assumptions on $\psi$.
The operator $\psi(t,-\mathrm{i}\nabla)$ is defined using the Fourier and inverse Fourier transforms, as shown in \eqref{psi defn}.
Thus, it is necessary to consider the broadest possible domain for these transforms if we aim to relax the conditions on the symbol $\psi$ ensuring that the operator $\psi(t,-\mathrm{i}\nabla)$ is well-defined.

It is well-known that the Fourier and inverse Fourier transforms are automorphisms on the class of all tempered distributions, where a tempered distribution is a (complex-valued) continuous linear functional defined on the 
$d$-dimensional Schwartz class $\cS(\fR^d)$.
 Additionally, the domain of the Fourier transform can be extended to include all distributions, aided by the Paley-Wiener theorem (\textit{cf.} \cite[Proposition 4.1]{Komech 1994}). This extension relies on analytic continuations and is therefore only suitable for smooth symbols $\psi$. 
 Consequently, this extension is not useful for defining the operator $\psi(t,-\mathrm{i}\nabla)$ since it requires the smoothness of the symbol $\psi$, which contradicts our main objective.

Fortunately, we discovered an alternative method to extend the domain of the Fourier and inverse Fourier transforms to include all distributions, as suggested by the authors in \cite{Choi Kim 2024}. We provide more precise definitions of the extended Fourier and inverse Fourier transforms below.

\begin{defn}[Fourier transforms of distributions]
								\label{defn fourier distribution}
We use $\cF^{-1}\cD(\fR^d)$ to denote the subclass of the Schwartz class whose Fourier transform is in $\cD(\fR^d)$, \textit{i.e.}
\begin{align*}
\cF^{-1}\cD(\fR^d) 
:= \{ u \in \cS(\fR^d) : \cF[u] \in \cD(\fR^d)\},
\end{align*}
where $\cS(\fR^d)$ represents the (complex-valued) Schwartz class on $\fR^d$ and $\cD(\fR^d)$ is the class of all complex-valued infinitely differentiable functions with compact supports defined on $\fR^d$.
The Schwartz class $\cS(\fR^d)$ is equipped with the strong topology generated by Schwartz's semi-norms. Since $\cF^{-1}\cD(\fR^d)$ is a subset of $\cS(\fR^d)$, it inherits the subspace topology. We can then consider the dual space of $\cF^{-1}\cD(\fR^d)$, denoted as $\cF^{-1}\cD'(\fR^d)$, \textit{i.e.} $u \in \cF^{-1}\cD' (\fR^d) $
if and only if $u$ is a continuous linear functional defined on $\cF^{-1}\cD(\fR^d)$.
For $u \in \cF^{-1}\cD'(\fR^d)$ and $\varphi \in \cF^{-1}\cD(\fR^d)$, we write
\begin{align*}
\langle u, \varphi \rangle := u(\varphi).
\end{align*}
In other words, $\langle u, \varphi \rangle$ denotes the image of $\varphi$ under $u$.
We adopt the weak*-topology for $\cF^{-1}\cD'(\fR^d)$ and consider the Borel sets generated by this topology.
We call $\cF^{-1}\cD'(\fR^d)$ the space of all \textbf{the Fourier transforms of distributions}. The rationale behind this naming and the notation will be explained in Definition \ref{fourier defn 2} below.

Similarly, we define the space of $l_2$-valued continuous linear functions on $\cF^{-1}\cD(\fR^d)$.
This space is denoted by $\cF^{-1}\cD'(\fR^d; l_2)$ and consists of all continuous linear functions from $\cF^{-1}\cD(\fR^d)$ to $l_2$. For $u \in \cF^{-1}\cD'(\fR^d; l_2)$ and $\varphi \in \cF^{-1}\cD(\fR^d)$, we denote the $k$-th component of the sequence $u(\varphi)$ by $\langle u^k, \varphi \rangle$ for all $k \in \fN$.
\end{defn}

We recall the definition of distributions (generalized functions). The space $\cD'(\fR^d)$ denotes all linear continuous functionals on $\cD(\fR^d)$. Similarly, $\cD'(\fR^d; l_2)$ denotes the space of all $l_2$-valued continuous linear functions on $\cD(\fR^d)$. We consider the weak*-topologies on $\cD'(\fR^d)$ and $\cD'(\fR^d; l_2)$. Consequently, the Borel sets generated by these topologies can be used to discuss the measurability of functions whose ranges lie in $\cD'(\fR^d)$ or $\cD'(\fR^d; l_2)$.

It is easy to observe that both $\cF^{-1}\cD'(\fR^d)$ and $\cD'(\fR^d)$ are larger than the class of all tempered distributions $\cS'(\fR^d)$. Additionally, it is obvious that $\cF^{-1}\cD'(\fR^d) \not\subset \cD'(\fR^d)$ and $ \cD'(\fR^d) \not\subset \cF^{-1}\cD'(\fR^d)$. However, there is an interesting relation that $\cF^{-1}\cD'(\fR^d) \cap \cD'(\fR^d)=\cS'(\fR^d)$.
Another connection between the spaces $\cF^{-1}\cD'(\fR^d)$ and $\cD'(\fR^d)$ can be established through the Fourier and inverse Fourier transforms. Specifically, the Fourier and inverse Fourier transforms can be defined on both $\cF^{-1}\cD'(\fR^d)$ and $\cD'(\fR^d)$ in such a way that they can be interchanged. Here are definitions of the Fourier and inverse Fourier transforms for elements in $\cF^{-1}\cD'(\fR^d)$ and $\cD'(\fR^d)$, which follows the conventional method to define these transforms in a weak sense.

\begin{defn}[Fourier and inverse Fourier transforms of elements in $\cF^{-1}\cD'(\fR^d)$ and $\cF^{-1}\cD'(\fR^d;l_2)$]
							\label{fourier defn 1}
 For any $u \in \cF^{-1}\cD'(\fR^d)$, the Fourier and inverse Fourier transforms of $u$ are defined in the following canonical way:
\begin{align*}
\langle \cF[u], \varphi \rangle
:=\langle u, \cF^{-1}[\varphi] \rangle
\end{align*}
and
\begin{align*}
\langle \cF^{-1}[u], \varphi \rangle
:=\langle u, \cF[\varphi] \rangle
\end{align*}
for all $\varphi \in \cD(\fR^d)$.
Thus for any $u \in \cF^{-1}\cD'(\fR^d) $, we have
$\cF[u] \in \cD'(\fR^d)$ and $ \cF^{-1}[u] \in \cD'(\fR^d)$
since it is evident that both $\cF^{-1}[\varphi]$ and $\cF[\varphi] $ are in $\cF^{-1}\cD(\fR^d)$ for all $\varphi \in \cD(\fR^d)$ due to well-known properties of the Fourier and inverse Fourier transforms of a Schwartz function.
The continuity of $\cF[u]$ and $\cF^{-1}[u]$ on $\cD(\fR^d)$ to be distributions is also easily obtained from properties of the Fourier transform, inverse Fourier transform, and the Schwartz functions.
Similarly, for $u \in \cF^{-1}\cD'(\fR^d;l_2)$, we define
\begin{align*}
\langle \cF[u]^k, \varphi \rangle
:=\langle u^k, \cF^{-1}[\varphi] \rangle
\end{align*}
and
\begin{align*}
\langle \cF^{-1}[u]^k, \varphi \rangle
:=\langle u^k, \cF[\varphi] \rangle
\end{align*}
for all $\varphi \in \cD(\fR^d)$ and $k \in \fN$. 
Simply, by $\cF[u]$ and $\cF^{-1}[u]$, we denote the $l_2$-valued continuous linear functions on $\cD(\fR^d)$ so that for any $k \in \fN$ and $\varphi \in \cD(\fR^d)$,
the $k$-th terms of $\cF[u](\varphi)$ and $\cF^{-1}[u](\varphi)$ are $\langle \cF[u]^k, \varphi \rangle$ and $\langle \cF^{-1}[u]^k, \varphi \rangle$, respectively.
\end{defn}

\begin{defn}[Fourier and inverse Fourier transforms of elements in $\cD' (\fR^d)$ and $\cD' (\fR^d;l_2)$]
								\label{fourier defn 2}
Let $u$ be a distribution on $\fR^d$, \textit{i.e.} $u \in \cD'(\fR^d)$.
The Fourier and inverse Fourier transforms of $u$ are defined as follows:
\begin{align*}
\langle \cF[u], \varphi \rangle 
:=\langle u,  \cF^{-1}[\varphi] \rangle \quad \forall \varphi \in \cF^{-1}\cD(\fR^d) 
\end{align*}
and
\begin{align*}
\langle \cF^{-1}[u], \varphi \rangle 
:=\langle u,  \cF[\varphi] \rangle  \quad \forall \varphi \in \cF^{-1}\cD(\fR^d).
\end{align*}
Then both $\cF[u]$ and $\cF^{-1}[u]$ belong to $\cF^{-1}\cD'(\fR^d)$.
Similarly, for $u \in \cD'(\fR^d;l_2)$, we define
\begin{align*}
\langle \cF[u]^k, \varphi \rangle
:=\langle u^k, \cF^{-1}[\varphi] \rangle
\end{align*}
and
\begin{align*}
\langle \cF^{-1}[u]^k, \varphi \rangle
:=\langle u^k, \cF[\varphi] \rangle
\end{align*}
for all $ \varphi \in \cF^{-1}\cD(\fR^d)$ and $k \in \fN$. 
Here, $\cF[u]$ and $\cF^{-1}[u]$ denote the $l_2$-valued continuous linear functions on $\cF^{-1}\cD(\fR^d)$ so that for any $k \in \fN$ and $\varphi \in \cF^{-1}\cD(\fR^d)$,
the $k$-th terms of $\cF[u](\varphi)$ and $\cF^{-1}[u](\varphi)$ are $\langle \cF[u]^k, \varphi \rangle$ and $\langle \cF^{-1}[u]^k, \varphi \rangle$, respectively.
\end{defn}

It can be easily demonstrated that both $\cF$ and $\cF^{-1}$ are homeomorphisms between $\cD'(\fR^d)$ and $\cF^{-1}\cD'(\fR^d)$. We omit a detailed proof as it follows the classical proof that the Fourier transform is an automorphism on the class of all tempered distributions (\textit{cf.} \cite[Theorem 2.4]{Choi Kim 2024} and references therein).
 
\begin{thm}
									\label{homeo thm}
Both Fourier and inverse Fourier transforms are homeomorphisms from $\cD'(\fR^d)$ onto $\cF^{-1}\cD'(\fR^d)$ with respect to the weak*-topologies. 
The inverse mappings of the homeomorphisms are given by the inverse Fourier transform and the Fourier transform, respectively.
\end{thm}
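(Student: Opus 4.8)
The plan is to follow the classical template showing that $\cF$ is an automorphism of $\cS'(\fR^d)$, transplanted to the pair $(\cD'(\fR^d),\cF^{-1}\cD'(\fR^d))$. Write $\Phi:=\cF^{-1}\cD(\fR^d)$ for the test space, carrying the subspace topology from $\cS(\fR^d)$. The one structural fact I would isolate first is how the transforms act on the test spaces themselves: for every $\varphi\in\cD(\fR^d)$ both $\cF[\varphi]$ and $\cF^{-1}[\varphi]$ lie in $\Phi$, and conversely for every $\phi\in\Phi$ both $\cF[\phi]$ and $\cF^{-1}[\phi]$ lie in $\cD(\fR^d)$. The first claim is immediate from the definition of $\Phi$ together with $\cF[\cF^{-1}[\varphi]]=\varphi$ and the reflection identity $\cF[\cF[\varphi]](x)=\varphi(-x)$; the second follows because $\phi\in\Phi$ means $\cF[\phi]\in\cD(\fR^d)$, while $\cF^{-1}[\phi](x)=\cF[\phi](-x)$ is then also smooth with compact support. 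These facts are precisely what make the duality pairings in Definitions \ref{fourier defn 1} and \ref{fourier defn 2} well posed, and I take the well-definedness of $\cF,\cF^{-1}$ as maps $\cD'(\fR^d)\to\cF^{-1}\cD'(\fR^d)$ and $\cF^{-1}\cD'(\fR^d)\to\cD'(\fR^d)$ from those definitions.

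Bijectivity I would obtain purely algebraically, by proving the inversion identities $\cF^{-1}[\cF[u]]=u$ and $\cF[\cF^{-1}[u]]=u$ together with their mirror images with $\cD'(\fR^d)$ and $\cF^{-1}\cD'(\fR^d)$ exchanged. For $u\in\cD'(\fR^d)$ and $\varphi\in\cD(\fR^d)$, unwinding the two definitions gives
\begin{align*}
\langle \cF^{-1}[\cF[u]], \varphi \rangle
= \langle \cF[u], \cF[\varphi] \rangle
= \langle u, \cF^{-1}[\cF[\varphi]] \rangle
= \langle u, \varphi \rangle,
\end{align*}
where the first equality is Definition \ref{fourier defn 1} applied to $\cF[u]\in\cF^{-1}\cD'(\fR^d)$ (legitimate since $\cF[\varphi]\in\Phi$), the second is Definition \ref{fourier defn 2} applied to $u$ (legitimate since $\cF^{-1}[\cF[\varphi]]\in\cD(\fR^d)$), and the last is the Fourier inversion theorem on the Schwartz function $\varphi$. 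The three remaining identities are established by the identical computation after permuting the roles of the two spaces and of $\cF,\cF^{-1}$; the sole analytic input is scalar Schwartz-space Fourier inversion. This exhibits $\cF$ and $\cF^{-1}$ as mutually inverse bijections between the two spaces, which in particular identifies the inverse mappings as claimed.

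For continuity I would invoke the universal property of the weak*-topologies, which are the initial topologies induced by the evaluation maps. Thus $\cF:\cD'(\fR^d)\to\cF^{-1}\cD'(\fR^d)$ is continuous iff for each fixed $\phi\in\Phi$ the scalar map $u\mapsto\langle\cF[u],\phi\rangle$ is continuous on $\cD'(\fR^d)$; but by Definition \ref{fourier defn 2} this map equals $u\mapsto\langle u,\cF^{-1}[\phi]\rangle$, and since $\cF^{-1}[\phi]\in\cD(\fR^d)$ it is one of the very evaluation functionals defining the weak*-topology on $\cD'(\fR^d)$, hence continuous. The same argument delivers continuity of $\cF^{-1}$ and of both transforms in the reverse direction, so each of $\cF,\cF^{-1}$ is a continuous bijection with continuous inverse, i.e.\ a homeomorphism. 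I expect no genuine obstacle here: the entire content is the bookkeeping of which test space each pairing inhabits, namely the verification that $\cF$ and $\cF^{-1}$ interchange $\cD(\fR^d)$ and $\Phi$ via the reflection identity $\cF^2=\mathrm{id}(\cdot\mapsto-\cdot)$. Once that correspondence is pinned down, both the inversion identities and the initial-topology continuity argument are formal. The $l_2$-valued statement follows verbatim, applied componentwise to each $u^k$.
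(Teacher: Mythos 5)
Your proposal is correct and is exactly the argument the paper has in mind: the paper omits the proof, saying only that it ``follows the classical proof that the Fourier transform is an automorphism on the class of all tempered distributions,'' and your write-up is a faithful instantiation of that template (test-space interchange via $\cF^2=$ reflection, inversion identities from unwinding Definitions \ref{fourier defn 1} and \ref{fourier defn 2} plus Schwartz-space inversion, continuity from the initial-topology characterization of the weak*-topologies). No gaps.
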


It is well-known that any locally integrable function on $\fR^d$  is a distribution on $\fR^d$.
Specifically, a locally integrable function with exponential growth at infinity is included in the class of all distributions but not in the class of tempered distributions. Controlling the exponential growth with respect to the frequencies of solutions and data is a major task in achieving our results since this naturally occurs when allowing symbols $\psi$ to be sign-changing. This necessity to handle exponential growth is one of the main reasons we need to extend the Fourier transform to include all distributions. Here is a more detailed mathematical explanation of distributions and their realizations as locally integrable functions.

\begin{defn}[Locally integrable functions and realizations of distributions]
										\label{local inte class}
Let $\cU$ be an open subset of $\fR^d$.
We say that  $v$ is locally integrable on $\cU$ if and only if
\begin{align*}
\int_{K}|v(\xi)| \mathrm{d} \xi < \infty \quad \text{for all compact subsets $K$ of $\cU$}.
\end{align*}
In particular, it is easy to show that  $v$ is locally integrable on $\fR^d$ if and only if
\begin{align*}
\int_{B_R}|v(\xi)| \mathrm{d} \xi < \infty \quad \forall R \in (0,\infty),
\end{align*}
where $B_R$ denotes the Euclidean ball whose center is zero and radius is $R$, \textit{i.e.}
\begin{align*}
B_R:= \{y \in \fR^d: |y| < R\}.
\end{align*}
We write $ v \in L_{1,\ell oc}(\cU)$ if $v$ is locally integrable on $\cU$.
Recall that any locally integrable function $v$ on $\fR^d$ is a distribution on $\fR^d$ due to the identification $v$ with the mapping
\begin{align}
										\label{integral identification}
\varphi \in \cD(\fR^d) \mapsto \int_{\fR^d} v(x) \overline{\varphi(x)} \mathrm{d}x,
\end{align}
where $\overline{\varphi(x)}$ denotes the complex conjugate of $\varphi(x)$.
In this sense, $L_{1,loc}(\fR^d) \subset \cD'(\fR^d)$ and there is the subspace topology on $L_{1,loc}(\fR^d)$.
Additionally, we say that $u \in \cD'(\fR^d)$ has a realization $v$ on $\cU$ if there exists a locally integrable function $v$ on $\cU$ so that 
\begin{align*}
\langle u , \varphi \rangle = \int_{\cU} v(x) \overline{\varphi(x)} \mathrm{d}x \quad \forall \varphi \in \cD(\cU),
\end{align*}
where $\cD \left(\cU\right)$ denotes the set of all infinitely differentiable functions on $\fR^d$ with compact supports in $\cU$.
We simply say that $u \in \cD'(\fR^d)$ is locally integrable on $\cU$ if it has a realization $v$ on $\cU$.

Similarly, we use the notation  $L_{1,\ell oc}(\cU ; l_2)$ to denote the space of all $l_2$-valued locally integrable functions $v$ on $\cU$ such that 
\begin{align*}
\int_{K}|v(\xi)|_{l_2} \mathrm{d} \xi < \infty \quad \text{for all compact subsets $K$ of $\cU$ }.
\end{align*}
In particular, $L_{1,\ell oc}(\fR^d ; l_2)$ becomes a subspace of $\cD'(\fR^d ; l_2)$ by considering the canonical identification
\begin{align*}
\varphi \in \cD(\fR^d) \mapsto   \left( \int_{\fR^d} v^1(x) \overline{\varphi(x)} \mathrm{d}x, \int_{\fR^d} v^2(x) \overline{\varphi(x)} \mathrm{d}x, \cdots  \right).
\end{align*}
We say that $u=(u^1,u^2,\cdots) \in \cD'(\fR^d;l_2)$ has a realization $v=(v^1,v^2, \cdots)$ on $\cU$ if
$v \in L_{1,\ell oc}(\cU ; l_2)$ and 
\begin{align*}
\langle u^k , \varphi \rangle = \int_{\cU} v^k(x) \overline{\varphi(x)} \mathrm{d}x \quad \forall \varphi \in \cD(\cU)~\text{and}~\forall k \in \fN.
\end{align*}
We simply say that $u \in \cD'(\fR^d;l_2)$ is ($l_2$-valued) locally integrable on $\cU$ if it has a realization $v$ on $\cU$.
\end{defn}

Here, we only considered the restrictions of $u \in \cD'(\fR^d)$ and $u \in \cD'(\fR^d;l_2)$ on $\cU$ rather than defining local distributions on $\cU$ to circumvent difficulties in combining or extending them.
This seems to be reasonable since our solutions and data are all defined on $\fR^d$ (in a generalized sense).

Furthermore, we temporarily equip $L_{1, \ell oc}(\fR^d)$ with the weak*-topology, which is derived from the subspace topology of $\mathcal{D}'(\fR^d)$, instead of the strong topology generated by the semi-norms $\|\cdot\|_{L_1(B_R)}$. This choice is because the coarse topology is more suitable for handling the Fourier transform and the inverse Fourier transform due to Theorem \ref{homeo thm}. 
We will revisit $L_{1, \ell oc}(\fR^d)$ with the strong topology when we examine $L_{1, \ell oc}(\fR^d)$-valued continuous functions later on.

\begin{defn}[The space of the inverse Fourier transforms of locally integrable functions]
Let $\cU$ be an open subset of $\fR^d$.
Due to Theorem \ref{homeo thm}, 
$$
\cF^{-1}L_{1,loc}(\cU) := \{ \cF^{-1}[u] : u \in \cD'(\fR^d) \cap L_{1,loc}(\cU)\}
$$
is a subspace of $\cF^{-1}\cD'(\fR^d)$. 
In other words, $\cF^{-1}L_{1,loc}(\cU)$ is the space of all the inverse Fourier transforms of distributions on $\fR^d$ which has a realization on $\cU$.
In a simpler way, we say that $u \in \cF^{-1}L_{1,loc}(\cU)$ has the Fourier transform which is locally integrable on $\cU$.
By applying Definition \ref{fourier defn 1} and the identification from \eqref{integral identification},
for any $v \in \cF^{-1}L_{1,loc}(\cU)$ and $\varphi \in \cD(\cU)$, we have
\begin{align*}
\langle v,  \cF^{-1}[\varphi] \rangle 
=\langle \cF[v], \varphi \rangle 
= \int_{\cU} \cF[v](\xi) \overline{\varphi(\xi)} \mathrm{d}\xi.
\end{align*}
Here, the obvious inclusion $\cD(\cU) \subset \cD(\fR^d)$ is used
to ensure $\cF^{-1}[\varphi]  \in \cS(\fR^d)$ by extending $\varphi$ trivially with zeros outside of $\cU$ (or simply by the definition of $\cD(\cU)$).
In particular, it is obvious that for any $v \in \cF^{-1}L_{1,loc}(\cU)$, $\cF[v]$ has a realization on $\cU$.
For this case, we say that $v$ has a {\bf realizable frequency function} on $\cU$. 

Similarly, we use the notation $\cF^{-1}L_{1,loc}(\cU;l_2)$ to denote the subspace of $\cF^{-1}\cD'(\fR^d;l_2)$ consisting of all $\cF^{-1}[u]$ with $u \in L_{1,loc}(\cU ; l_2) \cap \cD'(\fR^d;l_2)$.
In particular, for all $v=(v^1, v^2, \cdots ) \in \cF^{-1}L_{1,loc}(\cU;l_2)$, $\varphi \in \cD(\cU)$, and $k \in \fN$, we have
\begin{align*}
\langle v^k,  \cF^{-1}[\varphi] \rangle 
=\langle \cF[v^k], \varphi \rangle 
= \int_{\cU} \cF[v^k](\xi) \overline{\varphi(\xi)} \mathrm{d}\xi.
\end{align*}
\end{defn}

Based on these realizations of the elements in $\cF^{-1}\cD'(\fR^d)$, we are now ready to define the operator $\psi(t,-\mathrm{i} \nabla)$. To define the operator universally for all $t$, we slightly abuse the notation by considering $\psi(-\mathrm{i}\nabla)$ with a function $\psi(\xi)$ in $\fR^d$ as described below.
\begin{defn}
							\label{defn psi operator}
Let $g \in \cF^{-1}\cD'(\fR^d)$ so that  it has a realizable frequency function on an open set $\cU$ covering the support of $\psi$ and
$\xi \in \fR^d \mapsto \psi(\xi)\cF[g](\xi) \in \fC$ be a locally integrable function on $\fR^d$.
Here the function $\psi(\xi)\cF[g](\xi)$ is extended to be complex-valued over the entire $\fR^d$ by assigning it zero outside of $\cU$.
Then we define $\psi(-\mathrm{i}\nabla)g$ as an element in $\cF^{-1}\cD'(\fR^d)$ so that
\begin{align}
									\label{2024061901}
\langle \psi(-\mathrm{i} \nabla)g, \varphi \rangle
:=\langle  \cF^{-1}\left[\psi \cF[g] \right], \varphi \rangle
=\langle  \psi \cF[g], \cF[\varphi] \rangle
=\int_{\fR^d} \psi(\xi)\cF[g](\xi) \overline{\cF[\varphi](\xi)} \mathrm{d}\xi \quad \forall \varphi \in \cF^{-1}\cD(\fR^d).
\end{align}
It is clear that the open set $\cU$ is not uniquely determined according to the $g$.
However, it does not affect the value of the last right-hand side of \eqref{2024061901}, which implies that our operator $\psi(-\mathrm{i}\nabla)$ is well-defined.
In particular, if $f$  is a locally integrable function on $\fR^d$, then 
\begin{align*}
\langle \psi(-\mathrm{i} \nabla)(\cF^{-1}[f]), \varphi \rangle
=\int_{\fR^d} \psi(\xi) f(\xi) \overline{\cF[\varphi](\xi)} \mathrm{d}\xi \quad \forall \varphi \in \cF^{-1}\cD'(\fR^d).
\end{align*}
due to Theorem \ref{homeo thm}.
\end{defn}

Now it is time to define classes used to treat solutions and data.
We recognize that constructing these necessary classes is inherently complicated as they encompass not only random functions but also various types of linear functions such as $\cD'(\fR^d)$, $\cF^{-1} \cD'(\fR^d)$, $\cD'(\fR^d;l_2)$, and $\cF^{-1}\cD'(\fR^d;l_2)$.
Although defining all these classes may be tedious, it is a crucial foundational step to ensure the rigor of our theories.
Before specifically defining all spaces, recall that the weak*-topologies are given on $\mathcal{D}'(\fR^d)$, $\mathcal{F}^{-1} \mathcal{D}'(\fR^d)$, $\mathcal{D}'(\fR^d;l_2)$, and $\mathcal{F}^{-1} \mathcal{D}'(\fR^d;l_2)$. 
As a result, there are corresponding Borel sets on these spaces, which allows us to consider measurable functions that take values in $\mathcal{D}'(\fR^d)$, $\mathcal{F}^{-1} \mathcal{D}'(\fR^d)$, $\mathcal{D}'(\fR^d;l_2)$, or $\mathcal{F}^{-1} \mathcal{D}'(\fR^d;l_2)$.

\begin{defn}[$\cD'(\fR^d)$-valued random variable]
								\label{d valued rv}
\begin{enumerate}[(i)]
\item By $\bL_{0}\left( \Omega, \rG;\cD'(\fR^d)\right)$, we denote the class of all $\cD'(\fR^d)$-valued $\rG$-measurable random variables $u$ such that for each $\varphi \in \cD(\fR^d)$,
\begin{align*}
|\langle u, \varphi \rangle |< \infty \quad (a.s.).
\end{align*}

\item $\bL_{1}\left( \Omega, \rG;\cD'(\fR^d)\right)$ denotes the subspace of $\bL_{0}\left( \Omega, \rG;\cD'(\fR^d)\right)$ 
consisting of elements $u$ such that for each $\varphi \in \cD(\fR^d)$,
\begin{align*}
\bE\left[|\langle u, \varphi \rangle| \right] < \infty.
\end{align*}
\end{enumerate}
\end{defn}

\begin{defn}[$\cD'(\fR^d)$-valued processes]
									\label{d valued process}
Let $\rH|_{\opar 0,\tau \cbrk}$ be the restriction of $\rH$ on $\opar 0,\tau \cbrk$, which is defined by the sub-$\sigma$-algebra of $\rF \times \cB([0,\infty)) |_{\opar 0,\tau \cbrk}$ so that 
\begin{align*}
\rH|_{\opar 0,\tau \cbrk}
=\{ A \cap \opar 0,\tau \cbrk : A  \in \rH \},
\end{align*}
 where 
\begin{align*}
\opar 0,\tau \cbrk
:=\{ (\omega,t) \in \Omega \times (0,\infty) : 0<t \leq \tau(\omega) \}
\end{align*}
and
 $\rF \times \cB([0,\infty)) |_{\opar 0,\tau \cbrk}$ denotes the restriction of $\rF \times \cB([0,\infty))$ on $\opar 0,\tau \cbrk$, \textit{i.e.}
$$
\rF \times \cB([0,\infty)) |_{\opar 0,\tau \cbrk}
=\{ A \cap \opar 0,\tau \cbrk : A  \in \rF \times \cB([0,\infty))\}.
$$
\begin{enumerate}[(i)]
\item By $\bL_{0}\left( \opar 0,\tau \cbrk, \rH ;\cD'(\fR^d)\right)$, we denote the space of 
all $\cD'(\fR^d)$-valued functions $u$ on $\opar 0,\tau \cbrk$ such that
for each $\varphi \in \cD(\fR^d)$,
\begin{align*}
 |\langle u(t), \varphi \rangle|
 :=|\langle u(\omega,t), \varphi \rangle|
< \infty \quad (a.e.)~ (\omega,t) \in \opar 0,\tau \cbrk,
\end{align*}
and the mapping $(\omega,t) \in \opar 0, \tau \cbrk \mapsto \langle u(t), \varphi \rangle$
is $\rH|_{\opar 0,\tau \cbrk}$-measurable,
equivalently, the mapping $(\omega,t) \in \Omega \times [0,\infty) \mapsto  1_{\opar 0,\tau \cbrk }(t)\langle u(t), \varphi \rangle$
is $\rH$-measurable, where
\begin{align*}
1_{\opar 0,\tau \cbrk}(t)
:=1_{\opar 0,\tau \cbrk}(\omega,t)
:=
\begin{cases}
&1 \quad  \text{if $(\omega,t) \in \opar 0,\tau \cbrk$} \\
&0 \quad  \text{if $(\omega,t) \notin \opar 0,\tau \cbrk$}.
\end{cases}
\end{align*}
Here the trivial extension by zero is used to ensure the term $\langle u(t), \varphi \rangle$ is well-defined on $\Omega \times [0,\infty)$.
We also introduce an important subspace of $\bL_{0}\left( \opar 0,\tau \cbrk, \rH ;\cD'(\fR^d)\right)$ that possesses certain integrability properties.

\item We use the notation $\bL_{0,1}\left( \opar 0,\tau \cbrk, \rH ;\cD'(\fR^d)\right)$
to denote the subspace of   $\bL_{0}\left( \opar 0,\tau \cbrk, \rH ;\cD'(\fR^d)\right)$ consisting of all elements $u \in \bL_{0}\left( \opar 0,\tau \cbrk, \rH ;\cD'(\fR^d)\right)$  such that for each $\varphi \in \cD(\fR^d)$,
\begin{align*}
\int_0^{\tau} |\langle u(t), \varphi \rangle| \mathrm{d}t  
:= \int_0^{\tau(\omega)} |\langle u(\omega,t), \varphi \rangle| \mathrm{d}t
< \infty \quad (a.s.).
\end{align*}
\item $\bL_{1,1,loc}\left( \opar 0,\tau \cbrk, \rH ;\cD'(\fR^d)\right)$
denotes the subspace of $\bL_{0,1}\left( \opar 0,\tau \cbrk, \rH ;\cD'(\fR^d)\right)$ consisting of all elements $u \in \bL_{0,1}\left( \opar 0,\tau \cbrk, \rH ;\cD'(\fR^d)\right)$  such that for all $T \in (0,\infty)$ and $\varphi \in \cD(\fR^d)$,
\begin{align*}
\bE\left[\int_0^{\tau \wedge T} |\langle u(t), \varphi \rangle| \mathrm{d}t \right]
< \infty,
\end{align*}
where $a \wedge b := \min\{a,b\}$.
\item $\bL_{1,1}\left( \opar 0,\tau \cbrk, \rH ;\cD'(\fR^d)\right)$
denotes the subspace of $\bL_{1,1,loc}\left( \opar 0,\tau \cbrk, \rH ;\cD'(\fR^d)\right)$ consisting of all elements $u \in \bL_{1,1,loc}\left( \opar 0,\tau \cbrk, \rH ;\cD'(\fR^d)\right)$  such that for each $\varphi \in \cD(\fR^d)$,
\begin{align*}
\bE\left[\int_0^{\tau} |\langle u(t), \varphi \rangle| \mathrm{d}t \right]
< \infty.
\end{align*}
\end{enumerate}
\end{defn}
Similarly, we can readily define $\cD'(\fR^d;l_2)$-valued processes.
These processes are typically associated with integrands of Itô integrals, which are well-known in Krylov's analytic theories for SPDEs. 
Therefore, we restrict our attention to predictable processes.

\begin{defn}[$\cD'(\fR^d;l_2)$-valued processes]
									\label{d l2 valued process}
Recall that $\cP$ is the predictable $\sigma$-algebra.
\begin{enumerate}[(i)]
\item $\bL_{0}\left( \opar 0,\tau \cbrk, \cP;\cF^{-1}\cD'(\fR^d;l_2)\right)$ denotes the space of all $\cD'(\fR^d;l_2)$-valued function $u$ on $\opar 0,\tau \cbrk$ such that for each $\varphi \in \cD(\fR^d)$, 
\begin{align} 
										\label{20240322 01}
\left|\langle u(t), \varphi \rangle \right|_{l_2}
:= \left(\sum_{k=1}^\infty \left|\langle u^k(t), \varphi \rangle \right|^2\right)^{1/2} 
:= \left(\sum_{k=1}^\infty \left|\langle u^k(\omega,t), \varphi \rangle \right|^2\right)^{1/2} 
< \infty \quad (a.e.)~ (\omega,t) \in \opar 0,\tau \cbrk
\end{align}
and $(\omega,t) \mapsto 1_{\opar 0,\tau \cbrk }(t) \langle u(t,\cdot), \varphi \rangle$ is $\cP$-measurable.
\item $\bL_{0,2}\left( \opar 0,\tau \cbrk, \cP;\cD'(\fR^d;l_2)\right)$ denotes  
the subspace of $\bL_{0}\left( \opar 0,\tau \cbrk, \cP;\cD'(\fR^d;l_2)\right)$ consisting of all $u$ such that
for each $\varphi \in \cD(\fR^d)$,  
\begin{align*}
\int_0^{\tau} \left|\langle u(t,\cdot), \varphi \rangle\right|^2_{l_2} \mathrm{d}t  
:= \int_0^{\tau(\omega)} \sum_{k=1}^\infty \left|\langle u(\omega,t,\cdot)^k, \varphi \rangle\right|^2 \mathrm{d}t 
< \infty \quad (a.s.).
\end{align*}
\item $\bL_{1,2,loc}\left( \opar 0,\tau \cbrk, \cP;\cD'(\fR^d;l_2)\right)$ denotes the subspace of 
$\bL_{0,2}\left( \opar 0,\tau \cbrk, \cP;\cD'(\fR^d;l_2)\right)$ consisting of all $u$ such that
for all $T \in (0,\infty)$ and $\varphi \in \cD(\fR^d)$
\begin{align*}
\bE\left[\int_0^{\tau \wedge T} \left|\langle u(t,\cdot), \varphi \rangle\right|^2_{l_2} \mathrm{d}t\right]  < \infty.
\end{align*}
\item $\bL_{1,2}\left( \opar 0,\tau \cbrk, \cP;\cD'(\fR^d;l_2)\right)$ denotes the subspace of 
$\bL_{1,2,loc}\left( \opar 0,\tau \cbrk, \cP;\cD'(\fR^d;l_2)\right)$ consisting of all $u$ such that
\begin{align*}
\bE\left[\int_0^{\tau} \left|\langle u(t,\cdot), \varphi \rangle\right|^2_{l_2} \mathrm{d}t\right]  < \infty.
\end{align*}
\end{enumerate}
\end{defn}
\begin{rem}

The predictable measurability $\cP$ could be relaxed to progressive measurability. However, we chose predictable measurability for two reasons. First, predictable measurability is sufficient for considering classes of integrands for various stochastic integrals derived from (càdlàg) local martingales. Therefore, we believe that this formulation is more suitable for future research. Second, we adopt many terminologies from Krylov's theories which utilize predictable $\sigma$-algebras instead of progressive $\sigma$-algebras.

\end{rem}

The class of locally integrable functions is a key subset of distributions, as they realize distributions through integral actions, as detailed in Definition \ref{local inte class}. 
Since $L_{1,\ell oc}(\fR^d) \subset \cD'(\fR^d)$, there exists the natural $\sigma$-algebra on $L_{1,\ell oc}(\fR^d)$ inherited from the Borel sets on $\cD'(\fR^d)$. Alternatively, one can directly construct the Borel sets on $L_{1,\ell oc}(\fR^d)$ using the weak*-topology on $L_{1,\ell oc}(\fR^d)$. 
This allows for the consideration of measurable functions that take values in $L_{1, \ell oc}(\fR^d)$. 
Additionally, subclasses such as locally integrable function-valued random variables and stochastic processes are intimately connected with Fourier transforms of solutions and data. Below, we provide the notations relevant to these subclasses. 
Detailed explanations are omitted because they are straightforward modifications of the previous definitions.

\begin{defn}[locally integrable function-valued processes]
										\label{loc valued}
All spaces 
$$
\bL_{0}\left( \Omega, \rG; L_{1,\ell oc}(\fR^d)\right),
~\bL_{1}\left( \Omega, \rG; L_{1,\ell oc}(\fR^d)\right),
~\bL_{0}\left( \opar 0,\tau \cbrk, \rH ; L_{1,\ell oc}(\fR^d)\right), 
$$
$$
\bL_{0,1}\left( \opar 0,\tau \cbrk, \rH ; L_{1,\ell oc}(\fR^d)\right), 
~\bL_{1,1,loc}\left( \opar 0,\tau \cbrk, \rH ; L_{1,\ell oc}(\fR^d)\right),
~\bL_{1,1}\left( \opar 0,\tau \cbrk, \rH ; L_{1,\ell oc}(\fR^d)\right) 
$$
$$
\bL_{0}\left( \opar 0,\tau \cbrk, \cP;L_{1,\ell oc}(\fR^d;l_2)\right), 
~\bL_{0,2}\left( \opar 0,\tau \cbrk, \cP; L_{1,\ell oc}(\fR^d;l_2)\right),
~\bL_{1,2,loc}\left( \opar 0,\tau \cbrk, \cP; L_{1,\ell oc}(\fR^d;l_2)\right), 
$$
and $\bL_{1,2}\left( \opar 0,\tau \cbrk, \cP; L_{1,\ell oc}(\fR^d;l_2)\right)$ 
 are defined similarly as in Definitions \ref{d valued rv}, \ref{d valued process}, and \ref{d l2 valued process} by considering $L_{1,\ell oc}(\fR^d)$ and $L_{1,\ell oc}(\fR^d;l_2)$ instead of $\cD'(\fR^d)$ and $\cD(\fR^d;l_2)$.
\end{defn}

The extended Fourier and inverse Fourier transforms are essential in establishing our primary operators as defined in Definition \ref{defn psi operator}. 
Additionally, they act as homeomorphisms between $\cD'(\fR^d)$ and $\cF^{-1}\cD'(\fR^d)$, as demonstrated in Theorem \ref{homeo thm}. 
This connection allows us to define the classes that manage our solutions and data in a general context.

\begin{defn}[$\cF^{-1}\cD'(\fR^d;l_2)$-valued processes]
All spaces 
$$
\bL_{0}\left( \Omega, \rG; \cF^{-1}\cD'(\fR^d)\right),
~\bL_{1}\left( \Omega, \rG; \cF^{-1}\cD'(\fR^d)\right),
~\bL_{0}\left( \opar 0,\tau \cbrk, \rH ; \cF^{-1}\cD'(\fR^d)\right), 
$$
$$
\bL_{0,1}\left( \opar 0,\tau \cbrk, \rH ; \cF^{-1}\cD'(\fR^d)\right), 
~\bL_{1,1,loc}\left( \opar 0,\tau \cbrk, \rH ; \cF^{-1}\cD'(\fR^d)\right),
~\bL_{1,1}\left( \opar 0,\tau \cbrk, \rH ; \cF^{-1}\cD'(\fR^d)\right) 
$$
$$
\bL_{0}\left( \opar 0,\tau \cbrk, \cP;\cF^{-1}\cD'(\fR^d;l_2)\right), 
~\bL_{0,2}\left( \opar 0,\tau \cbrk, \cP; \cF^{-1}\cD'(\fR^d;l_2)\right),
~\bL_{1,2,loc}\left( \opar 0,\tau \cbrk, \cP; \cF^{-1}\cD'(\fR^d;l_2)\right), 
$$
and $\bL_{1,2}\left( \opar 0,\tau \cbrk, \cP; \cF^{-1}\cD'(\fR^d;l_2)\right)$
 are defined similarly as in Definitions \ref{d valued rv}, \ref{d valued process}, and \ref{d l2 valued process} by considering $\cF^{-1}\cD'(\fR^d)$ and $\cF^{-1}\cD'(\fR^d;l_2)$ instead of $\cD'(\fR^d)$ and $\cD(\fR^d;l_2)$.
\end{defn}

By Theorem \ref{homeo thm}, it is obvious that
$$
u \in \bL_{0}\left( \opar 0,\tau \cbrk, \rH ;\cF^{-1}\cD'(\fR^d)\right) ~\text{if and only if}~
\cF[u] \in \bL_{0}\left( \opar 0,\tau \cbrk, \rH ;\cD'(\fR^d)\right).
$$
Similarly,
$u \in \bL_{0}\left( \opar 0,\tau \cbrk, \cP;\cF^{-1}\cD'(\fR^d;l_2)\right)$ if and only if
$\cF[u] \in \bL_{0}\left( \opar 0,\tau \cbrk, \cP;\cD'(\fR^d;l_2)\right)$.
All other spaces have the same equivalences.

By Theorem \ref{homeo thm} again, if $u \in L_{1, \ell oc}(\fR^d) \subset \cD'(\fR^d)$, then $\cF^{-1}[u] \in \cF^{-1}\cD'(\fR^d)$. 
Additionally, $\cF^{-1}L_{1, \ell oc}(\fR^d)$ denotes the class of all $u \in \cF^{-1}\cD'(\fR^d)$ such that $\cF[u] \in L_{1, \ell oc}(\fR^d)$, \textit{i.e.}
\begin{align*}
u \in \cF^{-1}L_{1, \ell oc}(\fR^d) \iff \cF[u] \in L_{1, \ell oc}(\fR^d).
\end{align*}
In particular, it is evident that for any $u \in \cF^{-1}L_{1, \ell oc}(\fR^d)$, $u$ has a realizable frequency function on $\fR^d$, \textit{i.e.}
\begin{align*}
\langle \cF[u], \varphi \rangle = \int_{\fR^d} \cF[u](\xi) \overline{\varphi(\xi)} \mathrm{d}\xi \quad \forall \varphi \in \cD(\fR^d).
\end{align*}
Similarly, $\cF^{-1}L_{1,\ell oc}(\fR^d;l_2)$ is defined as the class of all the inverse Fourier transforms of $u$ in $L_{1,\ell oc}(\fR^d ; l_2)$.

We are now ready to define $\cF^{-1}L_{1,\ell oc}(\fR^d;l_2)$-valued processes, which form essential subclasses of $\cF^{-1}\cD'(\fR^d;l_2)$-valued processes. Here $\cF^{-1}L_{1,\ell oc}(\fR^d)$ can be considered as a subspace of $\cF^{-1}L_{1,\ell oc}(\fR^d;l_2)$.  These subclasses are crucial for understanding solutions and data characterized by frequencies derived from complex or $l_2$-valued stochastic processes. 
Additionally, we consider the Borel sets in $\mathcal{F}^{-1}L_{1, \ell oc}(\fR^d; l_2)$ to discuss the measurability of functions that take values in $\mathcal{F}^{-1}L_{1, \ell oc}(\fR^d; l_2)$.

\begin{defn}[$\cF^{-1}L_{1,\ell oc}(\fR^d;l_2)$-valued processes]
All spaces 
$$
\bL_{0}\left( \Omega, \rG; \cF^{-1}L_{1,\ell oc}(\fR^d)\right),
~\bL_{1}\left( \Omega, \rG; \cF^{-1} L_{1,\ell oc}(\fR^d)\right),
~\bL_{0}\left( \opar 0,\tau \cbrk, \rH ; \cF^{-1}L_{1,\ell oc}(\fR^d)\right), 
$$
$$
\bL_{0,1}\left( \opar 0,\tau \cbrk, \rH ; \cF^{-1}L_{1,\ell oc}(\fR^d)\right), 
~\bL_{1,1,loc}\left( \opar 0,\tau \cbrk, \rH ; \cF^{-1}L_{1,\ell oc}(\fR^d)\right),
~\bL_{1,1}\left( \opar 0,\tau \cbrk, \rH ; \cF^{-1}L_{1,\ell oc}(\fR^d)\right) 
$$
$$
\bL_{0}\left( \opar 0,\tau \cbrk, \cP; \cF^{-1} L_{1,\ell oc}(\fR^d;l_2)\right), 
~\bL_{0,2}\left( \opar 0,\tau \cbrk, \cP; \cF^{-1} L_{1,\ell oc}(\fR^d;l_2)\right),
~\bL_{1,2,loc}\left( \opar 0,\tau \cbrk, \cP; \cF^{-1} L_{1,\ell oc}(\fR^d;l_2)\right), 
$$
and
$\bL_{1,2}\left( \opar 0,\tau \cbrk, \cP; \cF^{-1} L_{1,\ell oc}(\fR^d;l_2)\right)$
 are defined similarly as in Definition \ref{loc valued} by considering $\cF^{-1} L_{1,\ell oc}(\fR^d)$ and $\cF^{-1} L_{1,\ell oc}(\fR^d;l_2)$ instead of $L_{1,\ell oc}(\fR^d)$ and $ L_{1,\ell oc}(\fR^d;l_2)$.
\end{defn}

Let $u \in \bL_{0}\left( \opar 0,\tau \cbrk, \rH ; L_{1,\ell oc}(\fR^d)\right)$.
Then it is possible to consider $u$ as a complex-valued function defined on $\opar 0,\tau \cbrk \times \fR^d$ by putting
\begin{align*}
u(t,\xi):= u(t)(\xi)
\end{align*}
In addition, it is obvious that for each $(\omega, t) \in \opar 0,\tau \cbrk$, the mapping 
$$
\xi \in \fR^d \mapsto u(t,\xi)
$$
is $\cB(\fR^d)$-measurable. 
However, the joint measurability with respect to $\rH \times \cB(\fR^d)$ is generally not guaranteed.

The joint measurability can be restored if $u$ has integrability with respect to the product measures.
Specifically, if $u$ belongs to a better space $\bL_{1,1}\left( \opar 0,\tau \cbrk, \rH ; L_{1,\ell oc}(\fR^d)\right)$, then a $\rH \times \cB(\fR^d)$-measurable modification of $u$ can be found, leveraging an approximation due to the separability of the space $L_{1,\ell oc}(\fR^d)$.

Moreover, it is possible to find a jointly measurable modification even without a strong integrability condition. This procedure will be described in detail after introducing some classes of complex-valued (or $l_2$ -valued) functions.

\begin{defn}[(finite) measurable functions]
\begin{enumerate}[(i)]
\item $\bL_0\left(\Omega \times \fR^d, \rG \times \cB(\fR^d) \right)$ denotes the space of all $\rG \times \cB(\fR^d)$-measurable functions on $\Omega \times \fR^d$.

\item $\bL_0\left(\opar 0,\tau \cbrk \times \fR^d, \rH \times \cB(\fR^d)  \right)$ denotes the space of all $\rH|_{\opar 0,\tau \cbrk}\times \cB(\fR^d)$-measurable functions $u$ on $\opar 0,\tau \cbrk \times \fR^d$.

\item $\bL_0\left(\opar 0,\tau\cbrk \times \fR^d,\cP \times \cB(\fR^d)  ; l_2 \right)$ denotes the space of all $l_2$-valued $\cP|_{\opar 0,\tau \cbrk} \times \cB(\fR^d)$-measurable functions $u$ on $\opar 0,\tau \cbrk \times \fR^d$.
\end{enumerate}
\end{defn}
Now we are prepared to identify our complex-valued (or $l_2$-valued) classes as subclasses of $\cD'(\fR^d)$-valued (or $\cD'(\fR^d;l_2)$-valued) processes.
Here is the core idea.
Let $u \in \bL_0\left(\Omega \times \fR^d, \rG \times \cB(\fR^d) \right)$ and assume that
\begin{align}
										\label{20240323 01}
\int_{B_R} |u( \xi)| \mathrm{d}\xi  < \infty \quad (a.s.) \quad \forall R \in (0,\infty).
\end{align}
Then for almost every $\omega \in \Omega$, the mapping 
$$
\varphi \in \cD(\fR^d) \mapsto \int_{\fR^d} u(\xi) \overline{\varphi(\xi)} \mathrm{d} \xi
$$ 
is in $\cD'(\fR^d)$.
Additionally, for each $\varphi \in \cD(\fR^d)$, the mapping
$$
\omega \in \Omega \mapsto \int_{\fR^d} u(\xi) \overline{\varphi(\xi)} \mathrm{d} \xi:=\int_{\fR^d} u(\omega,\xi) \overline{\varphi(\xi)} \mathrm{d} \xi
$$
is $\rG$-measurable by Fubini's theorem since the probability space is complete.
Thus, the mapping
$$
\omega \in \Omega \mapsto \left(\varphi \in \cD(\fR^d) \mapsto \int_{\fR^d} u(\omega,\xi) \varphi(\xi) \mathrm{d} \xi\right)
$$
is a $\cD'(\fR^d)$-valued $\rG$-measurable function on $\Omega$ since the weak*-topology and its Borel sets are given on $\cD'(\fR^d)$.
More precisely, $u$ is a $L_{1, \ell oc}(\fR^d)$-valued $\rG$-measurable function on $\Omega$.
Therefore $u$ becomes an element in $\bL_{0}\left( \Omega, \rG; L_{1,\ell oc}(\fR^d) \right)$ if \eqref{20240323 01} holds.
Moreover, one can consider the Fourier transform or the inverse Fourier transform of $u$ (for each fixed $\omega$) and specifically, 
$$
\cF^{-1}[u] \in \bL_{0}\left( \Omega, \rG; \cF^{-1}L_{1,\ell oc}(\fR^d)\right).
$$

Next, let $v \in \bL_{0}\left( \Omega, \rG; \cF^{-1}L_{1,\ell oc}(\fR^d)\right)$.
Then for each $\omega \in \Omega$, the mapping $\xi \in \fR^d \mapsto \cF[v](\xi) \in \fC$  becomes a complex-valued $\cB(\fR^d)$-measurable function defined $(a.e.)$ in $\fR^d$. 
Thus, we can regard $\cF[v](\xi):=\cF[v](\omega,\xi):=\cF[v(\omega)](\xi)$ as a complex-valued function defined on $\Omega \times \fR^d$ such that for each $\omega \in \Omega$, the mapping $\xi \in \fR^d \mapsto \cF[v](\xi)$ is $\cB(\fR^d)$-measurable.
Furthermore, for any $R \in (0,\infty)$,
\begin{align*}
\int_{B_R} |\cF[v](\xi)| \mathrm{d}\xi < \infty \quad (a.s.).
\end{align*}
However, there is no guarantee that 
$(\omega,\xi) \in \Omega \times \fR^d \mapsto \cF[v](\omega,\xi)$ is $\rG \times \cB(\fR^d)$-measurable.
Without joint measurability, it is impossible to apply the Fubini theorem. 
Thus, it needs to introduce a subspace of $\bL_{0}\left( \Omega, \rG; \cF^{-1}L_{1,\ell oc}(\fR^d)\right)$ with joint measurability.

On the other hand, if $\cF[v]$ has better integrability so that $\bE[\int_{\fR^d}|\cF[v](\xi)| \mathrm{d}\xi ] < \infty$, then one can easily find a $\rG \times \cB(\fR^d)$-measurable modification of $\cF[u]$ based on an approximation of nice functions. Here, the nice functions consist of finite sums of linear combinations of $(X \otimes f) (\omega,x) := X(\omega) f(x)$, where $X \in L_1(\Omega,\rG)$ and $f \in \cD(\fR^d)$. 

Furthermore, it is also possible to find a joint-measurable modification of $\cF[v]$
without integrability by using the approximation $ 1_{|\xi| < n} \cdot \left(-n \vee \cF[v](\xi) \wedge n\right)$ to $\cF[v]$ as $n \to \infty$.

Nonetheless, we prefer to consider classes of jointly measurable functions to avoid the complexity of finding modifications.
Thus we provide more precise definitions of subspaces to address joint measurability. 

\begin{defn}
\begin{enumerate}[(i)]
\item $\bL_{0,1,\ell oc}\left(\Omega \times \fR^d, \rG \times \cB(\fR^d)\right)$ denotes the subspace of $\bL_{0}\left( \Omega, \rG;L_{1,\ell oc}(\fR^d)\right)$ consisting of all $\rG \times \cB(\fR^d)$-measurable complex-valued functions $u$ on $\Omega \times \fR^d$ such that 
\begin{align*}
\int_{B_R} |u(\xi)| \mathrm{d}\xi  < \infty \quad (a.s.) \quad \forall R \in (0,\infty).
\end{align*}

\item $\bL_{1,1,\ell oc}\left(\Omega \times \fR^d, \rG \times \cB(\fR^d)\right)$ denotes the subspace of $\bL_{0,1,\ell oc}\left(\Omega \times \fR^d, \rG \times \cB(\fR^d)\right)$ consisting of all $\rG \times \cB(\fR^d)$-measurable complex-valued functions $u$ on $\Omega \times \fR^d$ such that 
\begin{align*}
\bE\left[\int_{B_R} |u(\xi)| \mathrm{d}\xi\right]  < \infty \quad \forall R \in (0,\infty).
\end{align*}

\item $\cF^{-1}\bL_{0,1,\ell oc}\left(\Omega \times \fR^d, \rG \times \cB(\fR^d)\right)$ denotes the subspace of $\bL_{0}\left( \Omega, \rG;\cF^{-1}L_{1,\ell oc}(\fR^d)\right)$ consisting of all $u \in \bL_{0}\left( \Omega, \rG;\cF^{-1}L_{1,\ell oc}(\fR^d)\right)$ such that the mapping 
\begin{align*}
(\omega,\xi) \in \Omega \times \fR^d \mapsto  \cF[u](\xi):=\cF[u](\omega, \xi):=\cF[u(\omega)](\xi) \in \fC
\end{align*}
is $\rG \times \cB(\fR^d)$-measurable and 
\begin{align*}
\int_{B_R} |\cF[u](\xi)| \mathrm{d}\xi  < \infty \quad (a.s.) \quad \forall R \in (0,\infty).
\end{align*}

\item $\cF^{-1}\bL_{1,1,\ell oc}\left(\Omega \times \fR^d, \rG \times \cB(\fR^d)\right)$ denotes the subspace of $\cF^{-1}\bL_{0,1,\ell oc}\left(\Omega \times \fR^d, \rG \times \cB(\fR^d)\right)$ consisting of all $u \in \cF^{-1}\bL_{0,1,\ell oc}\left(\Omega \times \fR^d, \rG \times \cB(\fR^d)\right)$ such that 
\begin{align*}
\bE\left[\int_{B_R} |\cF[u](\xi)| \mathrm{d}\xi \right]  < \infty  \quad \forall R \in (0,\infty).
\end{align*}

\end{enumerate}
\end{defn}

If we consider a function defined on $\opar 0,\tau \cbrk \times \fR^d$, then all the measurability issues become much more complicated. 
Recall that $\rH$ is a sub-$\sigma$-algebra of $\rF \times \cB([0,\infty))$ and
$\bL_{0,1}\left( \opar 0,\tau \cbrk, \rH;\cD'(\fR^d)\right)$ denotes the class of all $\cD'(\fR^d)$-valued $\rH|_{\opar 0,\tau \cbrk}$-measurable functions defined in
$\opar 0,\tau \cbrk$.
Let $u \in \bL_{0,1}\left( \opar 0,\tau \cbrk,\rH;\cD'(\fR^d)\right)$ and suppose that for each $(\omega,t) \in \opar 0,\tau \cbrk$, $u(\omega,t)$ has a realization on $\cU$, \textit{i.e.} for any compact subset $K \subset \cU$,
\begin{align}
									\label{20240324 01}
\int_{K}|u(t,\xi)| \mathrm{d}\xi
:=\int_{K}|u(\omega,t,\xi)| \mathrm{d}\xi
:=\int_{K}|u(\omega,t)(\xi)| \mathrm{d}\xi < \infty \quad (a.e.)~ (\omega,t) \in \opar 0,\tau \cbrk.
\end{align}
By $\bL_{0,1}\left( \opar 0,\tau \cbrk, \rH;L_{1,\ell oc}(\cU)\right)$, we denote the subspace of $\bL_{0,1}\left( \opar 0,\tau \cbrk, \rH;\cD'(\fR^d)\right)$ consisting of all $u \in \bL_{0,1}\left( \opar 0,\tau \cbrk, \rH;\cD'(\fR^d)\right)$ so that \eqref{20240324 01} holds.

In particular, $u \in \bL_{0,1}\left( \opar 0,\tau \cbrk, \rH;L_{1,\ell oc}(\fR^d)\right)$ if $\cU = \fR^d$ and $u$ can be identified with a complex-valued function defined on $\opar 0,\tau \cbrk \times \fR^d$ by putting
\begin{align*}
u(t,\xi):=u(\omega,t,\xi) := u(\omega,t)(\xi).
\end{align*}

However, unless $\cU=\fR^d$, then this identification does not work properly for $u \in \bL_{0,1}\left( \opar 0,\tau \cbrk, \rH;L_{1,\ell oc}(\cU)\right)$ 
with a complex-valued function defined on  $\opar 0,\tau \cbrk \times \fR$ according to the trivial extension 
$$
u(t,\xi) 1_{\opar 0,\tau \cbrk \times \cU}(t,\xi)
:=u(\omega,t,\xi) 1_{\opar 0,\tau \cbrk \times \cU}(\omega,t,\xi)
:=u(\omega,t)(\xi) 1_{\opar 0,\tau \cbrk \times \cU}(\omega,t,\xi)
$$ 
since it is not guaranteed to be an element in  $\bL_{0,1}\left( \opar 0,\tau \cbrk, \rH;\cD'(\fR^d)\right)$.

In particular, this identification is easily possible for special $u \in \bL_{0,1}\left( \opar 0,\tau \cbrk;L_{1,\ell oc}(\cU)\right)$  which has a compact support with respect to $\xi$ as a distribution uniformly for all $(\omega,t) \in \opar 0,\tau \cbrk$ by considering the canonical extension.
Additionally, for any $u \in \bL_{0,1}\left( \opar 0,\tau \cbrk;L_{1,\ell oc}(\cU)\right)$, it is obvious that for almost every $(\omega,t) \in \opar 0,\tau \cbrk$, 
the mapping
$$
\xi \in \fR^d \mapsto u(t,\xi) 1_{\opar 0,\tau \cbrk \times \cU}(t,\xi) \in \fC
$$
is $\cB(\fR^d)$-measurable.

However, \eqref{20240324 01} does not generally ensure the joint measurability of $u(\omega,t,\xi)$.
Thus we consider a subclasses of $\bL_{0,1}\left( \opar 0,\tau \cbrk, \rH;\cD'(\fR^d)\right)$ with enhanced measurability.
By 
$$
\bL_{0,1,1,\ell oc}\left( \opar 0,\tau \cbrk \times \fR^d,  \rH \times \cB(\fR^d) \right),
$$
we denote  the class consisting of  $u \in \bL_{0,1}\left( \opar 0,\tau \cbrk, \rH;\cD'(\fR^d)\right)$ such that \eqref{20240324 01} holds with $\cU=\fR^d$ and the mapping
$$
(\omega,t,\xi) \mapsto 
1_{\opar 0,\tau \cbrk}(t)u(t,\xi)
:=1_{\opar 0,\tau \cbrk}(\omega,t)u(\omega,t,\xi)
$$
is $\rH \times \cB(\fR^d)$-measurable. 
Moreover, we need to define a subclass of $\bL_{0,1}\left( \opar 0,\tau \cbrk, \rH; \cF^{-1}\cD'(\fR^d)\right)$ consisting of all elements having (spatial) realizable frequency functions on open sets in $\fR^d$. 
Formally, it is given by
\begin{align*}
\{ \cF^{-1}u : u(\omega,t,\cdot) \in L_{1,\ell oc}(\cU) \},
\end{align*}
where $\cU$ is an open set in $\fR^d$.
However, it is difficult to define it rigorously since 
all we know is that for each $(\omega,t)$, $u(\omega,t,x)$ becomes a complex-valued function only on $\cU$. 
Thus we redefine $L_{1,\ell oc}(\cU)$ as the subspace of $\cD'(\fR^d)$ consisting of $u$ having a realization on $\cU$.
Then $u \in \bL_{0,1}\left( \opar 0,\tau \cbrk, \rH;L_{1,\ell oc}(\cU)\right)$ implies that $u$ is a locally integrable function on $\cU$ but also a distribution on $\fR^d$ for each $(\omega,t) \in \opar 0, \tau \cbrk$ due to the definition of the class.
We provide rigorous definitions below with the additional consideration that $\cU$ varies depending on the time variable.

\begin{defn}
								\label{major function class}
We consider a family of open subsets of $\fR^d$ indexed by $(0,\infty)$ and  denoted by $\{ \cU_{t} \subset \fR^d :   t \in (0,\infty)\}$. 
We put
$$
\opar 0,\tau \cbrk \times \cU_{t}
:= \left \{ (\omega,t,\xi) \in \Omega \times (0, \infty) \times \fR^d :  0 < t \leq \tau(\omega)~\text{and}~\xi \in \cU_{t} \right\}.
$$
Here, we employ the notation $\opar 0,\tau \cbrk \times \cU_{t}$ somewhat loosely since $\cU_t$ is determined by each $ t \in (0, \tau(\omega)]$. 
In other words, for each $\omega$, $(0,\tau(\omega)] \times \cU_t:= \{ (t,\xi) \in (0,\infty) \times \fR^d :  t \in (0,\infty)~ \text{and}~ \xi \in \cU_t \} $ is actually a  subset of $(0,\infty) \times \fR^d$ but it is not a strict Cartesian product.
More precisely, 
\begin{align*}
(\omega,t,\xi) \in \opar 0,\tau \cbrk \times \cU_{t} \quad \iff \quad
(\omega,t) \in \opar 0,\tau \cbrk ~\text{and}~ \xi \in \cU_t
\quad \iff \omega \in \Omega,~t \in (0,\tau(\omega)],~ \text{and}~ \xi \in \cU_t.
\end{align*}

\begin{enumerate}[(i)]

\item For any $u \in \bL_{0}\left( \opar 0,\tau \cbrk, \rH ; \cF^{-1}\cD'(\fR^d)\right)$,
we write $u \in \cF^{-1}\bL_{0,0,1,\ell oc}\left( \opar 0,\tau \cbrk \times \cU_{t}, \rH \times \cB(\fR^d)\right)$
if there exists a complex-valued function $f(\omega,t,\xi)$ on $\opar 0,\tau \cbrk \times \cU_{t}$
such that
the mapping $(\omega,t,\xi) \mapsto 1_{\opar 0,\tau \cbrk \times \cU_{t}}(\omega,t,\xi)f(\omega,t,\xi)$ is $\rH \times \cB(\fR^d)$-measurable, and for almost every $(\omega,t) \in \opar 0,\tau \cbrk$,
\begin{align*}
 \int_{K}  \left|f(\omega,t,\xi) \right| \mathrm{d}t \mathrm{d}\xi 
< \infty  \quad \text{for all compact subsets $K$ of $\cU_{t}$},
\end{align*}
and 
\begin{align*}
\langle \cF[u(t,\cdot)], \varphi \rangle 
:=\langle \cF[u(\omega,t,\cdot)], \varphi \rangle 
:=\langle \cF[u(\omega,t)], \varphi \rangle 
=  \int_{\cU_{t}} f(\omega,t,\xi) \overline{\varphi(\xi)} \mathrm{d} \xi \quad \forall \varphi \in \cD(\cU_{t}).
\end{align*}
In particular, we write $\cF[u(t,\cdot)](\xi)=f(\omega,t,\xi)$ by identifying two mappings 
$$
(\omega,t) \mapsto \left(\varphi \in \cD(\cU_{t}) \mapsto \langle \cF[u(t,\cdot)], \varphi \rangle \right)
$$ 
and 
$$
(\omega,t) \mapsto \left(\varphi \in \cD(\cU_{t}) \mapsto \int_{\fR^d} f(\omega,t,\xi) \overline{\varphi(\xi)} \mathrm{d} \xi\right).
$$
Thus for any $u \in \cF^{-1}\bL_{0,0,1,\ell oc}\left( \opar 0,\tau \cbrk \times \cU_{t}, \rH \times \cB(\fR^d)\right)$, the Fourier transform of $u$ (with respect to the space variable) is a function defined  on $\opar 0,\tau \cbrk \times \cU_{t}$ so that the mapping 
$$
(\omega,t,\xi) \mapsto 1_{\opar 0,\tau \cbrk \times \cU_{t}}(\omega,t,\xi)\cF[u(\omega,t,\cdot)](\xi)
$$
is $\rH \times \cB(\fR^d)$-measurable, and for almost every $(\omega,t) \in \opar 0,\tau \cbrk$,
\begin{align}
										\label{20240325 10}
 \int_{K}  \left| \cF[u(t,\cdot)](\xi) \right|  \mathrm{d}\xi 
< \infty  \quad \text{for all compact subsets $K \subset \cU_{t}$}.
\end{align}
Here \eqref{20240325 10} means that there exists a subset $\cT \subset \opar 0,\tau \cbrk$  so that
$\opar 0,\tau \cbrk \setminus \cT$ is a null set and for each $(\omega,t) \in \cT$,  we have
\begin{align*}
 \int_{K}  \left| \cF[u(\omega,t,\cdot)](\xi) \right| \mathrm{d}t \mathrm{d}\xi < \infty \quad \text{for all compact subsets $K \subset \cU_{t}$}.
\end{align*}

\item 
We use the notation $\cF^{-1}\bL_{0,1,1,\ell oc}\left( \opar 0,\tau \cbrk \times \cU_{t}, \rH \times \cB(\fR^d)\right)$ to denote the subspace consisting of all $u \in \cF^{-1}\bL_{0,0,1,\ell oc}\left( \opar 0,\tau \cbrk \times \cU_{t}, \rH \times \cB(\fR^d)\right)$ such that 
\begin{align*}
\int_0^{\tau} \int_{K}  1_{\opar 0,\tau \cbrk \times \cU_{t}}(t,\xi) \left| \cF[u(t,\cdot)](\xi) \right| \mathrm{d}\xi \mathrm{d}t  
< \infty \quad (a.s.) \quad \text{for all compact subsets $K$ of $\fR^d$}, 
\end{align*}
which means that there exists a $\Omega' \subset \Omega$ so that $P(\Omega')=1$ and for all $\omega \in \Omega'$ and compact subset $K$ of $\fR^d$, we have
\begin{align*}
\int_0^{\tau(\omega)} \int_{K}  1_{\opar 0,\tau \cbrk \times \cU_{t}}(\omega,t,\xi) \left| \cF[u(\omega,t,\cdot)](\xi) \right| \mathrm{d}\xi  \mathrm{d}t
< \infty.
\end{align*}

\item 
We use the notation $\cF^{-1}\bL_{1,1,1,loc, \ell oc}\left( \opar 0,\tau \cbrk \times \cU_{t}, \rH \times \cB(\fR^d)\right)$ to denote the subspace consisting of all $u \in \cF^{-1}\bL_{0,1,1,\ell oc}\left( \opar 0,\tau \cbrk \times \cU_{t}, \rH \times \cB(\fR^d)\right)$ such that for all $T \in (0,\infty)$,
\begin{align*}
\bE\left[\int_0^{\tau \wedge T} \int_{K}  1_{\opar 0,\tau \cbrk \times \cU_{t}}(t,\xi) \left| \cF[u(t,\cdot)](\xi) \right| \mathrm{d}\xi  \mathrm{d}t\right] 
< \infty \quad \text{for all compact subsets $K$ of $\fR^d$}.
\end{align*}

\item 
We use the notation $\cF^{-1}\bL_{1,1,1,\ell oc}\left( \opar 0,\tau \cbrk \times \cU_{t}, \rH \times \cB(\fR^d)\right)$ to denote the subspace consisting of all $u \in \cF^{-1}\bL_{1,1,1,loc,\ell oc}\left( \opar 0,\tau \cbrk \times \cU_{t}, \rH \times \cB(\fR^d)\right)$ such that
\begin{align*}
\bE\left[\int_0^{\tau} \int_{K}  1_{\opar 0,\tau \cbrk \times \cU_{t}}(t,\xi) \left| \cF[u(t,\cdot)](\xi) \right|  \mathrm{d}\xi \mathrm{d}t \right] 
< \infty \quad \text{for all compact subsets $K$ of $\fR^d$}.
\end{align*}
\end{enumerate}
Here, $K$ does not vary with respect to the time variable in (ii), (iii), and (iv).
\end{defn}

All the spaces mentioned above appear quite complex due to the presence of the family of open subsets $\cU_t$. 
General definitions involving $\cU_t$ are provided to demonstrate that complicate identifications can work for general domains to some extent, potentially aiding future research. However, we usually set $\cU_t=\fR^d$  uniformly for all $t$. 
In fact, in this paper, we only consider the case where $\cU_t=\fR^d$  uniformly for all $t$.
This simplifies all definitions considerably. Moreover, it is clear that ``for all compact subsets $K$ of $\fR^d$ " can be replaced by ``for all $B_R:=\{\xi \in \fR^d : |\xi| < R\}$ with $R \in (0,\infty)$'' due to the Heine-Borel theorem.

Next, we extend the spaces defined in Definition \ref{major function class} by incorporating non-negative weights. In particular, we aim to permit weights to be zero on sets that have positive measures. Under these conditions, elements cease to be unique as complex-valued functions because they can assume any value in regions where the weights are zero. We resolve this issue by considering the weighted spaces as subspaces of the distribution-valued spaces previously introduced.

\begin{defn}
										\label{major function class 2}
Let $W$ be a non-negative $\rH \times \cB(\fR^d)$-measurable function on $\Omega \times (0,\infty) \times \fR^d$, and $\{\cU_{t}: t \in (0,\infty)\}$ be a family of open subsets of $\fR^d$ so that $\supp W(t,\cdot) \subset \cU_{t}$ with probability one, where
\begin{align*}
\supp W(t,\cdot) := \overline{ \{\xi \in \fR^d : W(t,\xi):=W(\omega,t,\xi) \neq 0 \} }
\end{align*}
and $\overline{ \{\xi \in \fR^d : W(t,\xi) \neq 0 \} }$ denotes the closure of the set $\{\xi \in \fR^d : W(t,\xi) \neq 0\}$ with respect to the topology generated by the Euclidean norm on $\fR^d$.

\begin{enumerate}[(i)]

\item For any $u \in \bL_{0,1}\left( \opar 0,\tau \cbrk, \rH;\cD'(\fR^d)\right)$,
we write $u \in \cF^{-1}\bL_{0,1,1,\ell oc}\left( \opar 0,\tau \cbrk \times \fR^d, \rH \times \cB(\fR^d), W(t,\xi)\mathrm{d}t \mathrm{d}\xi\right)$
if there exists a complex-valued function $f(\omega,t,\xi)$ on $\opar 0,\tau \cbrk \times \cU_{t}$ such that
for almost every $(\omega,t) \in \opar 0,\tau \cbrk$,
\begin{align}
										\label{20240331 20}
\langle \cF[u(\omega,t)], \varphi \rangle =  \int_{\cU_{t}} f(\omega,t,\xi) \varphi(\xi) \mathrm{d} \xi \quad \forall \varphi \in \cD(\cU_{t}),
\end{align}
the mapping $(\omega,t,\xi) \mapsto 1_{\opar 0,\tau \cbrk \times \cU_{t}}(\omega,t,\xi) f(\omega,t,\xi)$ is $\rH \times \cB(\fR^d)$-measurable, and
\begin{align}
									\label{l1 norms}
 \int_0^{\tau(\omega)}  \int_{B_R} 1_{\opar 0,\tau \cbrk \times \cU_{t}}(\omega,t,\xi) \left|f(\omega,t,\xi) \right| W(t,\xi) \mathrm{d}\xi \mathrm{d}t
< \infty \quad (a.s.) \quad \forall R \in (0,\infty).
\end{align}
In particular, we write $\cF[u(t,\cdot)](\xi)=\cF[u(\omega,t,\cdot)](\xi)=f(\omega,t,\xi)$ by identifying two mappings 
$$
(\omega,t) \mapsto \left(\varphi \in \cD(\cU_{t}) \mapsto \langle \cF[u(\omega,t)], \varphi \rangle \right)
$$ and 
$$
(\omega,t) \mapsto \left(\varphi \in \cD(\cU_{t}) \mapsto \int_{\fR^d} f(\omega,t,\xi) \overline{\varphi(\xi)} \mathrm{d} \xi\right).
$$
In other words, for almost every $(\omega,t) \in \opar 0,\tau \cbrk$, the Fourier transform of $u$,
$\cF[u(t,\cdot)]:=\cF[u(\omega,t)]$, has a realization on $\cU_{t}$ so that
\begin{align}
									\notag
&\int_0^{\tau} \int_{B_R}  \left|\cF[u(t,\cdot)](\xi) \right| W(t,\xi) \mathrm{d}\xi \mathrm{d}t  \\
									\label{2024062401}
&:=\int_0^{\tau(\omega)} \int_{B_R}  \left|\cF[u(\omega,t,\cdot)](\xi) \right| W(\omega,t,\xi) \mathrm{d}\xi \mathrm{d}t 
< \infty \quad (a.s.) \quad \forall R \in (0,\infty).
\end{align}
Thus for any 
$$
u \in \cF^{-1}\bL_{0,1,1,\ell oc}\left( \opar 0,\tau \cbrk \times \fR^d, \rH \times \cB(\fR^d), W(t,\xi)\mathrm{d}t \mathrm{d}\xi\right),
$$
the Fourier transform of $u$ (with respect to the space variable) is a function defined on $\opar 0,\tau \cbrk \times \cU_{t}$ such that the mapping $(\omega,t,\xi) \mapsto 1_{\opar 0,\tau \cbrk \times \cU_{t}}(\omega,t,\xi) \cF[u(\omega,t,\cdot)](\xi)$ is $\rH \times \cB(\fR^d)$-measurable, and \eqref{2024062401} holds.
In other words, any $u \in \cF^{-1}\bL_{0,1,1,\ell oc}\left( \opar 0,\tau \cbrk \times \fR^d, \rH \times \cB(\fR^d), W(t,\xi)\mathrm{d}t \mathrm{d}\xi\right)$ (for almost every $(\omega,t) \in \opar 0,\tau \cbrk$) has a realizable frequency function on $\cU_t$.

\item We use 
$$
\cF^{-1}\bL_{1,1,1,loc,\ell oc}\left( \opar 0,\tau \cbrk \times \fR^d, \rH \times \cB(\fR^d),W(t,\xi)\mathrm{d}t \mathrm{d}\xi\right)
$$
to denote the subspace of 
$$
\cF^{-1}\bL_{0,1,1,\ell oc}\left( \opar 0,\tau \cbrk \times \fR^d,\rH \times \cB(\fR^d), W(t,\xi)\mathrm{d}t \mathrm{d}\xi\right)
$$
consisting of all elements 
$$
u \in \cF^{-1}\bL_{0,1,1,\ell oc}\left( \opar 0,\tau \cbrk \times \fR^d,\rH \times \cB(\fR^d), W(t,\xi)\mathrm{d}t \mathrm{d}\xi\right)
$$
so that for each $T \in (0,\infty)$,
\begin{align*}
 \bE\left[\int_{B_R} \int_0^{\tau \wedge T} 1_{\opar 0,\tau \cbrk \times \cU_{t}}(t,\xi) \left|\cF[u(t,\cdot)](\xi) \right| W(t,\xi)\mathrm{d}t \mathrm{d}\xi \right]
< \infty \quad \forall R \in (0,\infty).
\end{align*}
Additionally,
$$
\cF^{-1}\bL_{1,1,1, \ell oc}\left( \opar 0,\tau \cbrk \times \fR^d, \rH \times \cB(\fR^d),W(t,\xi)\mathrm{d}t \mathrm{d}\xi\right)
$$
denotes the subspace of 
$$
\cF^{-1}\bL_{1,1,1,loc,\ell oc}\left( \opar 0,\tau \cbrk \times \fR^d,\rH \times \cB(\fR^d), W(t,\xi)\mathrm{d}t \mathrm{d}\xi\right)
$$
consisting of all elements whose expectation is finite, \textit{i.e.}
$$
u \in \cF^{-1}\bL_{1,1,1,\ell oc}\left( \opar 0,\tau \cbrk \times \fR^d,\rH \times \cB(\fR^d), W(t,\xi)\mathrm{d}t \mathrm{d}\xi\right)
$$
if and only if
$$
u \in \cF^{-1}\bL_{1,1,1,loc,\ell oc}\left( \opar 0,\tau \cbrk \times \fR^d,\rH \times \cB(\fR^d), W(t,\xi)\mathrm{d}t \mathrm{d}\xi\right)
$$
and 
\begin{align*}
 \bE\left[\int_{B_R} \int_0^{\tau} 1_{\opar 0,\tau \cbrk \times \cU_{t}}(t,\xi) \left|\cF[u(t,\cdot)](\xi) \right| W(t,\xi)\mathrm{d}t \mathrm{d}\xi \right]
< \infty \quad \forall R \in (0,\infty).
\end{align*}

\end{enumerate}
\end{defn}

\begin{rem}
Let $u \in \cF^{-1}\bL_{0,1,1,\ell oc}\left( \opar 0,\tau \cbrk \times \fR^d, \rH \times \cB(\fR^d), W(t,\xi)\mathrm{d}t \mathrm{d}\xi\right)$.
Then obviously, 
\begin{align*}
\int_0^{\tau} \int_{B_R}  \left|\cF[u(t,\cdot)](\xi) \right| W(t,\xi) \mathrm{d}\xi \mathrm{d}t  < \infty \quad (a.s.) \quad \forall R \in (0,\infty).
\end{align*}
It might seem that $\cF[u]$ could increase very rapidly within a bounded domain if $W$ strongly diminishes there.
However, the extent of the blow-up is restricted and cannot disrupt local integrability because $u$ must satisfy the additional condition that
\begin{align*}
 \left| \int_{\cU_{t}} \cF[u(\omega,t, \cdot)](\xi) \overline{\varphi(\xi)} \mathrm{d} \xi  \right| <\infty \quad \forall \varphi \in \cD(\cU_{t}).
\end{align*}
Due to this condition, it is also obvious that
$u \in \cF^{-1}\bL_{0,0,1,\ell oc}\left( \opar 0,\tau \cbrk \times \cU_{t}, \rH \times \cB(\fR^d)\right)$, which clearly implies
\begin{align*}
\cF^{-1}\bL_{0,1,1,\ell oc}\left( \opar 0,\tau \cbrk \times \fR^d, \rH \times \cB(\fR^d), W(t,\xi)\mathrm{d}t \mathrm{d}\xi\right)
\subset \cF^{-1}\bL_{0,0,1,\ell oc}\left( \opar 0,\tau \cbrk \times \cU_{t}, \rH \times \cB(\fR^d)\right).
\end{align*}

\end{rem}
\begin{rem}
It appears that all function spaces in Definition \ref{major function class 2} depend on the choice of a family of open subsets 
$\{\cU_t\}$. 
However, the (local) weighted $L_1$-norms in \eqref{l1 norms} are unaffected by this choice because of the condition 
$\supp W(t,\cdot) \subset \cU_{t}$ with probability one.
Furthermore, there is always the trivial option of setting $\cU_{t}= \fR^d$ for all $t$. 
This trivial case is considered throughout the paper. 
In other words, local distributions are not a primary focus since our equation is solved in the entire space with respect to the spatial variable. Nonetheless, each $\cU_t$ can be chosen to be as close to 
$\supp W(t,\cdot)$ as desired, provided the support is uniformly given for the sample points $\omega \in \Omega$.
\end{rem}

\begin{rem}
We do consider random weights in Definition \ref{major function class 2} since our symbol $\psi(t,\xi)$ may involve randomness, which leads to the emergence of certain random weighted spaces related to the symbol. We had restricted the definition of weighted spaces to suit our specific needs. Additionally, these classes are used to characterize our solutions according to the influence of the symbols. However, we do not address general weighted estimates with randomness, which, to the best of our knowledge, remains an unexplored area in theories of stochastic partial differential equations. Nonetheless, our approach enables the definitions of more general weighted spaces, such as 
$\cF^{-1}\bL_{p,q,r,loc,\ell oc}\left( \opar 0,\tau \cbrk \times \fR^d, W(t,\xi)\mathrm{d}t \mathrm{d}\xi ; l_2\right)$ for all $p,q,r \in [1,\infty)$, although these are not utilized in the paper.
\end{rem}

\begin{rem}
											\label{unique rem}
Recall that $\cF^{-1}\bL_{1,1,1,\ell oc}\left( \opar 0,\tau \cbrk \times \fR^d, \rH \times \cB(\fR^d), W(t,\xi)\mathrm{d}t \mathrm{d}\xi\right)$ is a subspace of 
$$
\bL_{0,1}\left(\opar 0,\tau \cbrk, \rH;\cF^{-1}\cD'(\fR^d)\right).
$$
However, it is not a metric space generated by the weighted $L_1$-norms in \eqref{l1 norms} if the weight $W(t,\xi)=0$ on a set having a positive measure.
In other words, there may exist two different $u_1$ and $u_2$ in $\bL_{0,1}\left( \opar 0,\tau \cbrk, \rH;\cF^{-1}\cD'(\fR^d)\right)$
such that
\begin{align}
										\label{2024062410}
 \bE\left[\int_{B_R} \int_0^{\tau(\omega)} 1_{\opar 0,\tau \cbrk \times \cU_{t}}(\omega,t,\xi) \left|\cF[u_1(\omega,t,\xi)] -\cF[u_2(\omega,t,\xi)] \right| W(t,\xi)\mathrm{d}t \mathrm{d}\xi \right]
=0 \quad \forall R \in (0,\infty)
\end{align}
if the weight $W$ is degenerate on a set with a positive measure.
Therefore the space 
$$
\cF^{-1}\bL_{1,1,1,\ell oc}\left( \opar 0,\tau \cbrk \times \fR^d, \rH \times \cB(\fR^d), W(t,\xi)\mathrm{d}t \mathrm{d}\xi\right)
$$ 
itself does not seem to be appropriate for discussing the uniqueness of a solution 
based solely on estimates like \eqref{2024062410}.
\end{rem}

Next, we examine $l_2$-valued correspondents, focusing solely on the predictable $\sigma$-algebra. Elements in these spaces act as stochastic inhomogeneous data and serve as integrands for Itô stochastic integrals, as previously mentioned.
Additionally, we consider only unweighted spaces, since we do not address weighted stochastic inhomogeneous data. It should be noted that some classes with different orders of integration are included to apply the recently developed stochastic Fubini theorems, which operate under weaker assumptions and are introduced in Section \ref{section stochastic Fubini}.
For simplicity, the trivial case $\cU_t= \fR^d$ for all $t$ is considered in the following definition and some notational details from the previous definitions are omitted.

\begin{defn}
										\label{sto integrand class}
\begin{enumerate}[(i)]

\item For any $u \in \bL_{0}\left( \opar 0,\tau \cbrk, \cP ; \cF^{-1}\cD'(\fR^d ; l_2)\right)$,
we write 
$$
u \in \cF^{-1}\bL_{0,0,1,\ell oc}\left( \opar 0,\tau \cbrk \times \fR^d, \cP \times \cB(\fR^d) ; l_2\right)
$$
if the mapping $(\omega,t,\xi) \mapsto 1_{\opar 0,\tau \cbrk}(\omega,t)\cF[u(\omega,t,\cdot)](\xi)$ is a $l_2$-valued $\cP \times \cB(\fR^d)$-measurable function and for almost every $(\omega,t) \in \opar 0, \tau \cbrk$,
\begin{align*}
 \int_{K}  \left|\cF[u(\omega,t,\cdot)](\xi) \right|_{l_2} \mathrm{d}\xi 
< \infty   \quad \text{for all compact subsets $K \subset \fR^d$}.
\end{align*}

\item 
We use the notation $\cF^{-1}\bL^{\omega,\xi,t}_{0,1,2,\ell oc}\left( \opar 0,\tau \cbrk \times \fR^d, \cP \times \cB(\fR^d) ; l_2 \right)$ to denote the subspace consisting of all $u \in \cF^{-1}\bL_{0,0,1,\ell oc}\left( \opar 0,\tau \cbrk \times \fR^d, \cP \times \cB(\fR^d) ; l_2\right)$ such that 
\begin{align*}
\int_{K} \left( \int_0^{\tau(\omega)}  \left| \cF[u(\omega,t,\cdot)](\xi) \right|^2_{l_2} \mathrm{d}t  \right)^{1/2} \mathrm{d}\xi 
< \infty \quad (a.s.) \quad \text{for all compact subsets $K \subset \fR^d$}. 
\end{align*}

\item 
The notation $\cF^{-1}\bL^{\omega,\xi,t}_{1,1,2,loc, \ell oc}\left( \opar 0,\tau \cbrk \times \fR^d, \cP \times \cB(\fR^d); l_2 \right)$ describes  the subspace consisting of all 
$$
u \in \cF^{-1}\bL^{\omega,\xi,t}_{0,1,2,\ell oc}\left( \opar 0,\tau \cbrk \times \fR^d, \cP \times \cB(\fR^d) ; l_2 \right)
$$
such that for each $T \in (0,\infty)$,
\begin{align*}
\bE\left[ \int_{K} \left(\int_0^{\tau(\omega) \wedge T} \left| \cF[u(\omega,t,\cdot)](\xi) \right|^2_{l_2}   \mathrm{d}t \right)^{1/2}\mathrm{d}\xi \right] 
< \infty \quad \text{for all compact subsets $K \subset \fR^d$}.
\end{align*}

\item 
Finally, $\cF^{-1}\bL^{\omega,\xi,t}_{1,1,2,\ell oc}\left( \opar 0,\tau \cbrk \times \fR^d, \cP \times \cB(\fR^d);l_2 \right)$ denotes the subspace consisting of all 
$$
u \in \cF^{-1}\bL^{\omega,\xi,t}_{1,1,2,loc, \ell oc}\left( \opar 0,\tau \cbrk \times \fR^d, \cP \times \cB(\fR^d); l_2 \right)
$$
such that
\begin{align*}
\bE\left[ \int_{K} \left(\int_0^{\tau(\omega)} \left| \cF[u(\omega,t,\cdot)](\xi) \right|^2_{l_2}   \mathrm{d}t \right)^{1/2}\mathrm{d}\xi \right] 
< \infty \quad \text{for all compact subsets $K \subset \fR^d$}.
\end{align*}
\end{enumerate}

\end{defn}

\begin{rem}
The superscript in the notation $\cF^{-1}\bL^{\omega,\xi,t}_{1,1,2,loc, \ell oc}\left( \opar 0,\tau \cbrk \times \fR^d, \cP \times \cB(\fR^d); l_2 \right)$
indicates the order of the integration. 
Specifically,
$$
 u \in \cF^{-1}\bL^{\omega,\xi,t}_{1,1,2,loc, \ell oc}\left( \opar 0,\tau \cbrk \times \fR^d, \cP \times \cB(\fR^d); l_2 \right)
$$
if and only if
for each $T \in (0,\infty)$,
\begin{align*}
\bE\left[ \int_{K} \left(\int_0^{\tau(\omega) \wedge T} \left| \cF[u(\omega,t,\cdot)](\xi) \right|^2_{l_2}   \mathrm{d}t \right)^{1/2}\mathrm{d}\xi \right] 
< \infty \quad \text{for all compact subsets $K \subset \fR^d$}.
\end{align*}
Similarly, the notation
$$
 u \in \bL^{\omega,x,t}_{1,1,2,loc, \ell oc}\left( \opar 0,\tau \cbrk \times \fR^d, \cP \times \cB(\fR^d); l_2 \right)
$$
implies that for each $T \in (0,\infty)$,
\begin{align*}
\bE\left[ \int_{K} \left(\int_0^{\tau(\omega) \wedge T} \left| u(\omega,t,x) \right|^2_{l_2}   \mathrm{d}t \right)^{1/2}\mathrm{d}x \right] 
< \infty \quad \text{for all compact subsets $K \subset \fR^d$}.
\end{align*}
If no superscript is provided, the integration order follows the sequence of the product.
For instance, if
$$
u \in \bL_{1,1,2,\ell oc}\left( \opar 0,\tau \cbrk \times \fR^d, \rF \times \cB([0,\infty)) \times \cB(\fR^d)\right),
$$ 
then 
\begin{align*}
\bE \int_0^\tau \left(\int_{K} |u(t,x)|^2 \mathrm{d}x\right)^{1/2} \mathrm{d}t  < \infty 
 \quad \text{for all compact subsets $K \subset \fR^d$}.
\end{align*}
Additionally, all complex-valued function spaces are defined without superscripts, which means that the order of the integrations is always in the sequence of the random variable $\omega$, time variable $t$, and space variable $x$.
\end{rem}

Typically, it is anticipated that solutions to SPDEs driven by space-time white noise exhibit continuous paths as all paths of  Brownian motions are continuous. However, our operator is quite irregular, meaning its symbol lacks regularity. Thus it is difficult to expect that the paths of our solutions will remain continuous. Nonetheless, we can still assert a certain form of path continuity in a weak sense using test functions. Specifically, the continuity of paths of solutions to \eqref{main eqn} can be maintained as a $\cF^{-1}\cD'(\fR^d)$-valued functions. We establish these concepts by first defining distribution-valued continuous processes.

\begin{defn}[Spaces of continuous paths]
											\label{space conti}
We say that  a function $u : \clbrk 0,\tau \cbrk \to \cD'(\fR^d)$ is a $\cD'(\fR^d)$-valued $\rH$-measurable process 
if  for each $\varphi \in \cD(\fR^d)$, (noting that $\cD'(\fR^d)$ is equipped with the the weak*-topology)
\begin{align*}
(\omega,t) \mapsto 
1_{\clbrk 0,\tau \cbrk}(t) \langle u(t), \varphi \rangle
:=1_{\clbrk 0,\tau \cbrk}(\omega,t) \langle u(\omega,t), \varphi \rangle
\end{align*}
is $\rH$-measurable, where
\begin{align*}
\clbrk 0,\tau \cbrk=\{(\omega,t) \in \Omega \times [0,\infty) :  0 \leq t \leq \tau(\omega) \}
\end{align*}
and
\begin{align*}
 1_{\clbrk 0,\tau \cbrk}(\omega,t)
 := 
\begin{cases}
& 1 \quad \text{if}~  (\omega,t) \in \clbrk 0,\tau \cbrk \\
& 0 \quad \text{if}~  (\omega,t) \in  \Omega\times [0,\infty) \setminus \clbrk 0,\tau \cbrk.
\end{cases}
\end{align*}
By $\bC\left( \clbrk 0,\tau \cbrk, \rH;\cD'(\fR^d)\right)$, we denote the space of all $\cD'(\fR^d)$-valued $\rH$-measurable processes such that for any $\varphi \in \cD(\fR^d)$, the paths
$$
t \mapsto  \langle u(t) , \varphi \rangle 
$$
 are continuous on $[0,\tau]$ with probability one, \textit{i.e.} there exists a $\Omega' \subset \Omega$ so that $P(\Omega')=1$ and for all $\omega \in \Omega'$ and $\varphi \in \cD(\fR^d)$,
the mapping 
\begin{align*}
t \in [0,\tau(\omega)] \mapsto  \langle u(\omega,t), \varphi \rangle \in \fC
\end{align*}
is continuous. 
 
The notation $\bL_1\bC\left( \clbrk 0,\tau \cbrk, \rH;\cD'(\fR^d)\right)$ denotes the subspace of $\bC\left( \clbrk 0,\tau \cbrk, \rH;\cD'(\fR^d)\right)$ consisting of elements with finite expectations, \textit{i.e.}
$u \in \bL_1\bC\left( \clbrk 0,\tau \cbrk, \rH;\cD'(\fR^d)\right)$ if and only if
$u \in \bC\left( \clbrk 0,\tau \cbrk, \rH;\cD'(\fR^d)\right)$ and
\begin{align*}
\bE \left[  \left|\langle u(t), \varphi \rangle \right|\right] <\infty \quad \forall \varphi \in  \cD(\fR^d).
\end{align*}
The spaces $\bC\left( \clbrk 0,\tau \cbrk, \rH;\cF^{-1}\cD'(\fR^d)\right)$ and $\bL_1\bC\left( \clbrk 0,\tau \cbrk,\rH;\cF^{-1}\cD'(\fR^d)\right)$
are defined in a similar way by substituting $\cD'(\fR^d)$ for $\cF^{-1}\cD'(\fR^d)$.
Then, it is obvious that 
$u \in \bC\left( \clbrk 0,\tau \cbrk,\rH;\cF^{-1}\cD'(\fR^d)\right)$ 
if and only if
$\cF[u] \in \bC\left( \clbrk 0,\tau \cbrk,\rH;\cD'(\fR^d)\right)$.

Finally, we also define important subspaces 
$$
\bC\left( \clbrk 0,\tau \cbrk, \rH;L_{1, \ell oc}(\fR^d)\right), ~
\bL_1\bC\left( \clbrk 0,\tau \cbrk, \rH;L_{1, \ell oc}(\fR^d)\right), ~
\bC\left( \clbrk 0,\tau \cbrk, \rH;\cF^{-1}L_{1, \ell oc}(\fR^d)\right), 
$$
and $\bL_1\bC\left( \clbrk 0,\tau \cbrk,\rH;\cF^{-1}L_{1, \ell oc}(\fR^d)\right)$ by considering the subspaces $L_{1, \ell oc}(\fR^d)$ and $\cF^{-1}L_{1, \ell oc}(\fR^d)$ instead of $\cD'(\fR^d)$ and $\cF^{-1}\cD'(\fR^d)$, respectively.
\end{defn}

\begin{rem}

There are typically two ways to define topologies on $L_{1,loc}(\fR^d)$.
The first method is to equip $L_{1,\ell oc}(\fR^d)$ with the subspace topology of $\cD'(\fR^d)$ so called the weak*-topology, which we denote by $\rT_w$. This is the topology we have used on $L_{1,\ell oc}(\fR^d)$  up to this point.
The second method involves using the strong topology generated by the local semi-norms
$$
\|u\|_{L_1(B_R)} = \int_{B_R} |u(x)| \mathrm{d}x \quad R \in (0,\infty)
$$
and use the notation $\rT_s$ to denote this topology.
Then it is obvious that $\rT_w \subset \rT_s$.
Additionally, the space 
$\bC\left( \clbrk 0,\tau \cbrk, \rH; L_{1,loc}(\fR^d) \right)$
can be equipped with either of these topologies. 
By $\bC\left( \clbrk 0,\tau \cbrk, \rH; L^{weak}_{1,\ell oc}(\fR^d) \right)$ and $\bC\left( \clbrk 0,\tau \cbrk, \rH; L^{strong}_{1,\ell oc}(\fR^d) \right)$, we denote the space $\bC\left( \clbrk 0,\tau \cbrk, \rH; L_{1,loc}(\fR^d) \right)$ with topologies $\rT_w$ and $\rT_s$, respectively.
Here we have not defined $\bC\left( \clbrk 0,\tau \cbrk, \rH; L^{strong}_{1,\ell oc}(\fR^d) \right)$ rigorously yet, but it can be understood as space of all continuous functions from $[0,\tau]$ to $L^{strong}_{1,\ell oc}(\fR^d)$ with probability one.

Now first consider the space $\bC\left( \clbrk 0,\tau \cbrk, \rH;L^{weak}_{1, \ell oc}(\fR^d)\right)$. 
Then this topology has an advantage because it can be easily translated to the topology on $\cF^{-1}L_{1,\ell oc}(\fR^d)$ by taking the inverse Fourier transform, due to the homeomorphism in Theorem \ref{homeo thm}. 
Additionally, a coarser topology is preferable when it suffices for constructing results. 
In fact, the weak*-topology is adequate for developing our theories.

However, we can naturally relate the semi-norms to the space $\bC\left( \clbrk 0,\tau \cbrk, \rH;L^{strong}_{1,\ell oc}(\fR^d)\right)$ to measure sizes of elements. 
Additionally, there is a trivial continuous embedding that
$$
\bC\left( \clbrk 0,\tau \cbrk, \rH;L^{strong}_{1,\ell oc}(\fR^d)\right)
\subset \bC\left( \clbrk 0,\tau \cbrk, \rH;L^{weak}_{1, \ell oc}(\fR^d)\right)
$$
according to the identity mapping  $ x \in L^{strong}_{1,\ell oc}(\fR^d) \mapsto x \in L^{weak}_{1, \ell oc}(\fR^d)$.
Thus, the space 
$$
\bC\left( \clbrk 0,\tau \cbrk, \rH;L^{strong}_{1,\ell oc}(\fR^d)\right)
$$
is preferred for considering a solution because it inherently provides more information and is more optimized.

Lastly, any $u \in \bC\left( \clbrk 0,\tau \cbrk, \rH;L^{strong}_{1,\ell oc}(\fR^d)\right)$ can be regarded as a complex-valued function defined on $\clbrk 0,\tau \cbrk \times \fR^d$ so that for any $R \in (0,\infty)$
\begin{align*}
t \in [0, \tau] \mapsto \int_{B_R} |u(\omega,t,x)|\mathrm{d}x
\end{align*}
is continuous and thus
\begin{align}
										\label{20240807 21}
\sup_{t \in  [0,\tau] }\int_{B_R} |u(\omega,t,x)| \mathrm{d}x < \infty
\end{align}
 with probability one by putting $u(\omega,t,x)= u(\omega,t)(x)$.
Conversely, for a nice complex-valued function $u(\omega,t,x)$ defined on  $\clbrk 0,\tau \cbrk \times \fR^d$ can be recognized as an element of
$\bC\left( \clbrk 0,\tau \cbrk, \rH;L^{strong}_{1,\ell oc}(\fR^d)\right)$ by considering the mappings
\begin{align*}
(\omega,t) \in \clbrk 0,\tau \cbrk  \mapsto \int_{B_R} |u(\omega,t,x)|\mathrm{d}x \quad  R \in (0,\infty)
\end{align*}
so that
\begin{align*}
t \in [0, \tau] \mapsto \int_{B_R} |u(\omega,t,x)|\mathrm{d}x
\end{align*}
is continuous with probability one and the condition 
\begin{align}
										\label{20240807 20}
\int_{B_R} \sup_{t \in [0,\tau]}|u(\omega,t,x)| \mathrm{d}x < \infty \quad (a.s.).
\end{align}
holds. 
Therefore, one can consider a subspace of $\bC\left( \clbrk 0,\tau \cbrk, \rH;L^{strong}_{1,\ell oc}(\fR^d)\right)$ consisting of complex-valued joint measurable functions  defined on  $\clbrk 0,\tau \cbrk \times \fR^d$ without losing much information.
Here \eqref{20240807 20} becomes stronger than \eqref{20240807 21} to derive the $\cD'(\fR^d)$-valued continuity from the continuity of $t \mapsto u(\omega,t,x)$ for almost every $\omega$ and $x$.
The detail for the definition is as follows.
\end{rem}
\begin{defn}[Spaces of realizable continuous paths]
Let $u$ be a (complex-valued) $\rH \times \cB(\fR^d)$-measurable function defined on $\clbrk 0,\tau \cbrk \times \fR^d$.
\begin{enumerate}[(i)]
\item We write $u \in \bC L_{1,\ell oc}\left( \clbrk 0,\tau \cbrk \times \fR^d, \rH \times \cB(\fR^d) \right)$ if for any $R$,
\begin{align*}
 \int_{B_R}\sup_{t \in [0,\tau]}|u(t,x)|\mathrm{d}x < \infty \quad (a.s.),
\end{align*}
and the mapping $t \in [0, \tau] \mapsto u(t,x)$  is continuous for almost every $x \in \fR^d$ with probability one, \textit{i.e.} there exists a $\Omega' \subset \Omega$ such that $P(\Omega')=1$, for any $\omega \in \Omega'$ and $R \in (0,\infty)$,
\begin{align*}
 \int_{B_R}\sup_{t \in [0,\tau(\omega)]}|u(\omega,t,x)|\mathrm{d}x < \infty
\end{align*}
and $t \in [0, \tau(\omega)] \mapsto u(\omega,t,x)$ is continuous for almost every $x \in \fR^d$.
\item $\bL_1\bC L_{1,\ell oc}\left( \clbrk 0,\tau \cbrk \times \fR^d, \rH \times \cB(\fR^d) \right)$ denotes the subspace consisting of $u$ such that 
$$
u \in \bC L_{1,\ell oc}\left( \clbrk 0,\tau \cbrk \times \fR^d, \rH \times \cB(\fR^d) \right)
$$
and for any $R$,
\begin{align*}
\bE\left[\int_{B_R}\sup_{t \in [0,\tau]} |u(t,x)|\mathrm{d}x \right] < \infty.
\end{align*}
\end{enumerate}
\end{defn}
\begin{rem}
Let $u \in \bC L_{1,\ell oc}\left( \clbrk 0,\tau \cbrk \times \fR^d, \rH \times \cB(\fR^d) \right)$ and $\varphi \in \cD(\fR^d)$.
Then by applying the dominate convergence theorem,
$t \in [0, \tau] \mapsto \int_{\fR^d}u(t,x) \varphi(x) \mathrm{d}x$ is continuous with probability one.
Additionally, it is obvious that for almost every $(\omega,t) \in \clbrk 0,\tau \cbrk$, the function $x \in \fR^d \mapsto u(t,x)$ is in $L_{1,\ell oc}(\fR^d)$. 
Thus
\begin{align}
									\label{20240801 50}
\bC L_{1,\ell oc}\left( \clbrk 0,\tau \cbrk \times \fR^d, \rH \times \cB(\fR^d) \right) 
\subset \bC\left( \clbrk 0,\tau \cbrk, \rH;L_{1, \ell oc}(\fR^d)\right)
\subset \bC\left( \clbrk 0,\tau \cbrk, \rH ;\cD'(\fR^d)\right).
\end{align}
Similarly, 
\begin{align*}
\bL_1\bC L_{1,\ell oc}\left( \clbrk 0,\tau \cbrk \times \fR^d, \rH \times \cB(\fR^d) \right) 
\subset \bL_1\bC\left( \clbrk 0,\tau \cbrk, \rH;L_{1, \ell oc}(\fR^d)\right)
\subset \bL_1\bC\left( \clbrk 0,\tau \cbrk, \rH;\cD'(\fR^d)\right).
\end{align*}
Due to \eqref{20240801 50}, one can consider the inverse Fourier transform of $u$ so that
\begin{align*}
\cF^{-1}u \in \bC\left( \clbrk 0,\tau \cbrk, \rH;\cF^{-1}L_{1, \ell oc}(\fR^d)\right)
\subset \bC\left( \clbrk 0,\tau \cbrk, \rH;\cF^{-1}\cD'(\fR^d)\right).
\end{align*}
More precisely, for almost every $(\omega,t) \in \clbrk 0,\tau \cbrk$,
\begin{align*}
\cF^{-1}\left[u(\omega,t,\cdot)\right] \in \cF^{-1}L_{1, \ell oc}(\fR^d).
\end{align*}
Therefore, we can consider the spaces consisting of the inverse Fourier transforms of 
$$
u \in \bC L_{1,\ell oc}\left( \clbrk 0,\tau \cbrk \times \fR^d, \rH \times \cB(\fR^d) \right).
$$
\end{rem}

\begin{defn}[Spaces of inverse Fourier transforms of realizable continuous paths]
\begin{enumerate}[(i)]
\item 
$$
\cF^{-1}\bC L_{1,\ell oc}\left( \clbrk 0,\tau \cbrk \times \fR^d, \rH \times \cB(\fR^d) \right)
$$ denotes the subspace of $\bC\left( \clbrk 0,\tau \cbrk, \rH;\cF^{-1}L_{1, \ell oc}(\fR^d)\right)$ consisting of $u$ such that  
$$
\cF[u] \in \bC L_{1,\ell oc}\left( \clbrk 0,\tau \cbrk \times \fR^d, \rH \times \cB(\fR^d) \right).
$$

\item $\cF^{-1}\bL_1\bC L_{1,\ell oc}\left( \clbrk 0,\tau \cbrk \times \fR^d, \rH \times \cB(\fR^d) \right)$ denotes the subspace of $\bL_1\bC\left( \clbrk 0,\tau \cbrk, \rH;\cF^{-1}L_{1, \ell oc}(\fR^d)\right)$ consisting of $u$ such that 
\begin{align*}
\cF[u] \in \bL_1\bC L_{1,\ell oc}\left( \clbrk 0,\tau \cbrk \times \fR^d, \rH \times \cB(\fR^d) \right).
\end{align*}
\end{enumerate}
\end{defn}
\begin{rem}
										\label{20240803 rem 1}
Let
$$
u \in \cF^{-1}\bC L_{1,\ell oc}\left( \clbrk 0,\tau \cbrk \times \fR^d, \rH \times \cB(\fR^d) \right).
$$
Then
\begin{align*}
\int_0^\tau \int_{B_R}|\cF[u(t,\cdot)](\xi)|\mathrm{d}\xi \mathrm{d}t
\leq \tau \cdot \int_{B_R} \sup_{t \in [0,\tau]}|\cF[u(t,\cdot)](\xi)|\mathrm{d}\xi  \quad (a.s.).
\end{align*}
Thus 
\begin{align*}
\cF^{-1}\bC L_{1,\ell oc}\left( \clbrk 0,\tau \cbrk \times \fR^d, \rH \times \cB(\fR^d) \right)
\subset \cF^{-1}\bL_{0,1,1,\ell oc}\left( \opar 0,\tau \cbrk \times \fR^d, \rH \times \cB(\fR^d)\right).
\end{align*}
Similarly,
\begin{align*}
\cF^{-1}\bL_{1,loc}\bC L_{1,\ell oc}\left( \clbrk 0,\tau \cbrk \times \fR^d, \rH \times \cB(\fR^d) \right)
\subset \cF^{-1}\bL_{1,1,1,loc,\ell oc}\left( \opar 0,\tau \cbrk \times \fR^d, \rH \times \cB(\fR^d)\right).
\end{align*}
\end{rem}

\begin{rem}
We treat all the spaces mentioned above as the quotient spaces generated by the equivalence relation which is ``equal almost everywhere''.
For instance,  a $\cF^{-1}\cD'(\fR^d;l_2)$-valued $\overline{\cP}$-measurable function defined $v$ on $\opar 0,\tau \cbrk$ is in $\bL_{0}\left( \opar 0,\tau \cbrk, \cP;\cF^{-1}\cD'(\fR^d;l_2)\right)$ if there exists a $\cF^{-1}\cD'(\fR^d;l_2)$-valued $\cP$-measurable functions $u$ on $\opar 0,\tau \cbrk$ such that
for each $\varphi \in \cF^{-1}\cD(\fR^d)$,
\begin{align}
								\label{20240227}
\langle v(t,\cdot), \varphi \rangle=\langle u(t,\cdot), \varphi \rangle < \infty \quad (a.e.)~(\omega,t) \in \opar 0,\tau \cbrk,
\end{align}
where $\overline{\cP}$ denotes the completion of the $\sigma$-algebra $\cP$ with respect to the measure $dP \times dt$.
Note that \eqref{20240227} holds $(a.e.)$ for each $\varphi \in \cF^{-1}\cD(\fR^d)$ and it is sufficient to identify $u$ and $v$ since the space $\cF^{-1}\cD(\fR^d)$ is separable.
In particular, if for each $\varphi \in \cF^{-1}\cD(\fR^d)$,
$\langle v(t,\cdot), \varphi \rangle$ is a modification $\langle u(t,\cdot), \varphi \rangle$ 
(so sufficiently two complex-valued processes $\langle v(t,\cdot), \varphi \rangle$ and $\langle u(t,\cdot), \varphi \rangle$ are indistinguishable),
then we identify $v$ and $u$.
\end{rem}

\begin{defn}[Temporal local spaces]
We define temporal local spaces based on the typical cutoffs of stopping times.
For instance, we write
\begin{align*}
u \in \cF^{-1}\bL_{1,1,1,loc, \ell oc}\left( \opar 0,\tau \cbrk \times \fR^d, \rH \times \cB(\fR^d), W(t,\xi)\mathrm{d}t \mathrm{d}\xi\right)
\end{align*}
if
\begin{align*}
u \in \cF^{-1}\bL_{1,1,1,\ell oc}\left( \opar 0,\tau \wedge T \cbrk \times \fR^d, \rH \times \cB(\fR^d),W(t,\xi)\mathrm{d}t \mathrm{d}\xi\right) \quad \forall T \in (0,\infty).
\end{align*}
Similarly,
\begin{align*}
u \in \bL_{0,1,loc}\left( \opar 0,\tau \cbrk, \rH;\cF^{-1}\cD'(\fR^d)\right)
\end{align*}
if
\begin{align*}
u \in \bL_{0,1}\left( \opar 0,\tau \wedge T \cbrk, \rH;\cF^{-1}\cD'(\fR^d)\right) \quad \forall T \in (0,\infty).
\end{align*}
All other temporal local spaces can be defined in a similar manner.
Some of these temporal local spaces have been already mentioned in Definition \ref{major function class} and Definition \ref{sto integrand class}.
It is easy to verify that these new temporal local spaces are equivalent to the previously defined ones.
Moreover, some spaces remain unchanged under temporal localization.
For example, it is straightforward to confirm that 
\begin{align}
								\label{20240504 10}
\bL_{0,loc}\left( \opar 0,\tau \cbrk, \rH;\cF^{-1}\cD'(\fR^d)\right)
=\bL_{0}\left( \opar 0,\tau \cbrk, \rH;\cF^{-1}\cD'(\fR^d)\right)
\end{align}
and
\begin{align}
								\label{20240504 11}
\bC_{loc}\left( \clbrk 0,\tau \cbrk, \rH;\cD'(\fR^d)\right)=\bC\left( \clbrk 0,\tau \cbrk, \rH;\cD'(\fR^d)\right)
\end{align}
since $\tau$ is a finite stopping time.
\end{defn}

It is clear that a temporally localized space is larger than the original one. 
Additionally, there are many interesting relationships between them. Specifically, the temporal local property does not affect processes without finite expectations, as our fixed stopping time $\tau$ is finite. Simple examples are provided in \eqref{20240504 10} and \eqref{20240504 11}. We present a straightforward proposition to list some of these properties, which are not as obvious as those in \eqref{20240504 10} and \eqref{20240504 11}. Nonetheless, the proof is not difficult, so we omit the details. 
These properties help justify the appropriateness of our data classes in the main theorems in the next section.

\begin{prop}
									\label{temporal local prop}
\begin{enumerate}[(i)]
\item
\begin{align*}
\cF^{-1}\bL_{0,1,1,loc, \ell oc}\left( \opar 0,\tau \cbrk \times \fR^d, \rH \times \cB(\fR^d)\right)
=\cF^{-1}\bL_{0,1,1, \ell oc}\left( \opar 0,\tau \cbrk \times \fR^d, \rH \times \cB(\fR^d)\right)
\end{align*}
\item
\begin{align*}
\cF^{-1}\bL^{\omega,\xi,t}_{0,1,2,loc,\ell oc}\left( \opar 0,\tau \cbrk \times \fR^d, \cP \times \cB(\fR^d) ; l_2 \right)
=\cF^{-1}\bL^{\omega,\xi,t}_{0,1,2,\ell oc}\left( \opar 0,\tau \cbrk \times \fR^d, \cP \times \cB(\fR^d) ; l_2 \right)
\end{align*}
\end{enumerate}
\end{prop}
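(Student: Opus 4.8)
The plan is to prove each equality by establishing the two inclusions, noting at once that the containment of the unlocalized space in the temporally localized one is immediate, so only the reverse inclusion carries content. Since the two sides share an identical base measurability requirement and differ only in the integrability condition over the random time interval, I would first record that membership in the base space $\cF^{-1}\bL_{0,0,1,\ell oc}$ is unaffected by temporal localization: the indicators satisfy $1_{\opar 0,\tau\wedge T\cbrk}(\omega,t) \to 1_{\opar 0,\tau\cbrk}(\omega,t)$ as $T\to\infty$ for every $(\omega,t)$, so the $\rH\times\cB(\fR^d)$-measurability (resp. $\cP\times\cB(\fR^d)$-measurability) of $1_{\opar 0,\tau\cbrk}\cF[u]$ is recovered as a pointwise limit of measurable functions, and the a.e.-finiteness of $\int_K|\cF[u(t,\cdot)](\xi)|_{l_2}\,\mathrm{d}\xi$ on $\opar 0,\tau\cbrk$ follows from the corresponding statements on each $\opar 0,\tau\wedge n\cbrk$ by a countable union of null sets over $n\in\fN$.

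For part (i), the trivial inclusion uses that, for the nonnegative integrand and $\tau\wedge T\leq\tau$, we have $\int_0^{\tau\wedge T}\int_{B_R}|\cF[u(t,\cdot)](\xi)|\,\mathrm{d}\xi\,\mathrm{d}t \leq \int_0^{\tau}\int_{B_R}|\cF[u(t,\cdot)](\xi)|\,\mathrm{d}\xi\,\mathrm{d}t$, so a.s. finiteness of the full integral forces a.s. finiteness of each localized integral. The reverse inclusion is the crux. Here I would, for each $n,R\in\fN$, invoke the localized hypothesis with $T=n$ to obtain a $P$-full event $\Omega_{n,R}$ on which $\int_0^{\tau\wedge n}\int_{B_R}|\cF[u(t,\cdot)](\xi)|\,\mathrm{d}\xi\,\mathrm{d}t<\infty$. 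Setting $\Omega^\ast := \{\tau<\infty\}\cap\bigcap_{n,R\in\fN}\Omega_{n,R}$, which has probability one because $\tau$ is a finite stopping time and a countable intersection of full events is full, I would argue pointwise: for $\omega\in\Omega^\ast$ and any $R$, choose $n\in\fN$ with $n>\tau(\omega)$, so that $\tau(\omega)\wedge n=\tau(\omega)$ and hence $\int_0^{\tau(\omega)}\int_{B_R}|\cF[u(\omega,t,\cdot)](\xi)|\,\mathrm{d}\xi\,\mathrm{d}t = \int_0^{\tau(\omega)\wedge n}\int_{B_R}|\cF[u(\omega,t,\cdot)](\xi)|\,\mathrm{d}\xi\,\mathrm{d}t<\infty$. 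Since every compact set lies in some $B_R$ by Heine--Borel, this is exactly membership in $\cF^{-1}\bL_{0,1,1,\ell oc}$.

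Part (ii) runs along identical lines, the only change being the mixed norm $\int_{B_R}\big(\int_0^{\tau\wedge T}|\cF[u(t,\cdot)](\xi)|_{l_2}^2\,\mathrm{d}t\big)^{1/2}\mathrm{d}\xi$; monotonicity of $t\mapsto\int_0^t|\cF[u]|_{l_2}^2\,\mathrm{d}s$ together with monotonicity of the square root preserves both the trivial inclusion and the identity $\tau(\omega)\wedge n=\tau(\omega)$ used in the crux step, so the same choice $n>\tau(\omega)$ equates the localized and full norms for $\omega\in\Omega^\ast$.

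I do not expect a genuine obstacle; the entire argument rests on the finiteness of $\tau$, which guarantees that for almost every $\omega$ the time horizon is exhausted by some integer cutoff, thereby converting a ``for all $T$'' hypothesis into a statement at the terminal time. The only point requiring care is the bookkeeping of null sets: the exceptional sets must be indexed by the countable families $n,R\in\fN$ so that their union remains null, which is why I reduce compact $K$ to the balls $B_R$ with $R\in\fN$ at the outset.
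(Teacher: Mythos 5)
Your proof is correct and follows essentially the argument the paper intends: the authors omit the proof of Proposition \ref{temporal local prop} but point to the proof of Lemma \ref{restriction lem}, which uses exactly your key step — since $\tau$ is a finite stopping time, for almost every $\omega$ one can choose an integer $n>\tau(\omega)$ so that $\tau(\omega)\wedge n=\tau(\omega)$, converting the ``for all $T$'' localized hypothesis into the statement at the terminal time. Your careful indexing of the exceptional null sets over the countable family $n,R\in\fN$ is the right bookkeeping and matches the spirit of the paper's argument.
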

As previously noted, all the properties mentioned above are easily derived from the fact that $\tau$ is a finite stopping time. 
A similar detail for the proof can be observed in Lemma \ref{restriction lem} later.
However, if we consider a space consisting of elements with finite expectations, then a temporally localized space becomes strictly larger than the original one.
This is because it is easy to find a stochastic process $X_t$ so that
\begin{align*}
\bE\left[ \int_0^{\tau} |X_t| dt\right] = \infty
\end{align*}
but
\begin{align*}
\bE\left[ \int_0^{\tau \wedge T} |X_t| dt\right] < \infty \quad \forall T \in (0,\infty)
\end{align*}
unless $\tau$ is bounded.

\mysection{Main results}

In this section, we present our main results. We discovered that our operators $\psi(t,-\mathrm{i}\nabla)$ can be defined without regularity conditions as described in Definition \ref{defn psi operator}. Our next step is to determine the conditions on the symbol $\psi(t,\xi)$ to ensure that \eqref{main eqn} is well-posed in a certain sense.

This point has been mentioned several times, but we repeat it for emphasis: the symbol $\psi$ can be random, sign-changing, and irregular, yet the solvability of \eqref{main eqn} remains preserved. 
In essence, our primary assumptions impose only integrability conditions on the symbol $\psi$.

Firstly, we require local boundedness of the symbol uniformly with respect to the sample points $\omega$. 
However, it is sufficient to assume local boundedness only on the real part of the symbol.

\begin{assumption}[Local boundedness on the real part of the symbol]
									\label{main as}
For all  $R,T \in (0,\infty)$,
\begin{align}
										\label{20240725 50}
\esssup_{ (t, \xi) \in  (0,T) \times B_R}\left[\int_0^t \esssup_{\omega \in \Omega}\left|\Re[\psi(r,\xi)]\right|\mathrm{d}r\right]  
< \infty.
\end{align}
\end{assumption}
We also introduce some constants related to Assumption \ref{main as}, which appear in our main estimates of solutions.
Put
\begin{align*}
C^{\mathrm{e}\int\sup|\Re[\psi]|}_{R,T}
=\esssup_{ (t,\xi) \in  (0,T) \times B_R}  
\left[\exp\left( \int_0^t \esssup_{\omega \in \Omega}\left|\Re[\psi(r,\xi)] \right| \mathrm{d}r  \right)\right].
\end{align*}
Then it is obvious that \eqref{20240725 50} holds if and only if
$$
C^{\mathrm{e}\int\sup|\Re[\psi]|}_{R,T} <\infty.
$$ 

Note that our requirement for the local boundedness pertains only to the space variable $\xi$. 
In other words, $\Re[\psi(t,\xi)]$ can be unbounded with respect to the time variable $t$ as long as it does not ruin the integrability.

Next, we impose an additional condition on $\psi(t,\xi)$ to ensure a control over the product $|\psi(t,\xi)|$ and $\exp\left( \int_s^t\left| \Re[\psi(r,\xi)]\right| \mathrm{d}r \right)$.
 This relationship is crucial for defining $\psi(t,-\mathrm{i}\nabla)u$ with a candidate $u$ for a solution as per Definition \ref{defn psi operator} and is necessary for applying the fundamental theorem of calculus in the proofs of the main theorems.

\begin{assumption}
									\label{main as 2}

For all $R,T \in (0,\infty)$,
\begin{align}
										\label{20240725 51}
C^{\sup|\psi|}_{R,T}:=\esssup_{\xi \in  B_R} \left[ \int_0^T \left(\esssup_{\omega \in \Omega}|\psi(t,\xi)| \right)\sup_{ 0\leq s \leq t} \left[\exp\left( \int_s^t \esssup_{\omega \in \Omega} \left| \Re[\psi(r,\xi)]\right| \mathrm{d}r \right) \right]    \mathrm{d}t  \right]
< \infty.
\end{align}
\end{assumption}
In particular, Assumption \ref{main as 2} implies that for almost every $(\omega,\xi) \in \Omega \times \fR^d$,
\begin{align}
									\label{20240801 01}
\int_0^T |\psi(t,\xi)| \mathrm{d}t < \infty \quad  \forall T \in (0,\infty)
\end{align}
since the term $\sup_{ 0\leq s \leq t} \left[\exp\left( \int_s^t \esssup_{\omega \in \Omega} \left| \Re[\psi(r,\xi)]\right| \mathrm{d}r \right) \right]$ is always greater than or equal to one.
Note that $C^{\sup|\psi|}_{R,T}$ could be small if the value of $|\psi(t,\xi)|$ diminishes.
In other words, we could not generally determine if $C^{|\psi|}_{R,T} \geq 1$.

\begin{rem}
All constants and conditions can be simplified if we remove the randomness on $\psi$.
Assume that the symbol $\psi$ is non-random (or deterministic), \textit{i.e.} 
\begin{align*}
\psi(\omega_1,t,\xi) = \psi(\omega_2,t,\xi) \quad \forall \omega_1, \omega_2 \in \Omega.
\end{align*}
Then \eqref{20240725 50} and \eqref{20240725 51} become
\begin{align*}
\esssup_{ (t, \xi) \in  (0,T) \times B_R}\left[\int_0^t\left|\Re[\psi(r,\xi)]\right|\mathrm{d}r\right]  
=\esssup_{ (t, \xi) \in  (0,T) \times B_R}\left[\int_0^t \esssup_{\omega \in \Omega}\left|\Re[\psi(r,\xi)]\right|\mathrm{d}r\right] < \infty
\end{align*}
and
\begin{align*}
&\esssup_{\xi \in  B_R} \left[ \int_0^T |\psi(t,\xi)|\sup_{ 0\leq s \leq t} \left[\exp\left( \int_s^t  \left| \Re[\psi(r,\xi)]\right| \mathrm{d}r \right) \right]    \mathrm{d}t  \right] \\
&=\esssup_{\xi \in  B_R} \left[ \int_0^T \left(\esssup_{\omega \in \Omega}|\psi(t,\xi)| \right)\sup_{ 0\leq s \leq t} \left[\exp\left( \int_s^t \esssup_{\omega \in \Omega} \left| \Re[\psi(r,\xi)]\right| \mathrm{d}r \right) \right]    \mathrm{d}t  \right] < \infty,
\end{align*}
respectively.
Moreover, these conditions can be replaced by the weaker conditions
\begin{align*}
\esssup_{ (t, \xi) \in  (0,T) \times B_R}\left|\int_0^t\Re[\psi(r,\xi)]\mathrm{d}r\right| 
< \infty
\end{align*}
and
\begin{align*}
\esssup_{\xi \in  B_R} \left[ \int_0^T|\psi(t,\xi)|\sup_{ 0\leq s \leq t} \exp\left( \left| \int_s^t\Re[\psi(r,\xi)] \mathrm{d}r \right|  \right)    \mathrm{d}t  \right]
< \infty.
\end{align*}
This will be detailed in Section \ref{sto fubini deter} and Section \ref{exist deter}.
\end{rem}

\begin{rem}
									\label{simple constant}
By the fundamental theorem of calculus,
\begin{align*}
\exp\left( \int_0^t\left|\Re[\psi(r,\xi)] \right|\mathrm{d}r \right)
=1+ \int_0^t |\Re[\psi(s,\xi)]|  \exp\left(\int_0^s \left|\Re[\psi(r,\xi)] \right|\mathrm{d}r \right) \mathrm{d}s.
\end{align*}
Therefore, there is the relation between the constants $C^{\mathrm{e}\int\sup|\Re[\psi]|}_{R,T}$ and $C^{\sup|\psi|}_{R,T}$ such that
\begin{align*}
C^{\mathrm{e}\int\sup|\Re[\psi]|}_{R,T} \leq 1+C^{\sup|\psi|}_{R,T}. 
\end{align*}

\end{rem}

\begin{rem}
$\psi(t,\xi)$ does not have to be defined at $t=0$ as can be observed in the main equation \eqref{main eqn}.
However, our solution $u(t,x)$ satisfies $u(0,x)=u_0$, which implies the solution $u$ is defined at $t=0$ in a certain sense.
Furthermore, the measurability of $u$ definitely depends on the measurability of the symbol $\psi$.
Thus, to explain this measurability relation, one must consider restrictions or extensions of the $\sigma$-algebras related to both $u$ and $\psi$.
Instead, we choose to make $\psi$ defined on the whole $\Omega \times [0,\infty) \times \fR^d$ to avoid all these complications.

Therefore, the value $\psi(0,\xi)$ can be arbitrary and it is provided only to simplify the expression of measurability relations which will be detailed in the proofs of our main theorems.
Specifically,  all estimates with $\psi$ do not depend on $\psi(0,\xi)$ at all. 
This is why the main assumptions regarding $\psi$ do not involve the values of $\psi(0,\xi)$.

\end{rem}

In addition to removing regularity conditions and allowing randomness on $\psi$, another novel aspect of our assumptions on $\psi$ is that $\Re[\psi(t,\xi)]$ is permitted to be sign-changing.
In other words, the range of $\Re[\psi(t,\xi)]$  can include both positive and negative values.
It is crucial to maintain the sign of $\Re[\psi(t,\xi)]$ to be negative in order to ensure even simpler equations with pseudo-differential operators than  \eqref{main eqn}  are well-posed in Sobolev's spaces.
This finding is supported by the results in \cite{CJH KID 2023,CJH LJB KID 2023,CJH 2024,KID SBL KHK 2015, KID SBL KHK 2016,KID KHK 2016,KID 2018}. 
All the results rely on the condition called ``an ellipticity condition" that there exist positive constants $\kappa$ and $\gamma$ so that
\begin{align*}
\Re[\psi(t,\xi)] \leq -\kappa |\xi|^\gamma,
\end{align*}
where $\gamma$ is understood as an order of the operator $\psi(t,-\mathrm{i}\nabla)$.
However, this ellipticity condition does not need to be met under our main assumptions, as we might consider analytically weak solutions rather than strong solutions in Sobolev spaces. 
In particular, it is impossible to assign a specific order to our operators, which partially demonstrates their generality.

It is surprising that uniqueness holds in our framework, given that classical theories for the differential equations with constant coefficients on $\fR^d$  suggest that uniqueness cannot be maintained without ellipticity, as mentioned in the introduction.

The relationship between the sign and ellipticity of symbols becomes very clear when examining the second-order case as a concrete example.
Here even the existence of a weak solution cannot be obtained from classical theories (by considering an equation in $\fR^{d+1}$) since our coefficients in the following example vary with respect to the time variable.

\begin{example}[Second-order operators with time-dependent coefficients]
For $i,j \in \{1,\ldots,d\}$, let $a^{ij}(t)$ be complex-valued functions such that
\begin{align}
									\label{20240626 03}
\int_0^T |a^{ij}(t)| \mathrm{d}t < \infty \quad \forall T \in (0,\infty).
\end{align}
Put
\begin{align*}
\psi(t,\xi) = -a^{ij}(t)\xi^i\xi^j.
\end{align*}
Then it is easy to verify that $\psi$ satisfies Assumptions \ref{main as} and \ref{main as 2}.
Additionally, it is obvious that 
\begin{align*}
\psi(t,-\mathrm{i} \nabla) \varphi = a^{ij}(t) \varphi_{x^ix^j}
\end{align*}
for a nice function $\varphi$ on $\fR^d$ due to some basic properties of the Fourier and inverse Fourier transforms.

We typically say that the coefficients $a^{ij}(t)$ satisfy an (weak) ellipticity condition if 
\begin{align}
										\label{20240626 01}
a^{ij}(t) \xi^i \xi^j  \geq 0 \quad \forall t \in (0,\infty).
\end{align}
In particular, the coefficients $a^{ij}(t)$ are said to be {\bf degenerate} if $a^{ij}(t)=0$ for some $t$.
Recently, there were researches showing some strong $L_p$-estimates are still possible even though the coefficients $a^{ij}(t)$ are degenerate (\cite{KID KHK 2018, KID KHK 2023, KID 2024}). 
Our weak solutions encompass all solutions from these results for degenerate second-order equations.

Moreover, 
we say that the coefficients $a^{ij}(t)$ satisfy a uniform ellipticity condition if there exists a positive constant $\kappa$ so that
\begin{align}
										\label{20240626 02}
a^{ij}(t) \xi^i \xi^j \geq \kappa |\xi|^2 \quad \forall t \in (0,\infty).
\end{align}
Obviously, \eqref{20240626 01}  implies that the symbol $\psi$ is always non-positive.
Especially, the condition in \eqref{20240626 02} makes the symbol $\psi$ strictly negative.

Note that even the condition in \eqref{20240626 01} is not required to satisfy our main assumptions.
The local integrability from \eqref{20240626 03} is sufficient to show that $\psi$ satisfies Assumptions \ref{main as} and \ref{main as 2}.
In conclusion, our operators $\psi(t,-\mathrm{i} \nabla)$ include second-order operators even without weak ellipticity conditions.

\end{example}

Next, we present another intriguing example that satisfies Assumptions \ref{main as} and \ref{main as 2}, involving sign-changing properties and fractional Laplacian operators. 

The fractional Laplacian operators usually denoted by $\Delta^{\alpha/2}$ have been fascinating subjects of study in mathematics. 
Typically, the restriction $\alpha \in (0,2]$ is imposed and then the operator $\Delta^{\alpha/2}$ becomes a generator of a L\'evy process. For this special cases, $\alpha \in (0,2]$, many interesting properties of the operators and their generalizations can be derived from theories of Markov processes (\textit{cf.} \cite{CKS2010,CKK2011,CKS2016,KSV2019}).
Additionally, numerous analytic methods exist to study these operators (\textit{cf.} \cite{CS2007,DJK2023,K2017,RS2014}).
There are also tons of researches addressing these operators in equations and can be found, for instance, in \cite{CK 2024,CKP2024,CKR2023,DK2012,DL2023,DR2024,KK 2012}.
Even for the range $\alpha \in (2,\infty)$, some analytic methods are still available to find out the properties of fractional Laplacian operators and the solvability of related equations. 
However, there is a lack of results considering fractional Laplacian operators with complex exponents despite the existence of natural analytic continuations.

To the best of our knowledge, our theory is the first to tackle the solvability of stochastic partial differential equations involving fractional Laplacian operators with complex exponents. Furthermore, we have not found any existing PDE results that address equations with fractional Laplacian operators featuring complex exponents. 
However, we found a numerical simulation result for this operator as reported by \cite{BB 2023}. 
It is unexpectedly revealed that the fractional Laplacian operator with a complex exponent is not only a mathematical generalization but also has scientific significance, as shown in \cite{BB 2023} and the references therein.

\begin{example}[Fractional Laplacian operators with complex exponents]
										\label{frac exam}
Let $\alpha= \Re[\alpha]+i \Im[\alpha]$ be a complex number and put
\begin{align*}
\psi(t,\xi) = -|\xi|^\alpha.
\end{align*}
Then for any nonzero $\xi \in \fR^d$, 
\begin{align*}
\psi(t,\xi) 
= -\exp\left(\alpha \log |\xi| \right)
&= -\exp\left(\Re[\alpha] \log |\xi| + \mathrm{i} \Im[\alpha] \log|\xi|\right) \\
&= -\exp\left(\Re[\alpha] \log |\xi|\right) \cdot \exp \left( \mathrm{i} \Im[\alpha] \log|\xi|\right)\\
&=- |\xi|^{\Re[\alpha]} \left( \cos (\Im[\alpha] \log|\xi|) + \mathrm{i} \sin (\Im[\alpha]\log|\xi|)  \right).
\end{align*}
For this symbol $\psi$, we adopt the simpler notation
\begin{align*}
\Delta^{\alpha/2}u(t,x) := \psi(t,-\mathrm{i} \nabla)u(t,x)
\end{align*}
and this operator is known as the {\bf fractional Laplacian operator}.
Assume that $\Re[\alpha] \geq 0$, then it is easy to check that $\psi$ satisfies Assumptions \ref{main as} and \ref{main as 2}.
\end{example}

Now we turn our attention to an important inequality in probability theories.

Let $g$ be an $l_2$-valued square integrable predictable process.
Then for all $p \in (0,\infty)$ and $T \in (0,\infty)$, there exist positive constants $c_p$ and $C_p$ (depending only on $p$) such that
\begin{align}
								\label{20240509 10}
c_p\bE\left[\left(\int_0^T |g(s)|^{2p}_{l_2} \mathrm{d}s \right)^{1/(2p)}\right]
\leq \bE\left[\sup_{t \in [0,T]} \left|\int_0^t g^k(s) \mathrm{d} B^k_s \right|^p \right] 
\leq C_p \bE\left[\left(\int_0^T |g(s)|^{2p}_{l_2} \mathrm{d}s \right)^{1/(2p)} \right].
\end{align}
This inequality is well-known as the BDG (Burkholder-Davis-Gundy) inequality.
A proof can be found, for instance, in \cite{Burk 1973,Krylov 1995,RY 1999}.
Especially, the constant $C_1$ plays an important role in our main theorem.
\begin{defn}[BDG constants]
We call $C_1$ in \eqref{20240509 10} {\bf the BDG constant} and use the special notation $C_{BDG}$ instead of $C_1$.
The exact value of the smallest  $C_{BDG}$ is not known to the best of our knowledge. 
However, taking $p=1$ and $g=(1,0,0,\ldots)$ in \eqref{20240509 10},  we have
\begin{align*}
\bE\left[\sup_{t \in [0,T]}|B^1_t|\right] \leq C_{BDG} T^{1/2} .
\end{align*}
Additionally, by Bachelier's theorem (\textit{cf}. \cite[Theorem 2.2.3]{Krylov 2002}),
\begin{align*}
\bE\left[\sup_{t \in [0,T]}|B^1_t|\right]
=2\int_0^\infty x \exp(-x^2) \mathrm{d}x \cdot T^{1/2} 
=T^{1/2}.
\end{align*}
Therefore,
\begin{align}
										\label{bdg lower}
 C_{BDG} \geq 1.
\end{align}
Additionally, it is well-known that $C_{BDG} \leq 3$ (\textit{cf}. \cite{Burk 1973} and \cite[Section 4 in Chapter IV]{Krylov 1995,RY 1999}).

This constant is considered to try to provide explicit constants (at least with respect to the constants appearing in the BDG inequality and assumptions) in our main theorem.
\end{defn}
Before stating the definition of our solution to \eqref{main eqn}, we recall an important property of the space-time white noise $\mathrm{d}\fB_t$ so that
$$
\mathrm{d}\fB_t = \eta_k \mathrm{d}B_t^k,
$$
where $\{\eta_k : k \in \fN\}$ is the orthonomal basis of $L_2(\fR^d)$ from \eqref{20240525 01}.
This relation is easily derived since $L_2(\fR^d)$-cylindrical Brownian motions can be constructed from a sequence of independent one-dimensional Brownian motions. 
Thus recognizing a SPDE driven by space-time white noise as an equation driven by a sequence of independent one-dimensional white noises has been a common approach in studying the properties of their solutions. For example, see \cite{CH2021,HK2020,Krylov 1999,KK 2020,KKL2019,KPR2022}. 
 In this paper, we also employ this method to provide a precise mathematical meaning to solutions of SPDEs in the form of \eqref{main eqn}. This will be detailed in the next section.
For now, we provide the exact definition of our solution.

\begin{defn}[Fourier-space weak solution to \eqref{main eqn}]
									\label{space weak solution}

Let $u_0 \in \bL_0\left(\Omega,\rF;\cF^{-1}\cD'(\fR^d) \right)$, 
\begin{align*}
f \in \bL_{0,1,loc}\left( \opar 0,\tau \cbrk, \rF \times \cB([0,\infty));\cF^{-1}\cD'(\fR^d)\right), 
\quad h \in \bL_{0,2}\left(\opar 0,\tau \cbrk , \cP ; \cF^{-1}\cD'(\fR^d)   \right),
\end{align*}
and 
$$
u \in  \bL_{0}\left( \opar 0,\tau \cbrk, \rF \times \cB\left( [0,\infty) \right);\cF^{-1}\cD'(\fR^d)\right).
$$
Then we say that $u$ is a {\bf Fourier-space weak solution} to equation \eqref{main eqn} if for all $\varphi \in \cF^{-1}\cD(\fR^d)$,
\begin{align}
									\notag
\left\langle u(t,\cdot),\varphi \right\rangle 
&= \langle u_0, \varphi \rangle +  \int_0^t  \left\langle  \psi(s,-\mathrm{i} \nabla)u(s,\cdot) , \varphi \right\rangle\mathrm{d}s 
+ \int_0^t \left\langle f(s,\cdot),\varphi\right\rangle \mathrm{d}s \\
									\label{solution meaning}
& \quad +\int_0^t\left\langle h (s,\cdot) \eta^k(\cdot),\varphi\right\rangle \mathrm{d}B^k_s
\quad  (a.e.) \quad (\omega,t) \in \clbrk 0,\tau \cbrk.
\end{align}
\end{defn}
\begin{rem}
The stochastic term 
$$
\int_0^t\left\langle h (s,\cdot) \eta^k(\cdot),\varphi\right\rangle \mathrm{d}B^k_s
$$
cannot be defined based on a canonical multiplication of a test function.
Specifically, the condition 
$$
h \in \bL_{0,2}\left(\opar 0,\tau \cbrk , \cP ; \cF^{-1}\cD'(\fR^d)   \right),
$$
is insufficient to render the term
$$
\int_0^t\left\langle h (s,\cdot) ,\eta^k \varphi\right\rangle \mathrm{d}B^k_s
$$
well-defined  since $\eta^k \varphi \in \cS(\fR^d)$ but $\cF[\eta^k \varphi]$ generally does not have a compact support.
Thus we need to consider a subclass of $\bL_{0,2}\left(\opar 0,\tau \cbrk , \cP ; \cF^{-1}\cD'(\fR^d)   \right)$, which ensures that the stochastic term in \eqref{solution meaning} is well-defined.
In particular, we will show that the stochastic term 
$$
\int_0^t\left\langle h (s,\cdot) \eta^k(\cdot),\varphi\right\rangle \mathrm{d}B^k_s
:=\int_0^t 1_{\opar 0,\tau \cbrk}(s)\left\langle h (s,\cdot) \eta^k(\cdot),\varphi\right\rangle \mathrm{d}B^k_s
$$
is well-defined if 
$$
h \in \bL_{0,2,2}\left(\opar 0,\tau \cbrk \times \fR^d, \cP \times \cB(\fR^d)   \right),
$$ in Corollary \ref{well define sto}.
\end{rem}
Finally, here is our main result.
\begin{thm}
							\label{main thm}
Let $u_0 \in \cF^{-1}\bL_{0,1,\ell oc}\left( \Omega \times \fR^d, \rG \times \cB(\fR^d)\right)$, $f \in \cF^{-1}\bL_{0,1,1,\ell oc}\left( \opar 0,\tau \cbrk \times \fR^d, \rH \times \cB(\fR^d)\right)$, and
$$
h \in \bL_{0,2,2}\left(\opar 0,\tau \cbrk \times \fR^d, \cP \times \cB(\fR^d)   \right).
$$
Suppose that Assumptions \ref{main as} and \ref{main as 2} hold.
Then there exists a unique Fourier-space weak solution $u$ to \eqref{main eqn} in the intersection of the classes
$$
\cF^{-1}\bC L_{1,\ell oc}\left( \clbrk 0,\tau \cbrk,\sigma\left(   \rG \times \cB([0,\infty))  \cup \rH\cup \cP   \right)\times \cB(\fR^d)\right)
$$
and
\begin{align*}
 \cF^{-1}\bL_{0,1,1, \ell oc}\left( \opar 0,\tau \cbrk \times \fR^d, \sigma\left(  \rG \times \cB([0,\infty))  \cup \rH\cup \cP   \right) \times \cB(\fR^d),  |\psi(t,\xi)|\mathrm{d}t \mathrm{d}\xi\right),
\end{align*}
where $\sigma\left(\rG \times \cB([0,\infty))  \cup \rH\cup \cP   \right)$ denotes the smallest $\sigma$-algebras including all elements in $\rG\times \cB([0,\infty)) \cup \rH  \cup \cP$.
\end{thm}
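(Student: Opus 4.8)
The plan is to take the spatial Fourier transform of \eqref{main eqn}, which decouples the problem into a family of scalar linear stochastic differential equations indexed by the frequency $\xi$. Writing $\hat u(t,\xi):=\cF[u(t,\cdot)](\xi)$ and testing \eqref{solution meaning} against $\varphi=\cF^{-1}[\phi]$ for an arbitrary $\phi\in\cD(\fR^d)$, Definition \ref{defn psi operator} and Parseval's identity recast the weak formulation as the statement that, for almost every $\xi$,
\begin{align*}
\hat u(t,\xi)=\hat u_0(\xi)+\int_0^t\psi(s,\xi)\hat u(s,\xi)\,\mathrm{d}s+\int_0^t\hat f(s,\xi)\,\mathrm{d}s+\int_0^t g_k(s,\xi)\,\mathrm{d}B^k_s,
\end{align*}
where $\hat u_0:=\cF[u_0]$, $\hat f:=\cF[f]$, and $g_k(s,\xi):=\cF[h(s,\cdot)\eta_k(\cdot)](\xi)$. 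Since the noise is additive and, for fixed $\xi$, the coefficient $\psi(\cdot,\xi)$ is only locally integrable in time, this equation is solved explicitly by the variation-of-constants formula
\begin{align*}
\hat u(t,\xi)=e^{\int_0^t\psi(r,\xi)\,\mathrm{d}r}\hat u_0(\xi)+\int_0^t e^{\int_s^t\psi(r,\xi)\,\mathrm{d}r}\hat f(s,\xi)\,\mathrm{d}s+\int_0^t e^{\int_s^t\psi(r,\xi)\,\mathrm{d}r}g_k(s,\xi)\,\mathrm{d}B^k_s.
\end{align*}
I would adopt this as the definition of the candidate $u:=\cF^{-1}[\hat u]$ and then verify, in order, that it is well-defined, that it belongs to both asserted classes, and that it satisfies \eqref{solution meaning}.

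For the size estimates note that $|e^{\int_s^t\psi(r,\xi)\,\mathrm{d}r}|=e^{\int_s^t\Re[\psi(r,\xi)]\,\mathrm{d}r}\le e^{\int_s^t|\Re[\psi(r,\xi)]|\,\mathrm{d}r}$, which on $B_R$ and for $t\le T$ is dominated by $C^{\mathrm{e}\int\sup|\Re[\psi]|}_{R,T}$ via Assumption \ref{main as}. Integrating the three terms over $B_R$ then bounds them by $\int_{B_R}|\hat u_0|\,\mathrm{d}\xi$ and $\int_0^\tau\int_{B_R}|\hat f|\,\mathrm{d}\xi\,\mathrm{d}s$, which are finite by the hypotheses on $u_0$ and $f$. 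For the stochastic term I would apply the BDG inequality \eqref{20240509 10} with $p=1$, producing the constant $C_{BDG}$ and reducing matters to $\bE[(\int_0^\tau|g(s,\xi)|_{l_2}^2\,\mathrm{d}s)^{1/2}]$; here the completeness of $\{\eta_k\}$ in $L_2(\fR^d)$ yields the $\xi$-independent identity $|g(s,\xi)|_{l_2}^2=(2\pi)^{-d}\|h(s,\cdot)\|_{L_2(\fR^d)}^2$, so that $h\in\bL_{0,2,2}$ closes the bound and places $u$ in $\cF^{-1}\bC L_{1,\ell oc}$. To reach the weighted class I multiply by $|\psi(t,\xi)|$ and integrate in $t$; after Fubini in the $(s,t)$-variables each term is controlled by $C^{\sup|\psi|}_{R,T}$ from Assumption \ref{main as 2} times the same data norms, giving $\int_0^\tau\int_{B_R}|\psi(t,\xi)|\,|\hat u(t,\xi)|\,\mathrm{d}\xi\,\mathrm{d}t<\infty$ almost surely.

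To confirm that $\hat u$ solves the frequency equation I would use that $t\mapsto e^{\int_0^t\psi(r,\xi)\,\mathrm{d}r}$ is absolutely continuous with almost-everywhere derivative $\psi(t,\xi)e^{\int_0^t\psi(r,\xi)\,\mathrm{d}r}$, so the fundamental theorem of calculus for the deterministic parts and It\^o's product rule for the stochastic part reproduce the integral equation; integrating against $\overline\phi$ in $\xi$ and exchanging the $\xi$-integration with the time and stochastic integrals — ordinary Fubini for the drift, and the stochastic Fubini theorem of Section \ref{section stochastic Fubini} for the noise — recovers \eqref{solution meaning}. Path continuity as an $\cF^{-1}L_{1,\ell oc}$-valued process follows from continuity of each term, the stochastic convolution again being handled by BDG together with a dominated-convergence argument, while the required measurability with respect to $\sigma(\rG\times\cB([0,\infty))\cup\rH\cup\cP)\times\cB(\fR^d)$ is inherited from the data through the explicit formula. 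I expect the passage between the tested formulation and the frequency-wise stochastic integral to be the principal obstacle: it is precisely the commutation of the $\xi$-integral with the It\^o integral under only local integrability of the irregular, sign-changing symbol, which is why the stochastic Fubini results are isolated in their own section; a secondary difficulty is that the factor $e^{\int_s^t\Re[\psi]}$ genuinely grows when $\Re[\psi]>0$ and is tamed only by the tailored integrability of Assumptions \ref{main as} and \ref{main as 2}.

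For uniqueness, let $v$ be the difference of two solutions in the intersection class; it is a Fourier-space weak solution of \eqref{main eqn} with $u_0=f=h=0$, so the same testing-and-Fubini reduction gives $\hat v(t,\xi)=\int_0^t\psi(s,\xi)\hat v(s,\xi)\,\mathrm{d}s$ for almost every $\xi$, almost surely. Membership of $v$ in $\cF^{-1}\bC L_{1,\ell oc}$ makes $t\mapsto\hat v(t,\xi)$ continuous and bounded on $[0,\tau]$ for almost every $\xi$, while membership in the $|\psi|$-weighted class gives $\int_0^\tau|\psi(s,\xi)|\,\mathrm{d}s<\infty$; Gronwall's inequality then forces $\hat v(\cdot,\xi)\equiv0$, and hence $v=0$ in the intersection class. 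This reconciles uniqueness with Remark \ref{unique rem}, since the degeneracy of the weight $|\psi|$ is compensated by the continuity requirement built into the solution class.
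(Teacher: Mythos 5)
Your uniqueness argument (reduce to the frequency representation, then Gr\"onwall) matches the paper's, and your verification steps for the deterministic drift part are in the right spirit. But the existence construction has a genuine gap: the theorem allows the symbol $\psi(t,\xi)$ to be \emph{random} (it is only assumed $\cP\times\cB(\fR^d)$-measurable, and Assumptions \ref{main as}--\ref{main as 2} are phrased with $\esssup_{\omega\in\Omega}$ precisely for this reason), and for a random symbol the stochastic convolution
\begin{align*}
\int_0^t \exp\left(\int_s^t\psi(r,\xi)\,\mathrm{d}r\right)1_{\opar 0,\tau\cbrk}(s)\,g_k(s,\xi)\,\mathrm{d}B^k_s
\end{align*}
is not a legitimate It\^o integral: for fixed $t$ the integrand $s\mapsto\exp(\int_s^t\psi(r,\xi)\,\mathrm{d}r)g_k(s,\xi)$ depends on the values of $\psi$ on $(s,t]$, which are only $\rF_t$-measurable and not $\rF_s$-measurable, so the integrand is neither predictable nor progressively measurable. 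Your variation-of-constants formula therefore cannot be ``adopted as the definition of the candidate $u$'' except when $\psi$ is deterministic. The obstacle you flag (commuting the $\xi$-integral with the It\^o integral) is real but secondary; the adaptedness failure is the one that kills the direct approach.

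The paper resolves this by a two-step linear decomposition: it first replaces $\psi$ by the deterministic majorant $\tilde\psi(t,\xi)=\esssup_{\omega}|\Re[\psi(t,\xi)]|+\mathrm{i}\,\esssup_{\omega}|\Im[\psi(t,\xi)]|$, for which the mild formula and the stochastic Fubini machinery are legitimate and yield a solution $\tilde u$ of the equation with symbol $\tilde\psi$; it then sets $\tilde f:=\psi(t,-\mathrm{i}\nabla)\tilde u-\tilde\psi(t,-\mathrm{i}\nabla)\tilde u$ (which lies in the admissible deterministic-data class because $|\psi|\le|\tilde\psi|$) and solves the noise-free equation $\mathrm{d}v=(\psi(t,-\mathrm{i}\nabla)v+\tilde f)\,\mathrm{d}t$, $v(0)=0$, which involves no stochastic integral and hence tolerates a random symbol. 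By linearity $u=\tilde u+v$ solves the original equation. To repair your proof you would need to insert this (or an equivalent) reduction; everything else you outline --- the BDG bound with the $\xi$-independent identity $|\cF[\eta h]|_{l_2}^2\propto\|h\|_{L_2}^2$, the fundamental theorem of calculus for the kernel, and the stochastic Fubini exchange --- then applies to $\tilde u$ with $\tilde\psi$ in place of $\psi$.
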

The proof of Theorem \ref{main thm} will be given in Section \ref{pf main thm}.

\begin{rem}
One might consider our data $u_0$  and $f$ in the theorem to be somewhat regular since they possess (spatial) realizable frequency functions that are locally integrable. However, the local integrability of the frequency function does not impose any restrictions on the behaviors at large frequencies. Consequently, the data could still be irregular due to the unrestricted nature at the large frequencies.
\end{rem}

\begin{rem}
						\label{formal solution}
Formally, the solution $u$ in Theorem \ref{main thm} is expected to be
\begin{align*}
u(t,x)=
&\cF^{-1}\Bigg[\exp\left(\int_0^t\psi(r,\xi)\mathrm{d}r \right) \cF[u_0](\xi)
+\int_0^t  \exp\left(\int_s^t\psi(r,\xi)\mathrm{d}r \right) 1_{\opar 0,\tau \cbrk}(s)\cF[f(s,\cdot)](\xi)\mathrm{d}s  \\
&\quad + \int_0^t  \exp\left(\int_s^t\psi(r,\xi)\mathrm{d}r \right) 1_{\opar 0,\tau \cbrk}(s)\cF[h(s,\cdot)\eta^k(\cdot)](\xi)\mathrm{d}B^k_s \Bigg].
\end{align*}
Then this type of solution could be considered as a mild solution. 
However, this representation loses a mathematical rigor for a random symbol $\psi$, even though we developed new Fourier and inverse Fourier transforms. 

For a deterministic symbol $\psi$, however, this mild formulation has a certain meaning due to the new Fourier and inverse Fourier transforms.
Nevertheless, even with the deterministic symbol $\psi$, the solution is generally not a complex-valued function.
Instead, it is a $\cF^{-1}\cD'(\fR^d)$-valued stochastic process on $\clbrk 0,\tau \cbrk$, which is defined  according to the action on $\cF^{-1}\cD(\fR^d)$ as follows: for almost every $(\omega,t) \in \clbrk 0,\tau \cbrk$,
\begin{align*}
\langle u(t,\cdot), \varphi \rangle
&= \langle \cF[u(t,\cdot)], \cF[\varphi] \rangle \\
&= \int_{\fR^d} \left(\exp\left(\int_0^t\psi(r,\xi)\mathrm{d}r \right) \cF[u_0](\xi) \right) \overline{\cF[\varphi](\xi)} \mathrm{d}\xi \\
&\quad + \int_{\fR^d} \left( \int_0^t  \exp\left(\int_s^t\psi(r,\xi)\mathrm{d}r \right) 1_{\opar 0,\tau \cbrk}(s)\cF[f(s,\cdot)](\xi)\mathrm{d}s \right) \overline{\cF[\varphi](\xi)} \mathrm{d}\xi \\
&\quad + \int_{\fR^d} \left( \int_0^t  \exp\left(\int_s^t\psi(r,\xi)\mathrm{d}r \right) 1_{\opar 0,\tau \cbrk}(s)\cF[h(s,\cdot)\eta^k(s,\cdot)](\xi)\mathrm{d}B^k_s \right)  \overline{\cF[\varphi](\xi)} \mathrm{d}\xi \quad \forall \varphi \in \cF^{-1}\cD(\fR^d).
\end{align*}
On the other hand, the Fourier transform of the solution $u$  (with respect to the space variable) is a complex-valued function.
In this sense, we may say that our solution $u$ is strong with respect to the time and frequencies of the space variable.
\end{rem}

\begin{rem}
A similar theorem to Theorem \ref{main thm} can also be obtained even if the stopping time 
$\tau$ is allowed to be infinite at some points. 
At first glance, it seems sufficient to consider the open interval $\opar 0, \tau \cpar$
 instead of the half-interval $\opar 0,\tau\cbrk$ in \eqref{main eqn}. However, we did not assume that our filtration 
$\rF_t$ is right-continuous, and additionally, the measure of the interval 
$\opar 0,\tau \cpar$ could become infinite. Therefore, to develop this theory, we would need to find many appropriate versions and perform additional localization steps with respect to the time variable. We chose not to present this theorem in detail because these steps are typically tedious, and our settings are already sufficiently complicate.
\end{rem}

If our Fourier transforms of data $u_0$, $f$, and $h$ have certain finite (stochastic) moments, then we can derive certain stability results.
However, Assumptions \ref{main as} and \ref{main as 2} are not enough to gain the stability results since our symbol is random. 
Thus we introduce a stronger assumption.
\begin{assumption}[Local boundedness on the symbol]
									\label{main as sta}
For all  $R,T \in (0,\infty)$,
\begin{align}
\|\psi\|_{L_\infty\left( \Omega \times (0,T) \times B_R \right)}:=\esssup_{ (\omega,t, \xi) \in  \Omega \times (0,T) \times B_R}\left|\psi(t,\xi)\right|  < \infty.
\end{align}
\end{assumption}
\begin{rem}
It is clear that Assumption \ref{main as sta} encompasses both Assumptions \ref{main as} and \ref{main as 2}. This strong assumption is introduced to account for random symbols. Fortunately, the symbol for the fractional Laplacian operator with a complex exponent, as seen in Example \ref{frac exam}, continues to satisfy Assumption \ref{main as sta}. Moreover, we can even consider the fractional Laplacian operator with a random complex exponent, provided that the real part of the symbol is non-negative and uniformly bounded with respect to the sample points. Lastly, if we only consider non-random symbols, then Assumptions \ref{main as} and \ref{main as 2} are sufficient to establish the stability result, which will be demonstrated in Section \ref{exist deter}.
\end{rem}

\begin{corollary}
									\label{main cor}
Let $u_0 \in \cF^{-1}\bL_{1,1,\ell oc}\left( \Omega \times \fR^d, \rG \times \cB(\fR^d)\right)$, $f \in \cF^{-1}\bL_{1,1,1,loc,\ell oc}\left( \opar 0,\tau \cbrk \times \fR^d, \rH \times \cB(\fR^d)\right)$, and
$$
h \in \bL_{1,2,2,loc}\left(\opar 0,\tau \cbrk \times \fR^d, \cP \times \cB(\fR^d)   \right).
$$
Suppose that $\psi$ satisfies Assumption \ref{main as sta}.
Then there exists a unique Fourier-space weak solution  $u$ to \eqref{main eqn} belonging to the intersection of the two classes 
$$
\cF^{-1}\bL_{1,loc}\bC L_{1,\ell oc}\left( \clbrk 0,\tau \cbrk,\sigma\left(   \rG \times \cB([0,\infty))  \cup \rH\cup \cP   \right)\times \cB(\fR^d)\right)
$$
and
\begin{align*}
 \cF^{-1}\bL_{1,1,1,loc, \ell oc}\Big( \opar 0,\tau \cbrk \times \fR^d, \sigma\left( \rG \times \cB([0,\infty))  \cup \rH  \cup \cP \right) \times \cB(\fR^d), |\psi(t,\xi)|\mathrm{d}t \mathrm{d}\xi\Big).
\end{align*}
Additionally, the solution $u$ satisfies
\begin{align}
										\notag
\bE\left[ \int_0^{\tau \wedge T} \int_{B_R}|\psi(t,\xi)| |\cF[u(t,\cdot)](\xi)| \mathrm{d}\xi  \mathrm{d}t \right]  
										\notag
&\leq  \cC^1_{R,T,\psi} \bE \left[\int_{B_R} \left|\cF[u_0](\xi)\right|    \mathrm{d}\xi 
+\int_{B_R} \int_0^{ \tau \wedge T}\left|\cF[f(s,\cdot)](\xi)\right|  \mathrm{d}s  \mathrm{d}\xi \right] \\
										\label{main a priori}
&\quad+ R \cdot C_{BDG} \cdot \cC^1_{R,T,\psi}
 \bE\left[ \left(\int_0^{\tau \wedge T} \|h(t,\cdot)\|^2_{L_2(\fR^d)} \mathrm{d}t\right)^{1/2} \right]
\end{align}
and
\begin{align}
\bE\left[ \int_{B_R} \sup_{t \in [0,\tau \wedge T]} |\cF[u(t,\cdot)](\xi)|\mathrm{d}\xi\right] 
										\notag
&\leq \left(\cC^2_{R,T,\psi} \wedge \cC^3_{R,T,\psi}\right)\bE\left[\int_{B_R} |\cF[u_0](\xi)|\mathrm{d}\xi + \int_{B_R} \int_0^{\tau \wedge T}\left|\cF[f(s,\cdot)](\xi)\right|  \mathrm{d}s  \mathrm{d}\xi \right] \\
										\label{main a priori 2}
&\quad+ R \cdot C_{BDG} \cdot \left(\cC^2_{R,T,\psi} \wedge \cC^3_{R,T,\psi}\right)  \bE\left[ \left(\int_0^{\tau \wedge T} \|h(t,\cdot)\|^2_{L_2(\fR^d)} \mathrm{d}t\right)^{1/2} \right]
\end{align}
for all $R, T \in (0,\infty)$, where $\cC^i_{R,T,\psi}$ $(i=1,2,3)$ are positive constants which are explicitly given by
\begin{align*}
\cC^1_{R,T,\psi}=\left[ C_{R,T}^{\sup|\psi|}+2T\|\psi\|_{L_\infty\left( \Omega \times (0,T) \times B_R \right)} \cdot C_{R,T}^{\mathrm{e}\int\sup|\Re[\psi]|}  \right],
\end{align*}
\begin{align*}
\cC^2_{R,T,\psi}=C_{R,T}^{\mathrm{e}\int\sup|\Re[\psi]|}  \left[ 1+ 2T\|\psi\|_{L_\infty\left( \Omega \times (0,T) \times B_R \right)}C_{R,T}^{\mathrm{e}\int\sup|\Re[\psi]|}  \right],
\end{align*}
and
\begin{align*}
\cC^3_{R,T,\psi} =  1+\cC^1_{R,T,\psi}.
\end{align*}
\end{corollary}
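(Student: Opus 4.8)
The plan is to deduce the corollary from Theorem \ref{main thm} together with quantitative a priori bounds read off from the frequency-wise Duhamel representation of the solution. First I would note that Assumption \ref{main as sta} is stronger than Assumptions \ref{main as} and \ref{main as 2}: since $\esssup_{\omega}|\psi(t,\xi)|\le\|\psi\|_{L_\infty(\Omega\times(0,T)\times B_R)}$ on $(0,T)\times B_R$, the integrals in \eqref{20240725 50} and \eqref{20240725 51} are finite, and moreover the data classes here are exactly the finite-expectation (temporally localized) subspaces of those in Theorem \ref{main thm}. Hence Theorem \ref{main thm} already produces a unique Fourier-space weak solution $u$, and uniqueness in the present smaller classes is inherited. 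It then remains to upgrade $u$ into the finite-expectation temporal-local spaces and to prove \eqref{main a priori}--\eqref{main a priori 2}. The structural point is that the left-hand side of \eqref{main a priori} is precisely the defining quantity of $\cF^{-1}\bL_{1,1,1,loc,\ell oc}(\cdots,|\psi(t,\xi)|\mathrm dt\mathrm d\xi)$ and the left-hand side of \eqref{main a priori 2} is precisely the defining quantity of $\cF^{-1}\bL_{1,loc}\bC L_{1,\ell oc}$; thus, once the two estimates are established and their right-hand sides are finite under the hypotheses on $u_0,f,h$, the asserted membership follows simultaneously.

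For the estimates I would work on the Fourier side. Testing \eqref{solution meaning} against an approximate identity concentrating in frequency shows that, for almost every fixed $\xi$, the frequency function $v(t,\xi):=\cF[u(t,\cdot)](\xi)$ solves the scalar linear It\^o equation $\mathrm dv=\psi(t,\xi)\,v\,\mathrm dt+\cF[f(t,\cdot)](\xi)\,\mathrm dt+\cF[h(t,\cdot)\eta^k(\cdot)](\xi)\,\mathrm dB^k_t$ with $v(0,\xi)=\cF[u_0](\xi)$; this makes rigorous, at the level of the frequency function, the heuristic of Remark \ref{formal solution}. Introducing the adapted integrating factor $\Phi(t,\xi)=\exp\!\big(-\int_0^t\psi(r,\xi)\,\mathrm dr\big)$ and solving gives $v(t,\xi)=\Phi(t,\xi)^{-1}\big[\cF[u_0](\xi)+\int_0^t\Phi(s,\xi)\cF[f(s,\cdot)](\xi)\,\mathrm ds+\int_0^t\Phi(s,\xi)\cF[h(s,\cdot)\eta^k(\cdot)](\xi)\,\mathrm dB^k_s\big]$, where now the integrand of the stochastic integral is genuinely predictable in $s$. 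The identities $|\Phi(t,\xi)^{-1}|=\exp(\int_0^t\Re[\psi]\,\mathrm dr)$ and $|\Phi(t,\xi)^{-1}\Phi(s,\xi)|=\exp(\int_s^t\Re[\psi]\,\mathrm dr)\le\exp(\int_s^t|\Re[\psi]|\,\mathrm dr)$ let me control every exponential weight by $C^{\mathrm{e}\int\sup|\Re[\psi]|}_{R,T}$, while for the $u_0$- and $f$-parts the product $|\psi(t,\xi)|\,\sup_{0\le s\le t}\exp(\int_s^t|\Re[\psi]|\,\mathrm dr)$ integrates in $t$ to $C^{\sup|\psi|}_{R,T}$ by the very definition of that constant; Fubini in $(s,t)$ converts the $f$-contribution into $C^{\sup|\psi|}_{R,T}\int_0^{\tau\wedge T}|\cF[f(s,\cdot)](\xi)|\,\mathrm ds$. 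Remark \ref{simple constant} keeps $C^{\mathrm{e}\int\sup|\Re[\psi]|}_{R,T}$ and $C^{\sup|\psi|}_{R,T}$ interchangeable, which is what allows the final bounds to be packaged into $\cC^1_{R,T,\psi}$, $\cC^2_{R,T,\psi}$, and $\cC^3_{R,T,\psi}=1+\cC^1_{R,T,\psi}$.

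The noise term is where the real work lies. Using Parseval against the real orthonormal basis $\{\eta^k\}$ I would evaluate $\sum_k|\cF[h(s,\cdot)\eta^k(\cdot)](\xi)|^2=(2\pi)^{-d}\|h(s,\cdot)\|_{L_2(\fR^d)}^2$, which is independent of $\xi$, so that the BDG inequality \eqref{20240509 10} with $p=1$ and constant $C_{BDG}$ bounds the quadratic variation of the stochastic integral by $(2\pi)^{-d}\int_0^{\tau\wedge T}\|h\|_{L_2}^2$; integrating the resulting $\xi$-uniform bound over $B_R$ then yields the factor $R$ (up to the dimensional constant coming from $|B_R|$ and $(2\pi)^{-d/2}$). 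To carry this out I must interchange $\int_{B_R}\mathrm d\xi$ with the It\^o integral and with $\mathrm dt$, which is legitimized by the stochastic Fubini theorem of Section \ref{section stochastic Fubini}; this is exactly why the data classes carry the particular orders of integration that they do. I expect the main obstacle to be the stochastic term combined with the randomness of $\psi$: because $\Phi(t,\xi)^{-1}$ sits outside the It\^o integral while $\Phi(s,\xi)$ sits inside, and for a genuinely random symbol these two factors cannot be merged into a single adapted weight, one must bound them separately, which is what produces the product-type weight $2T\|\psi\|_{L_\infty(\Omega\times(0,T)\times B_R)}C^{\mathrm{e}\int\sup|\Re[\psi]|}_{R,T}$ in $\cC^1_{R,T,\psi}$ and its squared counterpart in $\cC^2_{R,T,\psi}$. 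It is precisely here that the uniform bound $|\psi(t,\xi)|\le\|\psi\|_{L_\infty(\Omega\times(0,T)\times B_R)}$ from Assumption \ref{main as sta}, rather than the weaker Assumptions \ref{main as}--\ref{main as 2}, becomes indispensable, since for random $\psi$ no $\omega$-pointwise control of the symbol survives inside the expectation of the stochastic term.
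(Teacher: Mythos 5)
Your reduction is sound and your overall architecture matches the paper's: Assumption \ref{main as sta} implies Assumptions \ref{main as} and \ref{main as 2}, existence and uniqueness come from the space-time-white-noise theorem, and the noise is converted to an $l_2$-valued inhomogeneity via Parseval against $\{\eta_k\}$ (the paper's Lemma \ref{l2 con lem}), whose $\xi$-independence produces the factor $R$. Indeed the paper's own proof of this corollary is exactly that two-line reduction to Corollary \ref{time corollary}, with all the analytic work delegated to Corollary \ref{exist cor 2}. Where you genuinely diverge is in how the a priori estimates are obtained for a \emph{random} symbol. The paper never writes a Duhamel-type formula for $u$ itself; instead it decomposes $u=\tilde u+v$, where $\tilde u$ solves the equation with the deterministic majorant symbol $\tilde\psi(t,\xi)=\esssup_\omega|\Re[\psi(t,\xi)]|+\mathrm{i}\esssup_\omega|\Im[\psi(t,\xi)]|$ (so that the kernel estimates of Corollaries \ref{stochastic solution part} and \ref{deter exist cor 2} apply), and $v$ solves a noise-free equation with the correction datum $\tilde f=\psi(t,-\mathrm{i}\nabla)\tilde u-\tilde\psi(t,-\mathrm{i}\nabla)\tilde u$, estimated by Corollary \ref{exist cor 1} and Theorem \ref{thm a priori}. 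You instead apply the adapted integrating factor $\Phi(t,\xi)=\exp(-\int_0^t\psi(r,\xi)\,\mathrm{d}r)$ directly to the scalar It\^o equation from Lemma \ref{fourier repre}; since $\Phi$ is continuous, adapted and of finite variation (under Assumption \ref{main as sta}), the product rule is legitimate and the integrand $\Phi(s,\xi)\cF[h(s,\cdot)\eta^k](\xi)$ is predictable, so your factored representation is well defined even though the merged kernel $\exp(\int_s^t\psi)$ is not. This is a more direct route than the paper's and it buys a representation the paper claims not to have for random symbols; what it costs is that BDG forces you to bound $|\Phi(t)^{-1}|$ and $|\Phi(s)|$ by separate essential suprema, yielding a factor $\bigl(C^{\mathrm{e}\int\sup|\Re[\psi]|}_{R,T}\bigr)^2$ (and $C^{\sup|\psi|}_{R,T}\cdot C^{\mathrm{e}\int\sup|\Re[\psi]|}_{R,T}$ for the $|\psi|$-weighted bound) on the noise contribution.

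One correction: your closing claim that this separate bounding "produces the product-type weight $2T\|\psi\|_{L_\infty}C^{\mathrm{e}\int\sup|\Re[\psi]|}_{R,T}$ in $\cC^1_{R,T,\psi}$ and its squared counterpart in $\cC^2_{R,T,\psi}$" is not accurate. In the paper that additive term arises from the correction $\tilde f=(\psi-\tilde\psi)(-\mathrm{i}\nabla)\tilde u$, whose Fourier transform is bounded by $2\|\psi\|_{L_\infty(\Omega\times(0,T)\times B_R)}|\cF[\tilde u]|$ and then integrated over $[0,\tau\wedge T]$ (hence the $T$); it has nothing to do with splitting the two exponential factors. Your method therefore proves estimates of the same form but with different explicit constants, neither set dominating the other in general, and it does not produce the alternative constant $\cC^3_{R,T,\psi}=1+\cC^1_{R,T,\psi}$, which the paper obtains by feeding the $|\psi|$-weighted bound back through the kernel-free a priori estimate of Theorem \ref{thm a priori}. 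If the corollary is read as asserting those specific constants, your argument as written does not deliver them; the qualitative memberships and the stability statement do follow from your bounds.
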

\begin{rem}
\eqref{main a priori 2} clearly implies
\begin{align*}
\bE\left[ \sup_{t \in [0,\tau \wedge T]}  \int_{B_R} |\cF[u(t,\cdot)](\xi)|\mathrm{d}\xi\right] 
&\leq \left(\cC^2_{R,T,\psi} \wedge \cC^3_{R,T,\psi}\right)\bE\left[\int_{B_R} |\cF[u_0](\xi)|\mathrm{d}\xi + \int_{B_R} \int_0^{\tau \wedge T}\left|\cF[f(s,\cdot)](\xi)\right|  \mathrm{d}s  \mathrm{d}\xi \right] \\
&\quad+ R \cdot C_{BDG} \cdot \left(\cC^2_{R,T,\psi} \wedge \cC^3_{R,T,\psi}\right)  \bE\left[ \left(\int_0^{\tau \wedge T} \|h(t,\cdot)\|^2_{L_2(\fR^d)} \mathrm{d}t\right)^{1/2} \right],
\end{align*}
which appears to be a more conventional form in estimates for SPDEs. However, it does not suffice to show 
$$
u \in \cF^{-1}\bL_{1,loc}\bC L_{1,\ell oc}\left( \clbrk 0,\tau \cbrk,\sigma\left(   \rG \times \cB([0,\infty))  \cup \rH\cup \cP   \right)\times \cB(\fR^d)\right).
$$
\end{rem}

\begin{rem}
									\label{main cons rem}
Generally, it is hard to determine which constant within $\left(\cC^2_{R,T,\psi} \wedge \cC^3_{R,T,\psi}\right)$ is smaller since they are derived from different methods.
Additionally, the constant $\cC^2_{R,T,\psi}$ can be slightly smaller, which will be detailed in Remark \ref{20240730 rem}.
However, if our symbol $\psi$ is non-random, then the constants $C_{R,T}^{\mathrm{e}\int\sup|\Re[\psi]|}$ and $C_{R,T}^{\sup|\psi|}$
can be replaced by some smaller constants in Assumptions \ref{weaker as} and \ref{weaker as 2} (or \eqref{main deter as} and \eqref{main deter as 2}).
This makes it possible for $\cC^1_{R,T,\psi}$ and $\left(\cC^2_{R,T,\psi} \wedge \cC^3_{R,T,\psi}\right)$ to be substituted with significantly simpler constants $C_{R,T}^{|\psi|}$ and $C_{R,T}^{\mathrm{e}|\int\Re[\psi]|}$, respectively.
Additionally, for the deterministic symbol, we have
\begin{align*}
 \bE\left[ \int_0^{\tau \wedge T} \int_{B_R} |\cF[u(t,\cdot)](\xi)|\mathrm{d}\xi\right] \mathrm{d}t
										\notag
&\leq   T \cdot C_{R,T}^{\mathrm{e}\int\Re[\tilde\psi]} \cdot \bE \left[\int_{B_R} |\cF[u_0](\xi)|\mathrm{d}\xi 
+ \int_{B_R} \int_0^{\tau \wedge T}\left|\cF[f(s,\cdot)](\xi)\right|  \mathrm{d}s  \mathrm{d}\xi \right]
  \\
& \quad+ T \cdot C_{BDG} \cdot C_{R,T}^{\mathrm{e}\int\Re[\tilde\psi]} \bE\left[\int_{B_R} \left(\int_0^{\tau \wedge T}\left|\cF[g(s,\cdot)](\xi)\right|^2_{l_2}  \mathrm{d}s \right)^{1/2} \mathrm{d}\xi \right],
\end{align*}
where 
\begin{align*}
C^{\mathrm{e}\int\Re[\tilde \psi]}_{R,T}:=\esssup_{ 0\leq s \leq t \leq T, \xi \in   B_R} \left| \exp\left( \int_s^t\Re[\tilde \psi(r,\xi)]\mathrm{d}r  \right)  \right|   
< \infty.
\end{align*}
These substitute constants seem to be optimal since they are directly obtained from the kernels consisting of $\exp\left(\int_s^t\psi(r,\xi)\mathrm{d}r \right)$.
More details will be discussed in Corollary \ref{deter exist cor 2}.
\end{rem}

\begin{rem}
Due to Plancherel's theorem,  it is evident that
\begin{align*}
\|h(t,\cdot)\|^2_{L_2(\fR^d)}
=\|\cF[h(t,\cdot)]\|^2_{L_2(\fR^d)}.
\end{align*}
Thus \eqref{main a priori} and \eqref{main a priori 2} may indicate that the spatial Fourier transform of the solution $u$ to \eqref{main eqn} is governed by those of the data $u_0$, $f$, and $h$.
In other words, we could also say that 
the frequency function (with respect to the space variable) of the solution $u$ to \eqref{main eqn} is stable under the perturbations of the data $u_0$, $f$, and $h$ with respect to the spatial frequencies since our equation is linear.
\end{rem}

We also present the result without the random noise component to compare assumptions on $\psi$.
Specifically, we consider the deterministic version of \eqref{main eqn} as follows
\begin{align}
								\notag
&\mathrm{d}u=\left(\psi(t,-\mathrm{i}\nabla)u(t,x) + f(t,x)\right) \mathrm{d}t,\quad 
&(t,x) \in \Omega \times (0,\tau)\times \mathbf{R}^d,\\
&u(0,x)=u_0,\quad & x\in\mathbf{R}^d.
								\label{deter eqn}
\end{align}
Equation \eqref{deter eqn} remains random because the symbol $\psi$, the inhomogeneous data $f$, and the initial data $u_0$ could generally be random.
However, it is typically considered as a deterministic PDE since a solution can be easily obtained by solving the corresponding deterministic equation for each fixed sample point $\omega \in \Omega$.

This approach might raise concerns about whether the solution $u$, obtained by solving PDE for each $\omega \in \Omega$, is jointly measurable. Fortunately, when there is no stochastic (integral) term, joint measurability issues are minimal, making all mathematical conditions comparatively weaker. In particular, the conditions on $\psi$ can be relaxed in the absence of a stochastic term.

Therefore, we also examine the well-posedness of \eqref{deter eqn} under weaker conditions in this section, which is presented as an independent theorem in Theorem \ref{time deter thm}. We begin by proposing weaker assumptions on $\psi$.

\begin{assumption}
									\label{weaker as}
For all $R,T \in (0,\infty)$,
\begin{align*}
C^{\mathrm{e}\int\Re[\psi]}_{R,T}:=\esssup_{0\leq s \leq t \leq T, \xi \in B_R} \left| \exp\left( \int_s^t\Re[\psi(r,\xi)]\mathrm{d}r  \right)  \right|   
< \infty \quad (a.s.).
\end{align*}
\end{assumption}

\begin{assumption}
									\label{weaker as 2}

For all $R,T \in (0,\infty)$,
\begin{align*}
C^{|\psi|}_{R,T}:=\esssup_{ \xi \in  B_R} \left[ \int_0^T|\psi(t,\xi)|\sup_{ 0\leq s \leq t} \exp\left( \left|\int_s^t \Re[\psi(r,\xi)] \mathrm{d}r \right| \right)    \mathrm{d}t  \right]
< \infty \quad (a.s.).
\end{align*}
\end{assumption}
\begin{rem}
If the sign of $\Re[\psi(t,\xi)]$ is preserved, \textit{i.e.}
$$
\Re[\psi(t,\xi)] \geq 0 \quad \text{or} \quad \Re[\psi(t,\xi)] \leq 0 \quad \forall t,\xi,
$$
then the constant $C^{\mathrm{e}\int\Re[\psi]}_{R,T}$ has both clear upper and lower bounds.
For instance, if $\Re[\psi(t,\xi)] \leq 0 $ for all $t$ and $\xi$, then
\begin{align*}
\esssup_{\xi \in B_R} \left| \exp\left( \int_0^T\Re[\psi(r,\xi)]\mathrm{d}r  \right)  \right|   
\leq 
C^{\mathrm{e}\int\Re[\psi]}_{R,T}
\leq 1.
\end{align*}
This demonstrates that all constants associated with the symbol can be significantly simplified if the symbol meets an ellipticity condition.
\end{rem}

\begin{rem}
										\label{rem 20240730 10}
It is clear that Assumptions \ref{weaker as} and \ref{weaker as 2} are less stringent than Assumptions \ref{main as} and \ref{main as 2} as indicated by the inequalities
\begin{align}
										\label{20240721 50}
C^{\mathrm{e}\int\Re[\psi]}_{R,T} \leq C^{\mathrm{e}\int\sup|\Re[\psi]|}_{R,T} \quad (a.s.)
\end{align}
and
\begin{align*}
C^{|\psi|}_{R,T} \leq C^{\sup|\psi|}_{R,T} \quad (a.s.).
\end{align*}
In other words, Assumptions \ref{main as} and \ref{main as 2} are sufficient to ensure that Assumptions \ref{weaker as} and \ref{weaker as 2} are satisfied. 
In particular, 
$C^{\mathrm{e}\int\Re[\psi]}_{R,T} \leq C^{\mathrm{e}\int\sup|\Re[\psi]|}_{R,T}$ and $C^{|\psi|}_{R,T} \leq C^{\sup|\psi|}_{R,T}$
if the symbol is deterministic.
Note that the first equality is not generally true even though the symbol is deterministic.
Additionally, Assumption \ref{weaker as} is significant because this allows our main operators to encompass logarithmic operators, such as the logarithmic Laplacian (cf. \cite[Theorem 2.27]{Choi Kim 2024}).
Specifically, if $\psi(t,\xi) = \log |\xi|^2$, then $C^{\mathrm{e}\int\Re[\psi]}_{R,T} <\infty$ but $C^{\mathrm{e}\int\sup|\Re[\psi]|}_{R,T}=\infty$.

Moreover, the requirement in Assumption \ref{weaker as 2} can be somewhat relaxed to include logarithmic operators. 
This will be further elaborated in Remark \ref{20240803 rem 30}. 
In spite of this shortcoming, we opted for Assumption \ref{weaker as 2} over the weaker condition to maintain consistency with Assumption \ref{main as 2} since our primary focus is on SPDEs.  
Notably, our results do not encompass \eqref{main eqn} with logarithmic operators, and we firmly believe that theories involving logarithmic operators for SPDEs are not feasible due to intricate joint measurability issues, even though they are feasible for PDEs.

Lastly, if $\psi(t,\xi)$ is deterministic and $\Re[\psi(t,\xi)]$ is non-negative, then
$C^{\mathrm{e}\int\Re[\psi]}_{R,T}=C^{\mathrm{e}\int\sup|\Re[\psi]|}_{R,T}$.
This is why we use similar notations $C^{\mathrm{e}\int\Re[\psi]}_{R,T}$ and $C^{\mathrm{e}\int\sup|\Re[\psi]|}_{R,T}$ even though the supremum with respect to the time variable is taken in different ways. 
\end{rem}

\begin{thm}
							\label{time deter thm}
Let
\begin{align*}
u_0 \in \cF^{-1}\bL_{0,1,\ell oc}\left( \Omega \times \fR^d , \rG \times \cB(\fR^d)\right)
\quad \text{and} \quad
f \in \cF^{-1}\bL_{0,1,1,\ell oc}\left( \opar 0, \tau \cbrk \times \fR^d, \rH \times \cB(\fR^d) \right).
\end{align*}
Suppose that $\psi$ satisfies  Assumption \ref{weaker as} and Assumption \ref{weaker as 2}.
Then there exists a unique Fourier-space weak solution  $u$ to \eqref{deter eqn} in the intersection of the two classes 
$$
\cF^{-1}\bC L_{1,\ell oc}\left( \clbrk 0,\tau \cbrk,\sigma\left(   \rG \times \cB([0,\infty))  \cup \rH\cup \cP   \right)\times \cB(\fR^d)\right)
$$
and
$$
 \cF^{-1}\bL_{0,1,1, \ell oc}\left( \opar 0,\tau \cbrk \times \fR^d, \sigma\left( \rG \times \cB([0,\infty))  \cup \rH  \cup \cP   \right) \times \cB(\fR^d), |\psi(t,\xi)|\mathrm{d}t \mathrm{d}\xi\right).
$$
Moreover, the solution $u$ satisfies 
\begin{align}
										\label{202040524 20}
 \int_{B_R} \sup_{t \in [0,\tau \wedge T]}  |\cF[u(t,\cdot)](\xi)| \mathrm{d}\xi  
\leq   C_{R,T}^{\mathrm{e}\int\Re[\psi]} \left[\int_{B_R} |\cF[u_0](\xi)|\mathrm{d}\xi + \int_{B_R} \int_0^{\tau \wedge T}\left|\cF[f(s,\cdot)](\xi)\right|  \mathrm{d}s  \mathrm{d}\xi \right]
\end{align}
and
\begin{align}
										\label{202040524 21}
 \int_0^{\tau \wedge T} \int_{B_R}|\psi(t,\xi)||\cF[u(t,\cdot)](\xi)| \mathrm{d}\xi  \mathrm{d}t 
\leq
C_{R,T}^{|\psi|}
\left[\int_{B_R} |\cF[u_0](\xi)|\mathrm{d}\xi + \int_{B_R} \int_0^{\tau \wedge T}\left|\cF[f(s,\cdot)](\xi)\right|  \mathrm{d}s  \mathrm{d}\xi \right]
\end{align}
with probability one.
\end{thm}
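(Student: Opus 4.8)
The plan is to solve \eqref{deter eqn} on the frequency side, where the spatial Fourier transform decouples the equation into a family of scalar linear ODEs indexed by $\xi$. Writing $v(t,\xi)=\cF[u(t,\cdot)](\xi)$, the equation formally becomes $\partial_t v(t,\xi)=\psi(t,\xi)v(t,\xi)+\cF[f(t,\cdot)](\xi)$ with $v(0,\xi)=\cF[u_0](\xi)$, whose Duhamel (variation-of-parameters) solution is the explicit candidate
\begin{align*}
v(t,\xi):=\exp\left(\int_0^t\psi(r,\xi)\,\mathrm{d}r\right)\cF[u_0](\xi)+\int_0^t\exp\left(\int_s^t\psi(r,\xi)\,\mathrm{d}r\right)1_{\opar 0,\tau\cbrk}(s)\cF[f(s,\cdot)](\xi)\,\mathrm{d}s,
\end{align*}
which is exactly the frequency function of Remark \ref{formal solution} with $h\equiv0$. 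I would define the candidate solution by $u:=\cF^{-1}[v]$ via the extended inverse Fourier transform of Theorem \ref{homeo thm}, and then split the work into checking membership in the two asserted classes, verifying the weak formulation, and proving uniqueness.

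The two a priori bounds \eqref{202040524 20} and \eqref{202040524 21} follow directly from the identity $\bigl|\exp(\int_s^t\psi(r,\xi)\,\mathrm{d}r)\bigr|=\exp(\int_s^t\Re[\psi(r,\xi)]\,\mathrm{d}r)$. Dominating this factor by $C_{R,T}^{\mathrm{e}\int\Re[\psi]}$ on $\{0\le s\le t\le T,\ \xi\in B_R\}$ (Assumption \ref{weaker as}) and integrating over $B_R$ yields \eqref{202040524 20}; multiplying $|v(t,\xi)|$ by $|\psi(t,\xi)|$, integrating in $t$, applying Fubini to the inhomogeneous term, and bounding by $C_{R,T}^{|\psi|}$ (Assumption \ref{weaker as 2}) yields \eqref{202040524 21}. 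The continuity of $t\mapsto v(t,\xi)$ for a.e.\ $\xi$, hence membership in $\cF^{-1}\bC L_{1,\ell oc}$, comes from dominated convergence with the same domination $C_{R,T}^{\mathrm{e}\int\Re[\psi]}$, while joint measurability with respect to $\sigma(\rG\times\cB([0,\infty))\cup\rH\cup\cP)\times\cB(\fR^d)$ follows because $v$ is built from $\psi$, $\cF[u_0]$, and $\cF[f]$ through parameter-dependent integrals.

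Both the verification of \eqref{solution meaning} and the uniqueness rest on reducing the distributional identity, pointwise in $\xi$, to a scalar integral equation. Pairing each term against $\varphi\in\cF^{-1}\cD(\fR^d)$ and expressing it through its realizable frequency function as in Definition \ref{defn psi operator}, the weighted integrability above justifies Fubini and reduces \eqref{solution meaning} (with $h\equiv0$) to $v(t,\xi)=\cF[u_0](\xi)+\int_0^t\psi(s,\xi)v(s,\xi)\,\mathrm{d}s+\int_0^t1_{\opar 0,\tau\cbrk}(s)\cF[f(s,\cdot)](\xi)\,\mathrm{d}s$ for a.e.\ $\xi$. This holds by the fundamental theorem of calculus: Assumption \ref{weaker as 2} forces $\psi(\cdot,\xi)\in L_1(0,T)$ for a.e.\ $\xi$, so $t\mapsto\exp(\int_0^t\psi(r,\xi)\,\mathrm{d}r)$ is absolutely continuous with a.e.\ derivative $\psi(t,\xi)\exp(\int_0^t\psi(r,\xi)\,\mathrm{d}r)$, and differentiating the product form of $v$ recovers the ODE. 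For uniqueness, the difference $w$ of two solutions solves the homogeneous equation with zero data, so the same reduction gives $\cF[w(t,\cdot)](\xi)=\int_0^t\psi(s,\xi)\cF[w(s,\cdot)](\xi)\,\mathrm{d}s$, and Gr\"onwall's inequality with the finite weight $\int_0^t|\psi(s,\xi)|\,\mathrm{d}s$ forces $\cF[w(t,\cdot)](\xi)=0$ for a.e.\ $\xi$.

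I expect the principal difficulty to be bookkeeping rather than hard analysis: verifying joint measurability of $v$ against the generated $\sigma$-algebra $\sigma(\rG\times\cB([0,\infty))\cup\rH\cup\cP)$ and rigorously justifying each interchange of integration inside the distributional pairings. The second genuinely delicate point is uniqueness in the degenerate-weight setting, where the weighted seminorm cannot detect $w$ on the set where $\psi$ vanishes (cf.\ Remark \ref{unique rem}); the argument must therefore avoid that seminorm entirely and instead exploit the $\cF^{-1}\bC L_{1,\ell oc}$-regularity, which supplies genuine continuous-in-time frequency functions to which the pointwise Gr\"onwall estimate applies.
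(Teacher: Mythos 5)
Your proposal is correct and follows essentially the same route as the paper: the explicit Duhamel candidate on the frequency side (Corollary \ref{exist cor 1} via Theorem \ref{deter exist weak sol}), verification of the weak formulation by the fundamental theorem of calculus and Fubini, the two estimates by direct domination with $C_{R,T}^{\mathrm{e}\int\Re[\psi]}$ and $C_{R,T}^{|\psi|}$ as in Lemma \ref{deterministic solution part}, and uniqueness by reducing the weak identity to the pointwise scalar integral equation (Lemma \ref{fourier repre}, where the paper makes your ``pairing against test functions'' step precise with a mollifier whose Fourier transform is compactly supported) followed by Gr\"onwall. Your closing observation about the degenerate weight is exactly the point of Remark \ref{unique rem}, and is handled in the paper by requiring the solution to lie in a class with realizable frequency functions on all of $\fR^d$.
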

The proof of Theorem \ref{time deter thm} is given in Section \ref{pf time thm}.

\begin{rem}
										\label{rem define psi}
Formally, the kernel to \eqref{deter eqn} (or \eqref{main eqn}) is given by
\begin{align*}
\cF_{\xi}^{-1}\left[ \exp \left( \int_s^t \psi(r,\xi) \mathrm{d}r \right)\right](x), \quad (0\leq s \leq t~\text{and}~ x,\xi \in \fR^d)
\end{align*}
which serves as a solution to
\begin{align*}
&\mathrm{d}u=\psi(t,-\mathrm{i}\nabla)u(t,x) \mathrm{d}t,\quad &(t,x) \in \Omega \times (s,\infty)\times \mathbf{R}^d,\\
&u(s,x)=\delta_0,\quad & x\in\mathbf{R}^d,
\end{align*}
where $\delta_0$ is the Dirac delta centered at zero in $\fR^d$.
Thus it is important to control the functions $\exp \left( \int_s^t \psi(r,\xi) \mathrm{d}r \right)$. 
It seems that the imaginary part of these functions could be arbitrary since
\begin{align*}
\left|\exp \left( \int_s^t \psi(r,\xi) \mathrm{d}r \right) \right|
=\left|\exp \left( \int_s^t \Re[\psi(r,\xi)] \mathrm{d}r \right) \right|.
\end{align*}
However, the imaginary part of $\psi(r,\xi)$ must be integrable on any interval $(s,t)$ to make the functions $\exp \left( \int_s^t \psi(r,\xi) \mathrm{d}r \right)$ well-defined even though these imaginary values do not affect estimates of the exponential functions.  A sufficient condition to ensure the local integrability on $\Im[\psi(t,\xi)]$ is provided by \eqref{20240801 01}.

Recall that we assumed the weaker condition, Assumption \ref{weaker as 2}, than Assumption \ref{main as 2} in Theorem \ref{time deter thm}.
However, \eqref{20240801 01} still can be derived from  Assumption \ref{weaker as 2}.
Thus we did not independently add the condition \eqref{20240801 01} in Theorem \ref{time deter thm}.
\end{rem}

\begin{rem}
If the spatial Fourier transforms of our data $u_0$  and $f$ have finite moments, then that of the solution 
$u$ (including those amplified by $\psi$) also has a finite moment, as both \eqref{202040524 20} and \eqref{202040524 21} hold almost surely. Additionally, \eqref{202040524 20} and \eqref{202040524 21} can be used to obtain various moment estimates of the solutions. For further details on these moment estimates, we refer to \cite{KID 2022}.
\end{rem}

\mysection{A well-posedness theory to SPDEs driven by white noises in time}

In this section, we examine SPDEs driven by white noises in time as follows:
\begin{align}
								\notag
&\mathrm{d}u=\left(\psi(t,-\mathrm{i}\nabla)u(t,x) + f(t,x)\right) dt + g^k(t,x)\mathrm{d}B^k_t,\quad &(t,x) \in \Omega \times (0,\tau)\times \mathbf{R}^d,\\
&u(0,x)=u_0,\quad & x\in\mathbf{R}^d.
								\label{time eqn}
\end{align}
Here $B_t^k$ $(k=1,2,\ldots)$ are independent one-dimensional Brownian motions (Wiener processes) on $\Omega$, $\psi(t,\xi)$ is a (complex-valued) $\cP \times \cB(\fR^d)$ function defined on $\Omega \times [0,\infty) \times \fR^d$ and $\psi(t,-\mathrm{i}\nabla)$ is a (time-measurable) pseudo-differential operator with the symbol $\psi(t,\xi)$, i.e.
\begin{align}
								\label{ab op}
\psi(t,-\mathrm{i}\nabla)u(t,x):=\cF^{-1}\left[\psi(t,\cdot)\cF[u](t,\cdot)\right](x).
\end{align}
It is well-known that \eqref{main eqn} is a special case of \eqref{time eqn}.
Consequently, there have been many attempts to study stochastic partial differential equations driven by space-time white noises 
by using theories of corresponding SPDEs driven by a sequence of independent one-dimensional Brownian motions,
which is already mentioned before Definition \ref{space weak solution}.
Therefore we introduce the definition of our weak solutions to \eqref{time eqn}, which includes Definition \ref{space weak solution}.

\begin{defn}[Fourier-space weak solution to \eqref{time eqn}]
									\label{space weak solution 2}
Let $u_0 \in \bL_0\left(\Omega,\rF;\cF^{-1}\cD'(\fR^d) \right)$, 
\begin{align*}
f \in \bL_{0,1, loc}\left( \opar 0,\tau \cbrk, \rF \times \cB([0,\infty));\cF^{-1}\cD'(\fR^d)\right), 
\quad g\in \bL_{0,2,loc}\left( \opar 0,\tau \cbrk, \cP;\cF^{-1}\cD'(\fR^d;l_2)\right),
\end{align*}
and 
$$
u \in  \bL_{0}\left( \clbrk 0,\tau \cbrk, \rF \times \cB( [0,\infty) );\cF^{-1}\cD'(\fR^d)\right).
$$
We say that $u$ is a {\bf Fourier-space weak solution} to equation \eqref{time eqn} if for any $\varphi \in \cF^{-1}\cD(\fR^d)$,
\begin{align}
									\notag
\left\langle u(t,\cdot),\varphi \right\rangle 
&= \langle u_0, \varphi \rangle +  \int_0^t  \left\langle  \psi(s,-\mathrm{i} \nabla)u(s,\cdot) , \varphi \right\rangle\mathrm{d}s 
+ \int_0^t \left\langle f(s,\cdot),\varphi\right\rangle \mathrm{d}s \\
									\label{20240703 20}
& \quad +\int_0^t \left\langle g^k(s,\cdot),\varphi\right\rangle \mathrm{d}B^k_s
\quad  (a.e.) \quad (\omega,t) \in \clbrk 0,\tau \cbrk.
\end{align}
\end{defn}
\begin{rem}
										\label{20240718 10}
Let $g\in \bL_{0,2,loc}\left( \opar 0,\tau \cbrk, \cP;\cF^{-1}\cD'(\fR^d;l_2)\right)$
and $\varphi \in \cF^{-1}\cD(\fR^d)$.
Then for all $T \in (0,\infty)$,
\begin{align*} 
\int_0^T 1_{\opar 0 , \tau \cbrk}(s) \left|\langle g,\varphi \rangle  \right|^2_{l_2} <\infty \quad (a.s.).
\end{align*}
Thus the stochastic integral term
\begin{align*}
\int_0^t 1_{\opar 0 , \tau \cbrk}(s) \left\langle g^k(s,\cdot),\varphi\right\rangle \mathrm{d}B^k_s
\end{align*}
is well-defined for all $(\omega,t) \in \Omega \times [0,\infty)$.
In particular, for any $(\omega,t) \in \clbrk 0, \tau \cbrk$, the term
\begin{align*}
\int_0^t \left\langle g^k(s,\cdot),\varphi\right\rangle \mathrm{d}B^k_s
:= \int_0^t  1_{\opar 0 , \tau \cbrk}(s)\left\langle g^k(s,\cdot),\varphi\right\rangle \mathrm{d}B^k_s
\end{align*}
in \eqref{20240703 20} is well-defined.
Moreover, if 
$$
g \in \cF^{-1}\bL^{\omega,t,\xi}_{0,2,1,\ell oc}\left( \opar 0,  \tau \cbrk \times \fR^d, \cP \times \cB(\fR^d) ; l_2\right),
$$
then the well-defined stochastic term is given by
\begin{align*}
\int_0^t \left\langle g^k(s,\cdot),\varphi\right\rangle \mathrm{d}B^k_s
=\int_0^t \left(\int_{\fR^d} \cF[g^k(s,\cdot)](\xi) \overline{\cF[\varphi](\xi)} \mathrm{d}\xi\right) \mathrm{d}B^k_s
\end{align*}
since for any $T \in (0,\infty)$
\begin{align*}
&\int_0^T 1_{\opar 0 , \tau \cbrk}(s)\left|\int_{\fR^d} \cF[g(s,\cdot)](\xi) \overline{\cF[\varphi](\xi)} \mathrm{d}\xi\right|_{l_2}^2 \mathrm{d}s \\
&\leq  
\left(\sup_{\xi \in \fR^d} \left|\cF[\varphi]\right|\right)\int_0^T 1_{\opar 0 , \tau \cbrk}(s)\left(\int_{B_{R_\varphi}} |\cF[g(s,\cdot)](\xi)|_{l_2}  \mathrm{d}\xi\right)^2 \mathrm{d}s
< \infty \quad (a.s.),
\end{align*}
where ${R_\varphi}$ is a positive constant so that $\supp \cF[\varphi] \subset B_{R_\varphi}$.
In particular, for any 
$$
g \in \cF^{-1}\bL^{\omega,\xi,t}_{0,1,2,\ell oc}\left( \opar 0,  \tau \cbrk \times \fR^d, \cP \times \cB(\fR^d) ; l_2\right),
$$
the stochastic term is well-defined due to the generalized Minkowski inequality.
Indeed, for any $R \in (0,\infty)$,
\begin{align*}
\left(\int_0^\tau \left|\int_{B_R} |\cF[g(t,\cdot)] (\xi)| \mathrm{d}\xi \right|_{l_2}^2\mathrm{d}t\right)^{1/2}
\leq \int_{B_R} \left[\int_0^\tau \left|\cF[g(t,\cdot)] (\xi)  \right|_{l_2}^2\mathrm{d}t \right]^{1/2} \mathrm{d}\xi.
\end{align*}

\end{rem}

\begin{thm}
							\label{time thm}
Let 
$$
u_0 \in \cF^{-1}\bL_{0,1,\ell oc}\left( \Omega \times \fR^d , \rG \times \cB(\fR^d)\right),  
\quad f \in \cF^{-1}\bL_{0,1,1,\ell oc}\left( \opar 0, \tau \cbrk \times \fR^d, \rH \times \cB(\fR^d) \right),
$$
and 
$$
g \in \cF^{-1}\bL^{\omega,\xi,t}_{0,1,2,\ell oc}\left( \opar 0,  \tau \cbrk \times \fR^d, \cP \times \cB(\fR^d) ; l_2\right).
$$
Suppose that $\psi$ satisfies Assumptions \ref{main as} and \ref{main as 2}.
Then there exists a unique Fourier-space weak solution  $u$ to \eqref{time eqn} in the intersection of the two classes 
$$
\cF^{-1}\bC L_{1,\ell oc}\left( \clbrk 0,\tau \cbrk,\sigma\left(   \rG \times \cB([0,\infty))  \cup \rH\cup \cP   \right)\times \cB(\fR^d)\right)
$$
and
\begin{align*}
 \cF^{-1}\bL_{0,1,1, \ell oc}\left( \opar 0,\tau \cbrk \times \fR^d, \sigma\left( \rG \times \cB([0,\infty))  \cup \rH  \cup \cP  \right) \times \cB(\fR^d) , |\psi(t,\xi)|\mathrm{d}t \mathrm{d}\xi \right).
\end{align*}
\end{thm}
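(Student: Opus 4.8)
The plan is to conjugate \eqref{time eqn} by the spatial Fourier transform, which turns the pseudo-differential operator $\psi(t,-\mathrm{i}\nabla)$ into multiplication by its symbol and thereby decouples the equation into a family, indexed by the frequency $\xi$, of scalar linear It\^o equations. Writing $v(t,\xi):=\cF[u(t,\cdot)](\xi)$ and testing against $\varphi\in\cF^{-1}\cD(\fR^d)$ (so that $\cF[\varphi]\in\cD(\fR^d)$ has compact support), Definition \ref{defn psi operator} identifies $\langle u(t,\cdot),\varphi\rangle$ with $\int_{\fR^d}v(t,\xi)\overline{\cF[\varphi](\xi)}\,\mathrm{d}\xi$ and $\langle\psi(s,-\mathrm{i}\nabla)u(s,\cdot),\varphi\rangle$ with $\int_{\fR^d}\psi(s,\xi)v(s,\xi)\overline{\cF[\varphi](\xi)}\,\mathrm{d}\xi$. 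Hence, at the level of frequency functions, being a Fourier-space weak solution in the sense of Definition \ref{space weak solution 2} is equivalent (by arbitrariness of $\cF[\varphi]$) to requiring that for every $R$ and almost every $\xi\in B_R$,
\begin{align*}
v(t,\xi)=\cF[u_0](\xi)+\int_0^t\bigl(\psi(s,\xi)v(s,\xi)+\cF[f(s,\cdot)](\xi)\bigr)\,\mathrm{d}s+\int_0^t\cF[g^k(s,\cdot)](\xi)\,\mathrm{d}B^k_s .
\end{align*}

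For each fixed $\xi$ this is an affine linear SDE with the random, merely time-integrable coefficient $\psi(\cdot,\xi)$, where integrability of $\psi(\cdot,\xi)$ (in particular of $\Im[\psi(\cdot,\xi)]$) is supplied by \eqref{20240801 01}, which follows from Assumption \ref{main as 2}. First I would solve it by the integrating-factor method: writing $\Psi(s,t,\xi):=\int_s^t\psi(r,\xi)\,\mathrm{d}r$ and using the cocycle identity $\Psi(s,t,\xi)=\Psi(0,t,\xi)-\Psi(0,s,\xi)$ to pull the $t$-dependent factor $e^{\Psi(0,t,\xi)}$ out of the $\mathrm{d}B^k_s$-integral, the unique solution is the explicit kernel representation announced in Remark \ref{formal solution},
\begin{align*}
v(t,\xi)=e^{\Psi(0,t,\xi)}\cF[u_0](\xi)+\int_0^t e^{\Psi(s,t,\xi)}\cF[f(s,\cdot)](\xi)\,\mathrm{d}s+\int_0^t e^{\Psi(s,t,\xi)}\cF[g^k(s,\cdot)](\xi)\,\mathrm{d}B^k_s .
\end{align*}
The existence half of the theorem then amounts to \emph{defining} the candidate by this $v$, setting $u:=\cF^{-1}[v]$ (legitimate by Theorem \ref{homeo thm}), and checking membership in the two target classes. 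The decisive analytic input is the pointwise identity $|e^{\Psi(s,t,\xi)}|=\exp\bigl(\int_s^t\Re[\psi(r,\xi)]\,\mathrm{d}r\bigr)$: after passing to $\esssup_{\omega\in\Omega}$ this is dominated on $(0,T)\times B_R$ by $C^{\mathrm{e}\int\sup|\Re[\psi]|}_{R,T}$ by Assumption \ref{main as}, while the time-integrated product $|\psi|\,|e^{\Psi(s,t,\xi)}|$ is dominated by $C^{\sup|\psi|}_{R,T}$ by Assumption \ref{main as 2}. Estimating the drift terms directly and the stochastic term through the Burkholder--Davis--Gundy inequality \eqref{20240509 10} with constant $C_{BDG}$ (well-definedness of that It\^o integral being guaranteed by the generalized Minkowski bound of Remark \ref{20240718 10}, since $g\in\cF^{-1}\bL^{\omega,\xi,t}_{0,1,2,\ell oc}$) yields exactly the finiteness conditions defining $\cF^{-1}\bL_{0,1,1,\ell oc}(\cdots,|\psi(t,\xi)|\,\mathrm{d}t\,\mathrm{d}\xi)$ together with the unweighted control $\int_{B_R}\sup_{t}|v(t,\xi)|\,\mathrm{d}\xi<\infty$ needed for $\cF^{-1}\bC L_{1,\ell oc}$; the joint $\sigma(\rG\times\cB([0,\infty))\cup\rH\cup\cP)\times\cB(\fR^d)$-measurability of $v$ is inherited from that of $\psi$, $\cF[u_0]$, $\cF[f]$, $\cF[g]$ and of the stochastic integral regarded as a process in $\xi$.

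Next I would verify that this $u$ genuinely satisfies the weak identity \eqref{20240703 20}. Multiplying the kernel formula by $\overline{\cF[\varphi](\xi)}$ and integrating over $\xi\in B_{R_\varphi}$, the only nontrivial point is the interchange of the spatial integral $\int_{B_{R_\varphi}}\cdots\,\mathrm{d}\xi$ with the It\^o integral $\int_0^t\cdots\,\mathrm{d}B^k_s$; this is precisely where the stochastic Fubini theorem of Section \ref{section stochastic Fubini} is invoked, its hypotheses being met by the integrability established in the previous step. Continuity of the paths $t\mapsto\langle u(t,\cdot),\varphi\rangle$, hence $u\in\cF^{-1}\bC L_{1,\ell oc}$, follows from continuity of the drift integrals and of the stochastic integral in $t$ combined with dominated convergence, justified by the uniform-in-$t$ bound $\int_{B_R}\sup_t|v(t,\xi)|\,\mathrm{d}\xi<\infty$.

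Finally, for uniqueness the difference $w$ of two solutions has zero data, so its stochastic term is absent and, pairing with $\varphi$ and applying \emph{ordinary} Fubini to the drift, one finds that for almost every $\xi$ the frequency function satisfies the homogeneous integral equation $\cF[w(t,\cdot)](\xi)=\int_0^t\psi(s,\xi)\cF[w(s,\cdot)](\xi)\,\mathrm{d}s$; since $\int_0^T|\psi(s,\xi)|\,\mathrm{d}s<\infty$ by \eqref{20240801 01}, Gronwall's inequality forces $\cF[w(\cdot,\xi)]\equiv0$, and comparing solutions in the continuity class $\cF^{-1}\bC L_{1,\ell oc}$ (which carries the unweighted control $\int_{B_R}\sup_t|\cF[w(t,\cdot)]|\,\mathrm{d}\xi<\infty$) means the conclusion is not obstructed by the possible degeneracy of the weight $|\psi|$ flagged in Remark \ref{unique rem}. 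I expect the main obstacle to be the combination of the existence steps above: controlling the exponential kernel $e^{\Psi(s,t,\xi)}$ generated by the sign-changing symbol, for which Assumptions \ref{main as} and \ref{main as 2} are exactly tailored, and rigorously justifying the stochastic-Fubini interchange together with the joint measurability in $(\omega,t,\xi)$ of the frequency-space stochastic integral, the randomness of $\psi$ precluding any appeal to a deterministic mild-solution representation.
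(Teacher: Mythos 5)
Your uniqueness argument and your reduction to the frequency-level integral equation match the paper's route (Lemma \ref{fourier repre} plus Gr\"onwall, Theorem \ref{unique weak sol}), and your treatment of the \emph{deterministic}-symbol case is essentially Theorem \ref{deter exist weak sol}. The genuine gap is in the existence step for a \emph{random} symbol $\psi$. You define the candidate by the kernel formula and say the only nontrivial point is interchanging $\int_{B_{R_\varphi}}\mathrm{d}\xi$ with the It\^o integral; but the harder step is converting the mild representation into the weak identity \eqref{20240703 20}, i.e.\ producing the term $\int_0^t\langle\psi(s,-\mathrm{i}\nabla)u(s,\cdot),\varphi\rangle\,\mathrm{d}s$. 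Following the paper's computation (\eqref{20240315 20}--\eqref{20230213 20}), this requires interchanging $\int_0^t(\cdot)\,\mathrm{d}\rho$ with $\int_0^\rho(\cdot)\,\mathrm{d}B^k_s$ for the integrand $\psi(\rho,\xi)\exp\bigl(\int_s^\rho\psi(r,\xi)\,\mathrm{d}r\bigr)\cF[g^k(s,\cdot)](\xi)$, and equivalently making sense of $\int_0^t\bigl[\exp\bigl(\int_s^t\psi\,\mathrm{d}r\bigr)-1\bigr]\cF[g^k(s,\cdot)](\xi)\,\mathrm{d}B^k_s$. When $\psi$ is random, $\int_s^t\psi(r,\xi)\,\mathrm{d}r$ is only $\rF_t$-measurable, so these integrands are not predictable in $s$: the stochastic Fubini theorem does not apply and the resulting stochastic integral is anticipating. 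The paper flags exactly this obstruction at the start of Section \ref{sto fubini deter}, and your closing admission that randomness ``precludes any appeal to a deterministic mild-solution representation'' contradicts the fact that your whole existence argument rests on that representation. (An It\^o product-rule verification of the scalar SDE for each fixed $\xi$ could in principle substitute for this step, but you never invoke it, and you would still have to upgrade ``for each $\xi$, a.s.'' to ``a.e.\ $(\omega,t,\xi)$'' via the joint measurability that the paper labors over in Corollaries \ref{20240417 01}--\ref{cor 20240506}.)

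What the paper actually does (Theorem \ref{exist weak sol}) is a two-step decomposition that your proposal misses entirely: introduce the deterministic majorant symbol $\tilde\psi(t,\xi):=\esssup_{\omega}|\Re[\psi(t,\xi)]|+\mathrm{i}\,\esssup_{\omega}|\Im[\psi(t,\xi)]|$, solve the equation with $\tilde\psi$ by the (now rigorous) kernel representation to get $\tilde u$, set $\tilde f:=\psi(t,-\mathrm{i}\nabla)\tilde u-\tilde\psi(t,-\mathrm{i}\nabla)\tilde u$, and then solve the noise-free equation $\mathrm{d}v=(\psi(t,-\mathrm{i}\nabla)v+\tilde f)\,\mathrm{d}t$, $v(0)=0$, with the genuinely random $\psi$ --- harmless because no stochastic integral appears there (Corollary \ref{exist cor 1}). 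The solution is $u=\tilde u+v$ by linearity. This decomposition is also the reason Assumptions \ref{main as} and \ref{main as 2} carry the $\esssup_{\omega}$ inside the integrals: they are exactly what is needed for $\tilde\psi$ to satisfy the deterministic-symbol conditions \eqref{main deter as} and \eqref{main deter as 2} and for $|\psi|\le|\tilde\psi|$ to transfer the weighted integrability. Without this (or an equally careful anticipating-free argument), your existence proof does not go through for random symbols.
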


\begin{rem}
If $u_0$ is $\rF_0 \times \cB(\fR^d)$-measurable and $f$ is $\cP \times \cB(\fR^d)$-measurable then
$$
u \in \cF^{-1}\bC L_{1,\ell oc}\left( \clbrk 0,\tau \cbrk,\cP\times \cB(\fR^d)\right)
$$
and
$$
u \in \cF^{-1}\bL_{0,1,1,\ell oc}\left( \opar 0,\tau \cbrk \times \fR^d, \cP, |\psi(t,\xi)|\mathrm{d}t \mathrm{d}\xi\right).
$$
This property is crucial for extending our results to treat multiplicative noise, which is an interesting direction for future research.
\end{rem}

\begin{rem}
The class handling stochastic inhomogeneous data $g$ is 
$$
\cF^{-1}\bL^{\omega,\xi,t}_{0,1,2,\ell oc}\left( \opar 0,  \tau \cbrk \times \fR^d, \cP \times \cB(\fR^d) ; l_2\right)
$$
in Theorem \ref{time thm}.
This class is smaller than the class
$$
\cF^{-1}\bL^{\omega,t,\xi}_{0,2,1,\ell oc}\left( \opar 0,  \tau \cbrk \times \fR^d, \cP \times \cB(\fR^d) ; l_2\right)
$$
mentioned in Remark \ref{20240718 10} for defining the stochastic term in \eqref{time eqn}.
This restriction on the stochastic inhomogeneous data is important to applying stochastic Fubini theorems.
More specifics will be discussed later in upcoming sections.
\end{rem}

\begin{rem}
								\label{u meaning}
Remark \ref{formal solution} can be reiterated in relation to Theorem \ref{time thm}. The solution $u$ in Theorem \ref{time thm} is generally not a mild solution. However, in cases where $\psi$ is deterministic, it can be considered as a mild solution within our new weak formulation. 
In order to demonstrate this, assume that the symbol $\psi(t,\xi)$ is non-random.
Then the solution $u$ in Theorem \ref{time thm} is  an $\cF^{-1}\cD'(\fR^d)$-valued stochastic process defined on $\clbrk 0, \tau \cbrk$ as follows: for any $(\omega, t) \in \clbrk 0, \tau \cbrk$,
\begin{align}
										\notag
\varphi \in \cF^{-1}\cD(\fR^d) \mapsto \langle u(t,\cdot), \varphi \rangle
&= \int_{\fR^d} \left(\exp\left(\int_0^t\psi(r,\cdot)\mathrm{d}r \right) \cF[u_0](\xi) \right) \overline{\cF[\varphi](\xi)} \mathrm{d}\xi \\
										\notag
&\quad + \int_{\fR^d} \left(\int_0^t  \exp\left(\int_s^t\psi(r,\xi)\mathrm{d}r \right) 1_{\opar 0,\tau \cbrk}(s)\cF[f(s,\cdot)](\xi) \mathrm{d}s  \right) \overline{\cF[\varphi](\xi)}  \mathrm{d}\xi \\
										\label{solution candidate}
&\quad + \int_{\fR^d} \left(\int_0^t  \exp\left(\int_s^t\psi(r,\xi)\mathrm{d}r \right) 1_{\opar 0,\tau \cbrk}(s)\cF[g^k(s,\cdot)](\xi) \mathrm{d}B^k_s \right) \overline{\cF[\varphi](\xi)} \mathrm{d}\xi.
\end{align}
It is important to note that the function $u$ in \eqref{solution candidate} is not defined as a complex-valued function pointwisely.
However, the Fourier transform of $u$ with respect to the space variable is a complex-valued function $(a.e.)$ defined on $\clbrk 0,\tau \cbrk \times \fR^d$
based on the canonical identification with a locally integrable function and a distribution. In other words,
\begin{align}
										\notag
\cF[u(t,\cdot)](\xi)
&= \exp\left(\int_0^t\psi(r,\cdot)\mathrm{d}r \right) \cF[u_0](\xi)
+\int_0^t  \exp\left(\int_s^t\psi(r,\xi)\mathrm{d}r \right) 1_{\opar 0,\tau \cbrk}(s)\cF[f(s,\cdot)](\xi) \mathrm{d}s  \\
										\label{20240803 40}
&\quad +\int_0^t  \exp\left(\int_s^t\psi(r,\xi)\mathrm{d}r \right) 1_{\opar 0,\tau \cbrk}(s)\cF[g^k(s,\cdot)](\xi) \mathrm{d}B^k_s, 
\end{align}
where all terms in the right-hand side are obviously complex-valued functions $(a.e.)$ defined on $\clbrk 0,\tau \cbrk \times \fR^d$ due to the assumptions in Theorem \ref{time thm}. 
Thus we may say that our solution has a strong or realizable frequency function (with respect to the space variable).

Moreover, observe that all functions appearing in \eqref{20240803 40} are locally integrable, which means they can be regarded as distributions.
However, they generally do not qualify as tempered distributions because of insufficient conditions on the data $u_0$, $f$, and $g$. 
Hence $u$ cannot be derived from \eqref{20240803 40} by taking the classical inverse Fourier transform applicable for tempered distributions. 
However, our new inverse Fourier transform, as defined in Definition \ref{fourier defn 2}, enables us to recover $u$ from \eqref{20240803 40} as $\cF^{-1}\cD'(\fR^d)$-valued stochastic processes by applying it with respect to the spatial frequencies.
This idea is the starting point to find a unique solution $u$ to \eqref{time eqn} rigorously.
\end{rem}

\begin{rem}
It is readily observable that the right-hand side of \eqref{solution candidate} is defined for all $t \in (0,\infty)$, which implies that the solution $u$ can nicely be extended to $\Omega \times [0,\infty) \times \fR^d$.
More precisely, there exists a $\cF^{-1}\cD'(\fR^d)$-valued stochastic process $\tilde u$  on $\Omega \times [0,\infty)$ such that for all $(\omega,t) \in \opar 0, \tau \cbrk$,
$\tilde u(\omega,t) = u(\omega,t) $ as $\cF^{-1}\cD'(\fR^d)$-valued functions.
Additionally, $\tilde u$ belongs to  the intersection of the classes
$$
\cF^{-1}\bC_{loc} L_{1,\ell oc}\left(  \Omega \times [0,\infty),\sigma\left(   \rG \times \cB([0,\infty))  \cup \rH\cup \cP   \right)\times \cB(\fR^d)\right)
$$
and
$$
 \cF^{-1}\bL_{0,1,1,loc,\ell oc}\left(\Omega \times (0,\infty) \times \fR^d, \sigma\left( \rG \times \cB([0,\infty)) \cup \rH  \cup \cP \right) \times \cB(\fR^d) , |\psi(t,\xi)|\mathrm{d}t \mathrm{d}\xi\right),
$$
where
\begin{align*}
&u \in \cF^{-1}\bC_{loc} L_{1,\ell oc}\left(  \Omega \times [0,\infty),\sigma\left(   \rG \times \cB([0,\infty))  \cup \rH\cup \cP   \right)\times \cB(\fR^d)\right) \\
&\iff u \in \cF^{-1}\bC L_{1,\ell oc}\left(  \Omega \times [0,T],\sigma\left(   \rG \times \cB([0,\infty))  \cup \rH\cup \cP   \right)\times \cB(\fR^d)\right) \quad \forall T \in (0,\infty) 
\end{align*}
and
\begin{align*}
&u \in \cF^{-1}\bL_{0,1,1,loc,\ell oc}\left(\Omega \times [0,\infty) \times \fR^d, \sigma\left( \rG \times \cB([0,\infty)) \cup \rH  \cup \cP \right) \times \cB(\fR^d) , |\psi(t,\xi)|\mathrm{d}t \mathrm{d}\xi\right) \\
&\iff \cF^{-1}\bL_{0,1,1,loc,\ell oc}\left(\Omega \times (0,T] \times \fR^d, \sigma\left( \rG \times \cB([0,\infty)) \cup \rH  \cup \cP \right) \times \cB(\fR^d), |\psi(t,\xi)|\mathrm{d}t \mathrm{d}\xi\right) \quad \forall T \in (0,\infty).
\end{align*}

These extensions will be revisited in later sections to demonstrate the existence of solutions with deterministic symbols.
\end{rem}

It is naturally anticipated that the solution $u$ has a finite expectation if the data have finite expectations. This rule also applies to their spatial Fourier transforms, as previously shown in Corollary \ref{main cor}. A similar result can be established for the relationship between the solution and the data in \eqref{time eqn}.
Moreover, the solution mapping $(u_0,f,g) \mapsto u$ is continuous in terms of the spatial Fourier transforms.
However, for the stability result to encompass random symbols, $\psi$ should satisfy a stronger condition, as emphasized in Corollary \ref{main cor}.

\begin{corollary}
									\label{time corollary}
Let
\begin{align*}
u_0 \in \cF^{-1}\bL_{1,1,\ell oc}\left( \Omega \times \fR^d , \rG \times \cB(\fR^d)\right),  
\quad f \in \cF^{-1}\bL_{1,1,1,loc,\ell oc}\left( \opar 0, \tau \cbrk \times \fR^d, \rH \times \cB(\fR^d) \right),
\end{align*}
and 
$$
g \in \cF^{-1}\bL^{\omega,\xi,t}_{1,1,2,loc,\ell oc}\left( \opar 0,  \tau \cbrk \times \fR^d, \cP \times \cB(\fR^d) ; l_2\right).
$$
Suppose that $\psi$ satisfies Assumption \ref{main as sta}.
Then there exists a unique Fourier-space weak solution  $u$ to \eqref{time eqn} in the intersection of the two classes 
$$
\cF^{-1}\bL_{1,loc}\bC L_{1,\ell oc}\left( \clbrk 0,\tau \cbrk \times \fR^d, \sigma\left( \rG \times \cB([0,\infty))  \cup \rH\cup \cP \right)\times \cB(\fR^d) \right)
$$
and
$$
 \cF^{-1}\bL_{1,1,1, loc,\ell oc}\left( \opar 0,\tau \cbrk \times \fR^d, \sigma\left( \rG \times \cB([0,\infty))  \cup \rH  \cup \cP  \right) \times \cB(\fR^d) , |\psi(t,\xi)|\mathrm{d}t \mathrm{d}\xi\right).
$$
Moreover, the solution $u$ satisfies
\begin{align}
										\notag
\bE\left[ \int_0^{\tau \wedge T} \int_{B_R}|\psi(t,\xi)| |\cF[u(t,\cdot)](\xi)| \mathrm{d}\xi  \mathrm{d}t \right]  
										\notag
&\leq  \cC^1_{R,T,\psi} \bE \left[\int_{B_R} \left|\cF[u_0](\xi)\right|    \mathrm{d}\xi 
+\int_{B_R} \int_0^{ \tau \wedge T}\left|\cF[f(s,\cdot)](\xi)\right|  \mathrm{d}s  \mathrm{d}\xi \right] \\
									\label{linear a priori}
&\quad+ C_{BDG} \cdot \cC^1_{R,T,\psi} \bE\left[\int_{B_R} \left(\int_0^{\tau \wedge T}\left|\cF[g(s,\cdot)](\xi)\right|^2_{l_2}  \mathrm{d}s \right)^{1/2} \mathrm{d}\xi \right]
\end{align}
and
\begin{align}
\bE\left[ \int_{B_R} \sup_{t \in [0,\tau \wedge T]} |\cF[u(t,\cdot)](\xi)|\mathrm{d}\xi\right] 
										\notag
&\leq \left(\cC^2_{R,T,\psi} \wedge \cC^3_{R,T,\psi}\right)\bE\left[\int_{B_R} |\cF[u_0](\xi)|\mathrm{d}\xi + \int_{B_R} \int_0^{\tau \wedge T}\left|\cF[f(s,\cdot)](\xi)\right|  \mathrm{d}s  \mathrm{d}\xi \right] \\
									\label{linear a priori 2}
&\quad+ C_{BDG} \cdot \left(\cC^2_{R,T,\psi} \wedge \cC^3_{R,T,\psi}\right)\bE\left[\int_{B_R} \left(\int_0^{\tau \wedge T}\left|\cF[g(s,\cdot)](\xi)\right|^2_{l_2}  \mathrm{d}s \right)^{1/2} \mathrm{d}\xi \right],
\end{align}
for all $R, T \in (0,\infty)$, where $\cC^i_{R,T,\psi}$ $(i=1,2,3)$ are positive constants which are explicitly given by
\begin{align*}
\cC^1_{R,T,\psi}=\left[ C_{R,T}^{\sup|\psi|}+2T\|\psi\|_{L_\infty\left( \Omega \times (0,T) \times B_R \right)} \cdot C_{R,T}^{\mathrm{e}\int\sup|\Re[\psi]|}  \right],
\end{align*}
\begin{align*}
\cC^2_{R,T,\psi}=C_{R,T}^{\mathrm{e}\int\sup|\Re[\psi]|}  \left[ 1+ 2T\|\psi\|_{L_\infty\left( \Omega \times (0,T) \times B_R \right)}C_{R,T}^{\mathrm{e}\int\sup|\Re[\psi]|}  \right],
\end{align*}
and
\begin{align*}
\cC^3_{R,T,\psi} =  1+\cC^1_{R,T,\psi}.
\end{align*}
\end{corollary}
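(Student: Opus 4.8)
The plan is to deduce Corollary~\ref{time corollary} from Theorem~\ref{time thm} by adjoining the two a priori bounds \eqref{linear a priori} and \eqref{linear a priori 2}, which then promote the already-constructed solution from the $\bL_0$-type classes into the $\bL_1$-type classes. First I would note that Assumption~\ref{main as sta} implies both Assumption~\ref{main as} and Assumption~\ref{main as 2}, and that the data in $\cF^{-1}\bL_{1,1,\ell oc}$, $\cF^{-1}\bL_{1,1,1,loc,\ell oc}$, and $\cF^{-1}\bL^{\omega,\xi,t}_{1,1,2,loc,\ell oc}$ lie inside the corresponding $\bL_0$-spaces required by Theorem~\ref{time thm}. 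Hence Theorem~\ref{time thm} already furnishes a unique Fourier-space weak solution $u$; uniqueness in the smaller $\bL_1$-intersection is automatic, so only the quantitative estimates and the resulting finiteness of expectations remain.

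The core of the argument is to work entirely on the Fourier side, where (as in Remark~\ref{u meaning}) the equation decouples over frequencies: for a.e. $\xi$ the scalar process $v(t,\xi):=\cF[u(t,\cdot)](\xi)$ solves the linear It\^o equation $\mathrm{d}v=\psi(t,\xi)v\,\mathrm{d}t+\cF[f(t,\cdot)](\xi)\,\mathrm{d}t+\cF[g^k(t,\cdot)](\xi)\,\mathrm{d}B^k_t$ with $v(0,\xi)=\cF[u_0](\xi)$ on $\opar 0,\tau \cbrk$. Variation of constants gives $v(t,\xi)=\exp(\int_0^t\psi)\cF[u_0](\xi)+\int_0^t\exp(\int_s^t\psi)1_{\opar 0,\tau \cbrk}\cF[f]\,\mathrm{d}s+M(t,\xi)$ with $M(t,\xi)=\int_0^t\exp(\int_s^t\psi)1_{\opar 0,\tau \cbrk}\cF[g^k]\,\mathrm{d}B^k_s$. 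I would bound the kernel forward by $|\exp(\int_s^t\psi(r,\xi)\,\mathrm{d}r)|=\exp(\int_s^t\Re[\psi])\le C^{\mathrm{e}\int\sup|\Re[\psi]|}_{R,T}$ for $\xi\in B_R$, $0\le s\le t\le T$, multiply by $|\psi(t,\xi)|$, and use Assumption~\ref{main as 2} (the constant $C^{\sup|\psi|}_{R,T}$, via Fubini in $(s,t)$) to treat the $u_0$- and $f$-contributions; for the $|\psi|$-weighted stochastic term I would invoke the uniform bound $|\psi|\le\|\psi\|_{L_\infty(\Omega\times(0,T)\times B_R)}$ from Assumption~\ref{main as sta}. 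This produces \eqref{linear a priori} with constant $\cC^1_{R,T,\psi}$, exactly as in Corollary~\ref{main cor}.

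The delicate point, and the main obstacle, is the stochastic convolution $M(t,\xi)$, whose kernel $\exp(\int_s^t\psi(r,\xi)\,\mathrm{d}r)$ depends on the upper limit $t$ and on $\psi(r,\xi)$ for $r\in[s,t]$, so the naive integrand is not predictable and $M$ is not a martingale in $t$. I would resolve this by the factorization $M(t,\xi)=\exp(\int_0^t\psi)\,\widetilde M(t,\xi)$, where $\widetilde M(t,\xi)=\int_0^t\exp(-\int_0^s\psi)1_{\opar 0,\tau \cbrk}\cF[g^k]\,\mathrm{d}B^k_s$ has a genuinely predictable integrand (it depends on $\psi(r,\xi)$ only for $r\le s$), and equivalently by the resolvent identity $M(t,\xi)=m(t,\xi)+\int_0^t\psi(s,\xi)\exp(\int_s^t\psi)\,m(s,\xi)\,\mathrm{d}s$ in terms of the clean martingale $m(t,\xi)=\int_0^t 1_{\opar 0,\tau \cbrk}\cF[g^k]\,\mathrm{d}B^k_s$. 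Applying the BDG inequality to $\widetilde M$ (respectively $m$), together with $|\exp(-\int_0^s\Re[\psi])|\le C^{\mathrm{e}\int\sup|\Re[\psi]|}_{R,T}$ and $\int_0^T|\psi(s,\xi)|\,\mathrm{d}s\le T\|\psi\|_{L_\infty(\Omega\times(0,T)\times B_R)}$, controls $\bE\sup_{t\le\tau\wedge T}|M(t,\xi)|$ by $C_{BDG}\bE(\int_0^{\tau\wedge T}|\cF[g(s,\cdot)](\xi)|_{l_2}^2\,\mathrm{d}s)^{1/2}$ up to the stated constants. This is precisely where the uniform control of Assumption~\ref{main as sta} over the full symbol (including $\Im[\psi]$ and uniformly in $\omega$) is indispensable, in contrast to Theorem~\ref{time deter thm}. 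Integrating in $\xi$ over $B_R$, the generalized Minkowski inequality places the $\xi$-integral outside the time-$L_2$ norm, matching the data class $g\in\cF^{-1}\bL^{\omega,\xi,t}_{1,1,2,loc,\ell oc}$, and the stochastic Fubini theorem of Section~\ref{section stochastic Fubini} justifies interchanging $\int_{B_R}\mathrm{d}\xi$ with the stochastic integral.

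Finally, I would derive \eqref{linear a priori 2} in two independent ways, giving the minimum $\cC^2_{R,T,\psi}\wedge\cC^3_{R,T,\psi}$ (cf. Remark~\ref{main cons rem}): estimating $\sup_t|v(t,\xi)|$ directly from the variation-of-constants representation (the exponential factor and the stochastic-convolution bound above yield $\cC^2_{R,T,\psi}$), and alternatively using $\sup_t|v|\le|\cF[u_0]|+\int_0^{\tau\wedge T}|\psi||v|\,\mathrm{d}s+\int_0^{\tau\wedge T}|\cF[f]|\,\mathrm{d}s+\sup_t|m|$ and feeding \eqref{linear a priori} into the $\int|\psi||v|$ term, which yields $\cC^3_{R,T,\psi}=1+\cC^1_{R,T,\psi}$. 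The finiteness asserted by \eqref{linear a priori} and \eqref{linear a priori 2}, valid for every $R,T\in(0,\infty)$, is exactly the defining integrability of $\cF^{-1}\bL_{1,1,1,loc,\ell oc}(\ldots,|\psi(t,\xi)|\mathrm{d}t\,\mathrm{d}\xi)$ and of $\cF^{-1}\bL_{1,loc}\bC L_{1,\ell oc}$, so $u$ lies in the stated intersection, completing the proof. Throughout, the passage from per-frequency estimates to expectations uses Tonelli together with the finite-moment data hypotheses.
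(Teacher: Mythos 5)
Your reduction is sound in outline: Assumption \ref{main as sta} does imply Assumptions \ref{main as} and \ref{main as 2}, Theorem \ref{time thm} supplies the unique solution, the only remaining content is the pair of estimates (whose finiteness for all $R,T$ then places $u$ in the $\bL_1$-type classes), and your second route to \eqref{linear a priori 2} --- feeding \eqref{linear a priori} into $\sup_t|v|\le|\cF[u_0]|+\int|\psi||v|+\int|\cF[f]|+\sup_t|m|$ --- is exactly the paper's Theorem \ref{thm a priori} and correctly explains $\cC^3_{R,T,\psi}=1+\cC^1_{R,T,\psi}$.

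The gap is in the claim that your direct variation-of-constants argument ``produces \eqref{linear a priori} with constant $\cC^1_{R,T,\psi}$.'' It cannot. Since $\psi$ is random, the BDG inequality cannot be applied to $\int_0^t\exp(\int_s^t\psi)\,\cF[g^k]\,\mathrm{d}B^k_s$ with the kernel inside the quadratic variation (the integrand is not predictable), so you are forced, as you note, into the factorization $\exp(\int_0^t\psi)\cdot\int_0^t\exp(-\int_0^s\psi)\cF[g^k]\,\mathrm{d}B^k_s$. But then the two exponential factors must be controlled by \emph{separate} essential suprema over $\omega$ --- one as a random prefactor of the martingale, one inside the BDG bound --- and the best constant you can extract for the $g$-contribution to \eqref{linear a priori} is of the form $C_{BDG}\,C^{\sup|\psi|}_{R,T}\,C^{\mathrm{e}\int\sup|\Re[\psi]|}_{R,T}$ (or $C_{BDG}\,T\|\psi\|_{L_\infty}\bigl(C^{\mathrm{e}\int\sup|\Re[\psi]|}_{R,T}\bigr)^2$), i.e.\ a \emph{product} of two exponential-type constants. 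This is not dominated by $C_{BDG}\cdot\cC^1_{R,T,\psi}$: for $\psi\equiv-K$ with $KT$ large your constant is of order $e^{2KT}$, while $\cC^1_{R,T,\psi}$ is of order $TKe^{KT}$. The additive form of $\cC^1$ and $\cC^2$ is not bookkeeping; it encodes the mechanism your proof never introduces, namely the decomposition $u=\tilde u+v$ where $\tilde u$ solves the SPDE with the \emph{deterministic envelope symbol} $\tilde\psi(t,\xi)=\esssup_\omega|\Re[\psi(t,\xi)]|+\mathrm{i}\esssup_\omega|\Im[\psi(t,\xi)]|$ (whose kernel $\exp(\int_s^t\tilde\psi)$ is deterministic, so BDG applies with the kernel inside and only a single factor appears, via Corollaries \ref{stochastic solution part} and \ref{deter exist cor 2}), and $v$ solves a \emph{noise-free} equation with the random $\psi$ and source $\tilde f=\psi(t,-\mathrm{i}\nabla)\tilde u-\tilde\psi(t,-\mathrm{i}\nabla)\tilde u$, handled pathwise by Corollary \ref{exist cor 1}; the summand $2T\|\psi\|_{L_\infty}C^{\mathrm{e}\int\sup|\Re[\psi]|}_{R,T}$ is precisely the cost of $|\cF[\tilde f]|\le2\|\psi\|_{L_\infty}|\cF[\tilde u]|$. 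Your version of \eqref{linear a priori 2} happens to survive, because $\bigl(C^{\mathrm{e}\int\sup|\Re[\psi]|}_{R,T}\bigr)^2\le\cC^2_{R,T,\psi}$ follows from $C^{\mathrm{e}\int\sup|\Re[\psi]|}_{R,T}\le1+T\|\psi\|_{L_\infty}C^{\mathrm{e}\int\sup|\Re[\psi]|}_{R,T}$ (cf.\ Remark \ref{simple constant}), but \eqref{linear a priori} with the stated $\cC^1_{R,T,\psi}$ does not follow from your argument, and hence neither does the $\cC^3$ branch of \eqref{linear a priori 2}. To close the gap you need the envelope-symbol decomposition of Theorem \ref{exist weak sol} and Corollary \ref{exist cor 2}.
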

The proofs of Theorem \ref{time thm} and Corollary \ref{time corollary} are given in Section \ref{pf time thm}.

\begin{rem}
$\cC^1_{R,T,\psi}$ and $\left(\cC^2_{R,T,\psi} \wedge \cC^3_{R,T,\psi}\right)$ can be optimized to $C_{R,T}^{|\psi|}$ and $C_{R,T}^{\mathrm{e}\int\Re[\psi]}$ if the symbol $\psi$ is non-random as discussed in Remark \ref{main cons rem}.
\end{rem}
\begin{rem}
Observe that the constants in \eqref{linear a priori} and \eqref{linear a priori 2} strongly depend on each $T$.
Thus there is no guarantee that the solution $u$ belongs to the smaller spaces such as
$$
\cF^{-1}\bL_1\bC L_{1,\ell oc}\left( \clbrk 0,\tau \cbrk \times \fR^d, \sigma\left( \rG \times \cB([0,\infty))  \cup \rH\cup \cP \right)\times \cB(\fR^d) \right)
$$
and
$$
 \cF^{-1}\bL_{1,1,1, \ell oc}\left( \opar 0,\tau \cbrk \times \fR^d, \sigma\left( \rG \times \cB([0,\infty))  \cup \rH  \cup \cP  \right) \times \cB(\fR^d) , |\psi(t,\xi)|\mathrm{d}t \mathrm{d}\xi\right)
$$
even if our data satisfy the better conditions that
$$
f \in \cF^{-1}\bL_{1,1,1,\ell oc}\left( \opar 0, \tau \cbrk \times \fR^d, \rH \times \cB(\fR^d)|_{\opar 0, \tau \cbrk \times \fR^d} \right)
$$
and 
$$
g \in \cF^{-1}\bL^{\omega,\xi,t}_{1,1,2,\ell oc}\left( \opar 0,  \tau \cbrk \times \fR^d, \cP \times \cB(\fR^d)|_{\opar 0,  \tau \cbrk \times \fR^d} ; l_2\right).
$$
It is because the BDG inequality does not perfectly work for our solution $u$, which is discussed further in Remark \ref{BDG fail}.

\end{rem}

\mysection{Stochastic Fubini theorems}
											\label{section stochastic Fubini}

Our estimates can be derived using approximations by simple stochastic processes, such as step processes. However, obtaining rigorous theories through these approximations is challenging since our solutions and data generally lack finite moments. Instead, it is more effective to directly estimate many stochastic terms using the stochastic Fubini theorems.

Recent advancements in the stochastic Fubini theorems (\textit{cf.} \cite{NV 2005, Krylov 2011, Veraar 2012}) have been made. 
We rigorously develop our theories with the help of these theorems. Here, we introduce a version of the stochastic Fubini theorem that is well-suited to our applications.

\begin{thm}[Stochastic Fubini theorem]
									\label{stochastic Fubini}
Let $T\in (0,\infty)$, $A \in \cB(\fR^d)$,  
$$
F \in \bL_{0}\left( \opar 0,\tau \cbrk \times \fR^d, \rH \times \cB(\fR^d) \right),
$$
and $G \in\bL_{0}\left( \opar 0,\tau  \cbrk \times \fR^d, \cP \times \cB(\fR^d); l_2\right)$.
Assume that $|A| < \infty$,
\begin{align*}
 \int_{A} \int_0^{T} 1_{\opar 0,\tau  \cbrk}(t)\left|F(\omega,t,x) \right|\mathrm{d}t \mathrm{d}x
< \infty \quad (a.s.)
\end{align*}
and
\begin{align*}
 \int_{A} \left(\int_0^{T} 1_{\opar 0,\tau  \cbrk}(t) \left|G(\omega,t,x) \right|^2_{l_2}\mathrm{d}t \right)^{1/2} \mathrm{d}x
< \infty \quad (a.s.),
\end{align*}
where $|A|$ denotes the Lebesgue measure of $A$.
Then there exists a $\sigma\left( \rH  \cup \cP  \right) \times \cB(\fR^d)|_{\Omega \times [0,T] \times A}$-measurable function $H(\omega,t,x)$ on $\Omega \times [0,T] \times A$ such that
\begin{enumerate}[(i)]
\item
\begin{align*}
\int_{A} \sup_{t \in [0,T]}|H(\omega,t,x)| \mathrm{d}x  <\infty \quad (a.s.) \quad \forall t \in [0,T]
\end{align*}
\item 
\begin{align*}
&\int_{A} H(\omega,t,x) \mathrm{d}x \\
&=\int_0^{t} \int_{A}1_{\opar 0,\tau \cbrk}(s) F(\omega,s,x) \mathrm{d} x \mathrm{d}s  + \int_0^{t} \int_{A} 1_{\opar 0,\tau  \cbrk}(s) G^k(\omega,s,x)  \mathrm{d}x \mathrm{d}B^k_s   \quad (a.s.) \quad \forall t \in [0,T],
\end{align*}
 where the series $ \int_0^t \int_{A} 1_{\opar 0,\tau  \cbrk}(s) G^k(\omega,s,x)\mathrm{d}x \mathrm{d}B^k_s  $ converges in probability  uniformly on $[0,T]$.
\item for almost every $x \in A$, 
$$
H(\omega,t,x)= \int_0^{t } 1_{\opar 0,\tau \cbrk}(s) F(\omega,s,x) \mathrm{d}s +\int_0^{t} 1_{\opar 0,\tau  \cbrk}(s) G^k(\omega,s,x) \mathrm{d}B^k_s
\quad (a.s.) \quad \forall t \in [0,T],
$$
 where the series $\int_0^t 1_{\opar 0,\tau  \cbrk}(s) G^k(\omega,s,x) \mathrm{d}B^k_s$ and $\int_A \int_0^t 1_{\opar 0,\tau  \cbrk}(s) G^k(\omega,s,x) \mathrm{d}B^k_s \mathrm{d}x$ converge  in probability uniformly on $[0,T]$.
\end{enumerate}
In particular,
\begin{align*}
&\int_{A}\int_0^{t} 1_{\opar 0,\tau \cbrk}(s) F(\omega,s,x)  \mathrm{d}s \mathrm{d} x  + \int_{A} \int_0^{t}  1_{\opar 0,\tau  \cbrk}(s) G^k(\omega,s,x)   \mathrm{d}B^k_s \mathrm{d}x \\
&=\int_0^{t} \int_{A}1_{\opar 0,\tau \cbrk}(s) F(\omega,s,x) \mathrm{d} x \mathrm{d}s  + \int_0^{t} \int_{A} 1_{\opar 0,\tau  \cbrk}(s) G^k(\omega,s,x)  \mathrm{d}x \mathrm{d}B^k_s   \quad (a.s.) \quad \forall t \in [0,T]
\end{align*}
by identifying $H(\omega,t,x)$  and $\int_0^{t } 1_{\opar 0,\tau \cbrk}(s) F(\omega,s,x) \mathrm{d}s +\int_0^{t} 1_{\opar 0,\tau  \cbrk}(s) G^k(\omega,s,x) \mathrm{d}B^k_s$ based on the equivalent relation that is equal $(a.e.)$ on $\Omega \times [0,T] \times A$.
Moreover, if $F$ is $\cP \times \cB(\fR^d)$-measurable, then $(\omega,t,x) \mapsto 1_{[0,T]}(t)1_{A}(x)H(\omega,t,x)$ is $\cP \times \cB(\fR^d)$-measurable. 
\end{thm}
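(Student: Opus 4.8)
The plan is to split the two inhomogeneous contributions and treat them by different mechanisms. The term built from $F$ involves only a pathwise Lebesgue integral in time, so the identity
\begin{align*}
\int_A \int_0^t 1_{\opar 0,\tau \cbrk}(s) F(\omega,s,x)\,\mathrm{d}s\,\mathrm{d}x = \int_0^t \int_A 1_{\opar 0,\tau \cbrk}(s) F(\omega,s,x)\,\mathrm{d}x\,\mathrm{d}s
\end{align*}
follows from the classical Fubini theorem for almost every $\omega$, by the assumed $L_1$-integrability of $F$ over $[0,T]\times A$. All the real work therefore concerns the stochastic part, and I would first check that the right-hand integrand is admissible: setting $\tilde G := 1_{\opar 0,\tau\cbrk}G$ and using the generalized Minkowski inequality,
\begin{align*}
\left(\int_0^T \left|\int_A \tilde G(s,x)\,\mathrm{d}x\right|^2_{l_2}\mathrm{d}s\right)^{1/2} \le \int_A \left(\int_0^T |\tilde G(s,x)|^2_{l_2}\,\mathrm{d}s\right)^{1/2}\mathrm{d}x < \infty \quad (a.s.),
\end{align*}
so $\int_0^t \int_A \tilde G^k(s,x)\,\mathrm{d}x\,\mathrm{d}B^k_s$ is a well-defined continuous local martingale whose $k$-series converges uniformly in probability (the tail $\sum_{k>N}$ being controlled by the BDG bound applied to the squared $l_2$-remainder).

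Next I would establish the interchange for elementary integrands $G^k(s,x)=\sum_{j}\zeta_j^k(x)1_{(s_{j-1},s_j]}(s)$ with each $\zeta_j^k$ bounded, $\rF_{s_{j-1}}\otimes\cB(\fR^d)$-measurable and supported in $A$; for these both iterated expressions collapse to the same finite sum $\sum_j\big(\int_A\zeta_j^k(x)\,\mathrm{d}x\big)(B^k_{t\wedge s_j}-B^k_{t\wedge s_{j-1}})$ by pulling the pathwise spatial integral through a finite sum. For general $G$ I would localize by the stopping times
\begin{align*}
\rho_n = \inf\left\{ t\in[0,T] : \int_A \left(\int_0^t |\tilde G(s,x)|^2_{l_2}\,\mathrm{d}s\right)^{1/2}\mathrm{d}x \ge n \right\} \wedge T,
\end{align*}
which increase to $T$ almost surely (the spatial integral being adapted, continuous and nondecreasing in $t$), so that on $\opar 0,\rho_n\cbrk$ the a.s.\ integrability becomes a genuine $L_1(\Omega)$-bound, and pick elementary $G_n$ approximating $\tilde G$ in the norm $\bE\big[\int_A(\int_0^T|\tilde G_n-\tilde G|^2_{l_2}\mathrm{d}s)^{1/2}\mathrm{d}x\big]$. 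The BDG inequality \eqref{20240509 10} then controls both orders simultaneously: on the one hand
\begin{align*}
\bE\left[\sup_{t\le T}\left|\int_0^t \int_A (\tilde G_n^k-\tilde G^k)(s,x)\,\mathrm{d}x\,\mathrm{d}B^k_s\right|\right] \le C_{BDG}\,\bE\left[\int_A\left(\int_0^T|\tilde G_n-\tilde G|^2_{l_2}\,\mathrm{d}s\right)^{1/2}\mathrm{d}x\right],
\end{align*}
and on the other, applying BDG inside the spatial integral and using Tonelli,
\begin{align*}
\bE\left[\int_A \sup_{t\le T}\left|\int_0^t (\tilde G_n^k-\tilde G^k)(s,x)\,\mathrm{d}B^k_s\right|\,\mathrm{d}x\right] \le C_{BDG}\,\bE\left[\int_A\left(\int_0^T|\tilde G_n-\tilde G|^2_{l_2}\,\mathrm{d}s\right)^{1/2}\mathrm{d}x\right].
\end{align*}
Both right-hand sides tend to zero on each $\opar 0,\rho_n\cbrk$; passing to an almost surely uniformly convergent subsequence identifies the two limits, and letting $n\to\infty$ yields the identity almost surely for all $t$.

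To produce the single jointly measurable object $H$, I would define it by the fixed-$x$ formula of (iii), taking for the stochastic summand a genuinely $\sigma(\cP)\times\cB(\fR^d)$-measurable representative of $(\omega,t,x)\mapsto\int_0^t\tilde G^k(s,x)\,\mathrm{d}B^k_s$: on elementary integrands this measurability is explicit (continuous adapted, hence predictable, in $(\omega,t)$ and Borel in $x$), and it is preserved in the limit because the subsequence converges in the sup-over-$t$ norm for almost every $(\omega,x)$. Adding the pathwise $F$-integral, which is $\rH\times\cB(\fR^d)$-measurable, gives $H$ with the stated measurability, and the strengthening to $\cP\times\cB(\fR^d)$ when $F$ is predictable is then immediate. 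Conclusion (i) reads off from the BDG bound, (iii) from the pointwise-in-$x$ definition, and (ii) together with the final interchange from the Fubini identity just proved. The step I expect to be the genuine obstacle is precisely this construction of $H$ as a \emph{jointly} measurable process realizing, for almost every $x$, the pathwise stochastic integral while its spatial average equals the stochastic integral of the spatial average: the ordinary stochastic integral is defined only up to a null set depending on $x$, so the delicate point is to organize the elementary-integrand approximation and the stopping-time localization so that a \emph{single} exceptional null set works uniformly in $t$ and for both iterated orders at once. The uniform-in-time control supplied by BDG, combined with separability (which lets the identity for all $t$ descend from a countable dense set of times), is what makes this possible.
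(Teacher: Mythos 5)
Your proposal is correct in substance, but it takes a genuinely different route from the paper: the paper does not prove this theorem from scratch at all. Its ``proof'' consists of checking that $1_{(0,T]}1_{\opar 0,\tau\cbrk}F$ and $1_{(0,T]}1_{\opar 0,\tau\cbrk}G$ have the right measurability after extension by zero, and then delegating everything else to the stochastic Fubini theorems of van Neerven--Veraar, Krylov, and Veraar, with the caveat that ``suitable adjustments need to be made by inspecting their proofs.'' What you have written is essentially a reconstruction of the argument contained in those references: split off the pathwise $F$-part, verify admissibility of the spatially averaged integrand via the generalized Minkowski inequality, prove the interchange for elementary integrands, localize by the stopping times $\rho_n$ to upgrade almost-sure integrability to an $\bE[\,\cdot\,]$-bound, approximate in $L_1(\Omega\times A;L_2([0,T];l_2))$, and use the BDG inequality to control both iterated orders simultaneously, extracting an a.s.\ uniformly convergent subsequence to build the jointly measurable $H$. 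The trade-off is clear: the paper's citation-based route is short but leaves the reader to verify that the cited results really cover the $l_2$-valued, stopping-time-truncated, merely locally integrable setting, whereas your route is self-contained and makes explicit exactly where each hypothesis (the a.s.\ finiteness of the two iterated norms, $|A|<\infty$, predictability of $G$) is used. Two points deserve a little more care than you give them: the density of elementary integrands in the norm $\bE[\int_A(\int_0^T|\cdot|^2_{l_2}\,\mathrm{d}s)^{1/2}\mathrm{d}x]$ is asserted rather than proved (it is standard but is precisely the kind of ``adjustment'' the paper alludes to), and in your second BDG display the quantity $\int_A\sup_t|\int_0^t(\tilde G_n^k-\tilde G^k)\,\mathrm{d}B^k_s|\,\mathrm{d}x$ is not yet known to be measurable in $x$ before $H$ is constructed; the clean order is to apply BDG for each fixed $x$, integrate the jointly measurable right-hand side by Tonelli, and only then conclude. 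You correctly identify this circularity as the genuine obstacle and your resolution (define $H$ as the a.e.\ limit of the explicitly jointly measurable elementary approximants) is the right one.
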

\begin{proof}
The assumptions 
$$
F \in \bL_{0,0,1,\ell oc}\left( \opar 0,\tau  \cbrk \times A , \rH \times \cB(\fR^d) \right),
$$
and
$$
G \in\bL_{0,0,1,\ell oc}\left( \opar 0,\tau   \cbrk \times A, \cP \times \cB(\fR^d); l_2\right)
$$
imply that the extensions 
\begin{align*}
1_{(0,T]}(t)1_{\opar 0,\tau   \cbrk} \left( \omega,t \right)  F(\omega,t,x) 
:= 
\begin{cases}
& F(\omega,t,x)  \quad \text{if~ $(\omega,t) \in \opar 0,\tau  \wedge T \cbrk$ } \\
& 0 \quad \text{otherwise}
\end{cases}
\end{align*}
and
\begin{align*}
1_{(0,T]}(t)1_{\opar 0,\tau   \cbrk} \left( \omega,t \right)  G(\omega,t,x) 
:= 
\begin{cases}
& G(\omega,t,x)  \quad \text{if~ $(\omega,t) \in \opar 0,\tau  \wedge T \cbrk$ } \\
& 0 \quad \text{otherwise}
\end{cases}
\end{align*}
are  $\sigma(\rH \cup \cP)\times \cB(\fR^d) |_{ \Omega \times [0,T] \times A}$-measurable and
$\cP \times \cB(\fR^d)|_{ \Omega \times [0,T]\times A}$-measurable, respectively.

Then, the theorem can be derived straightforwardly from a version of the stochastic Fubini theorem. For further details, refer to \cite[Theorem 3.5]{NV 2005}, \cite[Lemmas 2.6 and 2.7]{Krylov 2011}, and \cite[Theorem 2.2]{Veraar 2012}, which discuss stochastic Fubini theorems in a general context sufficient to encompass our theorem.
However, we acknowledge that our theorem cannot be directly obtained from the aforementioned results. 
Some suitable adjustments need to be made by inspecting their proofs.
\end{proof}

Put $A=B_R$ with $R \in (0,\infty)$ in Theorem \ref{stochastic Fubini} and recall the condition that 
\begin{align}
									\label{20240403 01}
 \int_{B_R} \left(\int_0^{T} 1_{\opar 0,\tau  \cbrk}(t) \left|G(\omega,t,x) \right|^2_{l_2}\mathrm{d}t \right)^{1/2} \mathrm{d}x
< \infty \quad (a.s.).
\end{align}
Then this condition ensures that the stochastic term 
\begin{align*}
 \int_0^{t}  \left(\int_{B_R} 1_{\opar 0,\tau  \wedge T\cbrk}(s) G^k(\omega,s,x)  \mathrm{d}x \right) \mathrm{d}B^k_s
 \end{align*}
 is well-defined for all $(\omega,t) \in \Omega \times [0,T]$ due to the generalized Minkowski inequality:
 \begin{align*}
 \left(\int_0^{t} \left|\int_{B_R} 1_{\opar 0,\tau   \cbrk}(s) G^k(\omega,s,x)  \mathrm{d}x\right|^2_{l_2} \mathrm{d}s \right)^{1/2}
 &\leq  \int_{B_R} \left(\int_0^{t} \left|1_{\opar 0,\tau \wedge T \cbrk}(s) G^k(\omega,s,x) \right|^2_{l_2} \mathrm{d}s\right)^{1/2}  \mathrm{d}x \\
 &< \infty \quad (a.s.) \quad \forall t \in (0,\infty).
 \end{align*}
Additionally, \eqref{20240403 01} implies that for almost every $x \in B_R$,
 \begin{align*}
\left(\int_0^{T} 1_{\opar 0,\tau  \cbrk}(t) \left|G(\omega,t,x) \right|^2_{l_2}\mathrm{d}t \right)^{1/2} 
< \infty \quad (a.s.).
\end{align*}
Thus for almost every $x \in B_R$, 
\begin{align*}
 \int_0^{t}  1_{\opar 0,\tau   \cbrk}(s) G^k(\omega,s,x) \mathrm{d}B^k_s
\end{align*}
is well-defined for all $(\omega,t) \in  [0, T]$ and is continuous with respect to $t$.
In other words, the stochastic integral
\begin{align*}
 \int_0^{t}  1_{\opar 0,\tau   \cbrk}(s) G^k(\omega,s,x) \mathrm{d}B^k_s
\end{align*}
is a complex-valued function defined $(a.e.)$ on $\Omega \times [0,T] \times \fR^d$ so that for almost every $x \in B_R$ and $\omega \in \Omega$, 
the mapping
\begin{align*}
t \in [0,T] \mapsto \int_0^{t}  1_{\opar 0,\tau   \cbrk}(s) G^k(\omega,s,x) \mathrm{d}B^k_s
\end{align*}
is continuous.
Due to this construction, for almost every $x \in B_R$, the mapping
\begin{align*}
(\omega,t) \in \Omega \times [0,T] \mapsto  1_{ [0,T]}(t) \int_0^{t}  1_{\opar 0,\tau   \cbrk}(s) G^k(\omega,s,x) \mathrm{d}B^k_s
\end{align*}
is $\cP$-measurable. 
However, the joint measurability of 
\begin{align*}
 \int_0^{t}  1_{\opar 0,\tau   \cbrk}(s) G^k(\omega,s,x) \mathrm{d}B^k_s
\end{align*}
cannot be established directly in this way.
Fortunately, the stochastic Fubini theorem ensures that there exists a modification of 
\begin{align*}
\int_0^{t}  1_{\opar 0,\tau   \cbrk}(s) G^k(\omega,s,x) \mathrm{d}B^k_s
\end{align*}
which is joint measurable on $\Omega \times [0,T] \times A$.

Next consider another  version of the stochastic Fubini theorem which is uniformly given for all $T \in (0,\infty)$
by summing the localizations.
\begin{corollary}
									\label{Fubini corollary}
Let $F \in \bL_{0,1,1,\ell oc}\left( \opar 0,\tau  \cbrk \times \fR^d, \rH \times \cB(\fR^d) \right)$ and 
$$
G \in\bL^{\omega,x,t}_{0,1,2,\ell oc}\left( \opar 0,\tau \cbrk \times \fR^d, \cP \times \cB(\fR^d); l_2\right).
$$
Then there exists a $\sigma(\rH \times \cP) \times \cB(\fR^d)$-measurable function $H(\omega,t,x)$ on $\Omega \times [0,\infty) \times \fR^d$ such that 
\begin{enumerate}[(i)]
\item for each $R \in (0,\infty)$ and $T \in (0,\infty)$, 
\begin{align*}
\int_{B_R} \sup_{ t \in [0,T]}|H(\omega,t,x)| \mathrm{d}x  <\infty \quad (a.s.) ,
\end{align*}
\item for each $R \in (0,\infty)$, 
\begin{align*}
&\int_{B_R} H(\omega,t,x) \mathrm{d}x \\
&=\int_0^{t} \int_{B_R}1_{\opar 0,\tau \cbrk}(s) F(\omega,s,x) \mathrm{d} x \mathrm{d}t  
+ \int_0^{t} \int_{B_R} 1_{\opar 0,\tau  \cbrk}(s) G^k(\omega,s,x)  \mathrm{d}x \mathrm{d}B^k_s   \quad (a.s.) \quad \forall t \in [0,\infty),
\end{align*}
the series $\int_0^{t} \int_{B_R} 1_{\opar 0,\tau  \cbrk}(s) G^k(\omega,s,x)  \mathrm{d}x \mathrm{d}B^k_s$ converges in probability uniformly on $[0,T]$  for all $T \in (0,\infty)$.
\item for almost every $x \in \fR^d$, 
\begin{align*}
H(\omega,t,x)= \int_0^{t } 1_{\opar 0,\tau \cbrk}(s) F(\omega,s,x) \mathrm{d}t +\int_0^{t} 1_{\opar 0,\tau  \cbrk}(s) G^k(\omega,s,x) \mathrm{d}B^k_s
\quad (a.s.) \quad \forall t \in [0,\infty),
\end{align*}
\end{enumerate}
 where the series $\int_0^t 1_{\opar 0,\tau  \cbrk}(s) G^k(\omega,s,x) \mathrm{d}B^k_s$ converges  in probability uniformly on $[0,T]$ for all $T \in (0,\infty)$ and additionally for any $R \in (0,\infty)$, the series $\int_{B_R}\int_0^t 1_{\opar 0,\tau  \cbrk}(s) G^k(\omega,s,x) \mathrm{d}B^k_s$ converges  in probability uniformly on $[0,T]$ for all $T \in (0,\infty)$.
\end{corollary}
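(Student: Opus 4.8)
The plan is to obtain $H$ by patching together the functions produced by the single-horizon stochastic Fubini theorem, Theorem \ref{stochastic Fubini}, along an exhausting sequence of space--time cylinders. First I would fix $n \in \fN$ and apply Theorem \ref{stochastic Fubini} with terminal time $T=n$ and $A=B_n$. Its hypotheses are met: $|B_n|<\infty$, and since $\int_0^n 1_{\opar 0,\tau \cbrk}(t)(\cdot)\,\mathrm{d}t=\int_0^{\tau\wedge n}(\cdot)\,\mathrm{d}t\le\int_0^{\tau}(\cdot)\,\mathrm{d}t$, the assumptions $F\in\bL_{0,1,1,\ell oc}$ and $G\in\bL^{\omega,x,t}_{0,1,2,\ell oc}$ furnish
\begin{align*}
\int_{B_n}\int_0^n 1_{\opar 0,\tau \cbrk}(t)\left|F(\omega,t,x)\right|\,\mathrm{d}t\,\mathrm{d}x<\infty\quad(a.s.)
\end{align*}
and
\begin{align*}
\int_{B_n}\left(\int_0^n 1_{\opar 0,\tau \cbrk}(t)\left|G(\omega,t,x)\right|^2_{l_2}\,\mathrm{d}t\right)^{1/2}\mathrm{d}x<\infty\quad(a.s.).
\end{align*}
This produces a $\sigma(\rH\cup\cP)\times\cB(\fR^d)|_{\Omega\times[0,n]\times B_n}$-measurable function $H_n$ satisfying the three conclusions of Theorem \ref{stochastic Fubini} on $\Omega\times[0,n]\times B_n$.

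The decisive point is the consistency of the family $\{H_n\}$ on overlaps. By conclusion (iii) of Theorem \ref{stochastic Fubini}, for almost every $x\in B_n$,
\begin{align*}
H_n(\omega,t,x)=\int_0^t 1_{\opar 0,\tau \cbrk}(s)F(\omega,s,x)\,\mathrm{d}s+\int_0^t 1_{\opar 0,\tau \cbrk}(s)G^k(\omega,s,x)\,\mathrm{d}B^k_s\quad(a.s.),\ \forall t\in[0,n].
\end{align*}
The right-hand side is a process that is continuous in $t$ and does not depend on the localization index $n$ once $x\in B_n$ and $t\le n$, because for a.e. fixed $x$ the stochastic integral is the intrinsic one-dimensional It\^o integral. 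Consequently, for $m\le n$ one has $H_m=H_n$ almost everywhere on $\Omega\times[0,m]\times B_m$. I would then set $H(\omega,t,x):=H_n(\omega,t,x)$ whenever $t\le n$ and $x\in B_n$; by the preceding this is unambiguous up to a null set, and $H$ is $\sigma(\rH\cup\cP)\times\cB(\fR^d)$-measurable on all of $\Omega\times[0,\infty)\times\fR^d$ as a countable patching of measurable pieces, for instance realized as the pointwise a.e. limit $H=\lim_n 1_{[0,n]}(t)\,1_{B_n}(x)\,H_n$ of the zero-extended $H_n$.

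The three conclusions for $H$ then transfer by restriction. Fixing $R,T\in(0,\infty)$ and choosing $n\ge\max(R,T)$, conclusion (i) for $H_n$ restricted to $[0,T]\times B_R$ yields (i) for $H$; conclusion (ii) follows after restricting the spatial integral to $B_R\subset B_n$ and observing that the uniform-in-probability convergence of the series at horizon $n$ descends to the sub-horizon $[0,T]$; and conclusion (iii) for $H$ is immediate from that for $H_n$. The series $\int_0^t 1_{\opar 0,\tau \cbrk}(s)G^k\,\mathrm{d}B^k_s$ and its spatially integrated counterpart over $B_R$ converge in probability uniformly on $[0,T]$ for every $T$ precisely because they do so at any horizon $n\ge T$.

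The main obstacle is not the exhaustion itself but ensuring that the modifications $H_n$, which Theorem \ref{stochastic Fubini} determines only up to $(a.e.)$ equivalence on each cylinder, are mutually consistent so that the patched $H$ is a single well-defined jointly measurable object. This is resolved exactly by invoking the pathwise representation (iii), which exhibits each $H_n$ on overlaps as the same intrinsic process, independent of $n$. A secondary technical check is that the uniform-in-probability convergence of the $l_2$-series of stochastic integrals at horizon $n$ descends both to every shorter horizon $T\le n$ and to the spatially integrated series over $B_R\subset B_n$.
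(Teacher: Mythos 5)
Your construction is essentially the paper's: apply the single-cylinder stochastic Fubini theorem on an exhausting family of space--time cylinders, use the pathwise representation (iii) to force consistency of the a.e.-defined modifications on overlaps, and patch. The only cosmetic difference is that you take a diagonal exhaustion ($A=B_n$, $T=n$), whereas the paper keeps a doubly indexed family $H_{T,R}$ and glues over disjoint annuli $B_m\setminus B_{m-1}$ and time slabs $(n-1,n]$, which lets it avoid a limiting procedure in the definition of $H$.

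There is one step where your write-up is too quick, namely conclusion (ii) for an arbitrary radius $R<n$. Theorem \ref{stochastic Fubini} applied with $A=B_n$ yields the identity $\int_{B_n}H_n\,\mathrm{d}x=\int_0^t\int_{B_n}1_{\opar 0,\tau\cbrk}F\,\mathrm{d}x\,\mathrm{d}s+\int_0^t\int_{B_n}1_{\opar 0,\tau\cbrk}G^k\,\mathrm{d}x\,\mathrm{d}B^k_s$, and one cannot obtain the corresponding identity over $B_R$ by ``restricting the spatial integral'': an equality of integrals over $B_n$ says nothing about the integrals over a subset. You must either re-apply the theorem with $A=B_R$ (its hypotheses hold) and identify the resulting $H_{T,R}$ with your $H$ on $\Omega\times[0,T]\times B_R$ via the pathwise representation, or use additivity of all three terms over the decomposition $B_n=B_R\cup(B_n\setminus B_R)$; the paper does exactly this through its annulus identity \eqref{20240309 10}. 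The ingredients are already present in your argument, so this is a repairable omission rather than a wrong approach, but as stated the deduction of (ii) does not go through.
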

\begin{proof}
It may seem that this theorem can also be obtained from a general version of the stochastic Fubini theorem since the Lebesgue measure on $[0,\infty) \times \fR^d$ is $\sigma$-finite. However, we could not find an appropriate reference and thus suggest a proof with detail by applying  Theorem \ref{stochastic Fubini}.

It is sufficient to find a $\sigma(\rH \times \cP) \times \cB(\fR^d)$-measurable function $H(\omega,t,x)$ on $\Omega \times [0,\infty) \times \fR^d$ such that for all $R,T \in (0,\infty)$,
\begin{align}
									\label{20240309 30}		
\int_{B_R} \sup_{t \in [0,T]} |H(\omega,t,x)| \mathrm{d}x  <\infty \quad (a.s.),
\end{align}
\begin{align}
									\notag
&\int_{B_R} H(\omega,t,x) \mathrm{d}x \\
									\label{20240309 31}
&=\int_0^{t} \int_{B_R}1_{\opar 0,\tau \cbrk}(s) F(\omega,s,x) \mathrm{d} x \mathrm{d}s  
+ \int_0^{t} \int_{B_R} 1_{\opar 0,\tau  \cbrk}(s) G^k(\omega,s,x)  \mathrm{d}x \mathrm{d}B^k_s   \quad (a.s.) \quad \forall t \in [0,T],
\end{align}
and for almost every $x \in B_R$, 
\begin{align}
									\label{20240309 32}
H(\omega,t,x)= \int_0^{t } 1_{\opar 0,\tau \cbrk}(s) F(\omega,s,x) \mathrm{d}s +\int_0^{t} 1_{\opar 0,\tau  \cbrk}(s) G^k(\omega,s,x) \mathrm{d}B^k_s
\quad (a.s.) \quad \forall t \in [0,T],
\end{align}
where all the series (including the series $\int_{B_R}\int_0^t 1_{\opar 0,\tau  \cbrk}(s) G^k(\omega,s,x) \mathrm{d}B^k_s$) converge in probability uniformly on $[0,T]$ for any $T \in (0,\infty)$.
Recall the condition that $F \in \bL_{0,1,1,\ell oc}\left( \opar 0,\tau  \cbrk \times \fR^d,\cP \times \cB(\fR^d)\right)$ and 
$$
G \in\bL^{\omega,x,t}_{0,2,1,\ell oc}\left( \opar 0,\tau  \cbrk \times \fR^d, \cP \times \cB(\fR^d); l_2\right).
$$
Then for all $T,R \in (0,\infty)$, we have
\begin{align*}
 \int_{B_R} \int_0^{T} 1_{\opar 0,\tau  \cbrk}(t) \left|F(\omega,t,x) \right|\mathrm{d}t \mathrm{d}x
 +\int_{B_R} \left(\int_0^{T} 1_{\opar 0,\tau  \cbrk}(t) \left|G(\omega,t,x) \right|^2_{l_2}\mathrm{d}t \right)^{1/2} \mathrm{d}x
< \infty \quad (a.s.).
\end{align*}
Thus by Theorem \ref{stochastic Fubini}, for each $R,T \in (0,\infty)$, there exists a $\sigma(\cH \cup \cP) \times \cB(\fR^d)|_{\Omega \times [0,T] \times B_R}$-measurable function $H_{T,R}$ on $\Omega \times [0,T] \times B_R$ so that
\begin{align}
										\label{20240309 40}
\int_{B_R} \sup_{t \in T}|H_{T,R}(\omega,t,x)| \mathrm{d}x  <\infty \quad (a.s.),
\end{align}
\begin{align*}
&\int_{B_R} H_{T,R} (\omega,t,x) \mathrm{d}x \\
&=\int_0^{t} \int_{B_R}1_{\opar 0,\tau \cbrk}(s) F(\omega,s,x) \mathrm{d} x \mathrm{d}s  
+ \int_0^{t} \int_{B_R} 1_{\opar 0,\tau  \cbrk}(s) G^k(\omega,s,x)  \mathrm{d}x \mathrm{d}B^k_s   \quad (a.s.) \quad \forall t \in [0,T],
\end{align*}
and for almost every $x \in B_R$, 
\begin{align}
								\label{20240309 01}
H_{T,R}(\omega,t,x)= \int_0^{t } 1_{\opar 0,\tau \cbrk}(s) F(\omega,s,x) \mathrm{d}s +\int_0^{t} 1_{\opar 0,\tau  \cbrk}(s) G^k(\omega,s,x) \mathrm{d}B^k_s
\quad (a.s.) \quad \forall t \in [0,T],
\end{align}
 where all the series converge in probability uniformly on $[0,T]$.

Next we claim that for all $T \in (0,\infty)$ and $0<R_1 \leq R \leq R_2 <\infty$,
\begin{align}
									\label{20240309 02}
H_{T,R_1}= H_{T,R} = H_{T,R_2} \quad (a.e.)~ \text{on}~ \Omega \times [0,T] \times B_{R_1}
\end{align}
and
\begin{align}
									\notag
&\int_{B_{R} \setminus B_{R_1}} H_{T,R_2} (\omega,t,x) \mathrm{d}x \\
									\label{20240309 10}
&=\int_0^{t} \int_{B_{R} \setminus B_{R_1}} 1_{\opar 0,\tau \cbrk}(s) F(\omega,s,x) \mathrm{d} x \mathrm{d}s  
+ \int_0^{t} \int_{B_{R} \setminus B_{R_1}} 1_{\opar 0,\tau  \cbrk}(s) G^k(\omega,s,x)  \mathrm{d}x \mathrm{d}B^k_s   \quad (a.s.) \quad \forall t \in [0,T].
\end{align}
Due to \eqref{20240309 01} and the Fubini theorem, it is nearly evident that \eqref{20240309 02} holds.
To prove \eqref{20240309 10}, we use \eqref{20240309 02} and the linearity of the integrations.
Indeed,
\begin{align*}
&\int_{B_{R} \setminus B_{R_1}} H_{T,R_2} (\omega,t,x) \mathrm{d}x \\
&=\int_{B_{R}} H_{T,R_2} (\omega,t,x) \mathrm{d}x 
-\int_{ B_{R_1}} H_{T,R_2} (\omega,t,x) \mathrm{d}x\\
&=\int_{B_{R}} H_{T,R} (\omega,t,x) \mathrm{d}x 
-\int_{ B_{R_1}} H_{T,R_1} (\omega,t,x) \mathrm{d}x\\
&=\int_0^{t} \int_{B_{R} \setminus B_{R_1}} 1_{\opar 0,\tau \cbrk}(s) F(\omega,s,x) \mathrm{d} x \mathrm{d}s  
+ \int_0^{t} \int_{B_{R} \setminus B_{R_1}} 1_{\opar 0,\tau  \cbrk}(s) G^k(\omega,s,x)  \mathrm{d}x \mathrm{d}B^k_s   \quad (a.s.) \quad \forall t \in [0,T].
\end{align*}
Here we used the fact that all paths are continuous on $[0,T]$ almost surely.
From this, we can construct the function $H$ as follows:
\begin{align*}
H(\omega,t,x) :=
\begin{cases}
&0 \quad \text{if}~ t=0 \\
&H_{n,m}(\omega,t,x) \quad \text{if}~ (\omega,t,x) \in \Omega \times (n-1, n] \times B_m \setminus B_{m-1}.
\end{cases}
\end{align*}
Then it is obvious that $H$ is a $\sigma(\rH \cup \cP) \times \cB(\fR^d)$-measurable function function defined on $\Omega \times [0,\infty) \times \fR^d$.
We now show that $H$ satisfies \eqref{20240309 30}, \eqref{20240309 31}, and \eqref{20240309 32}.
Let $T, R \in (0,\infty)$. 
Denote by $\lceil T \rceil$  and $\lceil R \rceil$  the smallest integers such that
$T \leq \lceil T \rceil$ and $R \leq \lceil R \rceil$, respectively.
Then first by \eqref{20240309 40},
\begin{align*}
\int_{B_R} \sup_{t \in T}|H(\omega,t,x)| \mathrm{d}x  
&\leq \sum_{m=1}^{\lceil R \rceil} \sum_{n=1}^{\lceil T \rceil} \int_{B_R} 1_{B_m \setminus B_{m-1}}(x) \sup_{t \in T}|H_{n,m}(\omega,t,x)| \mathrm{d}x   \\
&\leq \sum_{m=1}^{\lceil R \rceil} \sum_{n=1}^{\lceil T \rceil}  \int_{B_m}  \sup_{t \in T}|H_{n,m}(\omega,t,x)| \mathrm{d}x   
<\infty \quad (a.s.).
\end{align*}
Next by \eqref{20240309 10},
\begin{align*}
&\int_{B_R} H(\omega,t,x) \mathrm{d}x \\
&=\sum_{m=1}^{\lceil R \rceil} \sum_{n=1}^{\lceil T \rceil} 1_{(n-1,n]}(t) \int_{B_R}1_{B_m \setminus B_{m-1}}(x) H_{n,m}(\omega,t,x) \mathrm{d}x \\
&=\int_0^{t} \int_{B_R}1_{\opar 0,\tau \cbrk}(s) F(\omega,s,x) \mathrm{d} x \mathrm{d}s  
+ \int_0^{t} \int_{B_R} 1_{\opar 0,\tau  \cbrk}(s) G^k(\omega,s,x)  \mathrm{d}x \mathrm{d}B^k_s   \quad (a.s.) \quad \forall t \in [0,T].
\end{align*}
Finally, by \eqref{20240309 02} and \eqref{20240309 01}, for almost every $x \in B_R \subset B_{ \lceil R \rceil}$, 
\begin{align*}
H(\omega,t,x)
&=\sum_{m=1}^{\lceil R \rceil} \sum_{n=1}^{\lceil T \rceil} 1_{(n-1,n]}(t) 1_{B_m \setminus B_{m-1}}(x) H_{n,m}(\omega,t,x) \\
&= \sum_{n=1}^{\lceil T \rceil} 1_{(n-1,n]}(t) H_{n,\lceil R \rceil}(\omega,t,x) \\
&= \int_0^{t } 1_{\opar 0,\tau \cbrk}(s) F(\omega,s,x) \mathrm{d}s +\int_0^{t} 1_{\opar 0,\tau  \cbrk}(s) G^k(\omega,s,x) \mathrm{d}B^k_s
\quad (a.s.) \quad \forall t \in [0,T].
\end{align*}
The corollary is proved. 
\end{proof}

\mysection{Stochastic Fubini theorems with deterministic symbols}
												\label{sto fubini deter}

In this section, we apply the stochastic Fubini theorems developed in the previous section to establish our existence result.
Recall that the solution $u$ to \eqref{time eqn} is given so that 
\begin{align}
										\notag
\cF[u(t,\cdot)](\xi)
&= \exp\left(\int_0^t\psi(r,\cdot)\mathrm{d}r \right) \cF[u_0](\xi)
+\int_0^t  \exp\left(\int_s^t\psi(r,\xi)\mathrm{d}r \right) 1_{\opar 0,\tau \cbrk}(s)\cF[f(s,\cdot)](\xi) \mathrm{d}s  \\
										\label{202040702 20}
&\quad +\int_0^t  \exp\left(\int_s^t\psi(r,\xi)\mathrm{d}r \right) 1_{\opar 0,\tau \cbrk}(s)\cF[g^k(s,\cdot)](\xi) \mathrm{d}B^k_s,
\end{align} 
if the symbol $\psi$ is deterministic, which is mentioned  in  Remark \ref{u meaning}.
Consequently, our solution $u$ will be derived from the above relation if the symbol is non-random.
Here, we provide more details explaining why this works only for deterministic symbols.

It is evident that the most challenging part to handle in \eqref{202040702 20} is the stochastic (integral) term
\begin{align}
										\label{20240724 30}
\int_0^t  \exp\left(\int_s^t\psi(r,\xi)\mathrm{d}r \right) 1_{\opar 0,\tau \cbrk}(s)\cF[g^k(s,\cdot)](\xi) \mathrm{d}B^k_s.
\end{align}
Firstly, it is essential to verify whether the stochastic term is well-defined based on our main assumptions and the stochastic Fubini theorems. Once this is established, a solution $u$ could be derived from the relation \eqref{202040702 20} with a certain mathematical meaning.
However, for each $t \in (0,\infty)$ and $\xi \in \fR^d$, the mapping
\begin{align}
										\label{20240702 10}
(\omega,s) \mapsto \int_s^t\psi(r,\xi)\mathrm{d}r \cdot 1_{\opar 0,\tau \wedge t\cbrk }(s)
\end{align}
is not predictable anymore since the term $\int_s^t\psi(r,\xi)\mathrm{d}r$ is always merely $\rF_t$-adapted for any $s \in (0,t)$ in general. In other words, the term in \eqref{20240702 10} is not predictable (even not progressive measurable) since it is not $\rF_s$-adapted for each $s$.
Therefore, \eqref{20240724 30} cannot be defined as an It\^o stochastic integral in general unless there is a very strong assumption regarding the randomness of $\psi$ to make the term in \eqref{20240702 10} predictable (or progressive measurable).

Nevertheless, the term in \eqref{20240702 10} naturally arises when seeking an appropriate candidate for a solution to \eqref{time eqn}, as demonstrated in \eqref{202040702 20}. This indicates that our existence theory cannot be directly derived from the stochastic Fubini theorems unless the symbol $\psi(t,\xi)$ meets a very strong assumption regarding the sample point $\omega$, due to the predictability issue mentioned earlier.

As a first step to ensure that \eqref{20240702 10} is predictable and employ \eqref{202040702 20}, we specifically assume that the symbol $\psi(t,\xi)$ is deterministic. 
This should be regarded as the simplest form of randomness.

Additionally, we use the notation $\tilde \psi(t,\xi)$ in order to emphasize that the symbol $\tilde \psi(t,\xi)$ is deterministic.
In other words, we assume that our symbol $\tilde \psi(t,\xi)$ is  non-random, \textit{i.e.} $\tilde \psi(t,\xi)$ is a complex-valued $\cB([0,\infty) \times \cB(\fR^d)$-measurable function defined on $[0,\infty) \times \fR^d$ throughout the section.

Moreover, suppose that our symbol $\tilde \psi$ satisfies weaker variants of Assumptions \ref{main as} and \ref{main as 2} so that for all  $R,T \in (0,\infty)$,
\begin{align}
									\label{main deter symbol as}
C^{\mathrm{e}|\int\Re[\tilde \psi]|}_{R,T}:=\esssup_{ 0\leq s \leq t \leq T, \xi \in   B_R} \left| \exp\left( \left|\int_s^t\Re[\tilde \psi(r,\xi)]\mathrm{d}r \right| \right)  \right|   
< \infty
\end{align}
and
\begin{align}
										\label{main deter symbol as 2}
C^{|\tilde \psi|}_{R,T}:=\esssup_{\xi \in  B_R} \left( \int_0^T|\tilde \psi(t,\xi)|\sup_{0\leq s \leq t}  \exp\left(\left|\int_s^t\Re[\tilde \psi(r,\xi)]\mathrm{d}r\right| \right)    \mathrm{d}t  \right)
< \infty.
\end{align}
One can view $\tilde \psi$ as a complex-valued $\cP \times \cB(\fR^d)$-measurable function on $\Omega \times [0,\infty) \times \fR^d$ by considering the canonical constant extension that $(\omega,t,\xi) \mapsto \tilde \psi(t,\xi)$.
Then the condition in \eqref{main deter symbol as} is equivalent to 
\begin{align*}
\esssup_{ (\omega,t, \xi) \in \Omega \times (0,T) \times B_R }\left|\int_0^t\Re[\psi(r,\xi)]\mathrm{d}r \right|  
< \infty,
\end{align*}
which shows that this condition is less restrictive than Assumption \ref{main as}. 
In particular, the constant $C^{\mathrm{e}|\int\Re[\tilde \psi]|}_{R,T}$ is located between $C^{\mathrm{e}\int\Re[\tilde \psi]}_{R,T}$ and $C^{\mathrm{e}\int\sup|\Re[\tilde \psi]|}_{R,T}$, \textit{i.e.} $C^{\mathrm{e}\int\Re[\tilde \psi]}_{R,T} \leq C^{\mathrm{e}|\int\Re[\tilde \psi]|}_{R,T}
\leq C^{\mathrm{e}\int\sup|\Re[\tilde \psi]|}_{R,T}$, where the other notations came from Assumption \ref{main as} and Assumption \ref{weaker as}. Moreover, by recalling Assumption \ref{main as 2}, it is obvious that
\begin{align*}
C^{|\tilde \psi|}_{R,T}
\leq C^{\sup|\tilde \psi|}_{R,T},
\end{align*}
which implies that \eqref{main deter symbol as 2} is slightly weaker.

Now we connect our main assumptions on the symbol with the stochastic Fubini theorems from the previous section. We present several corollaries of the stochastic Fubini theorems. 
The conditions \eqref{main deter symbol as} and \eqref{main deter symbol as 2} on the symbol $\tilde \psi$ are considered separately in the following corollaries except the last one, and even some weaker conditions on 
$\tilde \psi$ are introduced. 
Additionally, we explore natural extensions of the stochastic integrals to the entire space 
$\Omega \times [0,\infty) \times \fR^d$ with appropriate new classes.

\begin{corollary}
									\label{20240417 01}
Let $G \in \bL^{\omega,x,t}_{0,1,2,\ell oc}\left( \opar 0,  \tau \cbrk \times \fR^d, \cP \times \cB(\fR^d); l_2\right)$.
Suppose that \eqref{main deter symbol as} holds and for almost every $\xi \in \fR^d$,
\begin{align}
										\label{2024051801}
\int_0^t |\tilde\psi(s,\xi)| \mathrm{d}s < \infty \quad  \forall t \in (0,\infty).
\end{align}
Denote
\begin{align}
H(t,\xi) 
=\int_0^t  \exp\left(\int_s^t \tilde \psi(r,\xi)\mathrm{d}r \right) 1_{\opar 0,\tau \cbrk }(s) G^k(s,\xi)\mathrm{d}B^k_s.
\end{align}
Then 
\begin{align*}
H \in \bC_{loc}L_{1,\ell oc}\left(\Omega \times [0,\infty) \times \fR^d, \cP \times \cB(\fR^d)\right),
\end{align*}
where
\begin{align*}
&H \in  \bC_{loc} L_{1,\ell oc} \left(\Omega \times [0,\infty) \times \fR^d, \cP \times \cB(\fR^d)\right) \\
&\iff 
H \in  \bC L_{1,\ell oc}\left(\Omega \times [0,T]\times \fR^d, \cP \times \cB(\fR^d)\right) \quad \forall T \in (0,\infty).
\end{align*}
\end{corollary}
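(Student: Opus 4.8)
The plan is to use the deterministic nature of $\tilde\psi$ to turn the stochastic convolution defining $H$ into an ordinary It\^o integral with a predictable integrand, and then to let the stochastic Fubini theorem supply joint measurability together with the crucial almost-sure local integrability. The starting observation is the semigroup factorization
\[
\exp\left(\int_s^t\tilde\psi(r,\xi)\mathrm{d}r\right) = \exp\left(\int_0^t\tilde\psi(r,\xi)\mathrm{d}r\right)\exp\left(-\int_0^s\tilde\psi(r,\xi)\mathrm{d}r\right),
\]
valid for almost every $\xi$ by \eqref{2024051801} (this is exactly where that hypothesis enters: it makes $t\mapsto\int_0^t\tilde\psi(r,\xi)\mathrm{d}r$ finite and continuous, so both exponentials are well defined and continuous in their upper limit, controlling in particular the imaginary part of $\tilde\psi$). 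Since the prefactor $\exp(\int_0^t\tilde\psi(r,\xi)\mathrm{d}r)$ is non-random and does not depend on the integration variable $s$, it can be pulled out of the It\^o integral, giving for almost every $\xi$
\[
H(t,\xi) = \exp\left(\int_0^t\tilde\psi(r,\xi)\mathrm{d}r\right)M(t,\xi),\qquad M(t,\xi):=\int_0^t G_0^k(s,\xi)\,\mathrm{d}B^k_s,
\]
where $G_0^k(s,\xi):=\exp(-\int_0^s\tilde\psi(r,\xi)\mathrm{d}r)\,1_{\opar 0,\tau\cbrk}(s)\,G^k(s,\xi)$. The whole point of this rewriting is that $G_0$ is now $\cP\times\cB(\fR^d)$-measurable — the weight is deterministic and continuous in $s$, the indicator is predictable, and $G$ is predictable — so $M$ is a genuine stochastic integral. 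This is precisely the step that collapses for random symbols, which is why the deterministic assumption is imposed here.

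\textbf{Running the stochastic Fubini theorem.} Fix $T\in(0,\infty)$. Using $\exp(-\int_0^s\Re[\tilde\psi(r,\xi)]\mathrm{d}r)\le C^{\mathrm{e}|\int\Re[\tilde \psi]|}_{R,T}$ for $\xi\in B_R$ and $0\le s\le T$, which follows from \eqref{main deter symbol as}, I obtain the integrability needed to invoke Theorem \ref{stochastic Fubini}:
\[
\int_{B_R}\left(\int_0^T 1_{\opar 0,\tau\cbrk}(s)|G_0(s,\xi)|^2_{l_2}\,\mathrm{d}s\right)^{1/2}\mathrm{d}\xi \le C^{\mathrm{e}|\int\Re[\tilde \psi]|}_{R,T}\int_{B_R}\left(\int_0^{\tau}|G(s,\xi)|^2_{l_2}\,\mathrm{d}s\right)^{1/2}\mathrm{d}\xi<\infty \quad (a.s.),
\]
the last finiteness being the assumption $G\in\bL^{\omega,x,t}_{0,1,2,\ell oc}$. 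Applying Theorem \ref{stochastic Fubini} with $A=B_R$ and $F=0$ for each $R\in\fN$, and gluing the resulting versions over $R$ exactly as in the proof of Corollary \ref{Fubini corollary}, I get a single $\cP\times\cB(\fR^d)$-measurable process $M(\omega,t,\xi)$ on $\Omega\times[0,T]\times\fR^d$ (measurability of $M$ uses that $F=0$ is predictable, so the last assertion of Theorem \ref{stochastic Fubini} applies) such that, by conclusion (iii), $M(\cdot,\xi)$ coincides for almost every $\xi$ with the pathwise stochastic integral $\int_0^tG_0^k\mathrm{d}B^k_s$ — hence has continuous paths in $t$ — and, by conclusion (i), satisfies $\int_{B_R}\sup_{t\in[0,T]}|M(\omega,t,\xi)|\,\mathrm{d}\xi<\infty$ almost surely for every $R$.

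\textbf{Verifying membership.} I then define the candidate $H(\omega,t,\xi):=\exp(\int_0^t\tilde\psi(r,\xi)\mathrm{d}r)\,M(\omega,t,\xi)$ and check the three defining requirements of $\bC L_{1,\ell oc}(\Omega\times[0,T]\times\fR^d,\cP\times\cB(\fR^d))$. Joint $\cP\times\cB(\fR^d)$-measurability is immediate, since the weight is a deterministic and jointly measurable function of $(t,\xi)$. For almost every $\xi$ the path $t\mapsto M(t,\xi)$ is continuous and $t\mapsto\exp(\int_0^t\tilde\psi(r,\xi)\mathrm{d}r)$ is continuous, so $t\mapsto H(t,\xi)$ is continuous; and by (iii) together with the factorization this $H$ agrees for almost every $\xi$ with the process originally defined in the statement, which identifies the two. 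Finally, combining the uniform weight bound $|\exp(\int_0^t\tilde\psi)|\le C^{\mathrm{e}|\int\Re[\tilde \psi]|}_{R,T}$ on $B_R\times[0,T]$ with conclusion (i) gives
\[
\int_{B_R}\sup_{t\in[0,T]}|H(t,\xi)|\,\mathrm{d}\xi \le C^{\mathrm{e}|\int\Re[\tilde \psi]|}_{R,T}\int_{B_R}\sup_{t\in[0,T]}|M(t,\xi)|\,\mathrm{d}\xi<\infty \quad (a.s.).
\]
Letting $T$ range over $(0,\infty)$ and gluing the versions (which agree almost everywhere by uniqueness of the stochastic integral) yields $H\in\bC_{loc}L_{1,\ell oc}$.

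\textbf{Main obstacle.} I expect the genuine difficulty to be the almost-sure finiteness of $\int_{B_R}\sup_{t}|M(t,\xi)|\,\mathrm{d}\xi$ in the absence of any moment bound: since $G$ lies in an $\bL_0$-type class, the BDG inequality cannot be applied directly after taking expectations. The resolution is conclusion (i) of the stochastic Fubini theorem, which delivers exactly this sup-integrability for free; the only remaining care is the bookkeeping of the ``almost every $\xi$'' versus ``almost surely'' quantifiers during the gluing over $R$ and $T$, which is routine Fubini-type reasoning.
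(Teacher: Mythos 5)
Your proposal is correct and follows essentially the same route as the paper: the same factorization $H(t,\xi)=\exp\left(\int_0^t\tilde\psi(r,\xi)\mathrm{d}r\right)\cdot\int_0^t\exp\left(-\int_0^s\tilde\psi(r,\xi)\mathrm{d}r\right)1_{\opar 0,\tau\cbrk}(s)G^k(s,\xi)\mathrm{d}B^k_s$ to restore predictability of the integrand, the same appeal to the stochastic Fubini theorem (the paper invokes Corollary \ref{Fubini corollary}, which is exactly your ``apply Theorem \ref{stochastic Fubini} on each $B_R$ and glue'') for joint $\cP\times\cB(\fR^d)$-measurability and the almost-sure finiteness of $\int_{B_R}\sup_{t\in[0,T]}|H_2(t,\xi)|\mathrm{d}\xi$, and the same uniform bound from \eqref{main deter symbol as} on the deterministic prefactor to conclude. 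The only cosmetic difference is that the paper additionally records a quantitative tail-probability estimate for $\int_{B_R}\sup_t|H_2|\mathrm{d}\xi$ via the generalized Minkowski inequality, which your argument obtains qualitatively from conclusion (i) of the Fubini theorem.
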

\begin{proof}
First we show that $H$ is a complex-valued $\cP \times \cB(\fR^d)$-measurable function defined on $\Omega \times [0,\infty) \times \fR^d$.
Put
\begin{align*}
H_1(t,\xi) 
=\exp\left(\int_0^t \tilde \psi(r,\xi)\mathrm{d}r \right)
\end{align*}
and
\begin{align*}
H_2(t,\xi) 
=\int_0^t  \exp\left(-\int_0^s\tilde\psi(r,\xi)\mathrm{d}r \right) 1_{\opar 0,\tau \cbrk }(s) G^k(s,\xi) \mathrm{d}B^k_s.
\end{align*}
Then obviously $H_1$ is well-defined due to \eqref{2024051801}.
Additionally, since the symbol $\tilde \psi(r,\xi)$ is non-random and $\cB([0,\infty) \times  \cB(\fR^d)$-measurable,
it is easy to show that the mapping
$$
(\omega,t,\xi) \in \Omega \times [0,\infty) \times \fR^d \mapsto \int_0^t \tilde \psi(r,\xi)\mathrm{d}r \in \fC
$$ 
is  $\cP \times \cB(\fR^d)$-measurable. 
On the other hand, applying \eqref{main deter symbol as} and recalling the condition 
$$
G \in \bL^{\omega,x,t}_{0,2,1,\ell oc}\left( \opar 0,  \tau \cbrk \times \fR^d, \cP \times \cB(\fR^d); l_2\right),
$$
for all $T, R \in (0,\infty)$, we have
\begin{align*}
&\int_{B_R}\left(\int_0^T \left|\exp\left(-\int_0^s\tilde \psi(r,\xi)\mathrm{d}r \right) 1_{\opar 0,\tau \cbrk }(s) G(s,\xi)\right|^2_{l_2} \mathrm{d}s \right)^{1/2} \mathrm{d}\xi \\
&\leq  \esssup_{s \in (0,T), \xi \in B_R} \exp\left( \left|\int_0^s\Re[\tilde \psi(r,\xi)] \mathrm{d}r \right|\right) 
\int_{B_R} \left(\int_0^T \left| 1_{\opar 0,\tau \cbrk }(s) G(s,\xi)\right|^2_{l_2} \mathrm{d}s \right)^{1/2} \mathrm{d}\xi 
< \infty \quad (a.s.).
\end{align*}
Thus by Corollary \ref{Fubini corollary}, 
$H_2(t,\xi)$ is also $\cP \times \cB(\fR^d)$-measurable  (by considering the modification in the corollary) and
\begin{align}
										\label{20240801 11}
\int_{B_R} \sup_{ t \in [0,T]}|H_2(t,\xi)| \mathrm{d}\xi  <\infty \quad (a.s.).
\end{align}
Therefore $H(t,\xi)=H_1(t,\xi)H_2(t,\xi)$ is  a $\cP \times \cB(\fR^d)$-measurable function defined on $\Omega \times [0,\infty) \times \fR^d$.

Next, we claim 
\begin{align*}
H \in \bC_{loc}L_{1,\ell oc}\left(\Omega \times [0,\infty) \times \fR^d, \cP \times \cB(\fR^d)\right)
\end{align*}
to complete the proof.
First, it is obvious that both $H_1(t,\xi)$ and $H_2(t,\xi)$ are continuous with respect to $t$ for almost every $\xi$ and $\omega$. 
Putting 
\begin{align*}
M=\esssup_{ (t,\xi) \in  [0,T] \times B_R} \left| \exp\left( \left|\int_0^t\Re[\tilde \psi(r,\xi)]\mathrm{d}r \right| \right)  \right|   
\end{align*}
and using \eqref{main deter symbol as} and \eqref{20240801 11}, we obtain 
\begin{align}
										\label{20240414 02}
 \int_{B_R} \sup_{t \in [0,T]} \left|H(t,\xi)\right|  \mathrm{d}\xi 
\leq M\int_{B_R} \sup_{ t \in [0,T]}|H_2(t,\xi)| \mathrm{d}\xi \quad (a.s.).
\end{align}
Additionally, applying the stochastic Fubini theorem, a well-known property of the It\^o stochastic integral (\textit{cf.\cite[Theorem 6.3.5]{Krylov 2002}} and \cite[(2.5)]{Krylov 2011}), and the generalized Minkowski inequality, for all  $\varepsilon, \delta, T,R \in (0,\infty)$,  we have
\begin{align}
										\notag
&P\left( \int_{B_R}\sup_{t \in [0,T]}\left| H_2(t,\xi) \right|\mathrm{d}\xi  \geq \varepsilon  \right) \\
										\label{20240414 01}
&\leq  P \left( \left[ \int_{B_R} \left|\int_0^T \left|\exp\left(-\int_0^s\tilde\psi(r,\xi)\mathrm{d}r \right) 1_{\opar 0,\tau \cbrk }(s) G(s,\xi)\right|^2_{l_2} \mathrm{d}s\right|^{1/2} \mathrm{d}\xi \right]^{2} \geq \delta \right) + \frac{\delta}{\varepsilon^2}.
\end{align}
By combining \eqref{20240414 01} and \eqref{20240414 02},
\begin{align*}
&P\left(\int_{B_R}\sup_{t \in [0,T]}\left| H(t,\xi)\right| \mathrm{d}\xi  \geq \varepsilon  \right) \\
&\leq P\left( \int_{B_R}\sup_{t \in [0,T]}\left| H_2(t,\xi)\right| \mathrm{d}\xi  \geq \frac{\varepsilon}{M}  \right) \\
&\leq  P \left( \left[ \int_{B_R} \left|\int_0^T \left|\exp\left(-\int_0^s\tilde\psi(r,\xi)\mathrm{d}r \right) 1_{\opar 0,\tau \cbrk }(s) G(s,\xi)\right|^2_{l_2} \mathrm{d}s\right|^{1/2} \mathrm{d}\xi \right]^{2} \geq \delta \right) + M^2\frac{\delta}{\varepsilon^2}.
\end{align*}
Taking $\varepsilon \to \infty$ and $\delta \to \infty$, we have
\begin{align*}
P\left( \int_{B_R}\sup_{t \in [0,T]}\left|H(t,\xi)\right| \mathrm{d}\xi  =\infty  \right)=0 \quad \forall T,R \in (0,\infty).
\end{align*}
Since the $\cP \times \cB(\fR^d)$-measurability had already been shown above, the corollary is proved.
\end{proof}

\begin{rem}
Generally, \eqref{main deter symbol as} does not imply \eqref{2024051801} because it only controls the real part of $\psi$. However, \eqref{main deter symbol as 2} does imply \eqref{2024051801}, which will be used later to apply Corollary \ref{20240417 01} with the symbol satisfying both \eqref{main deter symbol as} and \eqref{main deter symbol as 2}.
Moreover, it is possible to construct our theories with the symbol satisfying \eqref{main deter symbol as}, \eqref{2024051801}, and a slightly weaker condition than \eqref{main deter symbol as 2}, which will be revisited in Remark \ref{ensure conti rem}.
\end{rem}

Next, we impose a finite expected condition on the product of the $\exp\left(\tilde \psi \right)$ and data $G$.
We assume a slightly weaker condition on the symbol $\tilde\psi$ rather than \eqref{main deter symbol as}.
This weaker assumption still enables us to estimate $H$ in terms of $\tilde \psi$ and $G$.
Additionally, recall that \eqref{main deter symbol as} is used to show the joint measurability of $H$ in the proof of Corollary \ref{20240417 01}.
This demonstrates that \eqref{main deter symbol as} is sufficiently given not only for obtaining estimates but also for ensuring joint measurability. In other words, the condition on $\tilde \psi$ seems to be possible to be weakened if one just focuses on estimates.
However, without joint measurability, the rigor of all theories falls apart. 
In this context, we strongly believe that the condition in the following corollary cannot replace $\eqref{main deter symbol as}$ in our main theorems even with deterministic symbols.

\begin{corollary}
										\label{joint measurability issue 1}
Let $G \in \bL_{0}\left( \opar 0,  \tau \cbrk \times \fR^d, \cP \times \cB(\fR^d); l_2\right)$.
Suppose that \eqref{2024051801} holds and for almost every $\xi \in \fR^d$,
\begin{align}
										\label{20240505 10}
\bE\left[ \left(\int_0^T  \exp\left(\int_s^T \Re[2\tilde\psi(r,\xi)]\mathrm{d}r \right) 1_{\opar 0,\tau \cbrk}(s)\left|G(s,\xi)\right|^2_{l_2} \mathrm{d}s \right)^{1/2} \right] < \infty \quad  \forall T \in (0,\infty).
\end{align}
Denote
\begin{align*}
H(t,\xi) 
=\int_0^t  \exp\left(\int_s^t\tilde\psi(r,\xi)\mathrm{d}r \right) 1_{\opar 0,\tau \cbrk }(s) G^k(s,\xi)\mathrm{d}B^k_s.
\end{align*}
Then $H$ is defined (a.e.) on $\Omega \times [0,\infty) \times \fR^d$
and
\begin{align}
										\label{20240409 01}
\bE\left[ |H(t,\xi)|\right]
\leq C_{BDG}   \bE\left[  \left(\int_0^t \exp\left( \int_s^t 2\Re[\tilde\psi(r,\xi)]\mathrm{d}r \right)  \left| 1_{\opar 0,\tau \cbrk}(s) G(s,\xi)\right|^2_{l_2} \mathrm{d}s \right)^{1/2} \right]
\end{align}
for almost every $(t,\xi) \in [0,\infty) \times \fR^d$.
\end{corollary}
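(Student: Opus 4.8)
The plan is to reduce the estimate to a single application of the Burkholder--Davis--Gundy inequality (the $p=1$ case of \eqref{20240509 10}), after first establishing that the stochastic integral $H(t,\xi)$ makes sense $(a.e.)$. First I would fix (for almost every $\xi$) the integrand
\begin{align*}
g^k_\xi(s) := \exp\left(\int_s^t \tilde\psi(r,\xi)\mathrm{d}r\right) 1_{\opar 0,\tau\cbrk}(s) G^k(s,\xi),
\end{align*}
and verify that it is a legitimate It\^o integrand. The predictability is the delicate point: the factor $\exp(\int_s^t \tilde\psi(r,\xi)\mathrm{d}r)$ depends on the upper limit $t$, so to treat $H(t,\xi)$ as an It\^o integral at fixed $t$ I would first factor out the $s$-independent piece, writing $\exp(\int_s^t) = \exp(\int_0^t)\cdot \exp(-\int_0^s)$, exactly as in the proof of Corollary \ref{20240417 01}. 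Since $\tilde\psi$ is \emph{deterministic} and $\cB([0,\infty))\times\cB(\fR^d)$-measurable, the process $(\omega,s)\mapsto \exp(-\int_0^s \tilde\psi(r,\xi)\mathrm{d}r)1_{\opar 0,\tau\cbrk}(s)$ is predictable, and \eqref{2024051801} guarantees it is finite and well-defined; multiplied by the $\cP$-measurable $G^k(s,\xi)$ this gives a predictable $l_2$-valued integrand.

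Next I would check the square-integrability condition that legitimizes the It\^o integral. Computing the quadratic variation and using $|\exp(\int_s^t \tilde\psi)|^2 = \exp(\int_s^t 2\Re[\tilde\psi])$, the relevant quantity is
\begin{align*}
\int_0^T \exp\left(\int_s^T 2\Re[\tilde\psi(r,\xi)]\mathrm{d}r\right) 1_{\opar 0,\tau\cbrk}(s)\left|G(s,\xi)\right|^2_{l_2}\,\mathrm{d}s,
\end{align*}
whose square root has finite expectation by hypothesis \eqref{20240505 10}. This finiteness (which holds $(a.e.)$ in $\xi$ by assumption) guarantees that for almost every $\xi$ the integral $H(t,\xi)$ is defined for all $t\in[0,T]$, with continuous paths in $t$; taking the union over a sequence $T\to\infty$ extends this to $[0,\infty)$, so $H$ is defined $(a.e.)$ on $\Omega\times[0,\infty)\times\fR^d$.

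For the estimate \eqref{20240409 01}, I would apply BDG with $p=1$ directly to the martingale $t\mapsto H(t,\xi)$ (for fixed admissible $\xi$), using $C_{BDG}=C_1$. This yields
\begin{align*}
\bE\left[|H(t,\xi)|\right] \leq \bE\left[\sup_{r\in[0,t]}|H(r,\xi)|\right]
\leq C_{BDG}\,\bE\left[\left(\int_0^t \exp\left(\int_s^t 2\Re[\tilde\psi(r,\xi)]\mathrm{d}r\right)\left|1_{\opar 0,\tau\cbrk}(s)G(s,\xi)\right|^2_{l_2}\,\mathrm{d}s\right)^{1/2}\right],
\end{align*}
which is precisely the claimed bound (here I use that $\int_s^t$ is the correct exponent after pulling the $\exp(\int_0^t)$ factor back inside the quadratic variation). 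I expect the main obstacle to be \emph{not} the BDG step itself but the careful bookkeeping around the predictability/joint-measurability of the integrand, together with justifying that the pointwise-in-$\xi$ estimate can be asserted for almost every $\xi$ simultaneously; this is exactly why the corollary is stated with the deterministic symbol $\tilde\psi$, since for a random symbol the factorization producing a predictable integrand fails, as discussed in the text preceding the corollary. Note that, unlike Corollary \ref{20240417 01}, this corollary asserts only an $(a.e.)$ pointwise estimate rather than joint measurability of $H$, so I would not need the stochastic Fubini machinery here --- the weaker condition \eqref{20240505 10} in place of \eqref{main deter symbol as} suffices precisely because joint measurability is not being claimed.
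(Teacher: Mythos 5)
Your proposal is correct and follows essentially the same route as the paper: fix $t$ and almost every $\xi$, observe that the deterministic symbol makes the integrand predictable, verify integrability from \eqref{20240505 10}, and apply the BDG inequality with $p=1$. One small caveat: $t\mapsto H(t,\xi)$ is not itself a martingale (the integrand depends on the upper limit $t$), so BDG cannot be applied to it ``directly'' as phrased; your factorization $\exp(\int_s^t\tilde\psi\,\mathrm{d}r)=\exp(\int_0^t\tilde\psi\,\mathrm{d}r)\exp(-\int_0^s\tilde\psi\,\mathrm{d}r)$ repairs this, while the paper instead freezes $t$ and applies BDG to the genuine martingale $u\mapsto\int_0^u \exp(\int_s^t\tilde\psi\,\mathrm{d}r)1_{\opar 0,\tau\cbrk}(s)G^k(s,\xi)\,\mathrm{d}B^k_s$, so either way the conclusion stands.
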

\begin{proof}
We show that for each $t$, $H(t,\xi)$ is well-defined as a random variable for almost every $\xi \in \fR^d$. 
Fix $t \in [0,\infty)$.
By applying \eqref{20240505 10}, we have
\begin{align}
										\notag
&\bE\left[  \left(\int_0^t  \left|\exp\left(\int_s^t\tilde\psi(r,\xi)\mathrm{d}r \right) 1_{\opar 0,\tau \cbrk}(s) G(s,\xi)\right|^2_{l_2} ds \right)^{1/2} \right] \\
										\label{20240730 90}
&=\bE\left[ \left(\int_0^t  \exp\left(\int_s^t  \Re[2\tilde\psi(r,\xi)]\mathrm{d}r \right) 1_{\opar 0,\tau \cbrk}(s) \left|G(s,\xi)\right|^2_{l_2} \mathrm{d}s \right)^{1/2}  \right]  < \infty  \quad  (a.e.) ~\xi \in \fR^d.
\end{align}
Additionally, by utilizing the Fubini theorem, it is possible to observe that for almost every $\xi \in \fR^d$, both the mappings $(\omega,s) \mapsto  1_{s \leq t} \int_s^t\tilde\psi(t,\xi) \mathrm{d}r$  and $(\omega,s) \mapsto 1_{\opar 0,\tau \cbrk}(s) G(s,\xi)$ are  $\cP$-measurable.
Fix $\xi \in \fR^d$ so that \eqref{20240730 90} and the above measurability conditions hold.
Then, the mapping
\begin{align*}
(\omega,s) \mapsto 
1_{s\leq t}\exp\left(\int_s^t\tilde\psi(r,\xi)\mathrm{d}r \right) 1_{\opar 0,\tau \cbrk}(s) G(s,\xi)
\end{align*}
is $\cP$-measurable.
Therefore, for almost every $\xi \in \fR^d$, the stochastic integral $H(t,\xi)$ is well-defined as a random variable (for each fixed $t$).
Additionally, by the BDG inequality, we have \eqref{20240409 01}.
The corollary is proved.
\end{proof}

\begin{rem}
								\label{measure issue rem}
\eqref{20240505 10} itself is not enough to guarantee that $H$ is joint measurable since the stochastic integral $H(t,\xi)$ in Corollary \ref{joint measurability issue 1} is defined in an iterated way. 
Even the stronger condition
\begin{align*}
\bE\left[\int_{B_R} \left(\int_0^T  \exp\left(\int_s^T \Re[2\tilde\psi(r,\xi)]\mathrm{d}r \right) 1_{\opar 0,\tau \cbrk}(s)\left|G(s,\xi)\right|^2_{l_2} \mathrm{d}s \right)^{1/2} \mathrm{d}\xi \right] < \infty \quad  \forall T,R \in (0,\infty)
\end{align*}
is not sufficient to ensure that $H$ is joint-measurable. 
We suggest an efficient condition to make $H$ joint-measurable in the next corollary.
\end{rem}

\begin{corollary}
										\label{joint measurability issue 2}
Let $G \in \bL_{0}\left( \opar 0,  \tau \cbrk \times \fR^d, \cP \times \cB(\fR^d); l_2\right)$.
Suppose that  \eqref{2024051801} holds and for any  $R \in (0,\infty)$,
\begin{align}
								\label{20240302 01}
\bE\left[\int_{B_R} \left(\int_0^T  \exp\left(-\int_0^s \Re[2\tilde\psi(r,\xi)]\mathrm{d}r \right) 1_{\opar 0,\tau \cbrk}(s)\left|G(s,\xi)\right|^2_{l_2} \mathrm{d}s \right)^{1/2} \mathrm{d}\xi \right] < \infty \quad  \forall T \in (0,\infty).
\end{align}
Denote
\begin{align*}
H(t,\xi) 
=\int_0^t  \exp\left(\int_s^t\tilde\psi(r,\xi)\mathrm{d}r \right) 1_{\opar 0,\tau \cbrk }(s) G^k(s,\xi)\mathrm{d}B^k_s.
\end{align*}
Then $H$ is $\cP \times \cB(\fR^d)$-measurable, \eqref{20240409 01} holds, and for any $T \in (0,\infty)$ and almost every $\xi \in \fR^d$, $H$ satisfies 
\begin{align}
										\notag
&\bE\left[ \sup_{t \in [0,T]} |H(t,\xi)|\right] \\
										\notag
&\leq C_{BDG}  \sup_{t \in [0,T]} \left[\exp\left(\int_0^t\Re[\tilde\psi(r,\xi)]\mathrm{d}r \right)\right] \bE\left[ \left(\int_0^{T}  \exp\left(-\int_0^s \Re[2\tilde\psi(r,\xi)]\mathrm{d}r \right) 1_{\opar 0,\tau \cbrk}(s)\left|G(s,\xi)\right|^2_{l_2} \mathrm{d}s \right)^{1/2} \right]\\
										\label{20240506 50}
&= C_{BDG}  \sup_{t \in [0,T]} \left( \bE\left[ \left(\int_0^{T}  \exp\left(\int_s^t \Re[2\tilde\psi(r,\xi)]\mathrm{d}r \right) 1_{\opar 0,\tau \cbrk}(s)\left|G(s,\xi)\right|^2_{l_2} \mathrm{d}s \right)^{1/2} \right] \right).
\end{align}
\end{corollary}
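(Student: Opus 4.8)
The plan is to mirror the argument of Corollary \ref{20240417 01}, using the same factorization of the kernel but now keeping track of finite expectations. First I would write
\[
\exp\left(\int_s^t\tilde\psi(r,\xi)\mathrm{d}r\right) = H_1(t,\xi)\exp\left(-\int_0^s\tilde\psi(r,\xi)\mathrm{d}r\right), \qquad H_1(t,\xi):=\exp\left(\int_0^t\tilde\psi(r,\xi)\mathrm{d}r\right),
\]
and set
\[
H_2(t,\xi):=\int_0^t \exp\left(-\int_0^s\tilde\psi(r,\xi)\mathrm{d}r\right)1_{\opar 0,\tau\cbrk}(s)G^k(s,\xi)\mathrm{d}B^k_s,
\]
so that $H=H_1H_2$. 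The factor $H_1$ is well-defined by \eqref{2024051801}, is non-random, and is $\cP\times\cB(\fR^d)$-measurable, since $\tilde\psi$ is $\cB([0,\infty))\times\cB(\fR^d)$-measurable and the constant-in-$\omega$ extension of a deterministic time integral is predictable.

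The decisive point—and the one I expect to be the main obstacle—is joint measurability of $H_2$. Because $\tilde\psi$ is deterministic, the process $(\omega,s)\mapsto\int_0^s\tilde\psi(r,\xi)\mathrm{d}r$ is $\rF_s$-adapted, so the integrand $\exp(-\int_0^s\tilde\psi(r,\xi)\mathrm{d}r)1_{\opar 0,\tau\cbrk}(s)G(s,\xi)$ is predictable; this is precisely the feature that fails for the genuinely random kernel $\int_s^t\psi(r,\xi)\mathrm{d}r$. To get joint measurability I would invoke Corollary \ref{Fubini corollary} with $F\equiv 0$ and $G'(s,\xi):=\exp(-\int_0^s\tilde\psi(r,\xi)\mathrm{d}r)1_{\opar 0,\tau\cbrk}(s)G(s,\xi)$. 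Its hypothesis $G'\in\bL^{\omega,x,t}_{0,1,2,\ell oc}$ reduces to $\int_{B_R}(\int_0^T|G'(s,\xi)|^2_{l_2}\mathrm{d}s)^{1/2}\mathrm{d}\xi<\infty$ $(a.s.)$, and since $|G'(s,\xi)|^2_{l_2}=\exp(-\int_0^s2\Re[\tilde\psi(r,\xi)]\mathrm{d}r)1_{\opar 0,\tau\cbrk}(s)|G(s,\xi)|^2_{l_2}$, this is exactly the almost-sure finiteness implied by the finite expectation in \eqref{20240302 01}. The corollary then yields a $\cP\times\cB(\fR^d)$-measurable modification of $H_2$ agreeing $(a.e.)$, for each fixed $t$, with the iterated stochastic integral, and multiplying by the measurable deterministic factor $H_1$ gives $\cP\times\cB(\fR^d)$-measurability of $H$.

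For the estimate \eqref{20240409 01} I would check that \eqref{20240302 01} and \eqref{2024051801} together imply \eqref{20240505 10}: factoring the deterministic quantity $\exp(\int_0^T\Re[\tilde\psi(r,\xi)]\mathrm{d}r)$, which is finite for almost every $\xi$ because $|\Re[\tilde\psi]|\le|\tilde\psi|$, out of the square root converts the integrand of \eqref{20240505 10} into that of \eqref{20240302 01}, so \eqref{20240409 01} follows directly from Corollary \ref{joint measurability issue 1}. Finally, the supremum bound \eqref{20240506 50} uses that $H_1$ is deterministic, whence $\sup_{t\in[0,T]}|H(t,\xi)|\le\sup_{t\in[0,T]}|H_1(t,\xi)|\cdot\sup_{t\in[0,T]}|H_2(t,\xi)|$ and $\sup_{t\in[0,T]}|H_1(t,\xi)|=\sup_{t\in[0,T]}\exp(\int_0^t\Re[\tilde\psi(r,\xi)]\mathrm{d}r)$ leaves the expectation as a scalar factor. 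Applying the BDG inequality to the $L_2$-martingale $t\mapsto H_2(t,\xi)$ (its integrand being predictable and square-integrable for a.e. $\xi$) bounds $\bE[\sup_{t}|H_2(t,\xi)|]$ by $C_{BDG}$ times the right-hand factor of \eqref{20240302 01}, giving the first line of \eqref{20240506 50}; the concluding equality is then the purely algebraic identity $\exp(\int_0^t\Re[\tilde\psi])\,(\int_0^T\exp(-\int_0^s2\Re[\tilde\psi])\cdots)^{1/2}=(\int_0^T\exp(\int_s^t2\Re[\tilde\psi])\cdots)^{1/2}$ carried out under $\sup_{t\in[0,T]}$. The estimates are routine once the factorization and BDG are in place; the genuine work lies entirely in the measurability step above.
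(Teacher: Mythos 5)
Your proposal is correct and follows essentially the same route as the paper: the factorization $H=H_1H_2$ with the deterministic exponential pulled out, joint measurability of $H_2$ via the stochastic Fubini theorem (Corollary \ref{Fubini corollary}), the reduction of \eqref{20240505 10} to \eqref{20240302 01} so that \eqref{20240409 01} follows from Corollary \ref{joint measurability issue 1}, and the BDG inequality applied to $H_2$ followed by multiplication by $\sup_{t\in[0,T]}|H_1(t,\xi)|$ to obtain \eqref{20240506 50}. Your explicit verification of the hypotheses of Corollary \ref{Fubini corollary} and of the implication \eqref{20240302 01}$+$\eqref{2024051801}$\Rightarrow$\eqref{20240505 10} simply fills in details the paper leaves as ``elementary measure theory.''
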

\begin{proof}
It is obvious that  \eqref{2024051801} and \eqref{20240302 01} imply \eqref{20240505 10} by elementary measure theories with the Fubini theorem.
Thus due to Corollary \ref{joint measurability issue 1}, 
it suffices to show that $H(t,\xi)$ is $\cP \times \cB(\fR^d)$-measurable and \eqref{20240506 50} holds. 
We split $H$ into two parts
\begin{align*}
H_1(t,\xi) 
=\exp\left(\int_0^t\tilde\psi(r,\xi)\mathrm{d}r \right)
\end{align*}
and
\begin{align*}
H_2(t,\xi) 
=\int_0^t  \exp\left(-\int_0^s\tilde\psi(r,\xi)\mathrm{d}r \right) 1_{\opar 0,\tau \cbrk }(s) G(s,\xi) \mathrm{d}B^k_s
\end{align*}
so that $H(t,\xi)= H_1(t,\xi) H_2(t,\xi)$.
Then by \eqref{2024051801}, $H_1(t,\xi)$ is well-defined for all $t \in [0,\infty)$ and almost every $\xi \in \fR^d$.
On the other hand, \eqref{20240302 01} with the Fubini theorem implies that for any $t \in (0,\infty)$,
\begin{align}
\bE\left[ \left(\int_0^t  \exp\left(-\int_0^s \Re[2\tilde\psi(r,\xi)]\mathrm{d}r \right) 1_{\opar 0,\tau \cbrk}(s)\left|G(s,\xi)\right|^2_{l_2} \mathrm{d}s \right)^{1/2} \right] < \infty \quad  (a.e.) \quad \xi \in \fR^d.
\end{align}
Thus for any $t \in (0,\infty)$, $H_2(t,\xi)$ is well-defined as a random variable for almost every $\xi \in \fR^d$
since the predictability of the integrand can be easily shown as in the proof of Corollary \ref{20240417 01}.
Moreover, it is straightforward to demonstrate that both functions $(\omega,t,\xi) \mapsto H_1(t,\xi)$ and $(\omega,t,\xi) \mapsto H_2(t,\xi)$
are $\cP \times \cB(\fR^d)$-measurable (by considering a modification) due to the stochastic Fubini theorem and continuity of paths. 
This directly implies that $H$ is $\cP \times \cB(\fR^d)$-measurable.

Next we show \eqref{20240506 50}. We estimate $H_2$ first.
By the BDG inequality, for each $T \in (0,\infty)$ and almost every $\xi \in \fR^d$,
\begin{align*}
&\bE\left[\sup_{t \in [0,T]}\left|H_2(t,\xi) \right|\right] \\
&\leq C_{BDG}  \bE\left[ \left(\int_0^{T}  \exp\left(-\int_0^s \Re[2\tilde\psi(r,\xi)]\mathrm{d}r \right) 1_{\opar 0,\tau \cbrk}(s)\left|G(s,\xi)\right|^2_{l_2} \mathrm{d}s \right)^{1/2} \right].
\end{align*}
Therefore,
\begin{align*}
&\bE\left[\sup_{t \in [0,T]}\left|H(t,\xi) \right|\right] \\
&\leq \sup_{t \in [0,T]} |H_1(t,\xi)| \bE\left[\sup_{t \in [0,T]}\left|H_2(t,\xi) \right|\right] \\
&\leq C_{BDG}  \sup_{t \in [0,T]} \left[\exp\left(\int_0^t\Re[\tilde\psi(r,\xi)]\mathrm{d}r \right)\right] \bE\left[ \left(\int_0^{T}  \exp\left(-\int_0^s \Re[2\tilde\psi(r,\xi)]\mathrm{d}r \right) 1_{\opar 0,\tau \cbrk}(s)\left|G(s,\xi)\right|^2_{l_2} \mathrm{d}s \right)^{1/2} \right],
\end{align*}
which completes \eqref{20240506 50}.
\end{proof}

\begin{rem}
Recall the term
\begin{align*}
\sup_{t \in [0,T]} \left( \bE\left[\left(\int_0^{T}  \exp\left(\int_s^t \Re[2\tilde\psi(r,\xi)]\mathrm{d}r \right) 1_{\opar 0,\tau \cbrk}(s)\left|G(s,\xi)\right|^2_{l_2} \mathrm{d}s \right)^{1/2}  \right]  \right), 
\end{align*}
from \eqref{20240506 50}. 
Here sup is taken for all $t \in [0,T]$, and the integration is also taken for all $s \in [0,T]$.
Thus the term $\int_s^t \Re[2\tilde\psi(r,\xi)]\mathrm{d}r$ is defined as
$$
\int_s^t \Re[2\tilde\psi(r,\xi)]\mathrm{d}r = 
-\int_t^s \Re[2\tilde\psi(r,\xi)]\mathrm{d}r \quad \text{for $t \leq s$}
$$
according to the convention of the notation for integrals.
Therefore, the inequality
\begin{align*}
&\sup_{t \in [0,T]} \left( \bE\left[\left(\int_0^{T}  \exp\left(\int_s^t \Re[2\tilde\psi(r,\xi)]\mathrm{d}r \right) 1_{\opar 0,\tau \cbrk}(s)\left|G(s,\xi)\right|^2_{l_2} \mathrm{d}s \right)^{1/2}  \right] \right) \\
&\leq \sup_{0\leq s\leq t \leq T} \left[\exp\left(\int_s^t \Re[\tilde\psi(r,\xi)]\mathrm{d}r \right)\right]
\left( \bE\left[\left(\int_0^{T}  1_{\opar 0,\tau \cbrk}(s)\left|G(s,\xi)\right|^2_{l_2} \mathrm{d}s \right)^{1/2}  \right] \right) 
\end{align*}
is not generally true.
Instead, the inequality 
\begin{align*}
&\sup_{t \in [0,T]} \left( \bE\left[\left(\int_0^{T}  \exp\left(\int_s^t \Re[\tilde\psi(r,\xi)]\mathrm{d}r \right) 1_{\opar 0,\tau \cbrk}(s)\left|G(s,\xi)\right|^2_{l_2} \mathrm{d}s \right)^{1/2}  \right] \right) \\
&\leq \sup_{0\leq s\leq t \leq T} \left[\exp\left(\left|\int_s^t \Re[\tilde\psi(r,\xi)]\mathrm{d}r \right| \right)\right]
\left( \bE\left[\left(\int_0^{T}  1_{\opar 0,\tau \cbrk}(s)\left|G(s,\xi)\right|^2_{l_2} \mathrm{d}s \right)^{1/2}  \right] \right) 
\end{align*}
seems to be optimal to include all sign-changing symbols.
\end{rem}

Next, we prepare to estimate the product of the symbol and the spatial Fourier transform of a solution.

\begin{corollary}
										\label{cor 20240506}
Let $G \in \bL_{0}\left( \opar 0,  \tau \cbrk \times \fR^d, \cP \times \cB(\fR^d); l_2\right)$.
Suppose that \eqref{2024051801} holds and for any  $R \in (0,\infty)$,
\begin{align}
										\label{20240505 20}
\bE\left[\int_{B_R} \left(\int_0^T  \exp\left(-\int_0^s \Re[2\tilde\psi(r,\xi)]\mathrm{d}r \right) 1_{\opar 0,\tau \cbrk}(s)\left|G(s,\xi)\right|^2_{l_2} \mathrm{d}s \right)^{1/2} \mathrm{d}\xi \right] < \infty \quad  \forall T \in (0,\infty),
\end{align}
and
\begin{align}
										\label{20240301 20}
\bE \left[\int_{B_R}\int_0^T |\tilde\psi(t,\xi)| \left(\int_0^t \exp\left(\int_s^t\Re[2\tilde\psi(r,\xi)]\mathrm{d}r \right) 1_{\opar 0,\tau \cbrk}(s)
\left|G(s,\xi)\right|^2_{l_2}  \mathrm{d}s \right)^{1/2} \mathrm{d}t \mathrm{d}\xi \right]  < \infty \quad \forall T \in (0,\infty).
\end{align}
Denote
\begin{align*}
H(t,\xi) 
=\int_0^t  \exp\left(\int_s^t\tilde\psi(r,\xi)\mathrm{d}r \right) 1_{\opar 0,\tau \cbrk }(s) G^k(s,\xi)\mathrm{d}B^k_s.
\end{align*}
Then $H$ satisfies \eqref{20240409 01}, \eqref{20240506 50},
\begin{align}
									\notag
&\bE\left[ \int_0^T \int_{B_R}|\tilde\psi(t,\xi)| |H(t,\xi)| \mathrm{d}\xi  \mathrm{d}t \right] \\
									\label{20240506 10}
&\leq C_{BDG}\bE \left[\int_{B_R}\int_0^T |\tilde\psi(t,\xi)| \left(\int_0^t \exp\left(\int_s^t\Re[2\tilde\psi(r,\xi)]\mathrm{d}r \right) 
1_{\opar 0,\tau \cbrk}(s)\left|G(s,\xi)\right|^2_{l_2}  \mathrm{d}s \right)^{1/2} \mathrm{d}t \mathrm{d}\xi \right]  
< \infty, 
\end{align}
\begin{align}
										\notag
&\bE\left[ \int_0^T \int_{B_R} \left|H(t,\xi) \right|\mathrm{d}\xi \mathrm{d}t\right]  \\
										\label{20240408 01-2}
&\leq T \cdot C_{BDG} 
\int_{B_R} \sup_{0\leq s \leq t \leq T} \left|\exp\left(\int_s^t \Re[\tilde\psi(r,\xi)]\mathrm{d}r \right)\right|   \left( \bE\left[\left(\int_0^{T}  1_{\opar 0,\tau \cbrk}(s)\left|G(s,\xi)\right|^2_{l_2} \mathrm{d}s \right)^{1/2}  \right] \mathrm{d}\xi \right),
\end{align}
and
\begin{align}
										\notag
& \int_{B_R}\bE\left[ \sup_{t \in [0,T]} |H(t,\xi)|\right] \mathrm{d}\xi \\
										\label{20240408 01}
&\leq C_{BDG}\int_{B_R}  \sup_{t \in [0,T]} \left( \bE\left[\left(\int_0^{T}  \exp\left(\int_s^t \Re[2\tilde\psi(r,\xi)]\mathrm{d}r \right) 1_{\opar 0,\tau \cbrk}(s)\left|G(s,\xi)\right|^2_{l_2} \mathrm{d}s \right)^{1/2}  \right] \right) \mathrm{d}\xi 
\end{align}
for all  positive constants $T$ and $R$.
Especially,
\begin{align}
									\label{20240311 01}
H \in   \bL_{1,1,1,loc,\ell oc}\left( \Omega \times (0,\infty) \times \fR^d, \cP \times \cB(\fR^d), |\tilde\psi(t,\xi)|\mathrm{d}t \mathrm{d}\xi\right),
\end{align}
where
\begin{align*}
&H \in  \bL_{1,1,1,loc,\ell oc}\left( \Omega \times (0,\infty) \times \fR^d, \cP \times \cB(\fR^d), |\tilde\psi(t,\xi)|\mathrm{d}t \mathrm{d}\xi\right) \\
&\iff 
H \in  
\bL_{1,1,1,loc,\ell oc}\left( \Omega \times (0,T] \times \fR^d, \cP \times \cB(\fR^d), |\tilde\psi(t,\xi)|\mathrm{d}t \mathrm{d}\xi\right) \quad \forall T \in (0,\infty).
\end{align*}
If additionally $\exp\left( \left| \int_0^t \Re[\tilde\psi(r,\xi)]\mathrm{d}r\right| \right)$ is locally bounded, \textit{i.e.}
\begin{align}
								\label{20240316 02}
\esssup_{t \in [0,T], \xi \in B_R} \left[ \exp\left(\left|\int_0^t\Re[\tilde\psi(r,\xi)]\mathrm{d}r \right| \right) \right]  
< \infty \quad \forall T,R \in (0,\infty),
\end{align}
then 
\begin{align}
								\label{20240316 01}
H \in  \bL_{1,loc}\bC L_{1,\ell oc}\left( \Omega \times [0,\infty) \times \fR^d, \cP \times \cB(\fR^d)\right),
\end{align}
where
\begin{align*}
&H \in   \bL_{1,loc}\bC L_{1,\ell oc}\left( \Omega \times [0,\infty) \times \fR^d, \cP \times \cB(\fR^d)\right) \\
&\iff 
H \in  \bL_1\bC L_{1,\ell oc}\left( \Omega \times [0,T]\times \fR^d, \cP \times \cB(\fR^d)\right) \quad \forall T \in (0,\infty).
\end{align*}
\end{corollary}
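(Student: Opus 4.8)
The plan is to reduce every claim to the three preceding corollaries and then to carry out only nonnegative (hence Tonelli-justified) interchanges of expectation and integration. First I would observe that the hypothesis \eqref{20240505 20} is literally the condition \eqref{20240302 01} of Corollary \ref{joint measurability issue 2}, and that \eqref{2024051801} is assumed; hence that corollary applies verbatim and yields at once that $H$ is $\cP\times\cB(\fR^d)$-measurable, that the pointwise moment bound \eqref{20240409 01} holds for almost every $(t,\xi)$, and that the pathwise moment bound \eqref{20240506 50} holds for almost every $\xi$. This settles the measurability together with \eqref{20240409 01} and \eqref{20240506 50} with no further work.

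Next, \eqref{20240506 10} will follow by integrating \eqref{20240409 01} against the nonnegative weight $|\tilde\psi(t,\xi)|$ over $(0,T)\times B_R$ and exchanging $\bE$ with the $dt\,d\xi$-integration (legitimate by Tonelli, as all integrands are nonnegative); the resulting right-hand side is exactly $C_{BDG}$ times the quantity assumed finite in \eqref{20240301 20}. Integrating the pathwise bound \eqref{20240506 50} in $\xi$ over $B_R$ gives \eqref{20240408 01} immediately. For \eqref{20240408 01-2} I would instead start from \eqref{20240409 01}, bound the kernel factor pointwise by $\exp(\int_s^t 2\Re[\tilde\psi]\,dr)^{1/2}=\exp(\int_s^t\Re[\tilde\psi]\,dr)\le \sup_{0\le s\le t\le T}|\exp(\int_s^t\Re[\tilde\psi]\,dr)|$, factor this (deterministic) supremum out of the $ds$-integral and enlarge $\int_0^t$ to $\int_0^T$, and then integrate in $t$ over $(0,T)$ (producing the factor $T$) and in $\xi$ over $B_R$. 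The membership \eqref{20240311 01} is then immediate: joint measurability has been secured, and the defining weighted expectation over $(0,T]$ is finite for every $T$ and $R$ by \eqref{20240506 10}, which is precisely the requirement for $H\in \bL_{1,1,1,loc,\ell oc}$ with weight $|\tilde\psi|\,dt\,d\xi$.

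The final claim \eqref{20240316 01}, under the additional hypothesis \eqref{20240316 02}, is where the only genuine bookkeeping lies. The key elementary observation is that \eqref{20240316 02} implies \eqref{main deter symbol as}: writing $M:=\esssup_{t\in[0,T],\xi\in B_R}\exp(|\int_0^t\Re[\tilde\psi]\,dr|)$, one has $\exp(|\int_s^t\Re[\tilde\psi]\,dr|)\le M^2$ for $0\le s\le t\le T$, so $C^{\mathrm{e}|\int\Re[\tilde \psi]|}_{R,T}\le M^2<\infty$. Consequently Corollary \ref{20240417 01} applies and gives $H\in \bC_{loc}L_{1,\ell oc}$, i.e.\ $t\mapsto H(t,\xi)$ has continuous paths for almost every $\xi$ and $\int_{B_R}\sup_{t\in[0,T]}|H(t,\xi)|\,d\xi<\infty$ almost surely. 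It remains only to upgrade this to a finite expectation. Here I would feed \eqref{20240316 02} into \eqref{20240408 01}: the bound $\exp(\int_s^t 2\Re[\tilde\psi]\,dr)^{1/2}\le M^2$ collapses the weighted integrand to the unweighted $(\int_0^T 1_{\opar 0,\tau\cbrk}(s)|G(s,\xi)|^2_{l_2}\,ds)^{1/2}$, and to see that the latter is $d\xi$-integrable after taking expectation I would invoke the complementary lower bound $\exp(-\int_0^s 2\Re[\tilde\psi]\,dr)\ge M^{-2}$ to dominate it by $M$ times the quantity assumed finite in \eqref{20240505 20}. Combining these yields $\bE[\int_{B_R}\sup_{t\in[0,T]}|H(t,\xi)|\,d\xi]<\infty$ for every $T,R$, which is exactly $H\in \bL_{1,loc}\bC L_{1,\ell oc}$. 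The main obstacle throughout is keeping the two-sided exponential control straight: an \emph{upper} bound on $\exp(\int_s^t\Re[\tilde\psi])$ is needed to estimate $H$, while a matching \emph{lower} bound on $\exp(-\int_0^s\Re[\tilde\psi])$ is what converts the weighted finiteness hypothesis \eqref{20240505 20} into the plain $L_1$-in-$\xi$ control of $G$ that the supremum estimate requires.
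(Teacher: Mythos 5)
Your proposal is correct and follows essentially the same route as the paper: invoke Corollary \ref{joint measurability issue 2} for measurability, \eqref{20240409 01} and \eqref{20240506 50}, then obtain \eqref{20240506 10}, \eqref{20240408 01-2} and \eqref{20240408 01} by Tonelli/Fubini integration of those pointwise bounds, and finally combine \eqref{20240316 02} with \eqref{20240505 20} and \eqref{20240408 01} for the $\bL_{1,loc}\bC L_{1,\ell oc}$ membership. The only (harmless) variation is that you route the path-continuity through Corollary \ref{20240417 01} — whose hypothesis $G\in\bL^{\omega,x,t}_{0,1,2,\ell oc}$ you implicitly verify via the lower bound $\exp(-\int_0^s 2\Re[\tilde\psi]\,\mathrm{d}r)\ge M^{-2}$ — whereas the paper argues continuity directly from the factorization $H=H_1H_2$.
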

\begin{proof}
By Corollary \ref{joint measurability issue 2}, $H$ is $\cP \times \cB(\fR^d)$-measurable and satisfies \eqref{20240409 01} and \eqref{20240506 50}.
Thus it suffices to show \eqref{20240506 10}, \eqref{20240408 01-2}, \eqref{20240408 01}, and \eqref{20240316 01}.
Due to the joint measurability, we can apply the Fubini theorem with \eqref{20240409 01}.
Indeed, for  all $T \in (0,\infty)$ and $R \in (0,\infty)$, applying the Fubini theorem, we have
\begin{align}
									\notag
&\bE\left[ \int_0^T \int_{B_R}|\tilde\psi(t,\xi)| |H(t,\xi)| \mathrm{d}\xi  \mathrm{d}t \right] \\
									\notag
&= \int_0^T \int_{B_R}|\tilde\psi(t,\xi)| \bE\left[ |H(t,\xi)|\right] \mathrm{d}\xi  \mathrm{d}t \\
									\notag
&\leq C_{BDG}\int_0^T \int_{B_R}|\tilde\psi(t,\xi)| \bE\left[  \left(\int_0^t  \left|\exp\left(\int_s^t\tilde\psi(r,\xi)\mathrm{d}r \right) 1_{\opar 0,\tau \cbrk}(s) G(s,\xi)\right|^2_{l_2} ds \right)^{1/2} \right] \mathrm{d}\xi  \mathrm{d}t \\
									\label{20240302 40}
&=\bE \left[\int_{B_R}\int_0^T |\tilde\psi(t,\xi)| \left(\int_0^t \exp\left(\int_s^t\Re[2\tilde\psi(r,\xi)]\mathrm{d}r \right) 
1_{\opar 0,\tau \cbrk}(s)\left|G(s,\xi)\right|^2_{l_2}  \mathrm{d}s \right)^{1/2} \mathrm{d}t \mathrm{d}\xi \right]  < \infty, 
\end{align}
where the last finiteness comes from \eqref{20240301 20}. 
Similarly, 
\begin{align*}
&\bE\left[ \int_0^T \int_{B_R} \left|H(t,\xi) \right|\mathrm{d}\xi \mathrm{d}t\right]  \\
&\leq C_{BDG} 
 \int_0^T \int_{B_R}   \left( \bE\left[\left(\int_0^{t}   \exp\left(\int_s^t \Re[\tilde\psi(r,\xi)]\mathrm{d}r \right)   1_{\opar 0,\tau \cbrk}(s)\left|G(s,\xi)\right|^2_{l_2} \mathrm{d}s \right)^{1/2}  \right] \mathrm{d}\xi \right) \mathrm{d}t\\
&\leq T \cdot C_{BDG} 
\int_{B_R} \sup_{0\leq s \leq t \leq T} \left|\exp\left(\int_s^t \Re[\tilde\psi(r,\xi)]\mathrm{d}r \right)\right|   \left( \bE\left[\left(\int_0^{T}  1_{\opar 0,\tau \cbrk}(s)\left|G(s,\xi)\right|^2_{l_2} \mathrm{d}s \right)^{1/2}  \right] \mathrm{d}\xi \right).
\end{align*}

Therefore, we have \eqref{20240506 10}, \eqref{20240408 01-2}, and \eqref{20240311 01}.

Next we show \eqref{20240408 01}.
Let $R,T \in (0,\infty)$.
Taking the integration $\int_{B_R} \cdot \mathrm{d}\xi$ to the both sides of \eqref{20240506 50}, we have
\begin{align}
										\notag
&\int_{B_R}\bE\left[ \sup_{t \in [0,T]} |H(t,\xi)|\right] \mathrm{d}\xi \\
											\label{20240728 01}
&\leq C_{BDG}\int_{B_R}  \sup_{t \in [0,T]} \left( \bE\left[\left(\int_0^{T}  \exp\left(\int_s^t \Re[2\tilde\psi(r,\xi)]\mathrm{d}r \right) 1_{\opar 0,\tau \cbrk}(s)\left|G(s,\xi)\right|^2_{l_2} \mathrm{d}s \right)^{1/2}  \right] \right) \mathrm{d}\xi , 
\end{align}
which directly implies  \eqref{20240408 01}.

At last, we prove \eqref{20240316 01}.
Since the $\cP \times \cB(\fR^d)$-measurability  had already been obtained, and
the additional condition in \eqref{20240316 02} and the Fubini theorem with \eqref{20240505 20} and \eqref{20240408 01} imply
\begin{align*}
\bE\left[\int_{B_R} \sup_{t \in [0,T]} |H(t,\xi)| \mathrm{d}\xi \right] < \infty,
\end{align*} 
it is sufficient to to show that the continuity of the paths.
It is straightforward that for almost every $\omega \in \Omega$ and $\xi \in \fR^d$, 
the both mappings
$$
t \in [0,\infty) \mapsto H_1(t,\xi) \quad \text{and} \quad t \in [0,\infty) \mapsto H_2(t,\xi)
$$
are continuous with respect to $t$ due to the properties of the Lebesgue integral and the It\^o stochastic integral, respectively. 
Therefore, the product $H$ of $H_1$ and $H_2$ is also continuous with respect to $t$ for almost every $(\omega,\xi) \in \Omega \times \fR^d$.
The corollary is proved. 
\end{proof}

\begin{rem}
										\label{measurability emph}
We reexamine the joint measurability issues to assess the validity of our stronger assumptions.
The stronger conditions \eqref{20240505 20} and \eqref{20240316 02} obviously imply the previous weaker condition
\begin{align}
									\label{20240424 01}
\bE\left[\int_{B_R} \left(\int_0^T  \exp\left(\int_s^T \Re[2\tilde\psi(r,\xi)]\mathrm{d}r \right) 1_{\opar 0,\tau \cbrk}(s)\left|G(s,\xi)\right|^2_{l_2} \mathrm{d}s \right)^{1/2} \mathrm{d}\xi \right] < \infty \quad  \forall T \in [0,\infty).
\end{align}
Additionally, recalling
\begin{align*}
H(t,\xi) 
=\int_0^t  \exp\left(\int_s^t\tilde\psi(r,\xi)\mathrm{d}r \right) 1_{\opar 0,\tau \cbrk }(s) G^k(s,\xi)\mathrm{d}B^k_s
\end{align*}
and applying \eqref{20240409 01}, for almost every $\xi$ and $t$, we have
\begin{align}
										\label{20240719 30}
\bE\left[ |H(t,\xi)| \right]
\lesssim
\bE\left[\left(\int_0^t  \exp\left(\int_s^t \Re[2\tilde\psi(r,\xi)]\mathrm{d}r \right) 1_{\opar 0,\tau \cbrk}(s)\left|G(s,\xi)\right|^2_{l_2} \mathrm{d}s \right)^{1/2} \right].
\end{align}
However, \eqref{20240424 01} alone was not sufficient to ensure that  $H(t,\xi)$  is $\rF \times \cB([0,\infty)) \times \cB(\fR^d)$-measurable as mentioned in Remark \ref{measure issue rem}.
Therefore \eqref{20240719 30}  does not generally imply 
\begin{align*}
\bE\left[ \int_0^T \int_{B_R}|H(t,\xi)|\mathrm{d}\xi \mathrm{d}t \right]
\lesssim
\bE\left[ \int_0^T \int_{B_R} \left(\int_0^t   \exp\left(\int_s^t \Re[2\tilde\psi(r,\xi)]\mathrm{d}r \right) 1_{\opar 0,\tau \cbrk}(s)\left|G(s,\xi)\right|^2_{l_2} \mathrm{d}s \right)^{1/2} \mathrm{d}\xi \mathrm{d}t \right]
\end{align*}
 since the Fubini theorem is not applicable in this case.
For this reason, we imposed stronger conditions \eqref{20240505 20} and \eqref{20240316 02} to guarantee that $H(t,\xi)$ is $\rF \times \cB([0,\infty)) \times \cB(\fR^d)$-measurable (indeed $\cP \times \cB(\fR^d)$-measurable).
The joint measurability was importantly used in \eqref{20240302 40} to apply the Fubini theorem.

Additionally, \eqref{20240316 02} is not necessary if we merely have an interest in the joint measurability as shown in the previous corollaries.
 \eqref{2024051801} and \eqref{20240505 20} are enough to show the joint measurability.
Furthermore, \eqref{2024051801} and \eqref{20240505 20} are also sufficient to ensure the path continuity so that for almost every $\xi$ and $\omega$, $t \mapsto H(t,\xi)$ is continuous.
 However, continuity of its integration such as $t \mapsto \int_{B_R}H(t,\xi)\mathrm{d}\xi$ and $t \mapsto \bE\left[\int_{B_R}H(t,\xi)\mathrm{d}\xi\right]$ is not guaranteed without \eqref{20240316 02}.
This is another important property for $H$ to be a $\cD'(\fR^d)$-valued continuous processes as in Definition \ref{space conti}.
Therefore both \eqref{20240316 02} and \eqref{2024051801} are considered to preserve both the joint measurability and the $\cD'(\fR^d)$-valued continuity of paths simultaneously. 
\end{rem}

\begin{rem}
								\label{BDG fail}
Suppose that all assumptions in Corollary \ref{cor 20240506} are satisfied.
Then for any bounded stopping time $\tilde \tau$, it obviously holds that
$$
H \in \bL_{1,1,1,\ell oc}\left( \opar 0, \tilde \tau \cbrk \times \fR^d, \cP \times \cB(\fR^d),|\tilde\psi(t,\xi)|\mathrm{d}t \mathrm{d}\xi\right)
$$
However, for a finite stopping time $\tilde \tau$, it is generally not true that
$$
H \in \bL_{1,1,1,\ell oc}\left( \opar 0, \tilde \tau \cbrk \times \fR^d, \cP \times \cB(\fR^d),|\tilde\psi(t,\xi)|\mathrm{d}t \mathrm{d}\xi\right).
$$
This is because the BDG inequality does not ensure that
\begin{align*}
\bE\left[ 1_{t \leq \tilde \tau}|H_2(t,\xi)|\right]
\lesssim \bE\left[ 1_{t \leq \tilde \tau} \left(\int_0^t  \left|\exp\left(-\int_0^s\tilde\psi(r,\xi)\mathrm{d}r \right) 1_{\opar 0,\tau \cbrk}(s) G(s,\xi) \right|_{l_2} ds \right)^{1/2} \right]
\end{align*}
due to the additional random term $1_{t \leq \tilde \tau}$.
Specifically, 
\begin{align*}
&\bE\left[ 1_{t \leq \tilde \tau} \left(\int_0^t  \left|\exp\left(-\int_0^s\tilde\psi(r,\xi)\mathrm{d}r \right) 1_{\opar 0,\tau \cbrk}(s) G(s,\xi) \right|_{l_2} ds \right)^{1/2} \right] \\
&\leq \bE\left[  \left(\int_0^{t \wedge \tau}  \left|\exp\left(-\int_0^s\tilde\psi(r,\xi)\mathrm{d}r \right) 1_{\opar 0,\tau \cbrk}(s) G(s,\xi) \right|_{l_2} ds \right)^{1/2} \right]
\end{align*}
and the equality does not generally hold.
\end{rem}

Now we compile all the corollaries to prepare the stability results for stochastic inhomogeneous data whose spatial Fourier transform has a finite expectation.
\begin{corollary}
										\label{stochastic solution part}
Let $G \in \bL^{\omega,x,t}_{1,1,2,loc,\ell oc}\left( \opar 0,  \tau \cbrk \times \fR^d, \cP \times \cB(\fR^d); l_2\right)$.
Suppose that $\tilde\psi$ satisfies \eqref{main deter symbol as} and \eqref{main deter symbol as 2}.
Denote
\begin{align*}
H(t,\xi) 
=\int_0^t  \exp\left(\int_s^t\tilde\psi(r,\xi)\mathrm{d}r \right) 1_{\opar 0,\tau \cbrk }(s) G^k(s,\xi)\mathrm{d}B^k_s.
\end{align*}
Then $H$ belongs to the intersection of the classes
\begin{align*}
\bL_{1,loc}\bC L_{1,\ell oc}\left( \Omega \times [0,\infty) \times \fR^d, \cP \times \cB(\fR^d)\right),
\end{align*}
and
\begin{align*}
\bL_{1,1,1,loc,\ell oc}\left( \Omega \times (0,\infty) \times \fR^d, \cP \times \cB(\fR^d), |\tilde\psi(t,\xi)|\mathrm{d}t \mathrm{d}\xi\right).
\end{align*}
Moreover, for all positive constants $T$ and $R$,
\begin{align}
										\notag
&\bE\left[ \int_0^T \int_{B_R}|H(t,\xi)|\mathrm{d}\xi \mathrm{d}t \right] \\
										\label{20240720 00}
&\leq T \cdot C_{BDG}\esssup_{0\leq s\leq t \leq T, \xi \in B_R}\left[  \exp\left(\int_s^t \Re[\tilde\psi(r,\xi)]\mathrm{d}r \right) \right]
\bE\left[  \int_{B_R} \left( \int_0^T  1_{\opar 0,\tau \cbrk}(s)\left|G(s,\xi)\right|^2_{l_2} \mathrm{d}s \right)^{1/2} \mathrm{d}\xi  \right],
\end{align}
\begin{align}
										\label{20240720 02}
\bE\left[ \int_{B_R} \sup_{t \in [0,T]} \left|H(t,\xi) \right|\mathrm{d}\xi \right]  
\leq \esssup_{0 \leq s \leq t \leq T, \xi \in B_R} \exp\left( \left|\int_s^t\Re[\tilde\psi(r,\xi)]\mathrm{d}r \right| \right)   \bE\left[\int_{B_R} \left(\int_0^{\tau \wedge T } \left|G(s,\xi)\right|^2_{l_2} \mathrm{d}s \right)^{1/2} \mathrm{d}\xi \right],
\end{align}
and
\begin{align}
								\notag
&\bE\left[ \int_0^T \int_{B_R}|\tilde\psi(t,\xi)| |H(t,\xi)| \mathrm{d}\xi  \mathrm{d}t \right] \\
									\label{20240512 11}
&\leq C_{BDG}
\esssup_{\xi \in B_R} \left(\int_0^T |\tilde\psi(t,\xi)| \sup_{0 \leq s \leq t}\left|\exp\left(\int_s^t\Re[\tilde\psi(r,\xi)]\mathrm{d}r \right) \right|  \mathrm{d}t \right)
 \bE \left[\int_{B_R} 
\left(\int_0^{\tau \wedge T}\left|G(s,\xi)\right|^2_{l_2}  \mathrm{d}s \right)^{1/2} \mathrm{d}\xi \right].
\end{align}

\end{corollary}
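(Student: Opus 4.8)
The plan is to recognize $H$ as precisely the process analyzed in Corollary \ref{cor 20240506} and to obtain both membership claims and the three estimates by verifying that corollary's hypotheses and then specializing its conclusions to the explicit constants $C^{|\tilde\psi|}_{R,T}$ and $C^{\mathrm{e}|\int\Re[\tilde\psi]|}_{R,T}$ supplied by \eqref{main deter symbol as} and \eqref{main deter symbol as 2}. Thus no new probabilistic machinery is needed; the work is entirely in discharging hypotheses and matching weights to constants.

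First I would verify the four hypotheses of Corollary \ref{cor 20240506}. Condition \eqref{2024051801} follows from \eqref{main deter symbol as 2}, since the factor $\sup_{0\le s\le t}\exp(|\int_s^t\Re[\tilde\psi]|)$ is always $\ge 1$, whence $\int_0^T|\tilde\psi(t,\xi)|\mathrm{d}t\le C^{|\tilde\psi|}_{R,T}<\infty$ for a.e. $\xi\in B_R$ and every $R$, hence for a.e. $\xi\in\fR^d$. Condition \eqref{20240316 02} is exactly the special case $s=0$ of \eqref{main deter symbol as}. For \eqref{20240505 20} I would bound $\exp(-\int_0^s\Re[2\tilde\psi])\le (C^{\mathrm{e}|\int\Re[\tilde\psi]|}_{R,T})^2$ on $B_R$, pull this constant out, and be left with $\bE[\int_{B_R}(\int_0^{\tau\wedge T}|G|^2_{l_2}\mathrm{d}s)^{1/2}\mathrm{d}\xi]$, which is finite precisely because $G\in\bL^{\omega,x,t}_{1,1,2,loc,\ell oc}$. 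The delicate verification is \eqref{20240301 20}: here one uses the algebraic cancellation $(\exp(\int_s^t\Re[2\tilde\psi]))^{1/2}=\exp(\int_s^t\Re[\tilde\psi])$, so that after bounding $\exp(\int_s^t\Re[\tilde\psi])\le\sup_{0\le s\le t}\exp(|\int_s^t\Re[\tilde\psi]|)$ and enlarging the inner integral to $\int_0^{\tau\wedge T}$, the surviving $t$-integral $\int_0^T|\tilde\psi(t,\xi)|\sup_{0\le s\le t}\exp(|\int_s^t\Re[\tilde\psi]|)\mathrm{d}t$ is controlled by $C^{|\tilde\psi|}_{R,T}$, while the remaining factor $(\int_0^{\tau\wedge T}|G|^2_{l_2}\mathrm{d}s)^{1/2}$ integrates finitely by the data hypothesis.

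With the hypotheses verified, Corollary \ref{cor 20240506} directly gives that $H$ is $\cP\times\cB(\fR^d)$-measurable and lies in both $\bL_{1,loc}\bC L_{1,\ell oc}(\Omega\times[0,\infty)\times\fR^d,\cP\times\cB(\fR^d))$ (via \eqref{20240316 01}, which requires \eqref{20240316 02}) and $\bL_{1,1,1,loc,\ell oc}(\dots,|\tilde\psi|\mathrm{d}t\mathrm{d}\xi)$ (via \eqref{20240311 01}), yielding the two class memberships. The three estimates then follow by specializing the corollary's conclusions: \eqref{20240720 00} from \eqref{20240408 01-2} by replacing $\sup_{0\le s\le t\le T}|\exp(\int_s^t\Re[\tilde\psi])|$ with its essential supremum over $\xi\in B_R$; \eqref{20240720 02} from \eqref{20240506 50} by integrating in $\xi$ over $B_R$, applying the same $(\exp(\int_s^t\Re[2\tilde\psi]))^{1/2}=\exp(\int_s^t\Re[\tilde\psi])\le C^{\mathrm{e}|\int\Re[\tilde\psi]|}_{R,T}$ bound, and using $\int_0^T 1_{\opar 0,\tau\cbrk}(s)|G|^2_{l_2}\mathrm{d}s=\int_0^{\tau\wedge T}|G|^2_{l_2}\mathrm{d}s$; and \eqref{20240512 11} from \eqref{20240506 10} by the very manipulation that established \eqref{20240301 20}, now applied to the right-hand side rather than to prove finiteness.

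The step I expect to be the main obstacle is keeping the exponential weights exactly matched to the two constants: one must pull $\exp(\int_s^t\Re[\tilde\psi])$ out of the square root so that the factor $2$ inside and the exponent $1/2$ cancel, then bound the resulting quantity by a supremum that can be absorbed into either $C^{|\tilde\psi|}_{R,T}$ or $C^{\mathrm{e}|\int\Re[\tilde\psi]|}_{R,T}$, and only afterward invoke Fubini. That last step is legitimate only because the joint $\cP\times\cB(\fR^d)$-measurability of $H$ has already been secured through Corollary \ref{joint measurability issue 2} (as used inside Corollary \ref{cor 20240506}); without it the iterated expectations cannot be interchanged. Everything else amounts to routine bookkeeping over the radii $R$ and horizons $T$.
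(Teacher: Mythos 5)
Your proposal is correct and follows essentially the same route as the paper: verify the hypotheses \eqref{2024051801}, \eqref{20240505 20}, \eqref{20240301 20}, and \eqref{20240316 02} of Corollary \ref{cor 20240506} from \eqref{main deter symbol as} and \eqref{main deter symbol as 2} (pulling the exponential weight out of the square root via the same $2$-versus-$1/2$ cancellation), then read off the memberships and specialize \eqref{20240408 01-2}, \eqref{20240506 10}, and \eqref{20240408 01} to obtain the three estimates. The only cosmetic discrepancy is that your derivation of \eqref{20240720 02} carries a $C_{BDG}$ factor, which also appears in the paper's own computation even though the displayed statement omits it.
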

\begin{proof}
Since 
$$
G \in \bL^{\omega,x,t}_{1,1,2,loc,\ell oc}\left( \opar 0,  \tau \cbrk \times \fR^d, \cP \times \cB(\fR^d); l_2\right)
$$
and $\tilde\psi$ satisfies \eqref{main deter symbol as} and \eqref{main deter symbol as 2}, for all $R,T \in (0,\infty)$, we have
\begin{align*}
&\bE\left[\int_{B_R} \left(\int_0^T  \exp\left(-\int_0^s \Re[2\tilde\psi(r,\xi)]\mathrm{d}r \right) 1_{\opar 0,\tau \cbrk}(s)\left|G(s,\xi)\right|^2_{l_2} \mathrm{d}s \right)^{1/2} \mathrm{d}\xi \right]  \\
&\leq \esssup_{0 \leq s \leq T, \xi \in B_R}\exp\left( \left|\int_0^s \Re[\tilde\psi(r,\xi)]\mathrm{d}r \right| \right) \bE\left[\int_{B_R} \left(\int_0^{\tau \wedge T}   \left|G(s,\xi)\right|^2_{l_2} \mathrm{d}s \right)^{1/2} \mathrm{d}\xi \right]  < \infty,
\end{align*}
\begin{align*}
&\bE \left[\int_{B_R}\int_0^T |\tilde\psi(t,\xi)| \left(\int_0^t \exp\left(\int_s^t\Re[2\tilde\psi(r,\xi)]\mathrm{d}r \right) 1_{\opar 0,\tau \cbrk}(s)
\left|G(s,\xi)\right|^2_{l_2}  \mathrm{d}s \right)^{1/2} \mathrm{d}t \mathrm{d}\xi \right]   \\
&\leq \bE \left[\int_{B_R}\int_0^T |\tilde\psi(t,\xi)| \sup_{0 \leq s \leq t}\left|\exp\left(\int_s^t\Re[\tilde\psi(r,\xi)]\mathrm{d}r \right) \right|  \mathrm{d}t   \left(\int_0^{\tau \wedge T}
\left|G(s,\xi)\right|^2_{l_2}  \mathrm{d}s \right)^{1/2} \mathrm{d}\xi \right]  \\
&\leq
\esssup_{\xi \in B_R} \left(\int_0^T |\tilde\psi(t,\xi)| \sup_{0 \leq s \leq t}\left|\exp\left(\int_s^t\Re[\tilde\psi(r,\xi)]\mathrm{d}r \right) \right|  \mathrm{d}t \right)
 \bE \left[\int_{B_R} 
\left(\int_0^{\tau \wedge T}\left|G(s,\xi)\right|^2_{l_2}  \mathrm{d}s \right)^{1/2} \mathrm{d}\xi \right]  \\
&< \infty,
\end{align*}
\begin{align*}
\esssup_{t \in [0,T], \xi \in B_R} \left[ \exp\left(\left|\int_0^t\Re[\tilde\psi(r,\xi)]\mathrm{d}r\right| \right) \right]  
< \infty,
\end{align*}
and
\begin{align*}
\esssup_{\xi \in  B_R} \left( \int_0^T|\tilde \psi(t,\xi)| \mathrm{d}t  \right)
\leq
\esssup_{\xi \in  B_R} \left( \int_0^T|\tilde \psi(t,\xi)|\sup_{0\leq s \leq t}  \exp\left(\left|\int_s^t\Re[\tilde \psi(r,\xi)]\mathrm{d}r \right|\right) \mathrm{d}t  \right)
< \infty.
\end{align*}
Thus by Corollary \ref{cor 20240506}, we have
\begin{align*}
H \in \bL_{1,loc}\bC L_{1,\ell oc}\left( \Omega \times [0,\infty)\times \fR^d, \cP \times \cB(\fR^d)\right)
\end{align*}
and
\begin{align*}
H \in \bL_{1,1,1,loc,\ell oc}\left( \Omega \times (0,\infty) \times \fR^d, \cP \times \cB(\fR^d), |\tilde\psi(t,\xi)|\mathrm{d}t \mathrm{d}\xi\right).
\end{align*}
Moreover, we apply \eqref{20240408 01-2}, \eqref{20240506 10}, and \eqref{20240408 01} from Corollary \ref{cor 20240506} to obtain \eqref{20240720 00}, \eqref{20240720 02}, and \eqref{20240512 11}.
Indeed, for all $T,R \in (0,\infty)$, applying \eqref{20240408 01-2}, we have
\begin{align*}
&\bE\left[ \int_0^T \int_{B_R}|H(t,\xi)|\mathrm{d}\xi \mathrm{d}t \right] \\
&\leq C_{BDG}\bE\left[ \int_0^T \int_{B_R} \left(\int_0^t   \exp\left(\int_s^t \Re[2\tilde\psi(r,\xi)]\mathrm{d}r \right) 1_{\opar 0,\tau \cbrk}(s)\left|G(s,\xi)\right|^2_{l_2} \mathrm{d}s \right)^{1/2} \mathrm{d}\xi \mathrm{d}t \right]\\
&\leq T \cdot C_{BDG}\esssup_{0\leq s\leq t \leq T, \xi \in B_R}\left[  \exp\left(\int_s^t \Re[\tilde\psi(r,\xi)]\mathrm{d}r \right) \right]
\bE\left[  \int_{B_R} \left( \int_0^T  1_{\opar 0,\tau \cbrk}(s)\left|G(s,\xi)\right|^2_{l_2} \mathrm{d}s \right)^{1/2} \mathrm{d}\xi  \right].
\end{align*}
Additionally, by  \eqref{20240506 10} and \eqref{20240408 01} with the Fubini theorem, 
\begin{align*}
&\bE\left[ \int_0^T \int_{B_R}|\tilde\psi(t,\xi)| |H(t,\xi)| \mathrm{d}\xi  \mathrm{d}t \right] \\
&\leq C_{BDG}\bE \left[\int_{B_R}\int_0^T |\tilde\psi(t,\xi)| \left(\int_0^t \exp\left(\int_s^t\Re[2\tilde\psi(r,\xi)]\mathrm{d}r \right) 
1_{\opar 0,\tau \cbrk}(s)\left|G(s,\xi)\right|^2_{l_2}  \mathrm{d}s \right)^{1/2} \mathrm{d}t \mathrm{d}\xi \right]  \\
&\leq C_{BDG}
\esssup_{\xi \in B_R} \left(\int_0^T |\tilde\psi(t,\xi)| \sup_{0 \leq s \leq t}\left|\exp\left(\int_s^t\Re[\tilde\psi(r,\xi)]\mathrm{d}r \right) \right|  \mathrm{d}t \right)
 \bE \left[\int_{B_R} 
\left(\int_0^{\tau \wedge T}\left|G(s,\xi)\right|^2_{l_2}  \mathrm{d}s \right)^{1/2} \mathrm{d}\xi \right]
\end{align*}
and
\begin{align*}
&\bE\left[ \int_{B_R} \sup_{t \in [0,T]}\left|H(t,\xi) \right|\mathrm{d}\xi \right]  \\
&\leq C_{BDG}\int_{B_R}  \sup_{t \in [0,T]} \left( \bE\left[\left(\int_0^{T}  \exp\left(\int_s^t \Re[2\tilde\psi(r,\xi)]\mathrm{d}r \right) 1_{\opar 0,\tau \cbrk}(s)\left|G(s,\xi)\right|^2_{l_2} \mathrm{d}s \right)^{1/2}  \right] \mathrm{d}\xi \right)\\
&\leq C_{BDG}\esssup_{0 \leq s \leq t \leq T, \xi \in B_R} \left( \exp\left( \left|\int_s^t\Re[\tilde\psi(r,\xi)]\mathrm{d}r \right| \right) \right)   \bE\left[\int_{B_R} \left(\int_0^{\tau \wedge T } \left|G(s,\xi)\right|^2_{l_2} \mathrm{d}s \right)^{1/2} \mathrm{d}\xi \right]  
\end{align*}
for all $R, T \in (0,\infty)$. The corollary is proved.
\end{proof}
\begin{rem}
										\label{20240803 rem 10}
It may seem that \eqref{20240720 02} is superior to \eqref{20240720 00}. 
However, \eqref{20240720 02} does not imply \eqref{20240720 00} since 
\begin{align*}
\sup_{0\leq s\leq t \leq T} \left[\exp\left(\int_s^t \Re[\tilde\psi(r,\xi)]\mathrm{d}r \right)\right]
\leq
\sup_{0\leq s\leq t \leq T} \left[\exp\left(\left|\int_s^t \Re[\tilde\psi(r,\xi)]\mathrm{d}r \right| \right)\right].
\end{align*}
\end{rem}
\begin{rem}
If $\tilde \psi$ satisfies \eqref{main deter symbol as}, then the two conditions \eqref{20240505 20} and \eqref{20240424 01} become equivalent.
\end{rem}

\begin{rem}
										\label{ensure conti rem}
The condition in \eqref{main deter symbol as 2} is sufficiently given to ensure that \eqref{2024051801} holds, which importantly used to show the joint measurability and continuity of paths. 
If one merely wants to obtain \eqref{20240512 11}, then \eqref{main deter symbol as 2} can be slightly relaxed to 
\begin{align}
									\label{20240730 80}
\esssup_{\xi \in  B_R} \left( \int_0^T|\tilde \psi(t,\xi)|\sup_{0\leq s \leq t}  \left|\exp\left(\int_s^t\Re[\tilde \psi(r,\xi)]\mathrm{d}r\right)\right|     \mathrm{d}t  \right)
< \infty.
\end{align}
Additionally, \eqref{main deter symbol as 2} can be perfectly replaced with \eqref{2024051801} and \eqref{20240730 80} if one also wishes to maintain the continuity of paths.
This generalization could be meaningful due to logarithmic operators as mentioned in Remark \ref{rem 20240730 10}.
\end{rem}

\mysection{Uniqueness of a solution}
										\label{unique section}

In this section, we examine the uniqueness of a Fourier-space weak solution to \eqref{time eqn}. We demonstrate that the uniqueness holds in a broader class than the one in which a solution exists as stated in Theorem \ref{time thm}. We assume throughout this section that a Fourier-space weak solution exists. However, understanding the strength of this assumption is challenging.
At a minimum, it ensures that the function $\psi(t,\xi) \cF[u(t,\cdot)](\xi)$ is locally integrable, based on the definition of the solution if $u$ is a Fourier-space weak solution.

Surprisingly, this existence assumption alone is sufficient to guarantee uniqueness without requiring additional assumptions on the symbol $\psi$. 
In other words, we only assume that our symbol $\psi$ is a complex-valued 
$\cP  \times \cB(\fR^d)$-measurable function defined on $\Omega \times [0,\infty) \times \fR^d$.
In particular, our symbol is random in this section.

Our approach is straightforward. First, we show that our solution has a strong form of representation in terms of data concerning their spatial frequencies, which is derived from the equation. Then, we apply the classical Gr\"onwall inequality to prove the uniqueness. This is feasible because our class of test functions $\cF^{-1}\cD(\fR^d)$ is sufficient to find a good approximation of the identity (Sobolev's mollifier). Here is our representation lemma.

\begin{lem}[A representation of a solution from the equation]
							\label{fourier repre}
Let $u_0 \in \cF^{-1}\bL_{0,1,\ell oc}\left( \Omega \times \fR^d, \rF \times \cB(\fR^d) \right)$, 
\begin{align*}
f \in \cF^{-1}\bL_{0,1,1,\ell oc}\left( \opar 0,\tau \cbrk \times \fR^d, \rF \times \cB\left([0,\infty)\right) \times \cB(\fR^d)\right), 
~g \in \cF^{-1}\bL^{\omega,\xi,t}_{0,1,2,\ell oc}\left( \opar 0,\tau \cbrk \times \fR^d, \cP \times \cB(\fR^d) ; l_2\right),
\end{align*}
and $u$ be a Fourier-space weak solution to \eqref{time eqn}.
Assume that
\begin{align*}
u \in \cF^{-1}\bL_{0,1,1, \ell oc}\left( \opar 0,\tau \cbrk \times \fR^d, \rF \times \cB([0,\infty)) \times \cB(\fR^d),|\psi(t,\xi)|\mathrm{d}t \mathrm{d}\xi\right).
\end{align*}
Then
\begin{align}
								\notag
\cF[u(t,\cdot)](\xi)
&= \cF[u_0](\xi)+\int_0^t \psi(s,\xi) \cF[u(s,\cdot)](\xi) \mathrm{d}s 
+\int_0^t \cF[f(s,\cdot)](\xi)  \mathrm{d}s  \\
								\label{20230617 02}
&\quad +\int_0^t \cF[g^k(s,\cdot)](\xi)  \mathrm{d}B^k_s\quad (a.e.)~(\omega,t,\xi) \in \opar 0,\tau \cbrk \times \fR^d.
\end{align}
In particular, \eqref{20230617 02} shows that 
\begin{align*}
u \in \cF^{-1}\bC L_{1,\ell oc}\left( \clbrk 0,\tau \cbrk,   \rF \times \cB([0,\infty)) \times \cB(\fR^d)\right)
\end{align*}
due to the stochastic Fubini theorem and there exists a continuous (with respect to $t$) modification of $\cF[u(t,\cdot)](\xi)$.
\end{lem}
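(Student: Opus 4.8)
The plan is to pass from the test-function identity \eqref{20240703 20} defining a Fourier-space weak solution to the pointwise-in-frequency identity \eqref{20230617 02}, using the realizability of the frequency functions together with the (stochastic) Fubini theorems of Section \ref{section stochastic Fubini}. Throughout, write $W(\omega,t,\xi)$ for the right-hand side of \eqref{20230617 02}.

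First I would fix $\varphi \in \cF^{-1}\cD(\fR^d)$ and set $\phi := \cF[\varphi] \in \cD(\fR^d)$, with $\supp \phi \subset B_R$ for some $R$. By the realizability of the frequency functions of $u_0$, $f$, $g$, and $u$ (Definitions \ref{defn fourier distribution}, \ref{fourier defn 1}, \ref{defn psi operator}, and Remark \ref{20240718 10}), each pairing on the right-hand side of \eqref{20240703 20} is an integral over $B_R$ against $\overline{\phi}$; for instance $\langle u_0,\varphi\rangle = \int_{B_R}\cF[u_0](\xi)\overline{\phi(\xi)}\,\mathrm{d}\xi$, $\langle \psi(s,-\mathrm{i}\nabla)u(s,\cdot),\varphi\rangle = \int_{B_R}\psi(s,\xi)\cF[u(s,\cdot)](\xi)\overline{\phi(\xi)}\,\mathrm{d}\xi$, and similarly for $f$ and each $g^k$. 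Next I would interchange the time integrals with the $\xi$-integral. For the two Lebesgue integrals this is the ordinary Fubini theorem, justified by $u\in\cF^{-1}\bL_{0,1,1,\ell oc}(\dots,|\psi(t,\xi)|\mathrm{d}t\mathrm{d}\xi)$ and $f\in\cF^{-1}\bL_{0,1,1,\ell oc}$, which together give $\int_0^{\tau}\int_{B_R}\big(|\psi||\cF[u]|+|\cF[f]|\big)\,\mathrm{d}\xi\,\mathrm{d}t<\infty$ $(a.s.)$. For the stochastic term I would apply Theorem \ref{stochastic Fubini} with $A=B_R$, drift integrand $F=\overline{\phi}\,(\psi\cF[u]+\cF[f])$, and diffusion integrand $G=\overline{\phi}\,\cF[g]$; the hypothesis $g\in\cF^{-1}\bL^{\omega,\xi,t}_{0,1,2,\ell oc}$ supplies exactly the condition $\int_{B_R}\big(\int_0^{\tau}|\cF[g]|^2_{l_2}\mathrm{d}t\big)^{1/2}\mathrm{d}\xi<\infty$ required there, the boundedness of $\phi$ being harmless. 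This yields, for each fixed $\varphi$,
\begin{align*}
\int_{B_R}\cF[u(t,\cdot)](\xi)\,\overline{\phi(\xi)}\,\mathrm{d}\xi
=\int_{B_R}W(\omega,t,\xi)\,\overline{\phi(\xi)}\,\mathrm{d}\xi
\quad(a.e.)~(\omega,t)\in\clbrk 0,\tau\cbrk .
\end{align*}

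To upgrade this to the claimed a.e. identity on $\opar 0,\tau\cbrk\times\fR^d$, I would fix a countable family $\{\phi_n\}\subset\cD(\fR^d)$ that is dense and separating (using separability of $\cD(\fR^d)$, equivalently of $\cF^{-1}\cD(\fR^d)$). Outside a single $\mathrm{d}P\times\mathrm{d}t$-null set the displayed identity then holds simultaneously for all $\phi_n$, and since both $\cF[u(t,\cdot)]$ and $W(\omega,t,\cdot)$ lie in $L_{1,\ell oc}(\fR^d)$, the du~Bois-Reymond lemma forces $\cF[u(t,\cdot)]=W(\omega,t,\cdot)$ as locally integrable functions for $(a.e.)~(\omega,t)$; one final application of Fubini gives \eqref{20230617 02} $(a.e.)$ on $\opar 0,\tau\cbrk\times\fR^d$. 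For the continuity assertion, Corollary \ref{Fubini corollary} furnishes a jointly measurable modification $H$ of the stochastic term $\int_0^t\cF[g^k(s,\cdot)](\xi)\,\mathrm{d}B^k_s$ for which, for a.e. $\xi$, the path $t\mapsto H(\omega,t,\xi)$ is continuous $(a.s.)$; as $\cF[u_0]$ is constant in $t$ and the two drift terms are absolutely continuous in $t$ for a.e. $\xi$, the representation \eqref{20230617 02} exhibits $\cF[u(t,\cdot)](\xi)$ as equal a.e. to a function with continuous paths in $t$ and locally integrable in $\xi$, whence $u\in\cF^{-1}\bC L_{1,\ell oc}\big(\clbrk 0,\tau\cbrk,\rF\times\cB([0,\infty))\times\cB(\fR^d)\big)$.

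The main obstacle I expect is the stochastic-Fubini step: verifying that the iterated stochastic integral $\int_0^t\langle g^k,\varphi\rangle\,\mathrm{d}B^k_s$ appearing in the weak formulation coincides with the frequency-wise integral $\int_{B_R}\big(\int_0^t\cF[g^k]\,\mathrm{d}B^k_s\big)\overline{\phi}\,\mathrm{d}\xi$ after interchange, and in particular securing the joint measurability of $\int_0^t\cF[g^k]\,\mathrm{d}B^k_s$ in $(\omega,t,\xi)$. This is exactly the content Theorem \ref{stochastic Fubini} and Corollary \ref{Fubini corollary} were designed to deliver; the remaining work is the routine bookkeeping of the integrability hypotheses on $A=B_R$ together with the bounded weights $\overline{\phi}$.
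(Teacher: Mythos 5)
Your proposal is correct and follows the same overall skeleton as the paper's proof: rewrite each pairing in the weak formulation \eqref{20240703 20} as an $L_2(\fR^d)$-integral of the realizable frequency functions against $\cF[\varphi]$, interchange the time and frequency integrals via the ordinary and stochastic Fubini theorems (the weighted hypothesis on $u$ and the hypotheses on $f$ and $g$ supplying exactly the required local integrability on each $B_R$), and then remove the test function. The only substantive difference is the final identification step. The paper does not invoke du~Bois-Reymond with a countable dense family; instead it tests against the specific family $\chi^\varepsilon(x-\cdot)$, where $\chi\in\cS(\fR^d)$ is chosen with $\cF[\chi]$ non-negative, symmetric, compactly supported and $\int\chi=(2\pi)^{d/2}$, uses continuity in $x$ of both sides plus Fourier inversion to conclude $\cF[u(t,\cdot)](\xi)\,\cF[\chi^\varepsilon](\xi)=W(\omega,t,\xi)\,\cF[\chi^\varepsilon](\xi)$ for a.e.\ $(\omega,t,\xi)$, and then lets $\varepsilon\downarrow 0$ using $\cF[\chi^\varepsilon](\xi)=\cF[\chi](\varepsilon\xi)\to 1$. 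Your route is the more standard one and avoids constructing the mollifier, but it does require you to be careful that the countable family $\{\phi_n\}$ is dense in a topology compatible with testing against functions that are merely locally integrable (e.g.\ uniform convergence with supports in a fixed compact set), and that the null set of $(\omega,t)$ is chosen uniformly over the family before applying du~Bois-Reymond; you flag both points, so the argument goes through. The paper's mollifier argument sidesteps this density bookkeeping at the cost of the explicit construction and the limit $\varepsilon\downarrow 0$. The Fubini/stochastic-Fubini justifications and the continuity assertion at the end are handled the same way in both arguments.
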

\begin{proof}
Let $\varphi \in \cF^{-1}\cD(\fR^d)$.
Then by using  the definition of a Fourier-space solution (Definition \ref{space weak solution 2}), the definition of the Fourier transform, and assumptions on $u$, $u_0$, $f$, and $g$,
we have
\begin{align}
									\notag
&\left\langle \cF[u(t,\cdot)],\cF[\varphi] \right\rangle  \\
									\notag
&= \left( \cF[u_0], \cF[\varphi]  \right)_{L_2(\fR^d)} +  \int_0^t  \left(  \psi(s,\xi)\cF[u(s,\cdot)](\xi) , \cF[\varphi](\xi) \right)_{L_2(\fR^d)}\mathrm{d}s 
+ \int_0^t \left( \cF[f(s,\cdot)],\cF[\varphi] \right)_{L_2(\fR^d)} \mathrm{d}s \\
									\label{20240229 10}
& \quad +\int_0^t 1_{\opar 0,\tau \cbrk}(s)\left( \cF[g^k(s,\cdot)],\cF[\varphi] \right)_{L_2(\fR^d)} \mathrm{d}B^k_s
\quad  (a.e.) \quad (\omega,t) \in \clbrk 0,\tau \cbrk,
\end{align}
where $(\cdot,\cdot)_{L_2(\fR^d)}$ denotes the $L_2(\fR^d)$-product, for instance, 
\begin{align*}
\left(  \psi(s,\xi)\cF[u(s,\cdot)](\xi) , \cF[\varphi](\xi) \right)_{L_2(\fR^d)}
=\int_{\fR^d} \psi(s,\xi)\cF[u(s,\cdot)](\xi) \overline{\cF[\varphi](\xi)} \mathrm{d}\xi.
\end{align*}
Here the generalized Minkowski inequality
\begin{align*}
\left( \int_0^{T} 1_{\opar 0,\tau  \cbrk}(t) \left(\int_{B_R} \left|\cF[g(\omega,t,\cdot)](\xi) \right|_{l_2}\mathrm{d}\xi\right)^2 \mathrm{d}t  \right)^{1/2}
\leq  \int_{B_R} \left(\int_0^{T} 1_{\opar 0,\tau  \cbrk}(t) \left|\cF[g(\omega,t,\cdot)](\xi) \right|^2_{l_2}\mathrm{d}t \right)^{1/2} \mathrm{d}\xi
\end{align*}
is applied to show 
\begin{align*}
\left\langle \cF[g^k(s,\cdot)], \cF[\varphi] \right\rangle
=\left( \cF[g^k(s,\cdot)], \cF[\varphi] \right)_{L_2(\fR^d)} \quad \forall k \in \fN.
\end{align*}
Additionally, by the (stochastic) Fubini theorem, \eqref{20240229 10} implies
\begin{align}
									\notag
&\left\langle \cF[u(t,\cdot)],\cF[\varphi] \right\rangle  \\
									\notag
&= \left( \cF[u_0], \cF[\varphi]  \right)_{L_2(\fR^d)} +   \left(  \int_0^t  \psi(s,\xi)\cF[u(s,\cdot)](\xi)\mathrm{d}s, \cF[\varphi](\xi) \right)_{L_2(\fR^d)} 
+ \left( \int_0^t \cF[f(s,\cdot)]\mathrm{d}s,\cF[\varphi] \right)_{L_2(\fR^d)}  \\
									\notag
& \quad +\left( \int_0^t 1_{\opar 0,\tau \cbrk}(s) \cF[g^k(s,\cdot)] \mathrm{d}B^k_s,\cF[\varphi] \right)_{L_2(\fR^d)}
\quad  (a.e.) \quad (\omega,t) \in \clbrk 0,\tau \cbrk.
\end{align}
Moreover by using the separability of $\cD(\fR^d)$, we have
\begin{align}
									\notag
&\left( \cF[u(t,\cdot)],\cF[\varphi] \right)_{L_2(\fR^d)}  \\
									\notag
&= \left( \cF[u_0], \cF[\varphi]  \right)_{L_2(\fR^d)} +   \left(  \int_0^t  \psi(s,\xi)\cF[u(s,\cdot)](\xi)\mathrm{d}s, \cF[\varphi](\xi) \right)_{L_2(\fR^d)} 
+ \left( \int_0^t \cF[f(s,\cdot)]\mathrm{d}s,\cF[\varphi] \right)_{L_2(\fR^d)}  \\
									\label{20240229 20}
& \quad +\left( \int_0^t 1_{\opar 0,\tau \cbrk}(s) \cF[g^k(s,\cdot)] \mathrm{d}B^k_s,\cF[\varphi] \right)_{L_2(\fR^d)}
\quad  (a.e.) \quad (\omega,t) \in \clbrk 0,\tau \cbrk \quad \forall \varphi \in \cF^{-1}\cD(\fR^d).
\end{align}
Next, we use a Sobolev mollifier with additional specific properties. 
Let $\chi$ be a function in $\cS(\fR^d)$ 
so that $\cF[\chi]$ is non-negative and symmetric, \textit{i.e.}
$\cF[\chi](\xi) =\cF[\chi](-\xi) \geq 0$ for all $\xi \in \fR^d$. 
Additionally, assume that $\cF[\chi]$ has a compact support  and 
$$
\int_{\fR^d} \chi(x)\mathrm{d}x=(2\pi)^{d/2}.
$$
For $\varepsilon \in (0,1)$, denote
\begin{align*}
\chi^\varepsilon(x) := \frac{1}{\varepsilon^d}\chi\left( \frac{x}{\varepsilon}\right).
\end{align*}
Fix $x \in \fR^d$ and put $\chi^\varepsilon(x-\cdot)$ in \eqref{20240229 20} instead of $\varphi$.
Then
\begin{align*}
&\int_{\fR^d} \cF[u(t,\cdot)](\xi) \overline{\cF[\chi^\varepsilon(x-\cdot)](\xi)}\mathrm{d}\xi \\
&=  \int_{\fR^d} \cF[u_0](\xi) \overline{\cF[\chi^\varepsilon(x-\cdot)](\xi)}\mathrm{d}\xi
+\int_{\fR^d} \int_0^t  \psi(s,\xi)\cF[u(s,\cdot)](\xi)  \overline{\cF[\chi^\varepsilon(x-\cdot)](\xi)}  \mathrm{d}s \mathrm{d}\xi \\
&\quad +\int_{\fR^d} \int_0^t  \cF[f(s,\cdot)](\xi)  \overline{\cF[\chi^\varepsilon(x-\cdot)](\xi)} \mathrm{d}s \mathrm{d}\xi 
+ \int_{\fR^d} \int_0^t  \cF[g^k(s,\cdot)](\xi)  \overline{\cF[\chi^\varepsilon(x-\cdot)](\xi)}  \mathrm{d}B^k_s \mathrm{d}\xi
\end{align*}
for almost every $(\omega,t) \in \clbrk 0,\tau \cbrk$.
Thus recalling properties of $\cF[\chi]$ and the Fourier transform, we have
\begin{align}
									\notag
&\int_{\fR^d} \mathrm{e}^{\mathrm{i}x \cdot \xi} \cF[u(t,\cdot)](\xi) \cF[\chi^\varepsilon](\xi) \mathrm{d}\xi \\
									\notag
&= \int_{\fR^d} \mathrm{e}^{\mathrm{i}x \cdot \xi} \cF[u_0](\xi) \cF[\chi^\varepsilon](\xi) \mathrm{d}\xi
+\int_{\fR^d} \mathrm{e}^{\mathrm{i}x \cdot \xi} \left(\int_0^t \psi(s,\xi) \cF[u(s,\cdot)](\xi)   \cF[\chi^\varepsilon](\xi) \mathrm{d}s\right)  \mathrm{d}\xi \\
									\label{20240229 30}
&\quad + \int_{\fR^d} \mathrm{e}^{\mathrm{i}x \cdot \xi} \left(\int_0^t \cF[f(s,\cdot)](\xi)   \cF[\chi^\varepsilon](\xi) \mathrm{d}s\right)  \mathrm{d}\xi
+ \int_{\fR^d}  \mathrm{e}^{\mathrm{i}x \cdot \xi} \left(\int_0^t  \cF[g^k(s,\cdot)](\xi)   \cF[\chi^\varepsilon](\xi)  \mathrm{d}B^k_s\right) \mathrm{d}\xi
\end{align}
for almost every $(\omega,t) \in \clbrk 0,\tau \cbrk$.
Note that both sides of \eqref{20240229 30} are continuous with respect to $x$, which implies that \eqref{20240229 30} holds for all $x$.
Therefore, due to  the Fourier inversion theorem,
\begin{align}
									\notag
\cF[u(t,\cdot)](\xi) \cF[\chi^\varepsilon](\xi)
&=\cF[u_0](\xi) \cF[\chi^\varepsilon](\xi)
+\int_0^t \psi(s,\xi) \cF[u(s,\cdot)](\xi)   \cF[\chi^\varepsilon](\xi) \mathrm{d}s
+\int_0^t \cF[f(s,\cdot)](\xi)   \cF[\chi^\varepsilon](\xi) \mathrm{d}s \\
										\label{20230617 01}
&\quad +\int_0^t  \cF[g^k(s,\cdot)](\xi)   \cF[\chi^\varepsilon](\xi)  \mathrm{d}B^k_s
\end{align}
for almost every $(\omega,t,\xi) \in \clbrk 0,\tau  \cbrk \times \fR^d$.
Observe that
\begin{align*}
\cF[\chi^\varepsilon](\xi)=\cF[\chi](\varepsilon\xi)
\end{align*}
and
\begin{align*}
\cF[\chi](0)= (2\pi)^{-d/2} \int_{\fR^d} \chi(y) \mathrm{d}y = 1.
\end{align*}
Finally, taking $\varepsilon \downarrow 0$ in \eqref{20230617 01}, we have
\eqref{20230617 02}. The lemma is proved.
\end{proof}
From the representation above, we obtain important two theorems.
The first one is {\em a prior estimate} and the second one is {\em uniqueness of a solution}.

We present a deterministic version of {\em a prior estimate} first.
Notably, it is feasible to determine the explicit constant in {\em a prior estimate} if there is no noise term.

\begin{thm}[(Deterministic) A priori estimate]
								\label{thm deter a priori}
Let $u_0 \in \cF^{-1}\bL_{0,1,\ell oc}\left( \Omega \times \fR^d, \rF \times \cB(\fR^d) \right)$, 
$$
f \in \cF^{-1}\bL_{0,1,1,\ell oc}\left( \opar 0,\tau \cbrk \times \fR^d, \rF \times \cB\left([0,\infty)\right) \times \cB(\fR^d)\right), 
$$
and $u$ be a Fourier-space weak solution to \eqref{deter eqn}.
Assume that for all $R \in (0,\infty)$,
\begin{align}
										\label{2024051901}
 \int_0^{\tau } \int_{B_R}|\psi(t,\xi)| |\cF[u(t,\cdot)](\xi)| \mathrm{d}\xi  \mathrm{d}t
&\leq C_0\int_{B_R} \left|\cF[u_0](\xi)\right|    \mathrm{d}\xi 
+C_1\int_{B_R} \int_0^{\tau }\left|\cF[f(s,\cdot)](\xi)\right|  \mathrm{d}s  \mathrm{d}\xi
\end{align}
with probability one, where $C_0$ and $C_1$ are positive constants.
Then 
$$
u \in \cF^{-1}\bC L_{1,\ell oc}\left( \clbrk 0,\tau \cbrk \times \fR^d, \rF \times \cB([0,\infty))\times \cB(\fR^d)\right)
$$
and for all $R \in (0,\infty)$,
\begin{align}
									\notag
&\int_{B_R} \sup_{t \in [0,\tau]} |\cF[u(t,\cdot)](\xi)| \mathrm{d}\xi   \\
									\notag
&\leq (1+C_0 )\int_{B_R} \left|\cF[u_0](\xi)\right|    \mathrm{d}\xi 
+ (1+C_1)\int_{B_R} \int_0^{\tau }\left|\cF[f(s,\cdot)](\xi)\right|  \mathrm{d}s  \mathrm{d}\xi \\
									\label{20240719 01}
&\leq (1+ C_0 \vee C_1 ) \left(\int_{B_R} \left|\cF[u_0](\xi)\right|    \mathrm{d}\xi 
+ \int_{B_R} \int_0^{\tau }\left|\cF[f(s,\cdot)](\xi)\right|  \mathrm{d}s  \mathrm{d}\xi\right)
\end{align}
with probability one.
\end{thm}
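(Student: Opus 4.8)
The plan is to feed the solution's own representation into the hypothesis \eqref{2024051901} and read the bound off directly. Since \eqref{deter eqn} carries no noise term, I would first check that the hypotheses of Lemma \ref{fourier repre} hold with $g \equiv 0$: the assumptions on $u_0$ and $f$ are precisely those imposed here, and the weighted integrability
\[
u \in \cF^{-1}\bL_{0,1,1, \ell oc}\left( \opar 0,\tau \cbrk \times \fR^d, \rF \times \cB([0,\infty)) \times \cB(\fR^d),|\psi(t,\xi)|\mathrm{d}t \mathrm{d}\xi\right)
\]
required by the lemma follows from \eqref{2024051901} together with the almost sure finiteness of $\int_{B_R}|\cF[u_0](\xi)|\,\mathrm{d}\xi$ and $\int_{B_R}\int_0^{\tau}|\cF[f(s,\cdot)](\xi)|\,\mathrm{d}s\,\mathrm{d}\xi$, which are in turn guaranteed by $u_0 \in \cF^{-1}\bL_{0,1,\ell oc}$ and $f \in \cF^{-1}\bL_{0,1,1,\ell oc}$. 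Lemma \ref{fourier repre} then provides, for almost every $(\omega,t,\xi)$, the representation
\[
\cF[u(t,\cdot)](\xi) = \cF[u_0](\xi) + \int_0^t \psi(s,\xi)\cF[u(s,\cdot)](\xi)\,\mathrm{d}s + \int_0^t \cF[f(s,\cdot)](\xi)\,\mathrm{d}s ,
\]
as well as a modification of $\cF[u(t,\cdot)](\xi)$ that is continuous in $t$, so that the pointwise-in-$t$ manipulations below are legitimate.

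Working with this continuous version, I would apply the triangle inequality and use that the integrands $|\psi\,\cF[u]|$ and $|\cF[f]|$ are nonnegative, so that enlarging the upper limit of integration from $t$ to $\tau$ only increases the integrals; taking the supremum over $t \in [0,\tau]$ then yields
\[
\sup_{t \in [0,\tau]} |\cF[u(t,\cdot)](\xi)| \leq |\cF[u_0](\xi)| + \int_0^{\tau} |\psi(s,\xi)|\,|\cF[u(s,\cdot)](\xi)|\,\mathrm{d}s + \int_0^{\tau} |\cF[f(s,\cdot)](\xi)|\,\mathrm{d}s .
\]
Integrating over $B_R$ and invoking Tonelli's theorem (all integrands being nonnegative) to match the order of integration appearing in \eqref{2024051901}, the middle term is controlled directly by the hypothesis, giving
\[
\int_{B_R} \sup_{t \in [0,\tau]} |\cF[u(t,\cdot)](\xi)|\,\mathrm{d}\xi \leq (1+C_0)\int_{B_R}|\cF[u_0](\xi)|\,\mathrm{d}\xi + (1+C_1)\int_{B_R}\int_0^{\tau}|\cF[f(s,\cdot)](\xi)|\,\mathrm{d}s\,\mathrm{d}\xi .
\]
This is the first line of \eqref{20240719 01}, and the second line is immediate from $1+C_0 \leq 1 + C_0 \vee C_1$ and $1+C_1 \leq 1 + C_0 \vee C_1$.

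For the class membership, the estimate just obtained shows $\int_{B_R}\sup_{t\in[0,\tau]}|\cF[u(t,\cdot)](\xi)|\,\mathrm{d}\xi < \infty$ almost surely for every $R$, while the representation exhibits $t \mapsto \cF[u(t,\cdot)](\xi)$ (for almost every $\omega$ and $\xi$) as the sum of a term independent of $t$ and two Lebesgue integrals, hence continuous in $t$. These are exactly the two defining conditions for $\cF[u] \in \bC L_{1,\ell oc}$, i.e.\ $u \in \cF^{-1}\bC L_{1,\ell oc}\left( \clbrk 0,\tau \cbrk \times \fR^d, \rF \times \cB([0,\infty))\times \cB(\fR^d)\right)$. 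I expect no genuine difficulty in the inequalities themselves; the one point requiring care is the bookkeeping of null sets. To manage this cleanly I would fix the continuous modification once and exhaust $\fR^d$ by the balls $B_R$ with $R \in \fN$, so that a single almost sure event supports every pointwise statement, and then pass to arbitrary $R$ by monotonicity and the Heine--Borel theorem.
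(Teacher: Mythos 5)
Your proposal is correct and follows essentially the same route as the paper's proof: deduce the weighted integrability of $u$ from \eqref{2024051901}, invoke Lemma \ref{fourier repre} (with $g\equiv 0$) to obtain the representation and a continuous-in-$t$ modification of $\cF[u(t,\cdot)](\xi)$, bound via the triangle inequality with the time integral extended to $\tau$, integrate over $B_R$, and absorb the middle term using the hypothesis. The only cosmetic difference is that the paper first works with $\esssup_t$ and then upgrades to $\sup_t$ by continuity, whereas you work with the continuous version from the outset; this changes nothing of substance.
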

\begin{proof}
\eqref{2024051901} obviously implies 
\begin{align*}
u \in \cF^{-1}\bL_{0,1,1, \ell oc}\left( \opar 0,\tau \cbrk \times \fR^d, \rF \times \cB([0,\infty)) \times \cB(\fR^d),|\psi(t,\xi)|\mathrm{d}t \mathrm{d}\xi\right).
\end{align*}
Thus by applying Lemma \ref{fourier repre}, it suffices to show \eqref{20240719 01}.
By \eqref{20230617 02}, there exists a continuous modification of $\cF[u(t,\cdot)](\xi)$ so that
\begin{align*}
\cF[u(t,\cdot)](\xi)
&= \cF[u_0](\xi)+\int_0^t \psi(s,\xi) \cF[u(s,\cdot)](\xi) \mathrm{d}s 
+\int_0^t \cF[f(s,\cdot)](\xi)  \mathrm{d}s \quad (a.e.)~(\omega,t,\xi) \in \opar 0,\tau \cbrk \times \fR^d
\end{align*}
and it implies
\begin{align}
										\label{20240706 01-2}
|\cF[u(t,\cdot)](\xi)|
&\leq |\cF[u_0](\xi)|+\int_0^t |\psi(s,\xi) \cF[u(s,\cdot)](\xi)| \mathrm{d}s 
+\int_0^t |\cF[f(s,\cdot)](\xi)|  \mathrm{d}s \quad (a.e.)~(\omega,t,\xi) \in \opar 0,\tau \cbrk \times \fR^d.
\end{align}
Taking the integral and the essential supremum to both sides of \eqref{20240706 01-2}, and using \eqref{2024051901}, we have
\begin{align*}
&\left[   \int_{B_R} \esssup_{t \in [0,\tau]} |\cF[u(t,\cdot)](\xi)| \mathrm{d}\xi  \right]  \\
&\leq
\Bigg(
\left[\int_{B_R} \left|\cF[u_0](\xi)\right|    \mathrm{d}\xi \right] 
+\left[\int_{B_R} \int_0^{\tau }\left|\psi(s,\xi)\cF[u(s,\cdot)](\xi)\right|  \mathrm{d}s  \mathrm{d}\xi \right] \Bigg)
+\left[\int_{B_R} \int_0^{\tau }\left|\cF[f(s,\cdot)](\xi)\right|  \mathrm{d}s  \mathrm{d}\xi \right] \Bigg)\\
&\leq (1+C_0)
 \left[\int_{B_R} \left|\cF[u_0](\xi)\right|    \mathrm{d}\xi \right] 
+(1+C_1)\left[\int_{B_R} \int_0^{\tau }\left|\cF[f(s,\cdot)](\xi)\right|  \mathrm{d}s  \mathrm{d}\xi \right].
\end{align*}
Finally, using the continuity of the modification of $\cF[u(t,\cdot)](\xi)$, we have \eqref{20240719 01}.
The theorem is proved.
\end{proof}

Continuously, we present a stochastic version of {\em a prior estimate}.

\begin{thm}[(Stochastic) A priori estimate]
								\label{thm a priori}
Let $u_0 \in \cF^{-1}\bL_{1,1,\ell oc}\left( \Omega \times \fR^d, \rF \times \cB(\fR^d) \right)$, 
$$
f \in \cF^{-1}\bL_{1,1,1,loc, \ell oc}\left( \opar 0,\tau \cbrk \times \fR^d, \rF \times \cB\left([0,\infty)\right) \times \cB(\fR^d)\right), 
$$
$$
g \in \cF^{-1}\bL^{\omega,\xi,t}_{1,1,2,loc, \ell oc}\left( \opar 0,\tau \cbrk \times \fR^d, \cP \times \cB(\fR^d) ; l_2\right),
$$ and $u$ be a Fourier-space weak solution to \eqref{time eqn}.
Assume that for each $R,T \in (0,\infty)$,
\begin{align}
									\notag
\bE\left[ \int_0^{\tau \wedge T} \int_{B_R}|\psi(t,\xi)| |\cF[u(t,\cdot)](\xi)| \mathrm{d}\xi  \mathrm{d}t \right] 
&\leq C_0\bE \left[\int_{B_R} \left|\cF[u_0](\xi)\right|    \mathrm{d}\xi \right] 
+C_1\bE \left[\int_{B_R} \int_0^{\tau \wedge T}\left|\cF[f(s,\cdot)](\xi)\right|  \mathrm{d}s  \mathrm{d}\xi \right]  \\
										\label{20240509 02}
&\quad +C_2\bE \left[\int_{B_R} \left(\int_0^{\tau \wedge T}\left|\cF[g(s,\cdot)](\xi)\right|^2_{l_2}  \mathrm{d}s \right)^{1/2} \mathrm{d}\xi \right],
\end{align}
where $C_0$, $C_1$, and $C_2$ are positive constants.
Then 
$$
u \in \cF^{-1}\bL_{1,loc}\bC L_{1,\ell oc}\left( \clbrk 0,\tau \cbrk \times \fR^d, \rF \times \cB([0,\infty))\times \cB(\fR^d)\right)
$$
and for all $R, T \in (0,\infty)$, 
\begin{align}
									\notag
&\bE\left[   \int_{B_R} \sup_{t \in [0,\tau \wedge T]} |\cF[u(t,\cdot)](\xi)| \mathrm{d}\xi  \right]  \\
									\notag
&\leq (1+C_0)\bE \left[\int_{B_R} \left|\cF[u_0](\xi)\right|    \mathrm{d}\xi \right] 
+(1+C_1)\bE \left[\int_{B_R} \int_0^{ \tau \wedge T}\left|\cF[f(s,\cdot)](\xi)\right|  \mathrm{d}s  \mathrm{d}\xi \right]  \\ 
									\notag
&\qquad \qquad \qquad \qquad \qquad +(C_{BDG}+C_2)\bE \left[\int_{B_R} \left(\int_0^{ \tau \wedge T}\left|\cF[g(s,\cdot)](\xi)\right|^2_{l_2}  \mathrm{d}s \right)^{1/2} \mathrm{d}\xi \right] \\
									\notag
&\leq (C_{BDG}+\max\{C_0,C_1,C_2\})
\Bigg(\bE \left[\int_{B_R} \left|\cF[u_0](\xi)\right|    \mathrm{d}\xi \right] 
+\bE \left[\int_{B_R} \int_0^{ \tau \wedge T}\left|\cF[f(s,\cdot)](\xi)\right|  \mathrm{d}s  \mathrm{d}\xi \right]  \\ 
									\label{20240509 03}
&\qquad \qquad \qquad \qquad \qquad \qquad +\bE \left[\int_{B_R} \left(\int_0^{ \tau \wedge T}\left|\cF[g(s,\cdot)](\xi)\right|^2_{l_2}  \mathrm{d}s \right)^{1/2} \mathrm{d}\xi \right] \Bigg).
\end{align}
\end{thm}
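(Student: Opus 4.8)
The plan is to read off the estimate from the representation \eqref{20230617 02} of Lemma \ref{fourier repre}, treating the drift and the deterministic inhomogeneity exactly as in Theorem \ref{thm deter a priori} and controlling the martingale part by the BDG inequality \eqref{20240509 10} with $p=1$. First I would verify that Lemma \ref{fourier repre} applies. Since $\tau$ is finite, for almost every $\omega$ one has $\tau\wedge T=\tau$ for $T$ large enough (depending on $\omega$), and each $\int_0^{\tau\wedge T}\int_{B_R}|\psi(t,\xi)||\cF[u(t,\cdot)](\xi)|\,d\xi\,dt$ is a.s. finite by hypothesis \eqref{20240509 02}; hence $\int_0^{\tau}\int_{B_R}|\psi(t,\xi)||\cF[u(t,\cdot)](\xi)|\,d\xi\,dt<\infty$ (a.s.) for every $R$, so that $u\in\cF^{-1}\bL_{0,1,1,\ell oc}\left(\opar 0,\tau \cbrk\times\fR^d,\rF\times\cB([0,\infty))\times\cB(\fR^d),|\psi(t,\xi)|\,dt\,d\xi\right)$. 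Lemma \ref{fourier repre} then yields a jointly measurable, $t$-continuous modification of $\cF[u(t,\cdot)](\xi)$ satisfying \eqref{20230617 02} for a.e. $(\omega,t,\xi)\in\opar 0,\tau \cbrk\times\fR^d$, which I would fix for the rest of the argument.

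Next I would pass to absolute values in \eqref{20230617 02} and take $\sup_{t\in[0,\tau\wedge T]}$. Bounding each Lebesgue integral by its integral of absolute values over $[0,\tau\wedge T]$ gives the pointwise-in-$(\omega,\xi)$ estimate in which $\sup_{t\in[0,\tau\wedge T]}|\cF[u(t,\cdot)](\xi)|$ is dominated by $|\cF[u_0](\xi)|+\int_0^{\tau\wedge T}|\psi(s,\xi)\cF[u(s,\cdot)](\xi)|\,ds+\int_0^{\tau\wedge T}|\cF[f(s,\cdot)](\xi)|\,ds+\sup_{t\in[0,\tau\wedge T]}\bigl|\int_0^t 1_{\opar 0,\tau \cbrk}(s)\cF[g^k(s,\cdot)](\xi)\,dB^k_s\bigr|$. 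I would then integrate over $B_R$ and take expectations. The first three terms are handled by Fubini's theorem, which is legitimate by the joint measurability built into the data classes and into the modification supplied by the stochastic Fubini theorem, producing precisely $\bE\int_{B_R}|\cF[u_0]|\,d\xi$, the term $\bE\int_{B_R}\int_0^{\tau\wedge T}|\psi\,\cF[u]|\,ds\,d\xi$, and $\bE\int_{B_R}\int_0^{\tau\wedge T}|\cF[f]|\,ds\,d\xi$.

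For the martingale term I would apply BDG fiberwise in $\xi$: for almost every fixed $\xi$ the integrand $1_{\opar 0,\tau \cbrk}(s)\cF[g^k(s,\cdot)](\xi)$ is predictable and vanishes for $s>\tau$, so the supremum over $[0,\tau\wedge T]$ equals that over $[0,T]$, and \eqref{20240509 10} with $p=1$ bounds $\bE\sup_{t\in[0,\tau\wedge T]}|\int_0^t 1_{\opar 0,\tau \cbrk}(s)\cF[g^k(s,\cdot)](\xi)\,dB^k_s|$ by $C_{BDG}\,\bE\bigl(\int_0^{\tau\wedge T}|\cF[g(s,\cdot)](\xi)|^2_{l_2}\,ds\bigr)^{1/2}$; integrating in $\xi$ over $B_R$ (Fubini again) bounds the martingale contribution by $C_{BDG}\,\bE\int_{B_R}\bigl(\int_0^{\tau\wedge T}|\cF[g]|^2_{l_2}\,ds\bigr)^{1/2}\,d\xi$. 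Collecting the four terms and invoking \eqref{20240509 02} to replace $\bE\int_{B_R}\int_0^{\tau\wedge T}|\psi\,\cF[u]|$ by the $C_0,C_1,C_2$ combination yields the first inequality of \eqref{20240509 03} with constants $1+C_0$, $1+C_1$, $C_{BDG}+C_2$; the second inequality then follows by bounding each constant by $C_{BDG}+\max\{C_0,C_1,C_2\}$, using $C_{BDG}\ge 1$ from \eqref{bdg lower}. The finiteness of the left-hand side of \eqref{20240509 03}, together with the joint measurability and the a.s. $t$-continuity of the chosen modification (the Lebesgue and It\^o integrals in \eqref{20230617 02} having continuous paths), delivers the membership $u\in\cF^{-1}\bL_{1,loc}\bC L_{1,\ell oc}\left(\clbrk 0,\tau \cbrk\times\fR^d,\rF\times\cB([0,\infty))\times\cB(\fR^d)\right)$.

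The main obstacle is the measurability bookkeeping that makes the fiberwise BDG step followed by integration in $\xi$ rigorous: one needs $(\omega,t,\xi)\mapsto\int_0^t 1_{\opar 0,\tau \cbrk}(s)\cF[g^k(s,\cdot)](\xi)\,dB^k_s$ to possess a jointly $\cP\times\cB(\fR^d)$-measurable, $t$-continuous modification so that $\sup_t|\cdot|$ is an honest measurable function of $(\omega,\xi)$ and Fubini may be applied \emph{after} BDG, and one must ensure that this is the very same modification appearing in the representation \eqref{20230617 02}. This is exactly what Theorem \ref{stochastic Fubini} and Corollary \ref{Fubini corollary} provide; once this identification is in place, the remaining manipulations are routine.
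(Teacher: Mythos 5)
Your proposal is correct and follows essentially the same route as the paper's proof: invoke the representation \eqref{20230617 02} from Lemma \ref{fourier repre}, take the supremum in $t$, apply the BDG inequality to the martingale term, integrate over $B_R$ via Fubini, and substitute the hypothesis \eqref{20240509 02} for the $\psi\,\cF[u]$ term, with the final inequality coming from $C_{BDG}\ge 1$. The only addition is your explicit preliminary check that \eqref{20240509 02} together with the finiteness of $\tau$ places $u$ in the class required by Lemma \ref{fourier repre}, a step the paper leaves implicit.
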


\begin{proof}
Due to Lemma \ref{fourier repre}, it is sufficient to show \eqref{20240509 03}.
By \eqref{20230617 02},
\begin{align*}
\cF[u(t,\cdot)](\xi)
&= \cF[u_0](\xi)+\int_0^t \psi(s,\xi) \cF[u(s,\cdot)](\xi) \mathrm{d}s 
+\int_0^t \cF[f(s,\cdot)](\xi)  \mathrm{d}s  \\
&\quad +\int_0^t \cF[g^k(s,\cdot)](\xi)  \mathrm{d}B^k_s\quad (a.e.)~(\omega,t,\xi) \in \opar 0,\tau \cbrk \times \fR^d
\end{align*}
and the right-hand side becomes a continuous (with respect to $t$) modification of $\cF[u(t,\cdot)]$.
For each $T \in (0,\infty)$, considering the modification, applying the above equality and  the BDG inequality,  taking the integration with respect to $\xi$ on each $B_R$, and using the Fubini theorem with \eqref{20240509 02}, we have
\begin{align}
									\notag
&\bE\left[   \int_{B_R} \sup_{t \in [0,\tau \wedge T]} |\cF[u(t,\cdot)](\xi)| \mathrm{d}\xi  \right]  \\
									\notag
&\leq \bE\left[\int_{B_R}|\cF[u_0](\xi)|\mathrm{d}\xi\right] + \bE\left[\int_0^{\tau \wedge T} \int_{B_R}|\psi(s,\xi) \cF[u(s,\cdot)](\xi)| \mathrm{d}s \mathrm{d}\xi\right]
\\
									\notag
&\quad +\bE\left[\int_0^{\tau \wedge T} \int_{B_R}|\cF[f(s,\cdot)](\xi)|  \mathrm{d}\xi \mathrm{d}s \right]  
+C_{BDG}\bE\left[ \int_{B_R}\left(\int_0^{\tau \wedge T} |\cF[g(s,\cdot)](\xi)|^2_{l_2}  \mathrm{d}s \right)^{1/2}\mathrm{d}\xi\right]\\
&\leq (1+C_0)\bE\left[\int_{B_R}|\cF[u_0](\xi)|\mathrm{d}\xi\right] +(1+C_1)\bE\left[\int_0^{\tau \wedge T} \int_{B_R}|\cF[f(s,\cdot)](\xi)|  \mathrm{d}\xi \mathrm{d}s \right]  
\\
									\notag
&\quad +(C_{BDG}+C_2)\bE\left[ \int_{B_R}\left(\int_0^{\tau \wedge T} |\cF[g(s,\cdot)](\xi)|^2_{l_2}  \mathrm{d}s \right)^{1/2}\mathrm{d}\xi\right].
\end{align}
Therefore, \eqref{bdg lower} completes \eqref{20240509 03}.
The theorem is proved.
\end{proof}
\begin{rem}
The last inequality in \eqref{20240509 03} is not utilized in the paper and does not seem to be optimal, since
it is believed that $C_{BDG} >1$. 
Instead, the second inequality in \eqref{20240509 03} is used to estimate a solution.
In particular, \eqref{20240509 03} directly implies
\begin{align}
									\notag
&\bE\left[  \int_0^{\tau \wedge T} \int_{B_R} |\cF[u(t,\cdot)](\xi)| \mathrm{d}\xi \mathrm{d}t \right]  \\
									\notag
&\leq T(1+C_0)\bE \left[\int_{B_R} \left|\cF[u_0](\xi)\right|    \mathrm{d}\xi \right] 
+T(1+C_1)\bE \left[\int_{B_R} \int_0^{ \tau \wedge T}\left|\cF[f(s,\cdot)](\xi)\right|  \mathrm{d}s  \mathrm{d}\xi \right]  \\ 
									\notag
&\qquad \qquad \qquad \qquad \qquad +T(C_{BDG}+C_2)\bE \left[\int_{B_R} \left(\int_0^{ \tau \wedge T}\left|\cF[g(s,\cdot)](\xi)\right|^2_{l_2}  \mathrm{d}s \right)^{1/2} \mathrm{d}\xi \right].
\end{align}
\end{rem}

\begin{rem}
Recall that a solution $u$ to \eqref{time eqn} can be alternatively represented by using kernels related to 
$\int_s^t\psi(r,\xi)\mathrm{d}r$, as shown in \eqref{202040702 20} if the symbol is non-random. 
It is also feasible to estimate 
$u$ based on this representation of $u$ with the kernels, as demonstrated in Corollary \ref{stochastic solution part}. This type of estimate is even simpler to obtain for deterministic terms.
An example appears in Lemma \ref{deterministic solution part}.

On the other hand, note that \eqref{20230617 02} is used to obtain Theorem \ref{thm a priori}.
As can be easily seen, \eqref{20230617 02} is rather straightforwardly linked with equation \eqref{time eqn}.
Additionally, they do not involve kernels directly, unlike  \eqref{202040702 20}.
Thus these-type estimates are referred to as ``kernel-free estimates".
Nowadays, there are many well-known kernel-free estimates to obtain {\em a priori estimate} of a solution.

Especially, there are numerous well-established kernel-free estimates that provide {\em a priori} estimates for solutions of PDEs (\textit{cf.} \cite{DK2010,DK2018,D2020,Krylov 2008}). These estimates are particularly beneficial for complex equations where explicit kernel forms are unavailable. 

Additionally, simple kernel-free estimates for SPDEs can be derived by applying It\^o's formula to compositions of smooth functions and solutions. However, recent kernel-free estimates developed for PDEs cannot be applied to SPDEs, as they often depend on local estimates that are not valid for SPDEs.

Overall, it remains uncertain whether "kernel-based" or "kernel-free" methods yield better estimates with respect to the size of their constants. 
Specifically, we are unsure which constant derived from Corollary \ref{stochastic solution part} or Theorem \ref{thm a priori} is smaller at first glance. 
To determine the most optimal constants, we compare all constants from our main theorems.
In particular, we utilize the straightforward relationship between constants provided in Remark \ref{simple constant} and \eqref{20240721 50} to simplify the constants in our estimates. 
Generally, the constants from kernel-based estimates tend to be smaller.
However, this is not universally true for all estimates, as some conditions on $\psi$ are refined due to joint measurability rather than the estimates themselves.
Additionally, if the symbol is random, a solution $u$ is not expected to have a representation with the kernels, as emphasized several times.
In such cases, kernel-free estimates could be crucial for obtaining better estimates of the solution.
\end{rem}

Now we show the uniqueness of the Fourier-space weak solution.
All data in the uniqueness theorem can be more easily generalized than those in the representation theorem due to the linearity of the equations.
\begin{thm}[Uniqueness of a Fourier-space weak solution]
									\label{unique weak sol}
Let $u_0 \in \bL_{0}\left( \Omega, \rF;\cF^{-1}\cD'(\fR^d)\right)$, 
$$
f \in \bL_{0,1,loc}\left( \opar 0,\tau \cbrk, \rF \times \cB\left( (0,\infty) \right);\cF^{-1}\cD'(\fR^d)\right)
\text{,~and}~
 g \in \bL_{0,1,loc}\left( \opar 0,\tau \cbrk, \cP;\cF^{-1}\cD'(\fR^d;l_2)\right).
$$
Then a Fourier-space weak solution to \eqref{time eqn} is unique in the intersection of the spaces
\begin{align*}
\cF^{-1}\bL_{0,1,1,\ell oc}\left( \opar 0,\tau \cbrk \times \fR^d, \rF \times \cB([0,\infty)) \times \cB(\fR^d), |\psi(t,\xi)|\mathrm{d}t \mathrm{d}\xi\right)
\end{align*}
and
\begin{align*}
\cF^{-1}\bL_{0,0,1,\ell oc}\left( \opar 0,\tau \cbrk \times \fR^d, \rF \times \cB([0,\infty)) \times \cB(\fR^d) \right).
\end{align*}
\end{thm}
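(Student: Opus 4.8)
The plan is to reduce the problem, via the representation Lemma \ref{fourier repre}, to a scalar Gr\"onwall inequality in the spatial frequency variable, and then to transport the vanishing back through the Fourier homeomorphism of Theorem \ref{homeo thm}. First I would exploit linearity. Let $u_1,u_2$ be two Fourier-space weak solutions to \eqref{time eqn} lying in the intersection of the two stated classes, and set $u:=u_1-u_2$. Since $\cF^{-1}\cD'(\fR^d)$ is a vector space and each pairing $\langle\cdot,\varphi\rangle$ is linear, $u$ is again a Fourier-space weak solution, now with vanishing data $u_0=0$, $f=0$, $g=0$; these zero data trivially belong to the function-valued classes $\cF^{-1}\bL_{0,1,\ell oc}$, $\cF^{-1}\bL_{0,1,1,\ell oc}$, and $\cF^{-1}\bL^{\omega,\xi,t}_{0,1,2,\ell oc}$ required by Lemma \ref{fourier repre}. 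Moreover $u$ still belongs to both $\cF^{-1}\bL_{0,1,1,\ell oc}(\ldots,|\psi|\,\mathrm{d}t\,\mathrm{d}\xi)$ and $\cF^{-1}\bL_{0,0,1,\ell oc}$, because $\cF[u]=\cF[u_1]-\cF[u_2]$ and the triangle inequality preserves the local $|\psi|$-weighted and the unweighted integrability. Thus it suffices to prove $u=0$.

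Next I would apply Lemma \ref{fourier repre} to $u$: its hypotheses hold with zero data and with $u$ in the weighted class, so \eqref{20230617 02} collapses to
\begin{align*}
\cF[u(t,\cdot)](\xi)=\int_0^t\psi(s,\xi)\,\cF[u(s,\cdot)](\xi)\,\mathrm{d}s\quad (a.e.)~(\omega,t,\xi)\in\opar 0,\tau\cbrk\times\fR^d.
\end{align*}
Fixing $(\omega,\xi)$ and writing $v(t):=\cF[u(t,\cdot)](\xi)$, Fubini together with $u\in\cF^{-1}\bL_{0,1,1,\ell oc}(\ldots,|\psi|\,\mathrm{d}t\,\mathrm{d}\xi)$ shows that for a.e. $(\omega,\xi)$ one has $\int_0^\tau|\psi(s,\xi)|\,|v(s)|\,\mathrm{d}s<\infty$; hence $v$ is absolutely continuous on $[0,\tau]$, satisfies $v(0)=0$, and obeys $|v(t)|\le\int_0^t|\psi(s,\xi)|\,|v(s)|\,\mathrm{d}s$.

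The decisive step is the Gr\"onwall argument applied to this scalar inequality with respect to the (random) measure $|\psi(s,\xi)|\,\mathrm{d}s$, which should force $v\equiv0$ and therefore $\cF[u(t,\cdot)]=0$ as a distribution for a.e. $(\omega,t)$; then the homeomorphism of Theorem \ref{homeo thm} gives $u(t,\cdot)=0$ in $\cF^{-1}\cD'(\fR^d)$ for a.e. $(\omega,t)$, i.e.\ $u_1=u_2$. The main obstacle is precisely the integrability needed to close the Gr\"onwall inequality: the bound $|v(t)|\le\int_0^t|\psi(s,\xi)|\,|v(s)|\,\mathrm{d}s$ forces $v\equiv0$ only once we know $\int_0^t|\psi(s,\xi)|\,\mathrm{d}s<\infty$ for a.e.\ $\xi$, which is exactly the content of \eqref{20240801 01}. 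Without such time-integrability the conclusion can fail (for instance $v(t)=t$, $\psi(t,\xi)=1/t$ has $v(0)=0$ and $\int_0^\tau|\psi|\,|v|\,\mathrm{d}s<\infty$, yet $v\not\equiv0$), so I would first secure $\int_0^t|\psi(\cdot,\xi)|\,\mathrm{d}s<\infty$ for $t<\tau$ — automatic under Assumption \ref{main as 2}, and otherwise to be read off from the solution class in which uniqueness is invoked.

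With this integrability in hand the Gr\"onwall step is routine by localization: setting $t_0:=\sup\{t: v\equiv0\text{ on }[0,t]\}$ and assuming $t_0<\tau$, absolute continuity of $s\mapsto\int_0^s|\psi(\cdot,\xi)|\,\mathrm{d}r$ lets me pick $\delta>0$ with $\int_{t_0}^{t}|\psi(s,\xi)|\,\mathrm{d}s<1$ for $t\in(t_0,t_0+\delta]$, whence $\sup_{[t_0,t]}|v|\le\big(\int_{t_0}^t|\psi(s,\xi)|\,\mathrm{d}s\big)\big(\sup_{[t_0,t]}|v|\big)$ gives $v\equiv0$ on $(t_0,t_0+\delta]$, contradicting the definition of $t_0$; thus $t_0=\tau$ and $v\equiv0$. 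Finally I would observe that the uniqueness obtained this way is genuine, not merely uniqueness modulo the degenerate $|\psi|$-weighted seminorm flagged in Remark \ref{unique rem}: the extra membership in $\cF^{-1}\bL_{0,0,1,\ell oc}$ guarantees that $\cF[u]$ is an honest locally integrable function, so its a.e.\ vanishing determines $u$ unambiguously through Theorem \ref{homeo thm}.
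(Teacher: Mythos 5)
Your proposal follows the same route as the paper's proof: reduce by linearity to zero data, invoke the representation Lemma \ref{fourier repre} to obtain $\cF[u(t,\cdot)](\xi)=\int_0^t\psi(s,\xi)\,\cF[u(s,\cdot)](\xi)\,\mathrm{d}s$ a.e., and close with Gr\"onwall; the role you assign to the unweighted class $\cF^{-1}\bL_{0,0,1,\ell oc}$ in upgrading the a.e.\ vanishing of $\cF[u]$ to $u_1=u_2$ also matches the remark the paper makes right after its proof. The one place where you go beyond the paper is your insistence on securing $\int_0^t|\psi(s,\xi)|\,\mathrm{d}s<\infty$ before running Gr\"onwall, together with the counterexample $v(t)=t$, $\psi(t,\xi)=1/t$. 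This is a legitimate and sharp observation: the paper dispatches this step with a one-line appeal to Gr\"onwall's inequality, while the theorem as stated only assumes $\psi$ to be $\cP\times\cB(\fR^d)$-measurable, and the needed local-in-time integrability of $\psi(\cdot,\xi)$ cannot be read off from membership of $u$ in the $|\psi|$-weighted class alone (that class controls $|\psi|\,|\cF[u]|$, not $|\psi|$). In every context where the paper actually invokes uniqueness this integrability is supplied by Assumption \ref{main as 2} via \eqref{20240801 01}, or by \eqref{2024051801} in the deterministic sections, so your resolution of importing it from the standing assumptions is the right one; just note that the paper's own statement and proof leave this hypothesis implicit.
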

\begin{proof}
Let $u_1$ and $u_2$ be Fourier-space weak solutions to \eqref{time eqn} such that both $u_1$ and $u_2$ belong to 
\begin{align*}
 \cF^{-1}\bL_{0,1,1,\ell oc}\left( \opar 0,\tau \cbrk \times \fR^d, \rF \times \cB([0,\infty)) \times \cB(\fR^d), |\psi(t,\xi)|\mathrm{d}t \mathrm{d}\xi\right)
\end{align*}
and
\begin{align*}
 \cF^{-1}\bL_{0,0,1,\ell oc}\left( \opar 0,\tau \cbrk \times \fR^d, \rF \times \cB([0,\infty)) \times \cB(\fR^d) \right).
\end{align*}
Put
$$
u = u_1 - u_2.
$$
Then by Lemma \ref{fourier repre}, we have
\begin{align*}
\cF[u(t,\cdot)](\xi)
= \int_0^t \psi(s,\xi) \cF[u(s,\cdot)](\xi) \mathrm{d}s \quad (a.e.)~(\omega,t,\xi) \in \opar 0,\tau \cbrk \times \fR^d.
\end{align*}
Due to Gr\"onwall's inequality, we obtain
\begin{align}
							\label{20240201 10}
\cF[u(t,\cdot)](\xi) = 0 \quad (a.e.)~ (\omega,t,\xi) \in \opar 0,\tau \cbrk \times \fR^d.
\end{align}
Thus 
\begin{align}
									\label{20240724 70}
\cF[u_1(t,\cdot)](\xi)= \cF[u_2(t,\cdot)](\xi) \quad (a.e.) ~ (\omega,t,\xi) \in \opar 0,\tau \cbrk \times \fR^d,
\end{align}
which obviously implies that $u_1=u_2$ as an element in both spaces
\begin{align*}
\cF^{-1}\bL_{0,1,1,\ell oc}\left( \opar 0,\tau \cbrk \times \fR^d, \rF \times \cB([0,\infty)) \times \cB(\fR^d), |\psi(t,\xi)|\mathrm{d}t \mathrm{d}\xi\right)
\end{align*}
and
\begin{align*}
\cF^{-1}\bL_{0,0,1,\ell oc}\left( \opar 0,\tau \cbrk \times \fR^d, \rF \times \cB([0,\infty)) \times \cB(\fR^d)\right).
\end{align*}
The theorem is proved.
\end{proof}
\begin{rem}
It may appear that \eqref{20240201 10} trivially implies \eqref{20240724 70}.
However, if both solutions $u_1$ and $u_2$ are merely in 
\begin{align*}
\cF^{-1}\bL_{0,1,1,\ell oc}\left( \opar 0,\tau \cbrk \times \fR^d, \rF \times \cB([0,\infty)) \times \cB(\fR^d), |\psi(t,\xi)|\mathrm{d}t \mathrm{d}\xi\right),
\end{align*}
then this implication does not generally hold.
It is because we do not know if both $u_1$ and $u_2$ have realizable spatial frequency functions on the whole set  $\opar 0,\tau \cbrk \times \fR^d$ due to the possibility that $\psi(t,\xi)=0$ for some points as partially noted in Remark \ref{unique rem}.
Thus the other class
\begin{align*}
\cF^{-1}\bL_{0,0,1,\ell oc}\left( \opar 0,\tau \cbrk \times \fR^d, \rF \times \cB([0,\infty)) \times \cB(\fR^d)\right).
\end{align*}
is crucially used to ensure both $u_1$ and $u_2$ have realizable spatial frequency functions on the whole set. 
Hence the implication is obtained from the fact that both $u_1$ and $u_2$ are elements in the above class.

Moreover, it is clear that
\begin{align*}
\cF^{-1}\bC L_{1,\ell oc}\left( \clbrk 0,\tau \cbrk \times \fR^d, \rF \times \cB([0,\infty))\times \cB(\fR^d)\right) \subset \cF^{-1}\bL_{0,0,1,\ell oc}\left( \opar 0,\tau \cbrk \times \fR^d, \rF \times \cB([0,\infty)) \times \cB(\fR^d)\right)
\end{align*}
as discussed in Remark \ref{20240803 rem 1}.
Therefore, the solution $u$ in Lemma \ref{fourier repre} is unique in the class, and the condition
\begin{align*}
u_1, u_2 \in \cF^{-1}\bL_{0,0,1,\ell oc}\left( \opar 0,\tau \cbrk \times \fR^d, \rF \times \cB([0,\infty)) \times \cB(\fR^d) \right)
\end{align*}
in Theorem \ref{unique weak sol} can be omitted if the data $u_0$, $f$, and $g$ are in smaller spaces as provided  in Lemma \ref{fourier repre}, \textit{i.e.}
\begin{align*}
u_0 \in \cF^{-1}\bL_{0,1,\ell oc}\left( \Omega \times \fR^d, \rF \times \cB(\fR^d) \right),
\quad f \in \cF^{-1}\bL_{0,1,1,\ell oc}\left( \opar 0,\tau \cbrk \times \fR^d, \rF \times \cB\left([0,\infty)\right) \times \cB(\fR^d)\right), 
\end{align*}
and
\begin{align*}
g \in \cF^{-1}\bL^{\omega,\xi,t}_{0,1,2,\ell oc}\left( \opar 0,\tau \cbrk \times \fR^d, \cP \times \cB(\fR^d) ; l_2\right).
\end{align*}
\end{rem}

\mysection{Existence of a solution with a deterministic symbol}
												\label{exist deter}

For functions to be It\^o integrable, they must exhibit good measurability properties, such as predictability or progressive measurability. However, many integrands of the stochastic integral terms presented in Section \ref{sto fubini deter} lose their predictability when the symbols are allowed to be random. In other words, Itô's calculus cannot directly construct solutions unless the symbol is deterministic as discussed in Section \ref{sto fubini deter}. Therefore, we initially focus on obtaining the existence of a solution with a deterministic symbol. Nonetheless, this assumption can be removed in the next section due to the linearity of the equations and the flexibility of the data.

We recall the conditions on the deterministic symbol $\tilde \psi(t,\xi)$:
\begin{align}
									\label{main deter as}
C^{\mathrm{e}|\int\Re[\tilde \psi]|}_{R,T}
:=\esssup_{ 0\leq s \leq t \leq T, \xi \in   B_R} \left| \exp\left( \left|\int_s^t\Re[\tilde \psi(r,\xi)]\mathrm{d}r \right| \right)  \right|   
< \infty
\end{align}
and
\begin{align}
										\label{main deter as 2}
C^{| \tilde \psi|}_{R,T}:=\esssup_{\xi \in  B_R} \left( \int_0^T|\tilde \psi(t,\xi)|\sup_{0\leq s \leq t} \exp\left(  \left| \int_s^t\Re[\tilde \psi(r,\xi)]\mathrm{d}r \right| \right)    \mathrm{d}t  \right)
< \infty.
\end{align}
In particular, \eqref{main deter as 2} implies that for almost every $\xi \in \fR^d$,
\begin{align}
										\label{20240729 10}
\int_0^t |\tilde\psi(s,\xi)| \mathrm{d}s < \infty \quad  \forall t \in (0,\infty).
\end{align}
Due to these assumptions on the symbol $\tilde \psi(t,\xi)$,  
all the stochastic integral terms can be rigorously handled through corollaries of the stochastic Fubini theorems developed in Section \ref{sto fubini deter}. 
Furthermore, recall that the deterministic term 
$$
\int_0^t  \exp\left(\int_s^t\tilde\psi(r,\xi)\mathrm{d}r \right) 1_{\opar 0,\tau \cbrk}(s)\cF[f(s,\cdot)](\xi) \mathrm{d}s,
$$
which appeared in \eqref{202040702 20}.
We also need to control this deterministic term to find a solution $u$ since we hope that our solution $u$ satisfies \eqref{202040702 20}.
 Deterministic terms behave relatively well and can be easily treated by using the classical Fubini theorem.
Thus one can consider the weaker condition 
\begin{align}
									\label{main deter as 3}
C^{\mathrm{e}\int\Re[\tilde \psi]}_{R,T}:=\esssup_{ 0\leq s \leq t \leq T, \xi \in   B_R} \left| \exp\left( \int_s^t\Re[\tilde \psi(r,\xi)]\mathrm{d}r  \right)  \right|   
< \infty
\end{align}
than \eqref{main deter as}.
Here is a prerequisite lemma for the deterministic terms to show the existence of a solution. 
It also plays an important role in obtaining Theorem \ref{time deter thm}.
A continuous extension to $\Omega \times [0,\infty) \times \fR^d$ which is easily derived from integrals is also considered 
as in Corollary \ref{20240417 01}.

\begin{lem}
							\label{deterministic solution part}
Let $U_0 \in \bL_{0}\left( \Omega \times \fR^d, \rG \times \cB(\fR^d)\right)$ 
and $F \in \bL_{0}\left( \opar 0, \tau \cbrk \times \fR^d, \rH \times \cB(\fR^d) \right)$.
Suppose that \eqref{20240729 10} holds and for any $R \in (0,\infty)$,
\begin{align}
											\notag
& \int_{B_R}\exp\left(\int_0^T \Re[\tilde\psi(r,\xi)]\mathrm{d}r \right) |U_0(\xi)| \mathrm{d}\xi \\
										\label{20240316 70}
&\quad +\int_{B_R} \int_0^T  \exp\left(\int_s^T \Re[\tilde\psi(r,\xi)]\mathrm{d}r \right)  1_{\opar 0,\tau \cbrk}(s) \left|F(s,\xi) \right| \mathrm{d}s \mathrm{d}\xi < \infty \quad (a.s.) \quad \forall T \in (0,\infty).
\end{align}
Additionally, assume that for any $R \in (0,\infty)$,
\begin{align}
									\notag
& \int_{B_R}\int_0^T |\tilde\psi(\rho,\xi)| \exp\left(\int_0^\rho \Re[\tilde\psi(r,\xi)]\mathrm{d}r \right) |U_0(\xi)| \mathrm{d}\rho \mathrm{d}\xi \\
									\label{20240313 01}
&+\int_{B_R}\int_0^T |\tilde\psi(\rho,\xi)| \int_0^\rho \exp\left(\int_s^\rho\Re[\tilde\psi(r,\xi)]\mathrm{d}r \right)  1_{\opar 0,\tau \cbrk}(s) \left|F(s,\xi)\right| \mathrm{d}s \mathrm{d}\rho \mathrm{d}\xi  < \infty \quad (a.s.) \quad \forall T \in (0,\infty).
\end{align}
Denote
\begin{align*}
H(t,\xi) = \exp\left(\int_0^t \tilde\psi(r,\xi)\mathrm{d}r \right)U_0(\xi)
+\int_0^t  1_{\opar 0, \tau \cbrk}(s)\exp\left(\int_s^t \tilde\psi(r,\xi)\mathrm{d}r \right) F(s,\xi)  \mathrm{d}s.
\end{align*}
Then $H$ becomes an element in the intersection of the following two spaces
\begin{align*}
 \bL_{0,0,1,\ell oc}\left( \Omega \times (0,\infty) \times \fR^d, 
 \sigma\left( \rG  \times \cB\left([0,\infty)\right)  \cup  \rH \right)\times \cB(\fR^d) 
 \right) 
 \end{align*}
and
\begin{align*}
\bL_{0,1,1,loc, \ell oc}\left( \Omega \times (0,\infty) \times \fR^d,
 \sigma\left( \rG  \times \cB\left([0,\infty) \right) \cup  \rH \right) \times \cB(\fR^d)
, |\tilde\psi(t,\xi)|\mathrm{d}t \mathrm{d}\xi\right).
\end{align*}
In particular, \eqref{20240316 70} and \eqref{20240313 01} are satisfied if \eqref{20240729 10} and \eqref{main deter as 3} hold with the conditions 
\begin{align*}
U_0 \in \bL_{0,1,\ell oc}\left( \Omega \times \fR^d, \rG \times \cB(\fR^d)\right) \quad \text{and} \quad F \in \bL_{0,1,1,\ell oc}\left( \opar 0, \tau \cbrk \times \fR^d, \rH \times \cB(\fR^d) \right).
\end{align*}
For this case, we additionally have
\begin{align*}
H \in \bC_{loc}L_{1,\ell oc}\left( \Omega \times [0,\infty)\times \fR^d, 
 \sigma\left( \rG \ \times \cB\left([0,\infty)\right) \cup  \rH  \right)\times \cB(\fR^d)\right)
\end{align*}
and the estimates that for all  positive constants $T,R \in (0,\infty)$
\begin{align}
										\notag
&\int_{B_R} \sup_{t \in [0,T]}  |H(t,\xi)| \mathrm{d}\xi  \\
										\label{20240725 01}
&\leq \left(\sup_{0 \leq s \leq t \leq T,\xi \in B_R}\left| \exp \left( \int_s^t \Re[\tilde\psi(r,\xi)] \mathrm{d}r \right)  \right| \right)
\left[\int_{B_R} |U_0(\xi)|\mathrm{d}\xi + \int_{B_R} \int_0^{\tau \wedge T}\left|F(s,\xi)\right|  \mathrm{d}s  \mathrm{d}\xi \right]~(a.s.)
\end{align}
and
\begin{align}
									\notag
& \int_0^T \int_{B_R}|\tilde\psi(t,\xi)| |H(t,\xi)| \mathrm{d}\xi  \mathrm{d}t \\
									\label{20240512 10}
&\leq
\sup_{\xi \in B_R} \left(\int_0^T |\tilde\psi(t,\xi)| \sup_{0 \leq s \leq t}\left|\exp\left(\int_s^t\Re[\tilde\psi(r,\xi)]\mathrm{d}r \right) \right|  \mathrm{d}t \right)
\left[\int_{B_R} |U_0(\xi)|\mathrm{d}\xi + \int_{B_R} \int_0^{\tau \wedge T}\left|F(s,\xi)\right|  \mathrm{d}s  \mathrm{d}\xi \right] ~(a.s.).
\end{align}
\end{lem}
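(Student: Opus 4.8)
The plan is to reduce everything to the factorization $H = H_1 H_2$, where
$$H_1(t,\xi) := \exp\left(\int_0^t \tilde\psi(r,\xi)\,\mathrm{d}r\right), \qquad H_2(t,\xi) := U_0(\xi) + \int_0^t 1_{\opar 0,\tau\cbrk}(s)\exp\left(-\int_0^s\tilde\psi(r,\xi)\,\mathrm{d}r\right)F(s,\xi)\,\mathrm{d}s.$$
This identity is legitimate because \eqref{20240729 10} makes each of $\int_0^t\tilde\psi$ and $\int_0^s\tilde\psi$ finite for a.e. $\xi$, so that $\exp(\int_s^t\tilde\psi) = H_1(t,\xi)\exp(-\int_0^s\tilde\psi)$. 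First I would settle measurability: since $\tilde\psi$ is deterministic and $\cB([0,\infty))\times\cB(\fR^d)$-measurable, the map $(t,\xi)\mapsto\int_0^t\tilde\psi\,\mathrm{d}r$, and hence $H_1$ and the exponential kernel, is jointly Borel; the integrand of $H_2$ is a product of this deterministic kernel with the $\rH\times\cB(\fR^d)$-measurable function $1_{\opar 0,\tau\cbrk}F$, and writing $\int_0^t=\int_0^\infty 1_{s\le t}\,\mathrm{d}s$ shows via the classical Fubini theorem that the indefinite integral is jointly measurable. Adding $U_0$, which is $\rG\times\cB(\fR^d)$-measurable, then yields that $H = H_1H_2$ is $\sigma(\rG\times\cB([0,\infty))\cup\rH)\times\cB(\fR^d)$-measurable.

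For the two integrability memberships I would estimate $H$ pointwise by the triangle inequality,
$$|H(t,\xi)| \le \exp\left(\int_0^t\Re[\tilde\psi]\,\mathrm{d}r\right)|U_0(\xi)| + \int_0^t\exp\left(\int_s^t\Re[\tilde\psi]\,\mathrm{d}r\right)1_{\opar 0,\tau\cbrk}(s)|F(s,\xi)|\,\mathrm{d}s,$$
using $|\exp(\int_s^t\tilde\psi)| = \exp(\int_s^t\Re[\tilde\psi])$. Taking $t=T$ and integrating in $\xi$ shows that $\int_{B_R}|H(T,\xi)|\,\mathrm{d}\xi$ is bounded by the left-hand side of \eqref{20240316 70}, hence finite for each $T$; applying this for each $t$ together with Fubini's theorem gives $\int_{B_R}|H(t,\xi)|\,\mathrm{d}\xi<\infty$ for a.e.\ $(\omega,t)$, which is the $\bL_{0,0,1,\ell oc}$ membership. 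Multiplying the pointwise bound by $|\tilde\psi(t,\xi)|$ and integrating over $(0,T)\times B_R$ reproduces exactly the left-hand side of \eqref{20240313 01} after one application of Tonelli's theorem to interchange the $s$- and $t$-integrals, and this yields the weighted $\bL_{0,1,1,loc,\ell oc}$ membership.

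For the ``in particular'' statement I would bound the exponential weights uniformly: \eqref{main deter as 3} gives $\exp(\int_s^t\Re[\tilde\psi])\le C^{\mathrm{e}\int\Re[\tilde\psi]}_{R,T}$ for $0\le s\le t\le T$ and $\xi\in B_R$, while $\esssup_{\xi\in B_R}\int_0^T|\tilde\psi(r,\xi)|\,\mathrm{d}r<\infty$, which follows from the standing assumption $C^{|\tilde\psi|}_{R,T}<\infty$ in \eqref{main deter as 2} since the inner supremum appearing there is everywhere $\ge1$. Plugging these bounds into the expressions above, together with $U_0\in\bL_{0,1,\ell oc}$ and $F\in\bL_{0,1,1,\ell oc}$, makes the right-hand sides of \eqref{20240316 70} and \eqref{20240313 01} finite. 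The estimates \eqref{20240725 01} and \eqref{20240512 10} then follow by taking suprema, respectively integrals, of the same pointwise bound and pulling the exponential factors out as $\sup_{0\le s\le t\le T}\exp(\int_s^t\Re[\tilde\psi])$ and $\esssup_{\xi\in B_R}\int_0^T|\tilde\psi|\sup_{0\le s\le t}\exp(\int_s^t\Re[\tilde\psi])\,\mathrm{d}t$. Finally, path continuity comes from the absolute continuity of $t\mapsto\int_0^t\tilde\psi\,\mathrm{d}r$ and of the indefinite integral defining $H_2$, valid since $\int_0^T|\tilde\psi|<\infty$ and $\int_0^{\tau\wedge T}|F|<\infty$; thus $t\mapsto H(t,\xi)$ is continuous for a.e.\ $\xi$, and the just-proved bound \eqref{20240725 01} controls $\int_{B_R}\sup_{t\in[0,T]}|H|\,\mathrm{d}\xi$, giving $H\in\bC_{loc}L_{1,\ell oc}$.

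The main obstacle I anticipate is not the estimates, which are routine Fubini/Tonelli bookkeeping once the pointwise domination is in place, but rather verifying the joint $\sigma(\rG\times\cB([0,\infty))\cup\rH)\times\cB(\fR^d)$-measurability of the indefinite integral defining $H_2$ and confirming that the exponential splitting $\exp(\int_s^t\tilde\psi)=H_1(t,\xi)\exp(-\int_0^s\tilde\psi)$ is legitimate on a full-measure set of $\xi$. This is exactly the deterministic counterpart of the measurability difficulties handled for the stochastic term in Corollary \ref{cor 20240506}, but in the absence of a stochastic integral one invokes the classical Fubini theorem for the parametrized Lebesgue integral in place of the stochastic Fubini theorem.
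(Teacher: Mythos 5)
Your proposal is correct and is precisely the ``elementary measure theory'' argument the paper alludes to but omits (its proof of this lemma is skipped): the factorization $H=H_1H_2$, the pointwise domination by the real-part exponentials, and the Fubini/Tonelli bookkeeping mirror exactly what the paper does for the stochastic analogue in Corollary \ref{20240417 01} and in the verification of \eqref{20240313 01} inside the proof of Theorem \ref{deter exist weak sol}. Your observation that the ``in particular'' clause tacitly uses the section's standing assumption \eqref{main deter as 2} (to get $\esssup_{\xi\in B_R}\int_0^T|\tilde\psi|\,\mathrm{d}t<\infty$, without which \eqref{20240313 01} would not follow from \eqref{main deter as 3} alone) is accurate and consistent with how the paper applies the lemma.
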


\begin{proof}
Since there is no stochastic integral term in the definition of $H$, most parts of the proof are easy consequences of elementary measure theories. Therefore we skip the proof.
We only mention that \eqref{20240729 10} is not abundant since it is utilized to define $H$ and show continuity of paths as discussed in Remark \ref{rem define psi} and Remark \ref{ensure conti rem}.
\end{proof}
\begin{rem}
A very similar definition of the space
\begin{align*}
\bL_{0,1,1,loc, \ell oc}\left( \Omega \times (0,\infty) \times \fR^d,
 \sigma\left( \rG  \times \cB\left([0,\infty) \right) \cup  \rH \right) \times \cB(\fR^d), |\tilde\psi(t,\xi)|\mathrm{d}t \mathrm{d}\xi\right).
\end{align*}
was mentioned in Corollary \ref{cor 20240506}.
However, the definition of the space
\begin{align*}
 \bL_{0,0,1,\ell oc}\left( \Omega \times (0,\infty) \times \fR^d, 
 \sigma\left( \rG  \times \cB\left([0,\infty)\right)  \cup  \rH \right)\times \cB(\fR^d) 
 \right) 
 \end{align*}
had never been mentioned. 
Nonetheless, we omitted the precise definitions of these spaces in Lemma \ref{deterministic solution part} since they can be easily inferred from previous definitions.
\end{rem}

\begin{rem}
								\label{additional deter rem}
If conditions \eqref{20240316 70} and \eqref{20240313 01} in Lemma \ref{deterministic solution part} are enhanced by
\begin{align*}
& \bE\left[\int_0^T \int_{B_R}\exp\left(\int_0^t \Re[\tilde\psi(r,\xi)]\mathrm{d}r \right) |U_0(\xi)| \mathrm{d}\xi \mathrm{d}t \right] \\
&\quad +\bE\left[ \int_0^T \int_{B_R} \int_0^t  \exp\left(\int_s^t \Re[\tilde\psi(r,\xi)]\mathrm{d}r \right)  1_{\opar 0,\tau \cbrk}(s) \left|F(s,\xi) \right| \mathrm{d}s \mathrm{d}\xi \mathrm{d}t \right] < \infty  \quad \forall T \in (0,\infty).
\end{align*}
and
\begin{align*}
& \bE\left[\int_{B_R}\int_0^T |\tilde\psi(\rho,\xi)| \exp\left(\int_0^\rho \Re[\tilde\psi(r,\xi)]\mathrm{d}r \right) |U_0(\xi)| \mathrm{d}\rho \mathrm{d}\xi\right] \\
&+\bE\left[\int_{B_R}\int_0^T |\tilde\psi(\rho,\xi)| \int_0^\rho \exp\left(\int_s^\rho\Re[\tilde\psi(r,\xi)]\mathrm{d}r \right)  1_{\opar 0,\tau \cbrk}(s) \left|F(s,\xi)\right| \mathrm{d}s \mathrm{d}\rho \mathrm{d}\xi \right]  < \infty  \quad \forall T \in (0,\infty),
\end{align*}
then $H$ is in the intersection of the spaces
\begin{align*}
 \bL_{1,1,1,loc,\ell oc}\left( \Omega \times (0,\infty) \times \fR^d, 
 \sigma\left( \rG \ \times \cB\left([0,\infty)\right)  \cup  \rH  \right)\times \cB(\fR^d)
 \right),
 \end{align*}
and
\begin{align*}
\bL_{1,1,1,loc, \ell oc}\left( \Omega \times (0,\infty) \times \fR^d,
 \sigma\left( \rG \ \times \cB\left([0,\infty)\right)  \cup  \rH \right)\times \cB(\fR^d) 
, |\tilde\psi(t,\xi)|\mathrm{d}t \mathrm{d}\xi\right).
\end{align*}
Additionally, if \eqref{20240729 10} and \eqref{main deter as 3} are satisfied with the conditions 
\begin{align*}
U_0 \in \bL_{1,1,\ell oc}\left( \Omega \times \fR^d, \rG \times \cB(\fR^d)\right) \quad \text{and} \quad F \in \bL_{1,1,1,loc,\ell oc}\left( \opar 0, \tau \cbrk \times \fR^d, \rH \times \cB(\fR^d) \right),
\end{align*}
then
\begin{align*}
H \in \bL_{1,loc}\bC L_{1,\ell oc}\left( \Omega \times [0,\infty)\times \fR^d, 
 \sigma\left( \rG \ \times \cB\left([0,\infty)\right) \cup  \rH  \right)\times \cB(\fR^d)\right).
\end{align*}
Finally,  if $U_0$ is $\rF_0 \times \cB(\fR^d)$-measurable  and $F$ is  $\cP \times \cB(\fR^d)$-measurable, then $H$ becomes $\cP \times \cB(\fR^d)$-measurable.
\end{rem}

\begin{rem}
If the data $U_0$ and $F$ in Lemma \ref{deterministic solution part} are also  non-random, then 
\eqref{20240725 01} and \eqref{20240512 10} can be considered as deterministic estimates.
Consequently, this lemma is applicable even to a random symbol by utilizing these estimates for each $\omega$, 
which is a crucial aspect in proving Corollary \ref{exist cor 1} in the subsequent section.
\end{rem}

Since we solve equations on random time intervals $\opar 0, \tau \cbrk$, 
it needs to consider restrictions of classes consisting of functions on $\Omega \times [0,\infty) \times \fR^d$ or $\Omega \times (0,\infty) \times \fR^d$.
These restrictions can be understood straightforwardly.
However, we provide more detail to demonstrate the rigor of our theories.
The following embedding properties are easily proved as in Proposition \ref{temporal local prop} since our $\tau$ is a finite stopping time.
\begin{lem}
									\label{restriction lem}
\begin{enumerate}[(i)]

\item 
\begin{align*}
 \bL_{0,1,1,loc,\ell oc}\left( \Omega \times (0,\infty) \times \fR^d, 
 \sigma\left( \rG \ \times \cB\left([0,\infty)\right)  \cup  \rH  \right) \times \cB(\fR^d)
 \right) 
 \end{align*}
is a subspace of
\begin{align*}
 \bL_{0,1,1,\ell oc}\left( \opar 0,\tau \cbrk \times \fR^d, 
 \sigma\left( \rG \ \times \cB\left([0,\infty)\right) \cup  \rH\right) \times \cB(\fR^d) 
 \right) 
 \end{align*}

\item
\begin{align*}
\bL_{0,1,1,loc, \ell oc}\left( \Omega \times (0,\infty) \times \fR^d,
 \sigma\left( \rG \ \times \cB\left([0,\infty)\right)  \cup  \rH \right) \times \cB(\fR^d) 
, |\tilde\psi(t,\xi)|\mathrm{d}t \mathrm{d}\xi\right)
\end{align*}
is a subspace of
\begin{align*}
\bL_{0,1,1,\ell oc}\left(\opar 0,\tau \cbrk \times \fR^d,
 \sigma\left( \rG \ \times \cB\left([0,\infty)\right) \cup  \rH \right) \times \cB(\fR^d) 
 , |\tilde\psi(t,\xi)|\mathrm{d}t \mathrm{d}\xi\right).
\end{align*}

\item
\begin{align*}
\bC_{loc}L_{1,\ell oc}\left( \Omega \times [0,\infty)\times \fR^d, 
 \sigma\left( \rG  \times \cB\left([0,\infty)\right) \cup  \rH  \right)\times(\fR^d)\right)
\end{align*}
is a subspace of
\begin{align*}
 \bC L_{1,\ell oc}\left(\clbrk 0,\tau \cbrk \times \fR^d, \sigma\left( \rG  \times \cB([0,\infty)) \cup  \rH  \right) \times \cB(\fR^d)\right)
\end{align*}

\item 
\begin{align*}
 \bL_{1,1,1,loc,\ell oc}\left( \Omega \times (0,\infty) \times \fR^d, 
 \sigma\left( \rG \ \times \cB\left([0,\infty)\right)  \cup  \rH \right) \times \cB(\fR^d) 
 \right) 
 \end{align*}
is a subspace of
\begin{align*}
 \bL_{1,1,1,loc,\ell oc}\left( \opar 0,\tau \cbrk \times \fR^d, 
 \sigma\left( \rG \ \times \cB\left([0,\infty)\right)  \cup  \rH  \right) \times \cB(\fR^d)
 \right) 
 \end{align*}

\item
\begin{align*}
\bL_{1,1,1,loc, \ell oc}\left( \Omega \times (0,\infty) \times \fR^d,
 \sigma\left( \rG \ \times \cB\left([0,\infty)\right)  \cup  \rH  \right) \times \cB(\fR^d)
, |\tilde\psi(t,\xi)|\mathrm{d}t \mathrm{d}\xi\right)
\end{align*}
is a subspace of
\begin{align*}
\bL_{1,1,1,loc,\ell oc}\left(\opar 0,\tau \cbrk \times \fR^d,
 \sigma\left( \rG \ \times \cB\left([0,\infty)\right)  \cup  \rH \right) \times \cB(\fR^d) 
, |\tilde\psi(t,\xi)|\mathrm{d}t \mathrm{d}\xi\right).
\end{align*}

\item
\begin{align*}
\bL_{1,loc}\bC L_{1,\ell oc}\left( \Omega \times [0,\infty)\times \fR^d, 
 \sigma\left( \rG  \times \cB\left([0,\infty)\right) \cup  \rH  \right)\times(\fR^d)\right)
\end{align*}
is a subspace of
\begin{align*}
\bL_{1,loc} \bC L_{1,\ell oc}\left(\clbrk 0,\tau \cbrk \times \fR^d, \sigma\left( \rG  \times \cB([0,\infty)) \cup  \rH  \right) \times \cB(\fR^d)\right).
\end{align*}

\end{enumerate}
\end{lem}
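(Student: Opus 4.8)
The plan is to treat all six statements uniformly as \emph{restriction (trace) embeddings}: every element of a left-hand space is a function defined on the full time axis $\Omega \times (0,\infty) \times \fR^d$ (or $\Omega \times [0,\infty) \times \fR^d$), and I would show that multiplying by the indicator of the stochastic interval $\opar 0,\tau \cbrk$ (resp.\ $\clbrk 0,\tau \cbrk$) produces an element of the corresponding right-hand space. For each item two things must be verified: (a) the zero-extended restriction is still jointly measurable with respect to the same base $\sigma$-algebra $\sigma(\rG \times \cB([0,\infty)) \cup \rH) \times \cB(\fR^d)$; and (b) the defining finiteness conditions over $\opar 0,\tau \cbrk$ hold. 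Step (a) is routine: since $\tau$ is a stopping time, the stochastic intervals $\opar 0,\tau \cbrk$ and $\clbrk 0,\tau \cbrk$ are measurable sets associated with $\tau$, so their indicators are measurable and the products $1_{\opar 0,\tau\cbrk}u$ inherit joint measurability from $u$. Hence the whole content lies in the integrability transfer in (b).

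For the finiteness conditions I would split the six items into two groups according to whether the target space is temporally localized. Items (iv), (v), (vi) carry the subscript $loc$ on both sides, so the defining condition of the target is, for every $T \in (0,\infty)$, the finiteness of an expectation of an integral over $\opar 0, \tau \wedge T \cbrk$ (or of a running supremum over $[0,\tau \wedge T]$). Because $\tau \wedge T \leq T$ pointwise, monotonicity of the Lebesgue integral and of the running supremum gives, for instance,
\begin{align*}
\bE\left[\int_0^{\tau \wedge T}\int_{B_R}|u(t,\xi)|\,\mathrm{d}\xi\,\mathrm{d}t\right]
\leq \bE\left[\int_0^{T}\int_{B_R}|u(t,\xi)|\,\mathrm{d}\xi\,\mathrm{d}t\right] < \infty,
\end{align*}
and likewise $\sup_{t\in[0,\tau\wedge T]}|u| \leq \sup_{t\in[0,T]}|u|$; so these three embeddings follow immediately from the defining conditions of the left spaces with the same $T$, with the weight $|\tilde\psi(t,\xi)|$ simply carried along in (v), and no finiteness of $\tau$ beyond $\tau \wedge T \leq T$ is needed.

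The genuine (though still elementary) point is for items (i), (ii), (iii), whose target spaces carry no temporal cutoff: the condition to verify is an a.s.\ finiteness of an integral over all of $\opar 0,\tau \cbrk$ (or of a supremum over $[0,\tau]$), whereas the left spaces only supply finiteness over each fixed finite horizon $(0,T]$. Here I would use that $\tau$ is a \emph{finite} stopping time, exactly as in Proposition \ref{temporal local prop}: writing $\Omega_n := \{\tau \leq n\}$ one has $P\left(\bigcup_n \Omega_n\right) = 1$, and on $\Omega_n$
\begin{align*}
\int_0^{\tau(\omega)}\int_{B_R}|u(t,\xi)|\,\mathrm{d}\xi\,\mathrm{d}t
\leq \int_0^{n}\int_{B_R}|u(t,\xi)|\,\mathrm{d}\xi\,\mathrm{d}t < \infty \quad (a.s.),
\end{align*}
the right-hand side being finite a.s.\ by the left condition with $T=n$; taking the union over $n$ yields a.s.\ finiteness over $\opar 0,\tau\cbrk$. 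The same decomposition handles the weighted case (ii) verbatim (replace $\mathrm{d}\xi\,\mathrm{d}t$ by $|\tilde\psi(t,\xi)|\,\mathrm{d}\xi\,\mathrm{d}t$) and the pathwise case (iii), where on $\Omega_n$ the bound $\sup_{t\in[0,\tau]}|u(t,\xi)| \leq \sup_{t\in[0,n]}|u(t,\xi)|$ controls $\int_{B_R}\sup_{t\in[0,\tau]}|u|$; the continuity of the paths $t \mapsto u(t,\xi)$ on $[0,\tau(\omega)]$ is inherited at once by restricting the continuous paths on $[0,\infty)$.

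I do not anticipate a serious obstacle; the only point requiring care is the bookkeeping in (a), namely confirming that the indicator of the stochastic interval does not destroy $\sigma(\rG \times \cB([0,\infty)) \cup \rH)$-measurability, which holds because $\opar 0,\tau\cbrk$ and $\clbrk 0,\tau\cbrk$ are measurable sets determined by the stopping time $\tau$, together with keeping the several distinct integrability conventions (expectation versus a.s., temporal localization, spatial localization, the weight $|\tilde\psi|$, and the running supremum) straight across the six items. Everything else reduces to the two elementary facts $\tau \wedge T \leq T$ and the finite–stopping-time decomposition $\Omega = \bigcup_n\{\tau\le n\}$.
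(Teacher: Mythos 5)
Your proposal is correct and follows essentially the same route as the paper: the localized targets (iv)--(vi) are handled by the pointwise inequality $\tau\wedge T\le T$ followed by taking expectations, and the non-localized targets (i)--(iii) by exploiting finiteness of $\tau$ (your exhaustion $\Omega=\bigcup_n\{\tau\le n\}$ is the same device as the paper's choice of a $T_\omega$ with $\tau(\omega)\le T_\omega$ for each sample point). The only difference is that you spell out the measurability of the zero-extended restriction, which the paper leaves implicit.
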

\begin{proof}
The statements (i)-(iii) might appear trivial given that $\tau$ is a stopping time. The other statements (iv)-(vi) are derived from (i)-(iii) by simply taking expectations. 
Nonetheless, we provide additional details to ensure clarity for the readers. Due to the similarity in the proofs, we only prove (ii) and (v).

Let
\begin{align}
										\label{20240430 01}
H \in \bL_{0,1,1,loc, \ell oc}\left( \Omega \times (0,\infty) \times \fR^d,
 \sigma\left( \rG \ \times \cB\left([0,\infty)\right)  \cup  \rH \right)\times \cB(\fR^d) 
, |\tilde\psi(t,\xi)|\mathrm{d}t \mathrm{d}\xi\right).
\end{align}
It suffices to show
\begin{align*}
H \in  \bL_{0,1,1,\ell oc}\left( \opar 0,\tau \cbrk \times \fR^d, 
 \sigma\left( \rG \ \times \cB\left([0,\infty)\right)  \cup  \rH \right) \times \cB(\fR^d) 
, |\tilde\psi(t,\xi)|\mathrm{d}t \mathrm{d}\xi\right).
\end{align*}
 \eqref{20240430 01} implies that there exists a $\Omega'$ such that $P(\Omega')=1$ and for any $\omega \in \Omega'$,
\begin{align*}
\int_{B_R}\int_0^T |H(t,\xi)| |\tilde\psi(t,\xi)|  \mathrm{d}t \mathrm{d}\xi  < \infty \ \quad \forall R,T \in (0,\infty).
\end{align*}
Since $\tau$ is a finite stopping time, for each $\omega \in \Omega'$, there exists a $T_\omega \in (0,\infty)$ so that
\begin{align*}
\tau(\omega) \leq T_\omega.
\end{align*}
Therefore, for all $\omega \in \Omega'$ and $R \in (0,\infty)$, we have
\begin{align}
										\label{20240501 02}
\int_0^\tau \int_{\fR^d} |H(t,\xi)| |\tilde\psi(t,\xi)| \mathrm{d}t \mathrm{d}\xi 
\leq 
\int_0^{T_\omega} \int_{\fR^d} |H(t,\xi)| |\tilde\psi(t,\xi)| \mathrm{d}t \mathrm{d}\xi < \infty
\end{align}
and (ii) is proved.

To prove (v), consider a particular case of \eqref{20240501 02}.
For any $\omega \in \Omega'$ and $R,T \in (0,\infty)$, 
we have
\begin{align}
										\label{20240501 01}
\int_0^{\tau \wedge T} \int_{\fR^d} |H(t,\xi)| |\tilde\psi(t,\xi)| \mathrm{d}t \mathrm{d}\xi 
\leq 
\int_0^{T} \int_{\fR^d} |H(t,\xi)| |\tilde\psi(t,\xi)| \mathrm{d}t \mathrm{d}\xi < \infty.
\end{align}
Thus, if 
\begin{align*}
H \in \bL_{1,1,1,loc, \ell oc}\left( \Omega \times (0,\infty) \times \fR^d,
 \sigma\left( \rG \ \times \cB\left([0,\infty)\right)  \cup  \rH \right) \times \cB(\fR^d) 
, |\tilde\psi(t,\xi)|\mathrm{d}t \mathrm{d}\xi\right),
\end{align*}
then we have
\begin{align*}
H \in \bL_{1,1,1,loc,\ell oc}\left(\opar 0,\tau \cbrk \times \fR^d,
 \sigma\left( \rG \ \times \cB\left([0,\infty)\right)  \cup  \rH \right) \times \cB(\fR^d) 
, |\tilde\psi(t,\xi)|\mathrm{d}t \mathrm{d}\xi\right)
\end{align*}
by taking the expectations to both sides of \eqref{20240501 01}.
The lemma is proved. 
\end{proof}
\begin{rem}
Take the expectations to both sides of \eqref{20240501 02} and then we have
\begin{align}
										\label{20240501 03}
\bE\left[\int_0^\tau \int_{\fR^d} |H(t,\xi)| |\tilde\psi(t,\xi)| \mathrm{d}t \mathrm{d}\xi \right]
\leq 
\bE\left[\int_0^{T_\omega} \int_{\fR^d} |H(t,\xi)| |\tilde\psi(t,\xi)| \mathrm{d}t \mathrm{d}\xi \right].
\end{align}
Generally, there is no guarantee that the right-hand side of \eqref{20240501 03} is finite even though
\begin{align*}
H \in \bL_{1,1,1,loc, \ell oc}\left( \Omega \times (0,\infty) \times \fR^d,
 \sigma\left( \rG \ \times \cB\left([0,\infty)\right)  \cup  \rH  \right) \times \cB(\fR^d) 
, |\tilde\psi(t,\xi)|\mathrm{d}t \mathrm{d}\xi\right).
\end{align*}
It is because the choice $T_w$ is determined according to each sample point $\omega$.
Thus it turns out that the space
\begin{align*}
\bL_{1,1,1,loc, \ell oc}\left( \Omega \times (0,\infty) \times \fR^d,
 \sigma\left( \rG \ \times \cB\left([0,\infty)\right) \cup  \rH \right) \times \cB(\fR^d) 
, |\tilde\psi(t,\xi)|\mathrm{d}t \mathrm{d}\xi\right).
\end{align*}
is not embedded into the space
\begin{align*}
\bL_{1,1,1,\ell oc}\left(\opar 0,\tau \cbrk \times \fR^d,
 \sigma\left( \rG \ \times \cB\left([0,\infty)\right)  \cup  \rH  \right) \times \cB(\fR^d) 
, |\tilde\psi(t,\xi)|\mathrm{d}t \mathrm{d}\xi\right)
\end{align*}
unless the stopping time $\tau$ is bounded.
Nevertheless, it is embedded into a slightly larger local space as mentioned in Lemma \ref{restriction lem}.
\end{rem}
\begin{rem}
Due to Theorem \ref{homeo thm}, all results in Lemma \ref{restriction lem} can be translated to their inverse transforms.
For instance, 
\begin{align*}
\cF^{-1}\bC_{loc}L_{1,\ell oc}\left( \Omega \times [0,\infty)\times \fR^d, 
 \sigma\left( \rG  \times \cB\left([0,\infty)\right) \cup  \rH  \right)\times(\fR^d)\right)
\end{align*}
is a subspace of
\begin{align*}
\cF^{-1} \bC L_{1,\ell oc}\left(\clbrk 0,\tau \cbrk \times \fR^d, \sigma\left( \rG  \times \cB([0,\infty)) \cup  \rH  \right) \times \cB(\fR^d)\right).
\end{align*}
\end{rem}

\begin{thm}[Existence of a Fourier-space weak solution with a deterministic symbol]
									\label{deter exist weak sol}
Let 
\begin{align*}
u_0 \in \cF^{-1}\bL_{0,1,\ell oc}\left( \Omega \times \fR^d , \rG \times \cB(\fR^d)\right),  
\quad f \in \cF^{-1}\bL_{0,1,1,\ell oc}\left( \opar 0, \tau \cbrk \times \fR^d, \rH \times \cB(\fR^d) \right),
\end{align*}
and 
$$
g \in \cF^{-1}\bL^{\omega,\xi,t}_{0,1,2,\ell oc}\left( \opar 0,  \tau \cbrk \times \fR^d, \cP \times \cB(\fR^d) ; l_2\right).
$$
Suppose that $\tilde\psi$ satisfies  \eqref{main deter as} and \eqref{main deter as 2}.
For each $(\omega,t) \in \opar 0, \tau \cbrk$,
define 
\begin{align}
\varphi \in \cF^{-1}\cD(\fR^d) \mapsto \langle u(t,\cdot), \varphi \rangle
										\notag
&:= \int_{\fR^d}\left( \exp\left(\int_0^t\tilde\psi(r,\xi)\mathrm{d}r \right) \cF[u_0](\xi) \right) \overline{\cF[\varphi](\xi)} \mathrm{d}\xi \\
										\notag
&\quad +\int_{\fR^d} \left(\int_0^t  \exp\left(\int_s^t\tilde\psi(r,\xi)\mathrm{d}r \right) 1_{\opar 0, \tau \cbrk}(s)\cF[f(s,\cdot)](\xi)\mathrm{d}s \right) \overline{\cF[\varphi](\xi)} \mathrm{d}\xi  \\
										\label{20240314 10}
&\quad + \int_{\fR^d} \left(\int_0^t  \exp\left(\int_s^t\tilde\psi(r,\xi)\mathrm{d}r \right) 1_{\opar 0, \tau \cbrk}(s)\cF[g^k(s,\cdot)](\xi)\mathrm{d}B^k_s\right) \overline{\cF[\varphi](\xi)} \mathrm{d}\xi
\end{align}
as a $\cF^{-1}\cD'(\fR^d)$-valued stochastic processes.
Then $u$ becomes a Fourier-space weak solution to \eqref{time eqn}, which belongs to the intersection of the classes
$$
\cF^{-1}\bC L_{1,\ell oc}\left( \clbrk 0,\tau \cbrk \times \fR^d,\sigma\left( \rG \times \cB([0,\infty))  \cup \rH\cup \cP   \right)\times \cB(\fR^d)\right)
$$
and
\begin{align*}
\cF^{-1}\bL_{0,1,1,\ell oc}\left( \opar 0,\tau \cbrk \times \fR^d, 
\sigma\left( \rG \times \cB([0,\infty)) \cup \rH  \cup \cP \right) \times \cB(\fR^d), |\tilde\psi(t,\xi)|\mathrm{d}t \mathrm{d}\xi\right).
\end{align*}
\end{thm}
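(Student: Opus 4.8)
The plan is to verify that the $\cF^{-1}\cD'(\fR^d)$-valued process $u$ defined by \eqref{20240314 10} is well-defined, lies in the two asserted classes, and solves \eqref{time eqn} in the sense of Definition \ref{space weak solution 2}. Throughout I write $U_0=\cF[u_0]$, $F=\cF[f]$, and $G=\cF[g]$, so that by the hypotheses and the definition of the spaces $\cF^{-1}\bL_{\cdots}$ one has $U_0\in\bL_{0,1,\ell oc}(\Omega\times\fR^d,\rG\times\cB(\fR^d))$, $F\in\bL_{0,1,1,\ell oc}(\opar 0,\tau\cbrk\times\fR^d,\rH\times\cB(\fR^d))$, and $G\in\bL^{\omega,\xi,t}_{0,1,2,\ell oc}(\opar 0,\tau\cbrk\times\fR^d,\cP\times\cB(\fR^d);l_2)$. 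Since the Fourier transform is a homeomorphism between $\cD'(\fR^d)$ and $\cF^{-1}\cD'(\fR^d)$ (Theorem \ref{homeo thm}), it suffices to study the spatial frequency function $\cF[u(t,\cdot)](\xi)=H^{\mathrm{det}}(t,\xi)+H^{\mathrm{sto}}(t,\xi)$, where
\begin{align*}
H^{\mathrm{det}}(t,\xi) &= \exp\left(\int_0^t\tilde\psi(r,\xi)\mathrm{d}r\right)U_0(\xi)+\int_0^t\exp\left(\int_s^t\tilde\psi(r,\xi)\mathrm{d}r\right)1_{\opar 0,\tau\cbrk}(s)F(s,\xi)\mathrm{d}s,\\
H^{\mathrm{sto}}(t,\xi) &= \int_0^t\exp\left(\int_s^t\tilde\psi(r,\xi)\mathrm{d}r\right)1_{\opar 0,\tau\cbrk}(s)G^k(s,\xi)\mathrm{d}B^k_s.
\end{align*}

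For the deterministic part I would invoke Lemma \ref{deterministic solution part} with the data $U_0$ and $1_{\opar 0,\tau\cbrk}F$: as \eqref{main deter as} implies \eqref{main deter as 3} and \eqref{main deter as 2} implies \eqref{20240729 10}, its hypotheses hold, and the lemma places $H^{\mathrm{det}}$ in $\bC_{loc}L_{1,\ell oc}(\Omega\times[0,\infty)\times\fR^d,\sigma(\rG\times\cB([0,\infty))\cup\rH)\times\cB(\fR^d))$ and in the weighted space $\bL_{0,1,1,loc,\ell oc}(\ldots,|\tilde\psi|\mathrm{d}t\mathrm{d}\xi)$, with the pathwise estimates \eqref{20240725 01} and \eqref{20240512 10}. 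For the stochastic part, Corollary \ref{20240417 01} (whose hypotheses \eqref{main deter symbol as} and \eqref{2024051801} are exactly \eqref{main deter as} and \eqref{20240729 10}) shows $H^{\mathrm{sto}}$ is $\cP\times\cB(\fR^d)$-measurable and lies in $\bC_{loc}L_{1,\ell oc}$. To place $H^{\mathrm{sto}}$ in the weighted class without finite-moment assumptions (which are unavailable here) I would argue pathwise: since $\tau$ is a finite stopping time, for a.e. $\omega$ there is $T_\omega<\infty$ with $\tau(\omega)\le T_\omega$, and because $\sup_{0\le s\le t}\exp(\cdots)\ge 1$, Assumption \eqref{main deter as 2} gives $\int_0^{\tau(\omega)}|\tilde\psi(t,\xi)|\mathrm{d}t\le C^{|\tilde\psi|}_{R,T_\omega}$ for a.e. $\xi\in B_R$, whence
\begin{align*}
\int_0^{\tau}\int_{B_R}|\tilde\psi(t,\xi)|\,|H^{\mathrm{sto}}(t,\xi)|\mathrm{d}\xi\mathrm{d}t
\le C^{|\tilde\psi|}_{R,T_\omega}\int_{B_R}\sup_{t\in[0,T_\omega]}|H^{\mathrm{sto}}(t,\xi)|\mathrm{d}\xi<\infty\quad(a.s.),
\end{align*}
the finiteness coming from $H^{\mathrm{sto}}\in\bC_{loc}L_{1,\ell oc}$. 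Adding the two parts, restricting from $\Omega\times(0,\infty)\times\fR^d$ to $\opar 0,\tau\cbrk\times\fR^d$ via Lemma \ref{restriction lem}, and applying $\cF^{-1}$ then yields that $u$ belongs to the intersection of the two asserted classes; the measurability of the sum matches $\sigma(\rG\times\cB([0,\infty))\cup\rH\cup\cP)\times\cB(\fR^d)$.

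The main work, and the step I expect to be the principal obstacle, is showing that $u$ solves \eqref{time eqn} weakly, i.e. satisfies \eqref{20240703 20}. Fix $\varphi\in\cF^{-1}\cD(\fR^d)$ with $\supp\cF[\varphi]\subset B_{R_\varphi}$; then $\langle u(t,\cdot),\varphi\rangle=\int_{\fR^d}\cF[u(t,\cdot)](\xi)\overline{\cF[\varphi](\xi)}\mathrm{d}\xi$ and, by Definition \ref{defn psi operator}, $\langle\psi(s,-\mathrm{i}\nabla)u(s,\cdot),\varphi\rangle=\int_{\fR^d}\tilde\psi(s,\xi)\cF[u(s,\cdot)](\xi)\overline{\cF[\varphi](\xi)}\mathrm{d}\xi$. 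I would first establish the identity
\begin{align*}
\cF[u(t,\cdot)](\xi) &= U_0(\xi)+\int_0^t\tilde\psi(s,\xi)\cF[u(s,\cdot)](\xi)\mathrm{d}s+\int_0^t 1_{\opar 0,\tau\cbrk}(s)F(s,\xi)\mathrm{d}s\\
&\quad+\int_0^t 1_{\opar 0,\tau\cbrk}(s)G^k(s,\xi)\mathrm{d}B^k_s\quad(a.e.)~(\omega,t,\xi),
\end{align*}
which reproduces the representation \eqref{20230617 02}. For $H^{\mathrm{det}}$ this follows from the fundamental theorem of calculus applied to $\partial_t\exp(\int_0^t\tilde\psi\,\mathrm{d}r)=\tilde\psi(t,\xi)\exp(\int_0^t\tilde\psi\,\mathrm{d}r)$ and differentiation under the deterministic integral sign. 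For $H^{\mathrm{sto}}$ I would use the same splitting as in the proof of Corollary \ref{20240417 01}, writing $H^{\mathrm{sto}}(t,\xi)=e(t,\xi)M(t,\xi)$ with $e(t,\xi)=\exp(\int_0^t\tilde\psi(r,\xi)\mathrm{d}r)$ of finite variation in $t$ and $M(t,\xi)=\int_0^t e(s,\xi)^{-1}1_{\opar 0,\tau\cbrk}(s)G^k(s,\xi)\mathrm{d}B^k_s$ a continuous local martingale, and then apply the It\^o product rule (no quadratic covariation appears), using $\exp(\int_s^t\tilde\psi\,\mathrm{d}r)=e(t,\xi)/e(s,\xi)$, to get $\mathrm{d}H^{\mathrm{sto}}=\tilde\psi H^{\mathrm{sto}}\mathrm{d}t+1_{\opar 0,\tau\cbrk}G^k\mathrm{d}B^k$. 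Finally I would multiply the identity by $\overline{\cF[\varphi](\xi)}$ and integrate over $B_{R_\varphi}$, moving the spatial integral inside the time integral by ordinary Fubini and inside the stochastic integral by the stochastic Fubini theorem (Corollary \ref{Fubini corollary}, whose applicability on $B_{R_\varphi}$ is ensured by the $l_2$-valued generalized Minkowski bound noted in Remark \ref{20240718 10}); this produces exactly \eqref{20240703 20}. The delicate points are the a.e.-$\xi$ application of the It\^o product rule with $\xi$-dependent finite-variation factors — for which the local integrability of $\tilde\psi$ in $t$ from \eqref{20240729 10} is essential to make $e(s,\xi)^{-1}$ and $M$ well-defined — and the verification of the stochastic Fubini hypotheses, both of which are supplied by the integrability established above for $H^{\mathrm{det}}$ and $H^{\mathrm{sto}}$.
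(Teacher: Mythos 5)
Your proposal is correct and its overall architecture coincides with the paper's proof: the same decomposition of $\cF[u(t,\cdot)]$ into the deterministic convolution and the stochastic convolution, the same appeal to Lemma \ref{deterministic solution part} and Corollary \ref{20240417 01} (plus Lemma \ref{restriction lem}) for membership in the two classes, and the same final passage to the weak formulation by integrating against $\overline{\cF[\varphi]}$ and invoking the ordinary and stochastic Fubini theorems. The one genuine divergence is in how you derive the pointwise-in-$\xi$ identity $\cF[u(t,\cdot)](\xi)=U_0+\int_0^t\tilde\psi\,\cF[u(s,\cdot)]\mathrm{d}s+\int_0^t F\,\mathrm{d}s+\int_0^t G^k\mathrm{d}B^k_s$: you apply the It\^o product rule to $e(t,\xi)M(t,\xi)$ with $e$ of finite variation (so no quadratic covariation term), whereas the paper avoids It\^o calculus altogether by writing $\exp(\int_s^t\tilde\psi\,\mathrm{d}r)=1+\int_s^t\tilde\psi(\rho,\xi)\exp(\int_s^\rho\tilde\psi\,\mathrm{d}r)\mathrm{d}\rho$ via the fundamental theorem of calculus and then exchanging $\mathrm{d}\rho$ with $\mathrm{d}B^k_s$ by the stochastic Fubini theorem (equations \eqref{20240315 10}--\eqref{20230213 20}). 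Both routes are valid under the stated hypotheses; your product-rule argument is slightly more direct but requires checking (as you do) that $e(s,\xi)^{-1}$ is locally bounded via \eqref{main deter as} and that $M(\cdot,\xi)$ is a well-defined continuous local martingale for a.e.\ $\xi$, while the paper's route keeps everything at the level of the already-established Fubini machinery. Your pathwise argument for placing $H^{\mathrm{sto}}$ in the $|\tilde\psi|\mathrm{d}t\mathrm{d}\xi$-weighted class (using $\tau(\omega)\le T_\omega$ and the a.s.\ finiteness of $\int_{B_R}\sup_t|H^{\mathrm{sto}}|\mathrm{d}\xi$) correctly supplies a detail the paper leaves implicit in its citation of Corollary \ref{20240417 01} and Lemma \ref{restriction lem}.
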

\begin{proof}
Denote
$$
U_0(\xi) := \cF[u_0](\xi),
~F(t,\xi) := \cF[f(t,\cdot)](\xi),
~G^k(t,\xi) := \cF[g^k(t,\cdot)](\xi),
$$
\begin{align*}
H_1(t,\xi)  :=  \exp\left(\int_0^t\tilde\psi(r,\xi)\mathrm{d}r \right) U_0(\xi)
+ \int_0^t  \exp\left(\int_s^t\tilde\psi(r,\xi)\mathrm{d}r \right) 1_{\opar 0, \tau \cbrk}(s)F(s,\xi)\mathrm{d}s,
\end{align*}
and
\begin{align*}
H_2(t,\xi) :=  \int_0^t  \exp\left(\int_s^t\tilde\psi(r,\xi)\mathrm{d}r \right) 1_{\opar 0, \tau \cbrk}(s)G^k(s,\xi)\mathrm{d}B^k_s.
\end{align*}
First we show that $U_0$ and $F$ satisfy \eqref{20240316 70} and \eqref{20240313 01} in Lemma \ref{deterministic solution part}
by reminding all assumptions in the theorem.
However, it is very elementary that  \eqref{main deter as} and 
the conditions
$$
u_0 \in \cF^{-1}\bL_{0,1,\ell oc}\left( \Omega \times \fR^d , \rG \times \cB(\fR^d)\right)  
$$
and
$$
f \in \cF^{-1}\bL_{0,1,1,\ell oc}\left( \opar 0, \tau \cbrk \times \fR^d, \rH \times \cB(\fR^d) \right)
$$
imply \eqref{20240316 70}. Thus we only show \eqref{20240313 01}.
Reminding  \eqref{main deter as 2}, the conditions on $u_0$ and $f$, and the Fubini theorem, we have
\begin{align*}
& \int_{B_R}\int_0^T |\tilde\psi(\rho,\xi)| \exp\left(\int_0^\rho \Re[\tilde\psi(r,\xi)]\mathrm{d}r \right) |U_0(\xi)| \mathrm{d}\rho \mathrm{d}\xi \\
&\quad +\int_{B_R}\int_0^T |\tilde\psi(\rho,\xi)| \int_0^\rho \exp\left(\int_s^\rho\Re[\tilde\psi(r,\xi)]\mathrm{d}r \right)  1_{\opar 0,\tau \cbrk}(s) \left|F(s,\xi)\right| \mathrm{d}s \mathrm{d}\rho \mathrm{d}\xi   \\
&\leq  \sup_{\xi \in B_R}\left(\int_0^T |\tilde\psi(\rho,\xi)| \exp\left(\int_0^\rho \Re[\tilde\psi(r,\xi)]\mathrm{d}r \right)\mathrm{d}\rho \right)
\int_{B_R} |\cF[u_0](\xi)|  \mathrm{d}\xi \\
&\quad +\int_{B_R}\int_0^T  \int_s^T|\tilde\psi(\rho,\xi)| \exp\left(\int_s^\rho\Re[\tilde\psi(r,\xi)]\mathrm{d}r \right)   \mathrm{d}\rho ~1_{\opar 0,\tau \cbrk}(s)  \left|\cF[f(s,\cdot)](\xi)\right| \mathrm{d}s \mathrm{d}\xi   \\
&\leq  \sup_{\xi \in B_R}\left(\int_0^T |\tilde\psi(\rho,\xi)| \exp\left(\int_0^\rho \Re[\tilde\psi(r,\xi)]\mathrm{d}r \right)\mathrm{d}\rho \right)
\int_{B_R} |\cF[u_0](\xi)|  \mathrm{d}\xi \\
&\quad +\sup_{\xi \in \fR^d}\left(\int_0^T|\tilde\psi(\rho,\xi)| \sup_{0 \leq s \leq \rho} \left[\exp\left(\int_s^\rho\Re[\tilde\psi(r,\xi)]\mathrm{d}r \right) \right]   \mathrm{d}\rho \right)
\int_{B_R} \int_0^T   ~1_{\opar 0,\tau \cbrk}(s)  \left|\cF[f(s,\cdot)](\xi)\right| \mathrm{d}s \mathrm{d}\xi  < \infty 
\end{align*}
with probability one for all $R,T \in (0,\infty)$.

Next, we show that \eqref{20240729 10} and \eqref{main deter as 3} hold.
However, they are obviously valid as discussed at the beginning of the section.
Therefore by Lemmas \ref{deterministic solution part} and \ref{restriction lem}, 
$H_1$ is in the intersection of two spaces
$$
\bC L_{1,\ell oc}\left( \clbrk 0,\tau \cbrk \times \fR^d,\sigma\left( \rG \times \cB([0,\infty))  \cup \rH\cup \cP   \right) \times \cB(\fR^d)\right)
$$
and
\begin{align*}
\bL_{0,1,1,\ell oc}\left( \opar 0,\tau \cbrk \times \fR^d, 
\sigma\left( \rG \times \cB([0,\infty)) \cup \rH  \cup \cP \right) \times \cB(\fR^d)  
, |\tilde\psi(t,\xi)|\mathrm{d}t \mathrm{d}\xi\right).
\end{align*}
Similarly, by Corollary \ref{20240417 01} and Lemma  \ref{restriction lem},
$H_2$ is in the intersection of the spaces
$$
\bC L_{1,\ell oc}\left( \clbrk 0,\tau \cbrk \times \fR^d,\sigma\left( \rG \times \cB([0,\infty))  \cup \rH\cup \cP   \right) \times \cB(\fR^d)\right)
$$
and
\begin{align*}
\bL_{0,1,1,\ell oc}\left( \opar 0,\tau \cbrk \times \fR^d, 
\sigma\left( \rG \times \cB([0,\infty))  \cup \rH  \cup \cP \right) \times \cB(\fR^d)  
, |\tilde\psi(t,\xi)|\mathrm{d}t \mathrm{d}\xi\right).
\end{align*}
Therefore, the sum
\begin{align*}
H:=H_1+H_2 
\end{align*}
is in the intersection
$$
\bC L_{1,\ell oc}\left( \clbrk 0,\tau \cbrk \times \fR^d,\sigma\left( \rG \times \cB([0,\infty))  \cup \rH\cup \cP   \right) \times \cB(\fR^d)\right)
$$
and
\begin{align*}
\bL_{0,1,1,\ell oc}\left( \opar 0,\tau \cbrk \times \fR^d, 
\sigma\left( \rG \times \cB([0,\infty)) \times \cB(\fR^d) \cup \rH \cup \cP \right) \times \cB(\fR^d)  
, |\tilde\psi(t,\xi)|\mathrm{d}t \mathrm{d}\xi\right).
\end{align*}
Here we identify $H$ with a $\cD'(\fR^d)$-valued stochastic process (defined on  $\clbrk 0, \tau \cbrk$) on the basis of an $L_2(\fR^d)$-inner product as follows: for each $(\omega,t) \in \clbrk 0, \tau \cbrk$,
\begin{align*}
\varphi \in \cD(\fR^d) \mapsto \langle H(t,\cdot), \varphi \rangle
&= \int_{\fR^d}\left( \exp\left(\int_0^t\tilde\psi(r,\xi)\mathrm{d}r \right) \cF[u_0](\xi) \right) \overline{\varphi(\xi)} \mathrm{d}\xi \\
&\quad +\int_{\fR^d} \left(\int_0^t  \exp\left(\int_s^t\tilde\psi(r,\xi)\mathrm{d}r \right) 1_{\opar 0, \tau \cbrk}(s)\cF[f(s,\cdot)](\xi)\mathrm{d}s \right) \overline{\varphi(\xi)} \mathrm{d}\xi  \\
&\quad + \int_{\fR^d} \left(\int_0^t  \exp\left(\int_s^t\tilde\psi(r,\cdot)\mathrm{d}r \right) 1_{\opar 0, \tau \cbrk}(s)\cF[g^k(s,\cdot) ]\mathrm{d}B^k_s\right) \overline{\varphi(\xi)} \mathrm{d}\xi.
\end{align*}
Thus by the definitions of $u$ in \eqref{20240314 10} and the Fourier transform on $\cF^{-1}\cD'(\fR^d)$, for any $(\omega,t) \in \clbrk 0,\tau \cbrk$, we have
\begin{align*}
\langle \cF[u(t,\cdot)] , \varphi \rangle = \langle H(t,\cdot), \varphi \rangle, \quad \forall \varphi \in \cD(\fR^d).
\end{align*}
In particular, for any $(\omega,t) \in \clbrk 0,\tau \cbrk$, 
$\xi \mapsto \cF[u(t,\cdot)](\xi)$ is a locally integrable function, which is given by
\begin{align*}
\cF[u(t,\cdot)](\xi)
=H(t,\xi)
&= \exp\left(\int_0^t\tilde\psi(r,\xi)\mathrm{d}r \right) \cF[u_0](\xi) 
+ \int_0^t  \exp\left(\int_s^t\tilde\psi(r,\xi)\mathrm{d}r \right) \cF[f(s,\cdot)](\xi)\mathrm{d}s \\
&\quad +\int_0^t  \exp\left(\int_s^t\tilde\psi(r,\xi)\mathrm{d}r \right) \cF[g^k(s,\cdot)](\xi)\mathrm{d}B^k_s.
\end{align*}
Additionally, since 
\begin{align*}
H \in \bL_{0,1,1,\ell oc}\left( \opar 0,\tau \cbrk \times \fR^d, \rF \times \cB([0,\infty)), |\tilde\psi(t,\xi)|\mathrm{d}t \mathrm{d}\xi\right),
\end{align*}
we have
\begin{align}
								\notag
&\int_0^t \tilde\psi(\rho,\xi) \cF[u(\rho,\cdot)](\xi) \mathrm{d}\rho \\
								\notag
&=\int_0^t \tilde\psi(\rho,\xi) H(\rho,\xi) \mathrm{d}\rho \\
								\notag
&=\int_0^t \tilde\psi(\rho,\xi) \exp\left(\int_0^\rho\tilde\psi(r,\xi)\mathrm{d}r \right) d\rho \cF[u_0](\xi)
+\int_0^t  \int_0^\rho \left[\tilde\psi(\rho,\xi)\exp\left(\int_s^\rho\tilde\psi(r,\xi)\mathrm{d}r \right)\right]   \cF[f(s,\cdot)](\xi)\mathrm{d}s \mathrm{d}\rho \\
								\label{20240315 20}
&\quad +\int_0^t  \int_0^\rho \left[\tilde\psi(\rho,\xi)\exp\left(\int_s^\rho\tilde\psi(r,\xi)\mathrm{d}r \right)\right]  \cF[g^k(s,\cdot)](\xi)\mathrm{d}B^k_s   \mathrm{d}\rho  \quad (a.e.)~(\omega,t,\xi) \in \clbrk 0,\tau \cbrk \times \fR^d.
\end{align}
Then by the fundamental theorem of calculus and the Fubini theorem,
\begin{align}
								\notag
&\cF[u(t,\cdot)](\xi) \\
								\notag
&= \exp\left(\int_0^t\tilde\psi(r,\xi)\mathrm{d}r \right) \cF[u_0](\xi) 
+ \int_0^t  \exp\left(\int_s^t\tilde\psi(r,\xi)\mathrm{d}r \right) \cF[f(s,\cdot)](\xi)\mathrm{d}s \\
								\notag
&\quad +\int_0^t  \exp\left(\int_s^t\tilde\psi(r,\xi)\mathrm{d}r \right) \cF[g^k(s,\cdot)](\xi)\mathrm{d}B^k_s \\
								\notag
&= \exp\left(\int_0^t\tilde\psi(r,\xi)\mathrm{d}r \right) \cF[u_0](\xi) + \int_0^t  \int_s^t \frac{\mathrm{d}}{\mathrm{d}\rho}\left[\exp\left(\int_s^\rho\tilde\psi(r,\xi)\mathrm{d}r \right)\right]  \mathrm{d}\rho \cF[f(s,\cdot)](\xi)\mathrm{d}s 
+\int_0^t  \cF[f(s,\cdot)](\xi)\mathrm{d}s \\
								\label{20240315 10}
& \quad + \int_0^t  \int_s^t \frac{\mathrm{d}}{\mathrm{d}\rho}\left[\exp\left(\int_s^\rho\tilde\psi(r,\xi)\mathrm{d}r \right)\right]  \mathrm{d}\rho \cF[g^k(s,\cdot)](\xi)\mathrm{d}B^k_s 
+\int_0^t  \cF[g^k(s,\cdot)](\xi)\mathrm{d}B_s^k 
\end{align}
for almost every $(\omega,t,\xi) \in \clbrk 0,\tau \cbrk \times \fR^d$, where two terms 
$\int_0^t  \cF[f(s,\cdot)](\xi)\mathrm{d}s$ and $\int_0^t  \cF[g^k(s,\cdot)](\xi)\mathrm{d}B_s^k$ are legitimate due to the assumptions
$$
f \in \cF^{-1}\bL_{0,1,1,\ell oc}\left( \opar 0, \tau \cbrk \times \fR^d, \rH \times \cB(\fR^d) \right)
$$
and 
$$
g \in \cF^{-1}\bL^{\omega,\xi,t}_{0,1,2,\ell oc}\left( \opar 0,  \tau \cbrk \times \fR^d, \cP \times \cB(\fR^d) ; l_2\right).
$$
By calculating the derivative of the compositions in \eqref{20240315 10}, we have
\begin{align}
									\notag
&\cF[u(t,\cdot)](\xi) \\
									\notag
&=\exp\left(\int_0^t\tilde\psi(r,\xi)\mathrm{d}r \right) \cF[u_0](\xi) 
+\int_0^t  \int_s^t \left[\tilde\psi(\rho,\xi)\exp\left(\int_s^\rho\tilde\psi(r,\xi)\mathrm{d}r \right)\right]  \mathrm{d}\rho \cF[f(s,\cdot)](\xi)\mathrm{d}s  \\
									\notag
&\quad +\int_0^t  \int_s^t \left[\tilde\psi(\rho,\xi)\exp\left(\int_s^\rho\tilde\psi(r,\xi)\mathrm{d}r \right)\right]  \mathrm{d}\rho \cF[g^k(s,\cdot)](\xi)\mathrm{d}B^k_s  \\
									\label{20240222 01}
&\quad +\int_0^t \cF[f(s,\cdot)](\xi)\mathrm{d}s +\int_0^t \cF[g^k(s,\cdot)](\xi)\mathrm{d}B^k_s
\end{align}
for almost every $(\omega,t,\xi) \in \clbrk 0,\tau \cbrk \times \fR^d$.
Additionally, applying the stochastic Fubini theorem and \eqref{20240315 20}, one can easily check that the sum of terms in \eqref{20240222 01} is equal to
\begin{align}
										\notag
& \exp\left(\int_0^t\tilde\psi(r,\xi)\mathrm{d}r \right) \cF[u_0](\xi) 
+\int_0^t  \int_0^\rho \left[\tilde\psi(\rho,\xi)\exp\left(\int_s^\rho\tilde\psi(r,\xi)\mathrm{d}r \right)\right]   \cF[f(s,\cdot)](\xi)\mathrm{d}s \mathrm{d}\rho \\
										\notag
&\quad +\int_0^t  \int_0^\rho \left[\tilde\psi(\rho,\xi)\exp\left(\int_s^\rho\tilde\psi(r,\xi)\mathrm{d}r \right)\right]  \cF[g^k(s,\cdot)](\xi)\mathrm{d}B^k_s   \mathrm{d}\rho\\
										\notag
&\quad +\int_0^t \cF[f(s,\cdot)](\xi)\mathrm{d}s +\int_0^t \cF[g^k(s,\cdot)](\xi)\mathrm{d}B^k_s \\
										\notag
&=\exp\left(\int_0^t\tilde\psi(r,\xi)\mathrm{d}r \right) \cF[u_0](\xi) 
-\int_0^t \tilde\psi(\rho,\xi) \exp\left(\int_0^\rho\tilde\psi(r,\xi)\mathrm{d}r \right) d\rho \cF[u_0](\xi) \\
										\notag
&\quad + \int_0^t \tilde\psi(\rho,\xi) \cF[u(\rho,\cdot)](\xi) \mathrm{d}\rho 
+\int_0^t \cF[f(s,\cdot)](\xi)\mathrm{d}s 
+\int_0^t \cF[g^k(s,\cdot)](\xi)\mathrm{d}B^k_s \\
									\label{20230213 20}
&= \cF[u_0](\xi)
+\int_0^t \tilde\psi(\rho,\xi) \cF[u(\rho,\cdot)](\xi) \mathrm{d}\rho 
+\int_0^t \cF[f(s,\cdot)](\xi)\mathrm{d}s
+\int_0^t \cF[g^k(s,\cdot)](\xi)\mathrm{d}B^k_s 
\end{align}
for almost every $(\omega,t,\xi) \in \clbrk 0,\tau \cbrk \times \fR^d$.
Finally, for any $\varphi \in \cF^{-1}\cD(\fR^d)$, taking the integration with $\overline{\cF[\varphi]}$ in \eqref{20230213 20} and applying the definitions of actions on the elements of the class $\cF^{-1}\cD'(\fR^d)$ and the operator $\tilde \psi (s,-\mathrm{i} \nabla)$ (Definition \ref{defn psi operator}) with the (stochastic) Fubini theorem,   we have
\begin{align*}
\left\langle u(t,\cdot),\varphi \right\rangle 
&= \langle u_0, \varphi \rangle +  \int_0^t \left\langle \tilde\psi(s,-\mathrm{i}\nabla)u(s,\cdot) , \varphi \right\rangle \mathrm{d}s 
+ \int_0^t \left\langle f(s,\cdot),\varphi\right\rangle \mathrm{d}s \\
&\quad + \int_0^t \left\langle g^k(s,\cdot),\varphi\right\rangle \mathrm{d}B^k_s 
\quad (a.e.)~(\omega,t) \in \clbrk 0,\tau \cbrk.
\end{align*}
Therefore $u$ is a Fourier-space weak solution to \eqref{time eqn}
by Definition \ref{space weak solution 2}.
The theorem is proved.
\end{proof}

\begin{rem}
									\label{g zero rem}
If $g=0$ in Theorem \ref{deter exist weak sol}, then the condition \eqref{main deter as} can be substituted with the less stringent condition \eqref{main deter as 3}. This is because the stronger assumption \eqref{main deter as} is solely employed to establish the joint measurability of $H_2$ in the proof of Theorem \ref{deter exist weak sol}.
Additionally, for this case $g=0$, the function $u$ without the stochastic integral term becomes a solution to \eqref{time eqn} even though the symbol is random.
\end{rem}

Next, we show a stability result of the solution $u$ obtained in the previous theorem.
Roughly saying, the spatial Fourier transform of the solution $u$ is stable in terms of that of data. 
The estimates for this stability result may depend on the constants
$C^{\mathrm{e}|\int\Re[\tilde \psi]|}_{R,T}$ and $C^{|\tilde \psi|}_{R,T}$
in \eqref{main deter as} and \eqref{main deter as 2}.
In the following estimates, the smaller constant $C^{\mathrm{e}\int\Re[\tilde \psi]}_{R,T}$ sometimes appears in stead of the constant $C^{\mathrm{e}|\int\Re[\tilde \psi]|}_{R,T}$ to optimize constants.
Additionally, the constants satisfy the inequalities
\begin{align*}
C^{\mathrm{e}\int\Re[\tilde \psi]}_{R,T} 
\leq C^{\mathrm{e}|\int\Re[\tilde \psi]|}_{R,T} 
\leq C^{\mathrm{e}\int \sup|\Re[\tilde \psi]|}_{R,T} 
\leq 1+C^{|\tilde \psi|}_{R,T}
\end{align*}
as explained in Remark \ref{simple constant}.
These inequalities help to simplify constants in the following Corollary.

Moreover, note that Theorem \ref{thm a priori} does not contribute to optimizing constants in estimates, as kernel-based estimates are superior for the deterministic symbols.
However, if the symbol is random, then a representation by the kernel is not to be expected.
In this case, Theorem \ref{thm a priori} can be used to optimize constants, as demonstrated in the proof of Corollary \ref{exist cor 2}. Nevertheless, Theorem \ref{thm deter a priori} does not aid in improving estimates for the deterministic terms, even if the symbol is random, which will be revisited in Remark \ref{deter est rem}.

\begin{corollary}
									\label{deter exist cor 2}
Let
\begin{align*}
u_0 \in \cF^{-1}\bL_{1,1,\ell oc}\left( \Omega \times \fR^d , \rG \times \cB(\fR^d)\right),  
\quad f \in \cF^{-1}\bL_{1,1,1,loc,\ell oc}\left( \opar 0, \tau \cbrk \times \fR^d, \rH \times \cB(\fR^d) \right),
\end{align*}
and 
$$
g \in \cF^{-1}\bL^{\omega,\xi,t}_{1,1,2,loc,\ell oc}\left( \opar 0,  \tau \cbrk \times \fR^d, \cP \times \cB(\fR^d) ; l_2\right).
$$
Suppose that $\tilde\psi$ satisfies  \eqref{main deter as} and \eqref{main deter as 2}.
Then the solution $u$ defined as \eqref{20240314 10} is in the intersection of the classes
$$
\cF^{-1}\bL_{1,loc}\bC L_{1,\ell oc}\left( \clbrk 0,\tau \cbrk \times \fR^d, 
\sigma\left( \rG \times \cB([0,\infty)) \cup \rH  \cup \cP  \right)\times \cB(\fR^d)\right)
$$
and
$$
\cF^{-1}\bL_{1,1,1,loc,\ell oc}\left( \opar 0,\tau \cbrk \times \fR^d, 
\sigma\left( \rG \times \cB([0,\infty))  \cup \rH \cup \cP   \right)\times \cB(\fR^d), |\tilde\psi(t,\xi)|\mathrm{d}t \mathrm{d}\xi\right).
$$
In particular, 
$$
u \in \cF^{-1}\bL_{1,loc}\bC L_{1,\ell oc}\left( \clbrk 0,\tau \cbrk \times \fR^d, 
\cP\times \cB(\fR^d)\right)
$$and
$$
u \in \cF^{-1}\bL_{1,1,1,loc,\ell oc}\left( \opar 0,\tau \cbrk \times \fR^d, \cP \times \cB(\fR^d), |\tilde\psi(t,\xi)|\mathrm{d}t \mathrm{d}\xi\right)
$$
if $\cF[u_0]$ is $\rF_0 \times \cB(\fR^d)$-measurable  and $\cF[f]$ is  $\cP \times \cB(\fR^d)$-measurable.
Moreover, $u$ satisfies
\begin{align}
										\notag
\bE\left[ \int_0^{\tau \wedge T} \int_{B_R}|\tilde\psi(t,\xi)| |\cF[u(t,\cdot)](\xi)| \mathrm{d}\xi  \mathrm{d}t \right]  
&\leq C_{R,T}^{|\tilde\psi|} \bE \left[\int_{B_R} \left|\cF[u_0](\xi)\right|    \mathrm{d}\xi 
+\int_{B_R} \int_0^{ \tau \wedge T}\left|\cF[f(s,\cdot)](\xi)\right|  \mathrm{d}s  \mathrm{d}\xi \right]\\
										\label{20240512 01}
& \quad +C_{BDG} \cdot C_{R,T}^{|\tilde\psi|} \bE\left[\int_{B_R} \left(\int_0^{\tau \wedge T}\left|\cF[g(s,\cdot)](\xi)\right|^2_{l_2}  \mathrm{d}s \right)^{1/2} \mathrm{d}\xi\right],
\end{align}
\begin{align}
 \bE\left[ \int_0^{\tau \wedge T} \int_{B_R} |\cF[u(t,\cdot)](\xi)|\mathrm{d}\xi\right] \mathrm{d}t
										\notag
&\leq   T \cdot C_{R,T}^{\mathrm{e}\int\Re[\tilde\psi]} \cdot \bE \left[\int_{B_R} |\cF[u_0](\xi)|\mathrm{d}\xi 
+ \int_{B_R} \int_0^{\tau \wedge T}\left|\cF[f(s,\cdot)](\xi)\right|  \mathrm{d}s  \mathrm{d}\xi \right]
  \\
									\label{2024052110-2}
& \quad+ T \cdot C_{BDG} \cdot C_{R,T}^{\mathrm{e}\int\Re[\tilde\psi]} \bE\left[\int_{B_R} \left(\int_0^{\tau \wedge T}\left|\cF[g(s,\cdot)](\xi)\right|^2_{l_2}  \mathrm{d}s \right)^{1/2} \mathrm{d}\xi \right],
\end{align}

and
\begin{align}
 \bE\left[ \int_{B_R} \sup_{t \in [0,\tau \wedge T]} |\cF[u(t,\cdot)](\xi)|\mathrm{d}\xi\right]  
										\notag
&\leq   C_{R,T}^{\mathrm{e}\int\Re[\tilde\psi]} \cdot \bE \left[\int_{B_R} |\cF[u_0](\xi)|\mathrm{d}\xi 
+ \int_{B_R} \int_0^{\tau \wedge T}\left|\cF[f(s,\cdot)](\xi)\right|  \mathrm{d}s  \mathrm{d}\xi \right]
  \\
									\label{2024052110}
& \quad+ C_{BDG} \cdot C_{R,T}^{\mathrm{e}|\int\Re[\tilde\psi]|} \bE\left[\int_{B_R} \left(\int_0^{\tau \wedge T}\left|\cF[g(s,\cdot)](\xi)\right|^2_{l_2}  \mathrm{d}s \right)^{1/2} \mathrm{d}\xi \right],
\end{align}
where $C_{R,T}^{|\tilde\psi|}$ and $C_{R,T}^{\mathrm{e}\int\Re[\tilde \psi]}$ are given in \eqref{main deter as 2} and \eqref{main deter as 3}.
\end{corollary}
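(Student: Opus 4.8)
The plan is to exploit the explicit decomposition of the solution already produced in the proof of Theorem \ref{deter exist weak sol}. Writing $U_0=\cF[u_0]$, $F=\cF[f]$, $G=\cF[g]$, recall that for the deterministic symbol $\tilde\psi$ the spatial Fourier transform of $u$ defined in \eqref{20240314 10} is $\cF[u(t,\cdot)]=H_1+H_2$, where $H_1$ is the (deterministic) contribution of $U_0$ and $F$ and $H_2$ is the stochastic integral built from $G$. I would treat the two pieces by the two tailor-made results already proved: $H_1$ via Lemma \ref{deterministic solution part} together with Remark \ref{additional deter rem}, and $H_2$ via Corollary \ref{stochastic solution part}. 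First note that the hypotheses $u_0\in\cF^{-1}\bL_{1,1,\ell oc}$, $f\in\cF^{-1}\bL_{1,1,1,loc,\ell oc}$, and $g\in\cF^{-1}\bL^{\omega,\xi,t}_{1,1,2,loc,\ell oc}$ are by definition the statements $U_0\in\bL_{1,1,\ell oc}$, $F\in\bL_{1,1,1,loc,\ell oc}$, and $G\in\bL^{\omega,\xi,t}_{1,1,2,loc,\ell oc}$ (with $\xi$ playing the role of the space variable), so the data hypotheses of those results are met; moreover \eqref{main deter as 2} forces \eqref{20240729 10} and \eqref{main deter as} implies \eqref{main deter as 3}.

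For the deterministic piece I would verify the enhanced (finite-expectation) versions of \eqref{20240316 70} and \eqref{20240313 01} demanded in Remark \ref{additional deter rem} by the same Fubini computation carried out verbatim in the proof of Theorem \ref{deter exist weak sol}, now with an extra outer expectation; the bounds \eqref{main deter as} and \eqref{main deter as 2} control the exponential kernels uniformly on $B_R\times[0,T]$, and the integrability of $U_0$ and $F$ supplies the finiteness. Remark \ref{additional deter rem} then places $H_1$ in $\bL_{1,loc}\bC L_{1,\ell oc}(\Omega\times[0,\infty)\times\fR^d,\cdots)$ and in $\bL_{1,1,1,loc,\ell oc}(\Omega\times(0,\infty)\times\fR^d,\cdots;|\tilde\psi|\mathrm{d}t\mathrm{d}\xi)$, measurable with respect to $\sigma(\rG\times\cB([0,\infty))\cup\rH)$, and yields \eqref{20240725 01} and \eqref{20240512 10}, whose constants are $C_{R,T}^{\mathrm{e}\int\Re[\tilde\psi]}$ and $C_{R,T}^{|\tilde\psi|}$ respectively. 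For the stochastic piece, Corollary \ref{stochastic solution part} directly gives $H_2$ in the same two ($\cP$-measurable) classes together with \eqref{20240720 00}, \eqref{20240720 02}, and \eqref{20240512 11}.

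With both pieces controlled I would restrict them to the stopping-time domain using Lemma \ref{restriction lem} (items (iv)--(vi)), add them, and apply $\cF^{-1}$ by Theorem \ref{homeo thm}. Since the relevant classes are linear, $u=\cF^{-1}[H_1+H_2]$ lands in the stated intersection, and its generating $\sigma$-algebra is $\sigma(\rG\times\cB([0,\infty))\cup\rH\cup\cP)$ because $H_1$ is $\sigma(\rG\times\cB([0,\infty))\cup\rH)$-measurable while $H_2$ is $\cP$-measurable. The special $\cP$-measurable case follows from the last sentence of Remark \ref{additional deter rem}, which makes $H_1$ itself $\cP$-measurable once $\cF[u_0]$ is $\rF_0\times\cB(\fR^d)$-measurable and $\cF[f]$ is $\cP\times\cB(\fR^d)$-measurable. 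Finally the three displayed estimates follow by the pointwise inequality $|\cF[u]|\le|H_1|+|H_2|$: \eqref{20240512 01} combines \eqref{20240512 10} with \eqref{20240512 11}; \eqref{2024052110} combines \eqref{20240725 01} (using $\sup_{[0,\tau\wedge T]}\le\sup_{[0,T]}$) with \eqref{20240720 02}; and \eqref{2024052110-2} combines the crude bound $\int_0^{\tau\wedge T}\int_{B_R}|H_1|\,\mathrm{d}\xi\,\mathrm{d}t\le T\int_{B_R}\sup_{t\in[0,T]}|H_1|\,\mathrm{d}\xi$ (estimated by \eqref{20240725 01}) with \eqref{20240720 00}.

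The main obstacle I anticipate is purely the bookkeeping of constants: the deterministic bounds carry $C_{R,T}^{\mathrm{e}\int\Re[\tilde\psi]}$ (the exponent keeps its sign, since in the inner $s$-integral one always has $s\le t$), whereas after the BDG inequality the stochastic supremum bound carries the larger $C_{R,T}^{\mathrm{e}|\int\Re[\tilde\psi]|}$ (the absolute value appears because the factor $\exp(-\int_0^s\Re[\tilde\psi])$ from the It\^o isometry is paired with $\exp(\int_0^t\Re[\tilde\psi])$, and the supremum over $t$ ranges on both sides of $s$). Keeping these two constants distinct, and matching each summand to the precise constant written in \eqref{20240512 01}, \eqref{2024052110-2}, and \eqref{2024052110}, is the only delicate point; everything else is a direct invocation of the preceding lemmas and corollaries.
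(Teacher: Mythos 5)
Your proposal is correct and follows essentially the same route as the paper: the paper likewise splits $\cF[u]$ into the deterministic part (estimated via Lemma \ref{deterministic solution part}, giving \eqref{20240725 01} and \eqref{20240512 10}) and the stochastic integral part (estimated via Corollary \ref{stochastic solution part}, giving \eqref{20240720 00}, \eqref{20240720 02}, and \eqref{20240512 11}), then restricts to $\opar 0,\tau\cbrk$ and adds the bounds. Your care in keeping $C_{R,T}^{\mathrm{e}\int\Re[\tilde\psi]}$ and $C_{R,T}^{\mathrm{e}|\int\Re[\tilde\psi]|}$ distinct for the two summands of \eqref{2024052110} matches the stated constants exactly.
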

\begin{proof}
Due to Theorem \ref{deter exist weak sol}, it suffices to show \eqref{20240512 01}, \eqref{2024052110-2}, and \eqref{2024052110} since all the right hand sides of \eqref{20240512 01}, \eqref{2024052110-2}, and \eqref{2024052110} are finite.
We split $u$ into two parts $u_1$ and $u_2$.
Put
\begin{align*}
\cF[u_1(t,\cdot)](\xi)
= \exp\left(\int_0^t\tilde\psi(r,\xi)\mathrm{d}r \right) \cF[u_0](\xi) 
+ \int_0^t  \exp\left(\int_s^t\tilde\psi(r,\xi)\mathrm{d}r \right) \cF[f(s,\cdot)](\xi)\mathrm{d}s
\end{align*}
and
\begin{align*}
\cF[u_2(t,\cdot)](\xi)
=\int_0^t  \exp\left(\int_s^t\tilde\psi(r,\xi)\mathrm{d}r \right) \cF[g^k(s,\cdot)](\xi)\mathrm{d}B^k_s.
\end{align*}
Then it is obvious that
\begin{align*}
\cF[u(t,\cdot)](\xi) = \cF[u_1(t,\cdot)](\xi) + \cF[u_2(t,\cdot)](\xi) \quad \forall (\omega,t,\xi) \in \opar 0, \tau \cbrk \times \fR^d
\end{align*}
and both $\cF[u_1(t,\cdot)]$, and $\cF[u_2(t,\cdot)]$ can be defined on $\Omega \times [0,\infty) \times \fR^d$.
Firstly, $\cF[u_1]$ can be easily estimated (for the details, see the proof of Corollary \ref{exist cor 1} in the next section).
Indeed, by Lemma \ref{deterministic solution part},
\begin{align*}
&\bE\left[\int_{B_R} \sup_{t \in [0,\tau \wedge T]}  |\cF[u_1(t,\cdot)](\xi)| \mathrm{d}\xi \right]  \\
&\leq \bE\left[\int_{B_R} \sup_{t \in [0,T]}  |\cF[u_1(t,\cdot)](\xi)| \mathrm{d}\xi \right]  \\
&\leq \left(\sup_{0 \leq s \leq t \leq T,\xi \in B_R}\left| \exp \left( \int_s^t \Re[\tilde\psi(r,\xi)] \mathrm{d}r \right)  \right| \right)
\bE\left[\int_{B_R} |\cF[u_0](\xi)|\mathrm{d}\xi + \int_{B_R} \int_0^{\tau \wedge T}\left|\cF[f(s,\cdot)](\xi)\right|  \mathrm{d}s  \mathrm{d}\xi \right]
\end{align*}
and
\begin{align*}
\bE\left[ \int_0^{\tau \wedge T} \int_{B_R}|\tilde\psi(t,\xi)| |\cF[u_1(t,\cdot)](\xi)| \mathrm{d}\xi  \mathrm{d}t \right]  
&\leq \bE\left[ \int_0^T \int_{B_R}|\tilde\psi(t,\xi)| |\cF[u_1(t,\cdot)](\xi)| \mathrm{d}\xi  \mathrm{d}t \right]  \\
&\leq
\sup_{\xi \in B_R} \left(\int_0^T |\tilde\psi(t,\xi)| \sup_{0 \leq s \leq t}\left|\exp\left(\int_s^t\Re[\tilde\psi(r,\xi)]\mathrm{d}r \right) \right|  \mathrm{d}t \right) \\
&\quad \times \bE\left[\int_{B_R} |\cF[u_0](\xi)|\mathrm{d}\xi + \int_{B_R} \int_0^{\tau \wedge T}\left|\cF[f(s,\cdot)](\xi)\right|  \mathrm{d}s  \mathrm{d}\xi \right].
\end{align*}
Next, we focus on estimating $\cF[u_2]$.
It is also easily obtained from  Corollary \ref{stochastic solution part} with the Fubini theorem.
Indeed, 
\begin{align*}
&\bE\left[ \int_0^{\tau \wedge T} \int_{B_R}|\tilde\psi(t,\xi)| |\cF[u_2(t,\cdot)](\xi)| \mathrm{d}\xi  \mathrm{d}t \right] \\
&\leq \bE\left[ \int_0^T \int_{B_R}|\tilde\psi(t,\xi)| |\cF[u_2(t,\cdot)](\xi)| \mathrm{d}\xi  \mathrm{d}t \right] \\
&\leq C_{BDG} \sup_{\xi \in B_R} \left(\int_0^T |\tilde\psi(t,\xi)| \sup_{0 \leq s \leq t}\left|\exp\left(\int_s^t\Re[\tilde\psi(r,\xi)]\mathrm{d}r \right) \right|  \mathrm{d}t \right)
\left[ \int_{B_R} \int_0^{\tau \wedge T}\left|\cF[g(s,\cdot)](\xi)\right|  \mathrm{d}s  \mathrm{d}\xi \right],
\end{align*}
\begin{align*}
&\bE\left[ \int_0^{\tau \wedge T} \int_{B_R}|\cF[u_2(t,\cdot)](\xi)|\mathrm{d}\xi \mathrm{d}t \right] \\
&\leq \bE\left[ \int_0^T \int_{B_R}|\cF[u_2(t,\cdot)](\xi)|\mathrm{d}\xi \mathrm{d}t \right] \\
&\leq T \cdot C_{BDG}\esssup_{0\leq s\leq t \leq T, \xi \in B_R}\left[  \exp\left(\int_s^t \Re[\tilde\psi(r,\xi)]\mathrm{d}r \right) \right]
\bE\left[  \int_{B_R} \left( \int_0^{\tau \wedge T}\left|\cF[g(s,\cdot)](\xi)\right|  \mathrm{d}s  \right)^{1/2} \mathrm{d}\xi  \right],
\end{align*}
and
\begin{align*}
&\bE\left[\int_{B_R}  \sup_{t \in [0,\tau \wedge T]}  \left|\cF[u_2(t,\cdot)](\xi) \right|\mathrm{d}\xi \right]  \\
&\leq C_{BDG} \left(\sup_{0 \leq s \leq t \leq T,\xi \in B_R}\exp \left( \left| \int_s^t \Re[\tilde\psi(r,\xi)] \mathrm{d}r \right|\right)   \right) \bE\left[\int_{B_R}  \left(\int_0^{\tau \wedge T} \left|\cF[g(s,\cdot)](\xi)\right|^2_{l_2} \mathrm{d}s \right)^{1/2} \mathrm{d}\xi   \right].
\end{align*}
Combining all estimates above and recalling the definitions of the notations $C_{R,T}^{\mathrm{e}\int\Re[\tilde \psi]}$, $C_{R,T}^{\mathrm{e}|\int \Re[\tilde \psi|]}$ and $C_{R,T}^{|\tilde\psi|}$, we have
\begin{align*}
&\bE\left[ \int_0^{\tau \wedge T} \int_{B_R}|\tilde\psi(t,\xi)| |\cF[u(t,\cdot)](\xi)| \mathrm{d}\xi  \mathrm{d}t \right] \\
&\leq\bE\left[ \int_0^{\tau \wedge T} \int_{B_R}|\tilde\psi(t,\xi)| |\cF[u_1(t,\cdot)](\xi)| \mathrm{d}\xi  \mathrm{d}t \right] 
+\bE\left[ \int_0^{\tau \wedge T} \int_{B_R}|\tilde\psi(t,\xi)| |\cF[u_2(t,\cdot)](\xi)| \mathrm{d}\xi  \mathrm{d}t \right] \\
&\leq C_{R,T}^{|\tilde\psi|} \bE\left[\int_{B_R} |\cF[u_0](\xi)|\mathrm{d}\xi + \int_{B_R} \int_0^{\tau \wedge T}\left|\cF[f(s,\cdot)](\xi)\right|  \mathrm{d}s  \mathrm{d}\xi +C_{BDG}\int_{B_R} \int_0^{\tau \wedge T}\left|\cF[g(s,\cdot)](\xi)\right|  \mathrm{d}s  \mathrm{d}\xi \right]
\end{align*}
and
\begin{align*}
\bE\left[\int_{B_R} \sup_{t \in [0,\tau \wedge T]}  \left|\cF[u(t,\cdot)](\xi) \right|\mathrm{d}\xi \right]  
&\leq \bE\left[\int_{B_R} \sup_{t \in [0,\tau \wedge T]}  \left|\cF[u_1(t,\cdot)](\xi) \right|\mathrm{d}\xi \right]  
+\bE\left[\int_{B_R} \sup_{t \in [0,\tau \wedge T]}  \left|\cF[u_2(t,\cdot)](\xi) \right|\mathrm{d}\xi \right]   \\
&\leq C_{R,T}^{\mathrm{e}\int\Re[\tilde\psi]}  \bE\left[\int_{B_R} |\cF[u_0](\xi)|\mathrm{d}\xi + \int_{B_R} \int_0^{\tau \wedge T}\left|\cF[f(s,\cdot)](\xi)\right|  \mathrm{d}s  \mathrm{d}\xi \right] \\
&\quad + C_{BDG}  \cdot C_{R,T}^{\mathrm{e}\int\Re[\tilde\psi]}
   \bE\left[\int_{B_R} \left(\int_0^{\tau \wedge T}\left|\cF[g(s,\cdot)](\xi)\right|^2_{l_2} \mathrm{d}s \right)^{1/2} \mathrm{d}\xi \right]. 
\end{align*}
The corollary is proved. 
\end{proof}

\begin{rem}
It might seem odd that  the constant $C^{\mathrm{e}|\int\Re[\tilde \psi]|}_{R,T}$ does not appear in \eqref{2024052110-2} since the finiteness of $C^{\mathrm{e}|\int\Re[\tilde \psi]|}_{R,T}$ is a key assumption in the corollary.
However, this condition was utilized to show the joint measurability of $\cF[u(t,\cdot)](\xi)$ in order to apply the Fubini theorem as emphasized in Remarks \ref{measure issue rem} and  \ref{measurability emph}.
This difficulty of the joint measurability arose due to the stochastic integral terms.
Therefore for the case that $g=0$, it may be possible that the symbol is random and Assumption \ref{main as} is replaced with Assumption \ref{weaker as},
which leads to the proof of Theorem \ref{time deter thm}.
We postpone proving this special case until the next section where random symbols are addressed.
\end{rem}

\mysection{Existence of a solution with a random symbol}
										\label{existence section}

In this section, we show that there exists a Fourier-space weak solution $u$ to \eqref{time eqn} even though the symbol $\psi$ 
is random. 
The function  $u$ in Theorem \ref{deter exist weak sol} which was a solution to \eqref{time eqn} with a deterministic symbol cannot be defined in a legitimate way anymore.
This is because the stochastic integral term
\begin{align*}
\int_0^t  \exp\left(\int_s^t\psi(r,\cdot)\mathrm{d}r \right) 1_{\opar 0, \tau \cbrk}(s)\cF[g^k(s,\cdot) ]\mathrm{d}B^k_s
\end{align*}
is not defined generally since the term $\int_s^t\psi(r,\cdot)\mathrm{d}r$ related to the symbol is not $\rF_s$-adapted and thus is not predictable as emphasized in Section \ref{sto fubini deter}. 
However, the other terms in \eqref{20240314 10} are still valid 
and fortunately, the function $u$ still becomes a solution to \eqref{time eqn} in spite of the randomness on the symbol $\psi$ if $g=0$. Therefore,  we start showing the existence of our solution with this simple case.

\begin{corollary}
									\label{exist cor 1}
Suppose that all the assumptions in Theorem \ref{time deter thm} hold and
define $u$ as  a $\cF^{-1}\cD'(\fR^d)$-valued stochastic process so that for each $(\omega,t) \in \clbrk 0, \tau \cbrk$ and  $\varphi \in \cF^{-1} \cD(\fR^d)$,
\begin{align}
									\notag
\langle u(t,\cdot), \varphi \rangle
&:= \int_{\fR^d}\left( \exp\left(\int_0^t\psi(r,\xi)\mathrm{d}r \right) \cF[u_0](\xi) \right) \overline{\cF[\varphi](\xi)} \mathrm{d}\xi \\
									\label{deter sol rep}
&\quad +\int_{\fR^d} \left(\int_0^t  \exp\left(\int_s^t\psi(r,\xi)\mathrm{d}r \right) 1_{\opar 0, \tau \cbrk}(s)\cF[f(s,\cdot)](\xi)\mathrm{d}s \right) \overline{\cF[\varphi](\xi)} \mathrm{d}\xi.
\end{align}
Then the function $u$ becomes a Fourier-space weak solution to \eqref{deter eqn}, which is in the intersection of the classes
$$
\cF^{-1}\bC L_{1,\ell oc}\left( \clbrk 0,\tau \cbrk \times \fR^d,\sigma\left( \rG \times \cB([0,\infty))  \cup \rH\cup \cP   \right)\times \cB(\fR^d)\right)
$$
and
$$
 \cF^{-1}\bL_{0,1,1, \ell oc}\left( \opar 0,\tau \cbrk \times \fR^d, \sigma\left( \rG \times \cB([0,\infty))  \cup \rH  \cup \cP \right) \times \cB(\fR^d)  , |\psi(t,\xi)|\mathrm{d}t \mathrm{d}\xi\right).
$$
Moreover, for all  $R,T \in (0,\infty)$, $u$ satisfies
\begin{align}
\int_{B_R} \sup_{t \in [0,\tau \wedge T]}  \left(|\cF[u(t,\cdot)](\xi)|\right) \mathrm{d}\xi   
									\label{20240523 10}
&\leq    C_{R,T}^{\mathrm{e}\int\Re[\psi]} \left(\int_{B_R} |\cF[u_0](\xi)|\mathrm{d}\xi + \int_{B_R} \int_0^{\tau \wedge T}\left|\cF[f(s,\cdot)](\xi)\right|  \mathrm{d}s  \mathrm{d}\xi \right)
\end{align}
and
\begin{align}
									\label{20240523 11}
 \int_0^{\tau \wedge T} \int_{B_R}|\psi(t,\xi)||\cF[u(t,\cdot)](\xi)| \mathrm{d}\xi  \mathrm{d}t 
\leq
C_{R,T}^{|\psi|}
\left(\int_{B_R} |\cF[u_0](\xi)|\mathrm{d}\xi + \int_{B_R} \int_0^{\tau \wedge T}\left|\cF[f(s,\cdot)](\xi)\right|  \mathrm{d}s  \mathrm{d}\xi \right)
\end{align}
with probability one.
\end{corollary}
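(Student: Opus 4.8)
The plan is to recognize \eqref{deter eqn} as the special case $g=0$ of \eqref{time eqn} and to reuse the deterministic machinery of Section~\ref{exist deter}, the only genuinely new feature being that the symbol is now permitted to be random. Setting $U_0(\xi):=\cF[u_0](\xi)$ and $F(t,\xi):=\cF[f(t,\cdot)](\xi)$, the process $u$ defined by \eqref{deter sol rep} is precisely the one whose spatial Fourier transform equals
\begin{align*}
H(t,\xi)=\exp\left(\int_0^t\psi(r,\xi)\mathrm{d}r\right)U_0(\xi)+\int_0^t\exp\left(\int_s^t\psi(r,\xi)\mathrm{d}r\right)1_{\opar 0,\tau\cbrk}(s)F(s,\xi)\,\mathrm{d}s,
\end{align*}
that is, the term ``$H_1$'' from the proof of Theorem~\ref{deter exist weak sol} with the stochastic summand deleted. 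First I would record that Assumption~\ref{weaker as} supplies \eqref{main deter as 3} and Assumption~\ref{weaker as 2} supplies $C^{|\psi|}_{R,T}<\infty$ and hence \eqref{20240729 10}, all holding $(a.s.)$, while the data hypotheses $u_0\in\cF^{-1}\bL_{0,1,\ell oc}$ and $f\in\cF^{-1}\bL_{0,1,1,\ell oc}$ translate into $U_0\in\bL_{0,1,\ell oc}$ and $F\in\bL_{0,1,1,\ell oc}$.

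The heart of the argument is measurability, and the hard part will be to state cleanly why a random $\psi$ is harmless here. I would stress the contrast with Theorem~\ref{time thm}: because \eqref{deter eqn} carries no stochastic integral, the kernel $(\omega,s,t,\xi)\mapsto\int_s^t\psi(r,\xi)\mathrm{d}r$ need only be jointly measurable, \emph{not} predictable in $s$. Writing $\int_s^t\psi(r,\xi)\mathrm{d}r=\int_0^\infty 1_{s<r\le t}\psi(\omega,r,\xi)\,\mathrm{d}r$ and applying Tonelli shows it is $\sigma\left(\rG\times\cB([0,\infty))\cup\rH\cup\cP\right)\times\cB(\fR^d)$-measurable; composing with $\exp$ and integrating the \emph{Lebesgue} (not It\^o) integral in $s$ then yields joint measurability of $H$ exactly as in Lemma~\ref{deterministic solution part}, now with $\psi$ random. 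This is precisely the point flagged in Remark~\ref{g zero rem}. With joint measurability secured, Lemma~\ref{deterministic solution part}---whose hypotheses \eqref{20240316 70} and \eqref{20240313 01} follow from \eqref{main deter as 3}, \eqref{20240729 10} and the data integrability---together with the restriction embeddings of Lemma~\ref{restriction lem} places $u$ in the intersection of the two asserted classes over $\opar 0,\tau\cbrk$ and $\clbrk 0,\tau\cbrk$.

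It then remains to verify that $u$ solves \eqref{deter eqn} weakly and to establish the two estimates. For the former I would repeat the fundamental-theorem-of-calculus computation from the proof of Theorem~\ref{deter exist weak sol}, namely the chain \eqref{20240315 20}--\eqref{20230213 20} with every stochastic integral omitted, to obtain
\begin{align*}
\cF[u(t,\cdot)](\xi)=\cF[u_0](\xi)+\int_0^t\psi(\rho,\xi)\cF[u(\rho,\cdot)](\xi)\,\mathrm{d}\rho+\int_0^t\cF[f(s,\cdot)](\xi)\,\mathrm{d}s
\end{align*}
for almost every $(\omega,t,\xi)\in\clbrk 0,\tau\cbrk\times\fR^d$; pairing against $\overline{\cF[\varphi]}$ for $\varphi\in\cF^{-1}\cD(\fR^d)$ and invoking Definition~\ref{defn psi operator} together with the Fubini theorem then reproduces the weak formulation \eqref{20240703 20} with $g=0$, which is exactly \eqref{deter eqn}. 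For the estimates, the bounds \eqref{20240725 01} and \eqref{20240512 10} of Lemma~\ref{deterministic solution part} are pathwise (deterministic) inequalities; applying them for each $\omega$ in a set of probability one, with $U_0=\cF[u_0]$ and $F=\cF[f]$, delivers \eqref{20240523 10} and \eqref{20240523 11} with the constants $C^{\mathrm{e}\int\Re[\psi]}_{R,T}$ and $C^{|\psi|}_{R,T}$, finite $(a.s.)$ by Assumptions~\ref{weaker as} and~\ref{weaker as 2}. This completes the proof.
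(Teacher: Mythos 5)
Your proposal is correct and follows essentially the same route as the paper: identify $\cF[u(t,\cdot)]$ with the deterministic kernel term $H_1$ from Theorem \ref{deter exist weak sol}, observe that the absence of a stochastic integral removes the predictability obstruction for a random symbol (Remark \ref{g zero rem}), and apply Lemma \ref{deterministic solution part} pathwise, i.e.\ for each fixed $\omega$, to obtain \eqref{20240523 10} and \eqref{20240523 11} with the constants $C^{\mathrm{e}\int\Re[\psi]}_{R,T}$ and $C^{|\psi|}_{R,T}$. The only difference is that you spell out the joint-measurability and fundamental-theorem-of-calculus steps that the paper dispatches by citation to Theorem \ref{deter exist weak sol}.
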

\begin{proof}
Note that \eqref{deter sol rep} is well-defined even though the symbol $\psi$ is random since there are no stochastic integral terms.
Moreover, it is easy to show that the function $u$ given by \eqref{deter sol rep} is a Fourier space weak solution to \eqref{deter eqn} according to Theorem \ref{deter exist weak sol} (\textit{cf}. Remark \ref{g zero rem}).
Therefore, it suffices to show \eqref{20240523 10} and \eqref{20240523 11}.

First, due to the definition of $u$, it is obvious that
\begin{align*}
\cF[u(t,\cdot)](\xi)
&= \exp\left(\int_0^t\psi(r,\xi)\mathrm{d}r \right) \cF[u_0](\xi) 
+ \int_0^t  \exp\left(\int_s^t\psi(r,\xi)\mathrm{d}r \right) \cF[f(s,\cdot)](\xi)\mathrm{d}s \\
&= \exp\left(\int_0^t\psi(r,\xi)\mathrm{d}r \right) \cF[u_0](\xi) 
+ \int_0^t  1_{\opar 0, \tau \cbrk}(s)\exp\left(\int_s^t\psi(r,\xi)\mathrm{d}r \right) \cF[f(s,\cdot)](\xi)\mathrm{d}s \\
\end{align*}
for all  $(\omega,t,\xi) \in \opar 0, \tau \cbrk \times \fR^d$.
Observe that the domains of all functions above can be naturally extended to $\Omega \times [0,\infty) \times \fR^d$.
Put
\begin{align*}
H(t,\xi)=\cF[u(t,\cdot)](\xi), \quad U_0(\xi)= \cF[u_0](\xi), \quad \text{and} \quad F(t,\xi)=\cF[f(t,\cdot)](\xi).
\end{align*}
Then, applying Lemma \ref{deterministic solution part} for each $\omega \in \Omega$, we have
\begin{align}
										\notag
&\int_{B_R} \left(\sup_{t \in [0,\tau \wedge T]}|\cF[u(t,\cdot)](\xi)| \right) \mathrm{d}\xi  \\
										\notag
&\leq \int_{B_R} \left( \sup_{t \in [0,T]}|H(t,\xi)| \right) \mathrm{d}\xi  \\
										\notag
&\leq \left(\sup_{0 \leq s \leq t \leq T,\xi \in B_R}\left| \exp \left( \int_s^t \Re[\psi(r,\xi)] \mathrm{d}r \right)  \right| \right)
\left[\int_{B_R} |U_0(\xi)|\mathrm{d}\xi + \int_{B_R} \int_0^{\tau \wedge T}\left|F(s,\xi)\right|  \mathrm{d}s  \mathrm{d}\xi \right]\\
										\notag
&= C_{R,T}^{\mathrm{e}\int\Re[\psi]}
\left[\int_{B_R} |\cF[u_0](\xi)|\mathrm{d}\xi + \int_{B_R} \int_0^{\tau \wedge T}\left|\cF[f(s,\cdot)](\xi)\right|  \mathrm{d}s  \mathrm{d}\xi \right]
\end{align}
and
\begin{align}
								\notag
& \int_0^{\tau \wedge T} \int_{B_R}|\psi(t,\xi)| |\cF[u(t,\cdot)](\xi)| \mathrm{d}\xi  \mathrm{d}t \\
								\notag
&\leq \int_0^T \int_{B_R}|\psi(t,\xi)| |H(t,\xi)| \mathrm{d}\xi  \mathrm{d}t \\
								\notag
&\leq
\sup_{\xi \in B_R} \left(\int_0^T |\psi(t,\xi)| \sup_{0 \leq s \leq t}\left|\exp\left(\int_s^t\Re[\psi(r,\xi)]\mathrm{d}r \right) \right|  \mathrm{d}t \right)
\left[\int_{B_R} |U_0(\xi)|\mathrm{d}\xi + \int_{B_R} \int_0^{\tau \wedge T}\left|F(s,\xi)\right|  \mathrm{d}s  \mathrm{d}\xi \right] \\
								\label{20240523 20}
&= C_{R,T}^{|\psi|}
\left[\int_{B_R} |\cF[u_0](\xi)|\mathrm{d}\xi + \int_{B_R} \int_0^{\tau \wedge T}\left|\cF[f(s,\cdot)](\xi)\right|  \mathrm{d}s  \mathrm{d}\xi \right].
\end{align}
Therefore,  \eqref{20240523 10} and \eqref{20240523 11} are obtained.
The corollary is proved.
\end{proof}
\begin{rem}
										\label{deter est rem}
Applying Theorem \ref{thm deter a priori} with \eqref{20240523 11}, we have
\begin{align}
										\label{20240523 31}
\int_{B_R} \left(\sup_{t \in [0,\tau \wedge T]}  |\cF[u(t,\cdot)](\xi)| \right) \mathrm{d}\xi  
\leq \left(1+C_{R,T}^{|\psi|}  \right)
\left[\int_{B_R} |\cF[u_0](\xi)|\mathrm{d}\xi + \int_{B_R} \int_0^{\tau \wedge T}\left|\cF[f(s,\cdot)](\xi)\right|  \mathrm{d}s  \mathrm{d}\xi \right]
\end{align}
with probability one.
However, \eqref{20240523 31} does not give any better information than \eqref{20240523 10}
since 
\begin{align*}
C_{R,T}^{\mathrm{e}\int\Re[\psi]} \leq \left(1+C_{R,T}^{|\psi|}  \right)
\end{align*}
as discussed in Remark \ref{simple constant} and \eqref{20240721 50}. 
\end{rem}
\begin{rem}
										\label{20240803 rem 30}
Recall that the logarithmic Laplacian operator $\log \Delta$ whose symbol is given by $-\log |\xi|^2$ satisfies Assumption \ref{weaker as} but it does not satisfy Assumption \ref{weaker as 2}.
Thus Theorem \ref{time deter thm} is not applicable to \eqref{deter eqn} with logarithmic operators.
However, Assumption \ref{weaker as 2} in Theorem \ref{time deter thm} also can be replaced with a weaker one to include logarithmic operators as mentioned in Remark \ref{20240803 rem 10}.
Specifically, the weaker condition consists of two parts as follows:
\begin{enumerate}[(i)]
\item for almost every $\xi \in \fR^d$,
\begin{align*}
\int_0^t |\psi(s,\xi)| \mathrm{d}s < \infty \quad  (a.s.) \quad \forall t \in (0,\infty).
\end{align*}
\item for all $R,T \in (0,\infty)$.
\begin{align*}
\esssup_{\xi \in  B_R} \left( \int_0^T|\psi(t,\xi)|\sup_{0\leq s \leq t}  \left|\exp\left(\int_s^t\Re[\psi(r,\xi)]\mathrm{d}r\right)\right|     \mathrm{d}t  \right) < \infty \quad (a.s.).
\end{align*}
\end{enumerate}
Therefore, we can achieve the weak well-posedness of \eqref{deter eqn} with logarithmic operators, even though such results might be impossible for SPDEs.
\end{rem}

Before going further to handle non-zero $g$, we consider a simple measurability issue for a predictable process.
Let $X_t$ be a predictable process on $[0,\infty)$ and assume that
\begin{align}
										\label{20240711 01}
\esssup_{\omega \in \Omega} |X_t|  < \infty.
\end{align}
Then the mapping
$$
t \in [0,\infty) \mapsto \esssup_{\omega \in \Omega} |X_t|
$$
is $\cB([0,\infty))$-measurable.
One can easily show it as least in two folds.
First, our probability space $\Omega$ include Brownian motions and thus it could be derived from the Wiener space.
Thus we may assume that $\Omega$ is separable.
On the other hand, any predictable process could be generated from an approximation of step processes and any step process possessing  \eqref{20240711 01} satisfies the measurability.
Here $\rF_t$-adapted process $X_t$ is called a step process if
\begin{align*}
X_t = \sum_{k=1}^n X_k 1_{(t_{k-1},t_k]}(t),
\end{align*}
for some  $0\leq t_0 \leq t_1 \leq \cdots \leq t_n<\infty$.

\begin{thm}[Existence of a Fourier-space weak solution with a random symbol]
									\label{exist weak sol}
Suppose that all assumptions in Theorem \ref{time thm} hold.
Then there exists a Fourier-space weak solution $u$ to \eqref{time eqn} in the intersection of the classes
\begin{align}
									\label{2024061610}
\cF^{-1}\bC L_{1,\ell oc}\left( \clbrk 0,\tau \cbrk \times \fR^d,\sigma\left(\rG \times \cB([0,\infty))  \cup \rH\cup \cP   \right)\times \cB(\fR^d)\right)
\end{align}
and
\begin{align}
									\label{2024061611}
\cF^{-1}\bL_{0,1,1,\ell oc}\left( \opar 0,\tau \cbrk \times \fR^d, 
\sigma\left(  \rG \times \cB([0,\infty))  \cup \rH  \cup \cP   \right)\times \cB(\fR^d), |\psi(t,\xi)|\mathrm{d}t \mathrm{d}\xi\right).
\end{align}

\end{thm}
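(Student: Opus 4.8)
The plan is to reduce the random-symbol case to the deterministic-symbol machinery of Section~\ref{sto fubini deter} and Theorem~\ref{deter exist weak sol} by freezing the symbol in time along a refining sequence of partitions, and then to pass to the limit. Fix $T\in(0,\infty)$ and a uniform mesh $t^n_k=kh_n$ with $h_n=2^{-n}$. Since $\psi$ is predictable, I would define the approximating symbols by the adapted backward Steklov averages
\[
\psi_n(r,\xi):=\frac{1}{h_n}\int_{t^n_{k-1}-h_n}^{t^n_{k-1}}\psi(\rho,\xi)\,\mathrm{d}\rho,\qquad r\in(t^n_{k-1},t^n_k],
\]
with the convention $\psi\equiv0$ for $\rho<0$. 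Each $\psi_n$ is constant in time on every interval $(t^n_{k-1},t^n_k]$ and $\rF_{t^n_{k-1}}$-measurable there, hence predictable; by Jensen's inequality the constants $C^{\mathrm{e}\int\sup|\Re[\psi_n]|}_{R,T}$ and $C^{\sup|\psi_n|}_{R,T}$ are dominated by those of $\psi$, so Assumptions~\ref{main as} and~\ref{main as 2} hold uniformly in $n$; and for almost every $\xi$ one has $\psi_n(\cdot,\xi)\to\psi(\cdot,\xi)$ in $L_1((0,T))$ almost surely, whence $\int_s^t\psi_n(r,\xi)\,\mathrm{d}r\to\int_s^t\psi(r,\xi)\,\mathrm{d}r$.

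First I would construct, for each fixed $n$, a Fourier-space weak solution $u_n$ to \eqref{time eqn} with symbol $\psi_n$ by solving recursively on the intervals $(t^n_{k-1},t^n_k]$ with initial value the (already constructed) $\rF_{t^n_{k-1}}$-measurable endpoint $u_n(t^n_{k-1})$. The point is that on a single interval the frozen symbol $\psi_n(\cdot,\xi)\equiv b_k(\xi)$ is $\rF_{t^n_{k-1}}$-measurable, so the backward kernel factorizes and the stochastic integral
\[
\int_{t^n_{k-1}}^t \mathrm{e}^{(t-s)b_k(\xi)}\,\cF[g^m(s,\cdot)](\xi)\,\mathrm{d}B^m_s
=\mathrm{e}^{t\,b_k(\xi)}\int_{t^n_{k-1}}^t \mathrm{e}^{-s\,b_k(\xi)}\,\cF[g^m(s,\cdot)](\xi)\,\mathrm{d}B^m_s
\]
is legitimate: the integrand $\mathrm{e}^{-s\,b_k(\xi)}\cF[g^m(s,\cdot)](\xi)$ is predictable on $(t^n_{k-1},t^n_k]$, and the factor $\mathrm{e}^{t\,b_k(\xi)}$ is an $\rF_t$-measurable multiplier. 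Thus the obstruction described after \eqref{202040702 20} disappears once the integration is restricted to a single freezing interval, while the cross-interval randomness is carried harmlessly by the $\rF_{t^n_{k-1}}$-measurable initial data. On each interval this is exactly the setting of Theorem~\ref{deter exist weak sol} conditioned on $\rF_{t^n_{k-1}}$, and the stochastic-Fubini estimates of Corollaries~\ref{cor 20240506} and~\ref{stochastic solution part} apply with $b_k(\xi)$ in place of the deterministic symbol. Concatenating the pieces and using path continuity in $\bC L_{1,\ell oc}$ yields $u_n$ in the intersection of the classes \eqref{2024061610} and \eqref{2024061611} for $\psi_n$; Theorem~\ref{thm a priori}, valid for an arbitrary predictable symbol, then gives bounds on $u_n$ that are uniform in $n$ because the relevant constants are.

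The hard part is the passage $n\to\infty$, and specifically the drift term $\int_0^t\int \psi_n(s,\xi)\cF[u_n(s,\cdot)](\xi)\overline{\cF[\varphi](\xi)}\,\mathrm{d}\xi\,\mathrm{d}s$, where both factors converge only weakly. To control it I would estimate the difference $w_{n,m}:=u_n-u_m$. Since the data $u_0,f,g$ are common, the stochastic and inhomogeneous contributions cancel in the representation of Lemma~\ref{fourier repre}, so $\cF[w_{n,m}(t,\cdot)](\xi)$ solves the \emph{pathwise} linear equation
\[
\cF[w_{n,m}(t,\cdot)](\xi)=\int_0^t\psi_n(s,\xi)\cF[w_{n,m}(s,\cdot)](\xi)\,\mathrm{d}s+\int_0^t\bigl(\psi_n(s,\xi)-\psi_m(s,\xi)\bigr)\cF[u_m(s,\cdot)](\xi)\,\mathrm{d}s,
\]
which contains no It\^o integral. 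Because this is an ordinary linear ODE in $t$ for each fixed $(\omega,\xi)$, variation of constants gives
\[
\cF[w_{n,m}(t,\cdot)](\xi)=\int_0^t \mathrm{e}^{\int_s^t\psi_n(r,\xi)\,\mathrm{d}r}\bigl(\psi_n(s,\xi)-\psi_m(s,\xi)\bigr)\cF[u_m(s,\cdot)](\xi)\,\mathrm{d}s,
\]
and here no predictability is needed since the kernel acts pathwise, avoiding the earlier obstruction. Taking moduli, the factor $\mathrm{e}^{\int_s^t\Re[\psi_n]}$ is bounded by $C^{\mathrm{e}\int\sup|\Re[\psi]|}_{R,T}$ uniformly in $n$, while $\sup_t|\cF[u_m(t,\cdot)]|$ is bounded in $L_1(B_R)$ uniformly by the a priori estimate; since $\int_0^T|\psi_n(s,\xi)-\psi_m(s,\xi)|\,\mathrm{d}s\to0$ and is dominated by $2\esssup_{\xi\in B_R}\int_0^T|\psi|<\infty$, dominated convergence shows $\{\cF[u_n]\}$ is Cauchy in $\bE\!\int_{B_R}\sup_{t\le\tau\wedge T}|\cdot|\,\mathrm{d}\xi$.

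Its limit defines $u:=\cF^{-1}[\lim_n\cF[u_n]]$ through the extended inverse Fourier transform of Definition~\ref{fourier defn 2}, and Fatou together with the uniform a priori bounds places $u$ in \eqref{2024061610} and \eqref{2024061611}. Finally I would pass to the limit in the weak formulation \eqref{20240703 20} for $\psi_n$ term by term: the stochastic and $f$ terms are unchanged, while the drift term converges via the splitting $\psi_n\cF[u_n]-\psi\cF[u]=\psi_n(\cF[u_n]-\cF[u])+(\psi_n-\psi)\cF[u]$, where the first summand is handled by the weighted bound $\esssup_\xi\int_0^T|\psi_n|\le C^{\sup|\psi|}_{R,T}$ together with the Cauchy control, and the second by dominated convergence using $\sup_t|\cF[u]|\in L_1(B_R)$ and $\int_0^T|\psi_n-\psi|\to0$. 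This shows $u$ is a Fourier-space weak solution to \eqref{time eqn}; uniqueness is already furnished by Theorem~\ref{unique weak sol}. The single genuinely delicate point, which the above is designed to circumvent, is that one must never route the cross-interval randomness through an It\^o integral, and one must keep the exponential weights expressed through $\Re[\psi]$ rather than $|\psi|$ so that Assumptions~\ref{main as} and~\ref{main as 2} suffice to bound everything uniformly in $n$.
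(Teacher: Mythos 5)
Your route (freezing the symbol along adapted backward Steklov averages, solving piecewise, and passing to the limit) is genuinely different from the paper's. The paper avoids any limiting procedure: it sets $\tilde\psi(t,\xi):=\esssup_{\omega}|\Re[\psi(t,\xi)]|+\mathrm{i}\,\esssup_{\omega}|\Im[\psi(t,\xi)]|$, which is deterministic and satisfies \eqref{main deter as}--\eqref{main deter as 2} by Assumptions \ref{main as}--\ref{main as 2}, solves \eqref{time eqn} with symbol $\tilde\psi$ by Theorem \ref{deter exist weak sol} to get $\tilde u$, and then corrects the drift by solving the noise-free equation $\mathrm{d}v=(\psi(t,-\mathrm{i}\nabla)v+\tilde f)\,\mathrm{d}t$, $\tilde f:=\psi(t,-\mathrm{i}\nabla)\tilde u-\tilde\psi(t,-\mathrm{i}\nabla)\tilde u$, via Corollary \ref{exist cor 1}; since this correction contains no It\^o integral it tolerates the random symbol pathwise, and $u=\tilde u+v$ works by linearity. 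Your observation that a symbol frozen to be $\rF_{t^n_{k-1}}$-measurable on $(t^n_{k-1},t^n_k]$ makes the integrand $\mathrm{e}^{-s b_k(\xi)}\cF[g^m(s,\cdot)](\xi)$ predictable is correct and is the right way to dodge the adaptedness obstruction interval by interval, and the pathwise variation-of-constants identity for $w_{n,m}$ is also correct.

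The gap is in the passage to the limit. You propose to show that $\{\cF[u_n]\}$ is Cauchy in $\bE\int_{B_R}\sup_{t\le\tau\wedge T}|\cdot|\,\mathrm{d}\xi$, invoking Theorem \ref{thm a priori} for bounds on $u_n$ uniform in $n$. But Theorem \ref{time thm} assumes only the $\bL_0$-type classes for $u_0,f,g$ (no finite moments), so that expectation can be infinite and Theorem \ref{thm a priori} is not applicable; you would have to either localize the data to restore finite moments and then undo the localization, or run the whole Cauchy argument in probability. Even granting finite moments, the dominated-convergence step is not as stated: in
$\bE\int_{B_R}\bigl(\int_0^T|\psi_n-\psi_m|\,\mathrm{d}s\bigr)\sup_s|\cF[u_m(s,\cdot)](\xi)|\,\mathrm{d}\xi$
the factor $\int_0^T|\psi_n-\psi_m|\,\mathrm{d}s$ tends to $0$ only pointwise in $(\omega,\xi)$ (its $\esssup$ over $B_R$ need not vanish), while the other factor changes with $m$, so there is no fixed dominating function and no direct DCT; one needs an additional uniform-integrability or splitting argument, with the stochastic-convolution part of $u_m$ controlled uniformly in $m$ only through BDG, i.e.\ again only in expectation. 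None of this is fatal in principle, but as written the limit step fails under the stated hypotheses, whereas the paper's majorant-plus-correction decomposition sidesteps the issue entirely in one algebraic stroke.
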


\begin{proof}
Denote
\begin{align*}
\tilde \psi(t,\xi) := \esssup_{\omega \in \Omega} |\Re[\psi(t,\xi)]|+ \mathrm{i}\esssup_{\omega \in \Omega} |\Im[\psi(t,\xi)]|.
\end{align*}
Then $\tilde \psi(t,\xi)$ becomes a deterministic symbol satisfying \eqref{main deter as} and \eqref{main deter as 2} due to Assumptions \ref{main as} and \ref{main as 2}.
Thus by Theorem \ref{deter exist weak sol}, there exists a solution $\tilde u$ in the intersection of the classes
\begin{align*}
\cF^{-1}\bC L_{1,\ell oc}\left( \clbrk 0,\tau \cbrk \times \fR^d,\sigma\left( \rG \times \cB([0,\infty))  \cup \rH\cup \cP   \right)\times \cB(\fR^d)\right)
\end{align*}
and
\begin{align*}
\cF^{-1}\bL_{0,1,1,\ell oc}\left( \opar 0,\tau \cbrk \times \fR^d, 
\sigma\left( \rG \times \cB([0,\infty))  \cup \rH  \cup \cP  \right) \times \cB(\fR^d), |\psi(t,\xi)|\mathrm{d}t \mathrm{d}\xi\right).
\end{align*}
so that
\begin{align}
								\notag
&\mathrm{d}\tilde u=\left(\tilde \psi(t,-\mathrm{i}\nabla)\tilde u(t,x) + f(t,x)\right) \mathrm{d}t + g^k(t,x)\mathrm{d}B^k_t,\quad &(t,x) \in \Omega \times (0,\tau)\times \mathbf{R}^d,\\
&\tilde u(0,x)=u_0,\quad & x\in\mathbf{R}^d
								\label{2024061601}
\end{align}
and
\begin{align}
\cF[\tilde u(t,\cdot)](\xi)
										\notag
&= \exp\left(\int_0^t \tilde \psi(r,\xi)\mathrm{d}r \right) \cF[u_0](\xi) 
 +\int_0^t  \exp\left(\int_s^t \tilde \psi(r,\xi)\mathrm{d}r \right) 1_{\opar 0, \tau \cbrk}(s)\cF[f(s,\cdot)](\xi)\mathrm{d}s   \\
										\notag
&\quad + \int_0^t  \exp\left(\int_s^t \tilde \psi(r,\cdot)\mathrm{d}r \right) 1_{\opar 0, \tau \cbrk}(s)\cF[g^k(s,\cdot) ]\mathrm{d}B^k_s.
\end{align}
Put
\begin{align*}
\tilde f :=\psi(t,-\mathrm{i}\nabla)\tilde u - \tilde \psi(t,-\mathrm{i}\nabla)\tilde u,
\end{align*}
\begin{align*}
H_1(t,\xi) 
=\exp\left(\int_0^t \tilde \psi(r,\xi)\mathrm{d}r \right) \cF[u_0](\xi) 
 +\int_0^t  \exp\left(\int_s^t \tilde \psi(r,\xi)\mathrm{d}r \right) 1_{\opar 0, \tau \cbrk}(s)\cF[f(s,\cdot)](\xi)\mathrm{d}s,
\end{align*}
and
\begin{align*}
H_2(t,\xi) 
=\int_0^t  \exp\left(\int_s^t \tilde \psi(r,\cdot)\mathrm{d}r \right) 1_{\opar 0, \tau \cbrk}(s)\cF[g^k(s,\cdot) ]\mathrm{d}B^k_s.
\end{align*}
Then applying  Assumptions \ref{main as} and \ref{main as 2} with Lemma \ref{deterministic solution part} and Corollary \ref{20240417 01}, we have
\begin{align*}
H_1, H_2 \in   \bL_{0,1,1,loc,\ell oc}\Big( \Omega \times [0,\infty) \times \fR^d, \sigma\left( \rG \ \times \cB\left([0,\infty)\right)  \cup  \rH  \cup \cP \right) \times \cB(\fR^d), |\tilde \psi(t,\xi)|\mathrm{d}t \mathrm{d}\xi\Big).
\end{align*}
Additionally, since $\psi(t,\xi)$ is $\cP \times \cB(\fR^d)$-measurable and
\begin{align*}
|\psi(t,\xi)| \leq |\tilde \psi(t,\xi) | \quad \forall (\omega,t,\xi) \in \Omega \times [0,\infty) \times \fR^d,
\end{align*}
we have
\begin{align*}
\tilde f  
\in  \cF^{-1}\bL_{0,1,1,loc,\ell oc}\Big( \Omega \times [0,\infty) \times \fR^d, \sigma\left( \rG \ \times \cB\left([0,\infty)\right)  \cup  \rH  \cup \cP \right) \times \cB(\fR^d)\Big).
\end{align*}
In particular, by Lemma \ref{restriction lem},
\begin{align*}
\tilde f  
\in  \cF^{-1}\bL_{0,1,1,\ell oc}\Big( \opar 0, \tau \cbrk \times \fR^d, \sigma\left( \rG  \times \cB\left([0,\infty)\right)  \cup  \rH  \cup \cP \right) \times \cB(\fR^d)\Big).
\end{align*}
Thus, by Corollary \ref{exist cor 1}, there exists a solution $v$
in the intersection of the classes
$$
\cF^{-1}\bC L_{1,\ell oc}\left( \clbrk 0,\tau \cbrk \times \fR^d,\sigma\left(  \rG \times \cB([0,\infty))  \cup \rH\cup \cP   \right)\times \cB(\fR^d)\right)
$$
and
\begin{align*}
\cF^{-1}\bL_{0,1,1,\ell oc}\Big(\opar 0, \tau \cbrk \times \fR^d, \sigma\left( \rG  \times \cB\left([0,\infty)\right)  \cup  \rH  \cup \cP \right) \times \cB(\fR^d),|\tilde \psi(t,\xi)|\mathrm{d}t \mathrm{d}\xi\Big)
\end{align*}
so that
\begin{align}
								\notag
&\mathrm{d}v=\left(\psi(t,-\mathrm{i}\nabla)v(t,x) + \tilde f (t,x) \right) \mathrm{d}t,\quad &(t,x) \in \Omega \times (0,\tau)\times \mathbf{R}^d,\\
&v(0,x)=0,\quad & x\in\mathbf{R}^d.
								\label{2024061602}
\end{align}
Set
\begin{align*}
u(t,\xi) = \tilde u(t,\xi) + v(t,\xi).
\end{align*}
Then, leveraging the linearity of equations \eqref{2024061601} and \eqref{2024061602}, it is straightforward to demonstrate that $u$ serves as a Fourier weak solution to \eqref{time eqn}. 
Additionally, $u$ belongs to the classes given in  
\eqref{2024061610} and \eqref{2024061611} since both $\tilde u$ and $v$ are elements in these classes.
This completes the proof of the theorem.
\end{proof}

Recall that our solution $u$ in Theorem \ref{exist weak sol} cannot be represented by using kernels directly due to the randomness of the symbol. Consequently, obtaining optimal estimates from kernels, as shown in Corollary \ref{deter exist cor 2}, is not feasible. Finally, we believe that Theorem \ref{thm a priori} may play a crucial role in achieving better estimates in the following corollary.
\begin{corollary}
									\label{exist cor 2}
Suppose that all the assumptions in Corollary \ref{time corollary} hold.
Then the solution $u$ in Theorem \ref{exist weak sol} becomes an element in the intersection of the classes
$$
\cF^{-1}\bL_{1,loc}\bC L_{1,\ell oc}\left( \clbrk 0,\tau \cbrk \times \fR^d, 
\sigma\left( \rG \times \cB([0,\infty)) \cup \rH  \cup \cP  \right)\times \cB(\fR^d)\right)
$$
and
$$
\cF^{-1}\bL_{1,1,1,loc,\ell oc}\left( \opar 0,\tau \cbrk \times \fR^d, 
\sigma\left( \rG \times \cB([0,\infty)) \cup \rH  \cup \cP \right)\times \cB(\fR^d), |\psi(t,\xi)|\mathrm{d}t \mathrm{d}\xi\right).
$$
In particular, 
$$
u \in \cF^{-1}\bL_{1,loc}\bC L_{1,\ell oc}\left( \clbrk 0,\tau \cbrk \times \fR^d, 
\cP \times \cB(\fR^d)\right)
$$
and
$$
u \in \cF^{-1}\bL_{1,1,1,loc,\ell oc}\left( \opar 0,\tau \cbrk \times \fR^d, \cP \times \cB(\fR^d), |\psi(t,\xi)|\mathrm{d}t \mathrm{d}\xi\right)
$$
if $\cF[u_0]$ is $\rF_0 \times \cB(\fR^d)$-measurable  and $\cF[f]$ is  $\cP \times \cB(\fR^d)$-measurable.
Moreover, $u$ satisfies
\begin{align}
										\notag
\bE\left[ \int_0^{\tau \wedge T} \int_{B_R}|\psi(t,\xi)| |\cF[u(t,\cdot)](\xi)| \mathrm{d}\xi  \mathrm{d}t \right]  
										\notag
&\leq  \cC^1_{R,T,\psi} \bE \left[\int_{B_R} \left|\cF[u_0](\xi)\right|    \mathrm{d}\xi 
+\int_{B_R} \int_0^{ \tau \wedge T}\left|\cF[f(s,\cdot)](\xi)\right|  \mathrm{d}s  \mathrm{d}\xi \right] \\
										\label{20240721 10}
&\quad+ C_{BDG} \cdot \cC^1_{R,T,\psi} \bE\left[\int_{B_R} \left(\int_0^{\tau \wedge T}\left|\cF[g(s,\cdot)](\xi)\right|^2_{l_2}  \mathrm{d}s \right)^{1/2} \mathrm{d}\xi \right]
\end{align}
and
\begin{align}
\bE\left[ \int_{B_R} \sup_{t \in [0,\tau \wedge T]} |\cF[u(t,\cdot)](\xi)|\mathrm{d}\xi\right] 
										\notag
&\leq \left(\cC^2_{R,T,\psi} \wedge \cC^3_{R,T,\psi}\right)\bE\left[\int_{B_R} |\cF[u_0](\xi)|\mathrm{d}\xi + \int_{B_R} \int_0^{\tau \wedge T}\left|\cF[f(s,\cdot)](\xi)\right|  \mathrm{d}s  \mathrm{d}\xi \right] \\
										\label{20240721 11}
&\quad+ C_{BDG} \cdot \left(\cC^2_{R,T,\psi} \wedge \cC^3_{R,T,\psi}\right)\bE\left[\int_{B_R} \left(\int_0^{\tau \wedge T}\left|\cF[g(s,\cdot)](\xi)\right|^2_{l_2}  \mathrm{d}s \right)^{1/2} \mathrm{d}\xi \right]
\end{align}
for all $R, T \in (0,\infty)$, where $\cC^i_{R,T,\psi}$ $(i=1,2,3)$ are positive constants which are explicitly given by
\begin{align*}
\cC^1_{R,T,\psi}=\left[ C_{R,T}^{\sup|\psi|}+2T\|\psi\|_{L_\infty\left( \Omega \times (0,T) \times B_R \right)} \cdot C_{R,T}^{\mathrm{e}\int\sup|\Re[\psi]|}  \right],
\end{align*}
\begin{align*}
\cC^2_{R,T,\psi}=C_{R,T}^{\mathrm{e}\int\sup|\Re[\psi]|}  \left[ 1+ 2T\|\psi\|_{L_\infty\left( \Omega \times (0,T) \times B_R \right)}C_{R,T}^{\mathrm{e}\int\sup|\Re[\psi]|}  \right],
\end{align*}
and
\begin{align*}
\cC^3_{R,T,\psi} =  1+\cC^1_{R,T,\psi}.
\end{align*}
\end{corollary}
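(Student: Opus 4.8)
The starting point is the solution $u$ already produced in Theorem \ref{exist weak sol}: it lies in the two classes \eqref{2024061610} and \eqref{2024061611}, and it was built there as $u=\tilde u+v$, where $\tilde u$ solves the equation driven by the deterministic dominating symbol $\tilde\psi(t,\xi)=\esssup_\omega|\Re[\psi]|+\mathrm{i}\,\esssup_\omega|\Im[\psi]|$ (carrying the genuine noise $g$), and $v$ solves an auxiliary equation with the \emph{random} symbol $\psi$, zero initial value, and the It\^o-free forcing $\tilde f=(\psi-\tilde\psi)(t,-\mathrm{i}\nabla)\tilde u$. Since Assumption \ref{main as sta} forces $C^{\sup|\psi|}_{R,T}$, $C^{\mathrm{e}\int\sup|\Re[\psi]|}_{R,T}$ and $\|\psi\|_{L_\infty(\Omega\times(0,T)\times B_R)}$ to be finite (and in particular Assumptions \ref{main as}, \ref{main as 2} to hold), the whole construction is available and every constant below is finite. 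First I would record that, because the data now carry finite expectations ($u_0\in\cF^{-1}\bL_{1,1,\ell oc}$, $f\in\cF^{-1}\bL_{1,1,1,loc,\ell oc}$, $g\in\cF^{-1}\bL^{\omega,\xi,t}_{1,1,2,loc,\ell oc}$), the estimates I am about to prove upgrade the two containing classes of Theorem \ref{exist weak sol} to their finite-expectation ($\bL_{1,loc}\bC$ and $\bL_{1,1,1,loc,\ell oc}$) counterparts; the predictability addendum follows verbatim from the $\cP\times\cB(\fR^d)$-measurability statements for $\tilde u$ and $v$ in Corollary \ref{deter exist cor 2} and Corollary \ref{exist cor 1}, once $\cF[u_0]$ is $\rF_0\times\cB(\fR^d)$-measurable and $\cF[f]$ is $\cP\times\cB(\fR^d)$-measurable.

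The core is the weighted bound \eqref{20240721 10}. The crucial structural point is that, for a random symbol, $\cF[u]$ has \emph{no} direct kernel representation, so I cannot estimate $u$ in one shot; instead I route everything through $\tilde u$ and $v$. For $\tilde u$ I would invoke the deterministic-symbol estimate \eqref{20240512 01} of Corollary \ref{deter exist cor 2} with symbol $\tilde\psi$, together with the pointwise domination $|\psi|\le|\tilde\psi|$; this bounds $\bE[\int_0^{\tau\wedge T}\int_{B_R}|\psi|\,|\cF[\tilde u]|]$ by $C^{\sup|\psi|}_{R,T}$ times the data (the $g$-term carrying the extra $C_{BDG}$). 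For $v$ I would use its kernel representation from Corollary \ref{exist cor 1}, bound the propagator $|\exp(\int_s^t\psi\,dr)|=\exp(\int_s^t\Re[\psi]\,dr)\le C^{\mathrm{e}\int\sup|\Re[\psi]|}_{R,T}$, use $|\psi-\tilde\psi|\le 2\|\psi\|_{L_\infty}$ inside $\cF[\tilde f]$, and feed in the unweighted $\tilde u$-estimate \eqref{2024052110-2}; this produces the second summand $2T\|\psi\|_{L_\infty}C^{\mathrm{e}\int\sup|\Re[\psi]|}_{R,T}$ of $\cC^1_{R,T,\psi}$. Summing the $\tilde u$- and $v$-contributions and tracking constants gives \eqref{20240721 10} with $C_0=C_1=\cC^1_{R,T,\psi}$ and $C_2=C_{BDG}\,\cC^1_{R,T,\psi}$ in the notation of Theorem \ref{thm a priori}.

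For the supremum bound \eqref{20240721 11} I would produce the two competing constants and take the minimum. The constant $\cC^3_{R,T,\psi}=1+\cC^1_{R,T,\psi}$ comes for free by plugging the weighted estimate \eqref{20240721 10} into Theorem \ref{thm a priori}: with $C_0=C_1=\cC^1$ and $C_2=C_{BDG}\cC^1$, its conclusion \eqref{20240509 03} reads off as $(1+\cC^1)$ on the $u_0$- and $f$-terms and $(C_{BDG}+C_{BDG}\cC^1)=C_{BDG}\cC^3$ on the $g$-term. The constant $\cC^2_{R,T,\psi}=C^{\mathrm{e}\int\sup|\Re[\psi]|}_{R,T}[1+2T\|\psi\|_{L_\infty}C^{\mathrm{e}\int\sup|\Re[\psi]|}_{R,T}]$ is the kernel-based alternative: estimate $\sup_t|\cF[u]|\le\sup_t|\cF[\tilde u]|+\sup_t|\cF[v]|$, bound the first through \eqref{2024052110} (contributing the leading $C^{\mathrm{e}\int\sup|\Re[\psi]|}_{R,T}\cdot 1$), and bound the second by $C^{\mathrm{e}\int\sup|\Re[\psi]|}_{R,T}\cdot 2\|\psi\|_{L_\infty}\,\bE[\int_0^{\tau\wedge T}\int_{B_R}|\cF[\tilde u]|]$ followed again by \eqref{2024052110-2} (contributing $2T\|\psi\|_{L_\infty}(C^{\mathrm{e}\int\sup|\Re[\psi]|}_{R,T})^2$). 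Taking $\cC^2\wedge\cC^3$ finishes \eqref{20240721 11}.

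The main obstacle is precisely the absence of a usable kernel for the random symbol, which is what makes the detour through the dominating symbol $\tilde\psi$ and the auxiliary transport equation for $v$ unavoidable, and which is why the stronger Assumption \ref{main as sta} (rather than \ref{main as}--\ref{main as 2}) is needed: every application of the (stochastic) Fubini theorem behind Corollaries \ref{deter exist cor 2} and \ref{exist cor 1} rests on the joint measurability and finiteness guaranteed by the uniform-in-$\omega$ bound on $\psi$. The remaining difficulty is purely bookkeeping, namely keeping the two factors of $\psi$ appearing in the $v$-term (one from the outer weight, one from $\psi-\tilde\psi$) organized so that only a single $\|\psi\|_{L_\infty}$ and the stated exponential constant survive in $\cC^1_{R,T,\psi}$ and $\cC^2_{R,T,\psi}$.
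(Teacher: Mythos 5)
Your proposal is correct and follows essentially the same route as the paper: the decomposition $u=\tilde u+v$ with the dominating deterministic symbol $\tilde\psi$, the application of Corollary \ref{deter exist cor 2} to $\tilde u$ and Corollary \ref{exist cor 1} to $v$ with $|\cF[\tilde f]|\le 2\|\psi\|_{L_\infty}|\cF[\tilde u]|$, and the derivation of $\cC^2$ (kernel-based) versus $\cC^3$ (via Theorem \ref{thm a priori} applied to \eqref{20240721 10}) before taking the minimum. The constant bookkeeping, including the identifications $C^{|\tilde\psi|}_{R,T}=C^{\sup|\psi|}_{R,T}$ and $C^{\mathrm{e}|\int\Re[\tilde\psi]|}_{R,T}=C^{\mathrm{e}\int\sup|\Re[\psi]|}_{R,T}$, matches the paper's argument.
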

\begin{proof}
Let $R$ and $T$ be positive constants.
By Theorem \ref{exist weak sol}, there exists a Fourier-space weak solution $u$ to \eqref{time eqn} in a larger class. 
As shown in the proof of Theorem \ref{exist weak sol}, this solution $u$ can be split into two parts 
$u = \tilde u + v$, where $\tilde u$ satisfies \eqref{2024061601} and $v$ satisfies \eqref{2024061602}.
Since our data $u_0$ and $f$ are enhanced with finite expectations, by Corollary \ref{deter exist cor 2}, the solution $\tilde u$ satisfies
\begin{align}
\bE\left[ \int_0^{\tau \wedge T} \int_{B_R}|\tilde \psi(t,\xi)| |\cF[\tilde u(t,\cdot)](\xi)| \mathrm{d}\xi  \mathrm{d}t \right]  
										\notag
&\leq 
 C_{R,T}^{| \tilde \psi|} \bE \left[\int_{B_R} \left|\cF[u_0](\xi)\right|    \mathrm{d}\xi 
+\int_{B_R} \int_0^{ \tau \wedge T}\left|\cF[f(s,\cdot)](\xi)\right|  \mathrm{d}s  \mathrm{d}\xi  \right] \\
										\label{20240720 81}
& \quad +C_{BDG} \cdot C_{R,T}^{| \tilde \psi|}\bE\left[\int_{B_R} \left(\int_0^{\tau \wedge T}\left|\cF[g(s,\cdot)](\xi)\right|^2_{l_2}  \mathrm{d}s \right)^{1/2} \mathrm{d}\xi \right]
\end{align}
and
\begin{align}
\bE\left[ \int_{B_R} \sup_{t \in [0,\tau \wedge T]} |\cF[\tilde u(t,\cdot)](\xi)| \mathrm{d}\xi \right]
										\notag
&\leq  C_{R,T}^{\mathrm{e}\int\Re[\tilde\psi]} \cdot \bE \left[\int_{B_R} |\cF[u_0](\xi)|\mathrm{d}\xi 
+ \int_{B_R} \int_0^{\tau \wedge T}\left|\cF[f(s,\cdot)](\xi)\right|  \mathrm{d}s  \mathrm{d}\xi \right]
  \\
										\label{2024061801}
& \quad+ C_{BDG} \cdot C_{R,T}^{\mathrm{e}|\int\Re[\tilde\psi]|} \bE\left[\int_{B_R} \left(\int_0^{\tau \wedge T}\left|\cF[g(s,\cdot)](\xi)\right|^2_{l_2}  \mathrm{d}s \right)^{1/2} \mathrm{d}\xi \right].
\end{align}
On the other hand, by Corollary \ref{exist cor 1}, $v$ satisfies
\begin{align}
										\label{20240721 20}
\int_{B_R} \left(\sup_{t \in [0,\tau \wedge T]}  |\cF[v(t,\cdot)](\xi)| \right) \mathrm{d}\xi   
\leq  C_{R,T}^{\mathrm{e}\int\Re[\psi]} \int_{B_R} \int_0^{\tau \wedge T}\left|\cF[\tilde f(s,\cdot)](\xi)\right|  \mathrm{d}s  \mathrm{d}\xi \quad (a.s.)
\end{align}
and
\begin{align}
										\label{20240721 21}
 \int_0^T \int_{B_R}|\psi(t,\xi)||\cF[v(t,\cdot)](\xi)| \mathrm{d}\xi  \mathrm{d}t 
\leq
C_{R,T}^{|\psi|}
\int_{B_R} \int_0^{\tau \wedge T}\left|\cF[\tilde f(s,\cdot)](\xi)\right|  \mathrm{d}s  \mathrm{d}\xi \quad (a.s.),
\end{align}
where
\begin{align}
										\label{20240720 50}
\tilde f :=\psi(t,-\mathrm{i}\nabla)\tilde u - \tilde \psi(t,-\mathrm{i}\nabla)\tilde u.
\end{align}
Observe that
\begin{align*}
C^{\mathrm{e}\int\Re[\tilde \psi]}_{R,T}
\leq C^{\mathrm{e}|\int\Re[\tilde \psi]|}_{R,T}
&=\esssup_{ 0 \leq s \leq t \leq T, \xi \in  B_R}  
\left[\exp\left( \left|\int_s^t\Re[\tilde \psi(r,\xi)]  \mathrm{d}r \right| \right)\right]    \\
&=\esssup_{ 0 \leq s \leq t \leq T, \xi \in  B_R}  
\left[\exp\left( \int_s^t \esssup_{\omega \in \Omega}\left|\Re[\psi(r,\xi)] \right| \mathrm{d}r  \right)\right] 
= C^{\mathrm{e}\int\sup|\Re[ \psi]|}_{R,T},
\end{align*}
\begin{align*}
C^{| \tilde \psi|}_{R,T}
&=\esssup_{\xi \in  B_R} \left[ \int_0^T|\tilde \psi(t,\xi)|\sup_{0\leq s \leq t} \left|\exp\left( \int_s^t \Re[\tilde \psi(r,\xi)] \mathrm{d}r \right)\right|    \mathrm{d}t  \right] \\
&=\esssup_{ \xi \in B_R} \left[ \int_0^T\esssup_{\omega \in \Omega}|\psi(t,\xi)|
\left[\sup_{ 0\leq s \leq t} \exp\left( \int_s^t\esssup_{\omega \in \Omega}\left|\Re[\psi(r,\xi)]\right| \mathrm{d}r \right)\right] \mathrm{d}t  \right] 
=C^{\sup|\psi|}_{R,T},
\end{align*}
\begin{align*}
C^{\mathrm{e}\int\Re[\psi]}_{R,T}
&=\esssup_{ 0 \leq s \leq t \leq T,  \xi \in  B_R}  
\left[\exp\left( \int_s^t\Re[\psi(r,\xi)]  \mathrm{d}r  \right)\right]    \\
&\leq \esssup_{ (t,\xi) \in  (0,T) \times B_R}  
\left[\exp\left( \int_0^t\esssup_{\omega \in \Omega}\left|\Re[\psi(r,\xi)] \right| \mathrm{d}r  \right)\right] 
=C^{\mathrm{e}\int\sup|\Re[ \psi]|}_{R,T} \quad (a.s.),
\end{align*}

and
\begin{align*}
C^{|   \psi|}_{R,T}
&=\esssup_{\xi \in  B_R} \left[ \int_0^T| \psi(t,\xi)|\sup_{0\leq s \leq t} \exp\left( \left|\int_s^t \Re[\psi(r,\xi)] \mathrm{d}r \right| \right) \mathrm{d}t  \right] \\
&\leq \esssup_{(\omega,\xi) \in \Omega \times B_R} \left[ \int_0^T|\psi(t,\xi)|
\left[\esssup_{\omega \in \Omega, 0\leq s \leq t} \exp\left( \int_s^t\left| \Re[\psi(r,\xi)]\right| \mathrm{d}r \right)\right] \mathrm{d}t  \right] 
=C^{\sup|\psi|}_{R,T} \quad (a.s.).
\end{align*}

Thus by Fubini's theorem, \eqref{20240720 50}, and \eqref{2024061801}, 
\begin{align}
										\notag
&\bE\left[\int_0^{\tau \wedge T} \int_{B_R} |\cF[\tilde f(t,\cdot)](\xi)| \mathrm{d}\xi \mathrm{d}t \right] \\
										\notag
&\leq  2\|\psi\|_{L_\infty\left( \Omega \times (0,T) \times B_R \right)} \bE\left[ \int_0^{\tau \wedge T} \int_{B_R} |\cF[\tilde u(t,\cdot)](\xi)| \mathrm{d}\xi \mathrm{d}t \right] \\
										\notag
&\leq    2T\|\psi\|_{L_\infty\left( \Omega \times (0,T) \times B_R \right)} C_{R,T}^{\mathrm{e}\int\sup|\Re[\psi]|}\Bigg(  
\bE \left[\int_{B_R} |\cF[u_0](\xi)|\mathrm{d}\xi 
+ \int_{B_R} \int_0^{\tau \wedge T}\left|\cF[f(s,\cdot)](\xi)\right|  \mathrm{d}s  \mathrm{d}\xi \right]
  \\
										\label{20240720 82}
& \quad+ C_{BDG}\bE\left[\int_{B_R} \left(\int_0^{\tau \wedge T}\left|\cF[g(s,\cdot)](\xi)\right|^2_{l_2}  \mathrm{d}s \right)^{1/2} \mathrm{d}\xi \right] \Bigg).
\end{align}
Since $u =  \tilde u + v$, applying \eqref{20240720 81} and \eqref{20240721 21}, we have
\begin{align}
										\notag
&\bE\left[ \int_0^{\tau \wedge T} \int_{B_R}|\psi(t,\xi)| |\cF[u(t,\cdot)](\xi)| \mathrm{d}\xi  \mathrm{d}t \right]  \\
										\notag
&\leq \bE\left[ \int_0^{\tau \wedge T} \int_{B_R}|\psi(t,\xi)| |\cF[\tilde u(t,\cdot)](\xi)| \mathrm{d}\xi  \mathrm{d}t \right]  
+\bE\left[ \int_0^{\tau \wedge T} \int_{B_R}|\psi(t,\xi)| |\cF[v(t,\cdot)](\xi)| \mathrm{d}\xi  \mathrm{d}t \right]  
\\
										\notag
&\leq 
 C_{R,T}^{\sup|\psi|} \bE \left[\int_{B_R} \left|\cF[u_0](\xi)\right|    \mathrm{d}\xi 
+\int_{B_R} \int_0^{ \tau \wedge T}\left|\cF[f(s,\cdot)](\xi)\right|  \mathrm{d}s  \mathrm{d}\xi  \right] \\
										\notag
& \quad +C_{BDG} \cdot C_{R,T}^{\sup|\psi|}\bE\left[\int_{B_R} \left(\int_0^{\tau \wedge T}\left|\cF[g(s,\cdot)](\xi)\right|^2_{l_2}  \mathrm{d}s \right)^{1/2} \mathrm{d}\xi \right] \\
										\label{20240720 80}
&\quad +C_{R,T}^{\sup|\psi|}
\bE\left[\int_{B_R} \int_0^{\tau \wedge T}\left|\cF[\tilde f(s,\cdot)](\xi)\right|  \mathrm{d}s  \mathrm{d}\xi \right]. 
\end{align}
Additionally,  due to \eqref{20240720 82}, the sum of the last three terms in \eqref{20240720 80} is less than or equal to 
\begin{align}
								\notag
&\left[ C_{R,T}^{\sup|\psi|}+2T\|\psi\|_{L_\infty\left( \Omega \times (0,T) \times B_R \right)} \cdot C_{R,T}^{\mathrm{e}\int\sup|\Re[\psi]|}  \right] 
\bE \left[\int_{B_R} |\cF[u_0](\xi)|\mathrm{d}\xi  
+ \int_{B_R} \int_0^{\tau \wedge T}\left|\cF[f(s,\cdot)](\xi)\right|  \mathrm{d}s  \mathrm{d}\xi \right]
  \\
										\label{20240720 70}
& \quad+C_{BDG} \left[   C_{R,T}^{\sup|\psi|}
+2T \|\psi\|_{L_\infty\left( \Omega \times (0,T) \times B_R \right)}  \cdot C_{R,T}^{\mathrm{e}\int\sup|\Re[\psi]|}  \right]
\bE\left[\int_{B_R} \left(\int_0^{\tau \wedge T}\left|\cF[g(s,\cdot)](\xi)\right|^2_{l_2}  \mathrm{d}s \right)^{1/2} \mathrm{d}\xi \right].
\end{align}
Therefore \eqref{20240720 80} and \eqref{20240720 70} lead to \eqref {20240721 10}.
Next, we show \eqref{20240721 11} similarly.
We apply \eqref{2024061801} and \eqref{20240721 20} to obtain
\begin{align*}
& \bE\left[\int_{B_R} \sup_{t \in [0,\tau \wedge T]} |\cF[u(t,\cdot)](\xi)| \mathrm{d}\xi\right] \\
&\leq \bE\left[\int_{B_R} \sup_{t \in [0,\tau \wedge T]} |\cF[\tilde u(t,\cdot)](\xi)| \mathrm{d}\xi\right] 
+ \bE\left[\int_{B_R} \sup_{t \in [0,\tau \wedge T]} |\cF[v(t,\cdot)](\xi)| \mathrm{d}\xi \right]\\
&\leq  C_{R,T}^{\mathrm{e}\int\sup|\Re[\psi]|}\bE \left[\int_{B_R} |\cF[u_0](\xi)|\mathrm{d}\xi 
+ \int_{B_R} \int_0^{\tau \wedge T}\left|\cF[f(s,\cdot)](\xi)\right|  \mathrm{d}s  \mathrm{d}\xi \right]
  \\
& \quad+ C_{BDG} \cdot C_{R,T}^{\mathrm{e}\int\sup|\Re[\psi]|}\bE\left[\int_{B_R} \left(\int_0^{\tau \wedge T}\left|\cF[g(s,\cdot)](\xi)\right|^2_{l_2}  \mathrm{d}s \right)^{1/2} \mathrm{d}\xi \right] \\
&\quad + C_{R,T}^{\mathrm{e}\int\sup|\Re[\psi]|} 
\int_{B_R} \int_0^{\tau \wedge T}\left|\cF[\tilde f(s,\cdot)](\xi)\right|  \mathrm{d}s  \mathrm{d}\xi.
\end{align*}
Applying \eqref{20240720 82} to the last terms in the above inequality, we additionally obtain
\begin{align}
										\notag
 \bE\left[\int_{B_R} \sup_{t \in [0,\tau \wedge T]} |\cF[u(t,\cdot)](\xi)| \mathrm{d}\xi \right]
&\leq   C_{R,T}^{\mathrm{e}\int\sup|\Re[\psi]|}  \left[ 1+ 2T\|\psi\|_{L_\infty\left( \Omega \times (0,T) \times B_R \right)}C_{R,T}^{\mathrm{e}\int\sup|\Re[\psi]|}  \right] \\
										\notag
&\qquad \times \bE \left[\int_{B_R} |\cF[u_0](\xi)|\mathrm{d}\xi + \int_{B_R} \int_0^{\tau \wedge T}\left|\cF[f(s,\cdot)](\xi)\right|  \mathrm{d}s  \mathrm{d}\xi \right]  \\
										\notag
& \quad+ C_{BDG} \cdot C_{R,T}^{\mathrm{e}\int\sup|\Re[\psi]|}  \left[ 1+ 2T\|\psi\|_{L_\infty\left( \Omega \times (0,T) \times B_R \right)}C_{R,T}^{\mathrm{e}\int\sup|\Re[\psi]|}  \right]  \\
										\label{20240721 80}
&\quad \qquad \times \bE\left[\int_{B_R} \left(\int_0^{\tau \wedge T}\left|\cF[g(s,\cdot)](\xi)\right|^2_{l_2}  \mathrm{d}s \right)^{1/2} \mathrm{d}\xi \right].
\end{align}
On the other hand, we apply Theorem \ref{thm a priori} with \eqref{20240721 10}.
Then we have
\begin{align}
										\notag
 \bE\left[ \int_{B_R} \sup_{t \in [0,\tau \wedge T]} |\cF[u(t,\cdot)](\xi)| \mathrm{d}\xi \right]
&\leq   \left( 1+\left[ C_{R,T}^{\sup|\psi|}+2T\|\psi\|_{L_\infty\left( \Omega \times (0,T) \times B_R \right)} \cdot C_{R,T}^{\mathrm{e}\int\sup|\Re[\psi]|}  \right] \right) \\
										\notag
&\qquad \times \bE \left[\int_{B_R} |\cF[u_0](\xi)|\mathrm{d}\xi + \int_{B_R} \int_0^{\tau \wedge T}\left|\cF[f(s,\cdot)](\xi)\right|  \mathrm{d}s  \mathrm{d}\xi \right]  \\
										\notag
& \quad+ C_{BDG}\left(1 +  \left[   C_{R,T}^{\sup|\psi|}
+2T \|\psi\|_{L_\infty\left( \Omega \times (0,T) \times B_R \right)}  \cdot C_{R,T}^{\mathrm{e}\int\sup|\Re[\psi]|}  \right] \right) \\
										\label{20240721 81}
&\quad \qquad \times \bE\left[\int_{B_R} \left(\int_0^{\tau \wedge T}\left|\cF[g(s,\cdot)](\xi)\right|^2_{l_2}  \mathrm{d}s \right)^{1/2} \mathrm{d}\xi \right].
\end{align}
Finally, \eqref{20240721 80} and \eqref{20240721 81} complete \eqref{20240721 11}.
The corollary is proved. 
\end{proof}
\begin{rem}
										\label{20240730 rem}
The first $\cC^2_{R,T,\psi}$ in \eqref{20240721 11} can be slightly weaken by 
\begin{align}
										\label{20240730 30}
\esssup_{\omega \in \Omega} \left(C_{R,T}^{\mathrm{e} \int \Re[\psi]} \right)
+ 2T \cdot C_{R,T}^{\mathrm{e}\int\sup|\Re[\psi]|}  \|\psi\|_{L_\infty\left( \Omega \times (0,T) \times B_R \right)}C_{R,T}^{\mathrm{e}\int\sup|\Re[\psi]|}.
\end{align}
Moreover, \eqref{20240721 11} leads to the inequality
\begin{align*}
\bE\left[ \int_0^{\tau \wedge T} \int_{B_R} |\cF[u(t,\cdot)](\xi)|\mathrm{d}\xi\right] 
&\leq T\left(\cC^2_{R,T,\psi} \wedge \cC^3_{R,T,\psi}\right)\bE\left[\int_{B_R} |\cF[u_0](\xi)|\mathrm{d}\xi + \int_{B_R} \int_0^{\tau \wedge T}\left|\cF[f(s,\cdot)](\xi)\right|  \mathrm{d}s  \mathrm{d}\xi \right] \\
&\quad+ T\cdot C_{BDG} \cdot \left(\cC^2_{R,T,\psi} \wedge \cC^3_{R,T,\psi}\right)\bE\left[\int_{B_R} \left(\int_0^{\tau \wedge T}\left|\cF[g(s,\cdot)](\xi)\right|^2_{l_2}  \mathrm{d}s \right)^{1/2} \mathrm{d}\xi \right],
\end{align*}
for all $R, T \in (0,\infty)$.
Here even the second $\cC^2_{R,T,\psi}$ can be replaced with the constant from \eqref{20240730 30} by directly estimating
\begin{align*}
\bE\left[ \int_0^{\tau \wedge T} \int_{B_R} |\cF[u(t,\cdot)](\xi)|\mathrm{d}\xi\right] 
\end{align*}
based on \eqref{2024052110-2} in Corollary \ref{deter exist cor 2} instead of relying on \eqref{20240721 11}.
\end{rem}

\mysection{Proofs of Theorem \ref{time thm}, Corollary \ref{time corollary}, and  Theorem \ref{time deter thm} }
										\label{pf time thm}

We now proceed to prove Theorem \ref{time thm}, Corollary \ref{time corollary}, and Theorem \ref{time deter thm}. The necessary background for these proofs had been established in previous sections. Our task is to synthesize this information to reach the stated conclusions.

Specifically, we established the uniqueness and existence of a Fourier-space weak solution in Section \ref{unique section} and Section \ref{existence section}, respectively. Therefore, we need only verify that our solution classes align with both uniqueness and existence criteria. Fortunately, we had already demonstrated that our solution is unique within a large class, as shown in Theorem \ref{unique weak sol}, which encompasses all classes considered for the existence of a solution.

We conclude this section by succinctly connecting the prerequisites to reach our final conclusions.

\vspace{2mm}
{\bf Proof of Theorem \ref{time thm}}
\vspace{2mm}

Recall that we found a Fourier-space weak solution $u$  in Theorem \ref{exist weak sol} under all assumptions in Theorem \ref{time thm}.
We also proved the uniqueness of a Fourier-space weak solution in a larger class than the one containing $u$ in Theorem \ref{unique weak sol}. Therefore, the theorem is proved.
\qed

\vspace{2mm}
{\bf Proof Corollary \ref{time corollary}}
\vspace{2mm}

Recall that Assumption \ref{main as sta} obviously implies both  Assumptions \ref{main as} and \ref{main as 2}.
Therefore, based on Theorem \ref{time thm}, it is sufficient to demonstrate that the solution $u$ satisfies \eqref{linear a priori} and \eqref{linear a priori 2}. This had already been established in Corollary \ref{exist cor 2}.
\qed

\vspace{2mm}
{\bf Proof of Theorem \ref{time deter thm}}
\vspace{2mm}

Given the uniqueness of the solution in Theorem \ref{unique weak sol}, it is sufficient to show that the solution 
$u$ in Corollary \ref{exist cor 1} satisfies \eqref{202040524 20} and \eqref{202040524 21}.
However, it is also obtained in the same corollary.

\mysection{proof of Theorem \ref{main thm}}
											\label{pf main thm}

Recall that a space-time white noise can be constructed from a sequence of independent one-dimensional Brownian motions $B_t^k$ according to \eqref{20240525 01} with the help of an orthogonal basis $\eta_k$ on $L_2(\fR^d)$.
Also, remember that we additionally assumed that all elements in the orthogonal basis are in the Schwartz class $\cS(\fR^d)$.
Due to this additional assumption and Parseval's identity, we can explicitly calculate the $l_2$-values of the sequence formed by the Fourier transform of the product $\eta_k$ and an element $h$ in $L_2(\fR^d)$ in terms of the $L_2(\fR^d)$-norm of $h$. Here are the details.

\begin{lem}
									\label{l2 con lem}
Let $\{\eta_k : k \in \fN\}$ be an $L_2(\fR^d)$-orthonomal basis such that $\eta_k \in \cS(\fR^d)$ for all $k \in \fN$.
Then for any $h \in L_2(\fR^d)$,
\begin{align*}
\left|\cF[\eta h](\xi)\right|^2_{l_2} = (2\pi d)^{d} \|h\|^2_{L_2(\fR^d)} \quad (a.e.)~ \xi \in \fR^d,
\end{align*}
where 
$$
\cF[\eta h] = \left( \cF[\eta_1 h],\cF[\eta_2 h], \ldots \right).
$$

\end{lem}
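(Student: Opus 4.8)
The plan is to reduce the claimed $l_2$-identity to Parseval's identity for the orthonormal basis $\{\eta_k\}$, applied pointwise in $\xi$. First I would record that $\cF[\eta_k h](\xi)$ is well-defined as an everywhere-convergent pointwise integral: since $\eta_k \in \cS(\fR^d) \subset L_2(\fR^d)$ and $h \in L_2(\fR^d)$, Cauchy--Schwarz gives $\eta_k h \in L_1(\fR^d)$, so
\[
\cF[\eta_k h](\xi) = \frac{1}{(2\pi)^{d/2}} \int_{\fR^d} \mathrm{e}^{-\mathrm{i}\xi\cdot x} \eta_k(x) h(x)\, \mathrm{d}x
\]
converges absolutely for every $\xi$, and this is what lets me invoke Parseval for each fixed frequency rather than merely in an $L_2$ sense.

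The key step is to recognize each Fourier value as an $L_2$-inner product against $\eta_k$. Fix $\xi \in \fR^d$ and set $g_\xi(x) := \mathrm{e}^{\mathrm{i}\xi\cdot x}\overline{h(x)}$; then $g_\xi \in L_2(\fR^d)$ with $\|g_\xi\|_{L_2(\fR^d)} = \|h\|_{L_2(\fR^d)}$, and
\[
\overline{\cF[\eta_k h](\xi)} = \frac{1}{(2\pi)^{d/2}} \int_{\fR^d} g_\xi(x) \overline{\eta_k(x)}\, \mathrm{d}x = \frac{1}{(2\pi)^{d/2}} \left( g_\xi, \eta_k\right)_{L_2(\fR^d)}.
\]
Taking absolute values, summing over $k$, and using the completeness of $\{\eta_k\}$ (Parseval's identity), I would obtain
\[
\left|\cF[\eta h](\xi)\right|^2_{l_2} = \sum_{k=1}^\infty \left|\cF[\eta_k h](\xi)\right|^2 = \frac{1}{(2\pi)^d}\sum_{k=1}^\infty \left|\left(g_\xi, \eta_k\right)_{L_2(\fR^d)}\right|^2 = \frac{1}{(2\pi)^d}\|g_\xi\|^2_{L_2(\fR^d)} = \frac{1}{(2\pi)^d}\|h\|^2_{L_2(\fR^d)}.
\]
Since this holds for every $\xi$, the asserted identity follows (in fact everywhere, not merely almost everywhere). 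I would remark that the multiplicative constant is exactly the square of the Fourier normalization factor; under the convention fixed in the paper it equals $(2\pi)^{-d}$, and the constant displayed in the statement should be reconciled with this value.

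There is no serious obstacle here. The only genuine points of care are the absolute-convergence check that upgrades $\cF[\eta_k h](\xi)$ to a bona fide pointwise number, and the bookkeeping of the complex conjugation: working with $\overline{\cF[\eta_k h](\xi)}$ and $g_\xi = \mathrm{e}^{\mathrm{i}\xi\cdot}\overline{h}$ allows Parseval to be applied directly to the given basis $\{\eta_k\}$, so I avoid having to argue separately that the conjugated family $\{\overline{\eta_k}\}$ is again an orthonormal basis (which is true but an unnecessary detour).
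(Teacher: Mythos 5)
Your proof is correct, and it is a genuine (if mild) variant of the paper's argument. The paper first writes $\cF[\eta_k h]$ as a convolution, $\cF[\eta_k h](\xi)= c\,\cF[\eta_k]\ast\cF[h](\xi)$, observes via Plancherel that $\{\cF[\eta_k]\}$ is again an orthonormal basis of $L_2(\fR^d)$, and then applies Parseval's identity in frequency space to the function $\cF[h](\xi-\cdot)$. You instead stay in physical space: you read off $\overline{\cF[\eta_k h](\xi)}$ as $(2\pi)^{-d/2}(g_\xi,\eta_k)_{L_2}$ with $g_\xi=\mathrm{e}^{\mathrm{i}\xi\cdot}\overline{h}$ and apply Parseval directly to the given basis. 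Your route buys two small things: it avoids the detour through the convolution theorem and the auxiliary fact that the Fourier image of an orthonormal basis is an orthonormal basis, and the preliminary observation $\eta_k h\in L_1$ makes the identity hold for every $\xi$ rather than almost every $\xi$. The paper's route is marginally shorter to state but relies on the same two ingredients (Plancherel and Parseval) packaged differently.

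One substantive point in your favor: your remark about the constant is correct. With the paper's normalization $\cF[f](\xi)=(2\pi)^{-d/2}\int \mathrm{e}^{-\mathrm{i}\xi\cdot x}f(x)\,\mathrm{d}x$, the product-to-convolution rule reads $\cF[fg]=(2\pi)^{-d/2}\cF[f]\ast\cF[g]$, so the factor $(2\pi d)^{d/2}$ appearing in the paper's proof, and hence the constant $(2\pi d)^{d}$ in the statement, should both be $(2\pi)^{-d/2}$ and $(2\pi)^{-d}$ respectively; your computation yields exactly this. The discrepancy is harmless for the applications (Corollary \ref{well define sto} and the proofs of Theorem \ref{main thm} and Corollary \ref{main cor} only use the identity up to a fixed constant), but the statement as printed does not match either derivation.
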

\begin{proof}
For each $k \in \fN$, $\eta_k h \in L_2(\fR^d)$.
Thus the Fourier (Plancherel) transform of $\eta_k h$ is well-defined as an element in $L_2(\fR^d)$.
Additionally, by some properties of the Fourier and inverse Fourier transforms,
\begin{align*}
\cF[\eta_k h](\xi)
= (2\pi d)^{d/2} \cF[\eta_k] \ast \cF[h](\xi)
:= (2\pi d)^{d/2}\int_{\fR^d} \cF[\eta_k](\zeta) \cF[h](\xi - \zeta) \mathrm{d} \zeta.
\end{align*}
Observe that $\{\cF[\eta_k] :k \in \fN\}$ is another orthonomal basis on $L_2(\fR^d)$ due to the Plancherel theorem.
Thus for each $\xi$, applying Parseval's identity and the Plancherel theorem, we have
\begin{align*}
\left|\cF[\eta h](\xi)\right|^2_{l_2}
= (2\pi d)^{d} \|\cF[h](\xi - \cdot)\|^2_{L_2(\fR^d)}
= (2\pi d)^{d} \|h\|^2_{L_2(\fR^d)}.
\end{align*}
The lemma is proved.
\end{proof}
This identity allows us to regard our main equation \eqref{main eqn} as a specific case of \eqref{time eqn}.
In particular, the stochastic differential term $h(t,x) \mathrm{d}\fB_t$ in \eqref{main eqn} becomes well-defined if
$$
h \in \bL_{0,2,2}\left(\opar 0,\tau \cbrk \times \fR^d, \cP \times \cB(\fR^d)   \right).
$$
We show this in the following corollary.
\begin{corollary}
									\label{well define sto}
Let
$$
h \in \bL_{0,2,2}\left(\opar 0,\tau \cbrk \times \fR^d, \cP \times \cB(\fR^d)   \right).
$$
Then for any $\varphi \in \cF^{-1}\cD(\fR^d)$, the stochastic term 
$$
\int_0^t\left\langle h (s,\cdot) \eta^k(\cdot),\varphi\right\rangle \mathrm{d}B^k_s
:=\int_0^t 1_{\opar 0,\tau \cbrk}(s)\left\langle h (s,\cdot) \eta^k(\cdot),\varphi\right\rangle \mathrm{d}B^k_s
$$
in \eqref{solution meaning} is well-defined.
\end{corollary}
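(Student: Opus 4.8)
The plan is to verify that the $l_2$-valued integrand
\[
Y(s) := \left( 1_{\opar 0,\tau \cbrk}(s)\left\langle h(s,\cdot)\eta^k(\cdot),\varphi\right\rangle \right)_{k \in \fN}
\]
is $\cP$-measurable and satisfies $\int_0^T |Y(s)|^2_{l_2}\,\mathrm{d}s < \infty$ $(a.s.)$ for every $T \in (0,\infty)$, which is precisely the classical criterion ensuring that the series $\sum_k \int_0^t Y^k(s)\,\mathrm{d}B^k_s$ converges in probability uniformly on compact time intervals and hence defines an It\^o integral. First I would rewrite each component. Since the hypothesis $h \in \bL_{0,2,2}\left(\opar 0,\tau \cbrk \times \fR^d, \cP \times \cB(\fR^d)\right)$ gives $\int_0^\tau \|h(s,\cdot)\|^2_{L_2(\fR^d)}\,\mathrm{d}s < \infty$ $(a.s.)$, we have $h(s,\cdot) \in L_2(\fR^d)$ for almost every $(\omega,s) \in \opar 0,\tau \cbrk$, and because $\eta^k \in \cS(\fR^d)$ is bounded, the product $h(s,\cdot)\eta^k \in L_2(\fR^d)$ is a tempered distribution whose pairing with $\varphi$ is the absolutely convergent integral $\langle h(s,\cdot)\eta^k(\cdot),\varphi\rangle = \int_{\fR^d} h(s,x)\eta^k(x)\overline{\varphi(x)}\,\mathrm{d}x$; in particular each $Y^k(s)$ is a genuine complex number $(a.e.)$.

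For the measurability step, since $h$ is $\cP \times \cB(\fR^d)$-measurable and $\eta^k\overline{\varphi}$ is a fixed Schwartz function, the classical Fubini theorem shows that $(\omega,s) \mapsto \langle h(s,\cdot)\eta^k,\varphi\rangle$ is $\cP$-measurable; multiplying by $1_{\opar 0,\tau \cbrk}$, which is predictable because $\tau$ is a stopping time, preserves $\cP$-measurability. As $l_2$ is separable, coordinatewise $\cP$-measurability together with the $(a.s.)$ finiteness of $|Y(s)|_{l_2}$ established below yields that $Y$ is a $\cP$-measurable $l_2$-valued process.

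The main step, and the principal obstacle, is the $l_2$-summability estimate, where Lemma \ref{l2 con lem} does the decisive work. Passing to frequency via Plancherel's theorem and using that $\cF[\varphi] \in \cD(\fR^d)$ has compact support, say $\supp \cF[\varphi] \subset B_{R_\varphi}$, I would write
\[
Y^k(s) = \int_{B_{R_\varphi}} \cF[h(s,\cdot)\eta^k](\xi)\,\overline{\cF[\varphi](\xi)}\,\mathrm{d}\xi,
\]
and apply the Cauchy--Schwarz inequality on $B_{R_\varphi}$ to obtain $|Y^k(s)|^2 \leq \|\varphi\|^2_{L_2(\fR^d)} \int_{B_{R_\varphi}} |\cF[h(s,\cdot)\eta^k](\xi)|^2\,\mathrm{d}\xi$. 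Summing over $k$ (Tonelli) and invoking Lemma \ref{l2 con lem}, which identifies $\sum_k |\cF[h(s,\cdot)\eta^k](\xi)|^2 = |\cF[\eta h(s,\cdot)](\xi)|^2_{l_2} = (2\pi d)^d \|h(s,\cdot)\|^2_{L_2(\fR^d)}$ for almost every $\xi$, collapses the sum over the basis into a single $L_2$-norm of $h(s,\cdot)$:
\[
|Y(s)|^2_{l_2} \leq (2\pi d)^d\, |B_{R_\varphi}|\,\|\varphi\|^2_{L_2(\fR^d)}\,\|h(s,\cdot)\|^2_{L_2(\fR^d)}.
\]

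Finally, integrating in time and using the hypothesis,
\[
\int_0^T 1_{\opar 0,\tau \cbrk}(s)\,|Y(s)|^2_{l_2}\,\mathrm{d}s \leq (2\pi d)^d\, |B_{R_\varphi}|\,\|\varphi\|^2_{L_2(\fR^d)} \int_0^{\tau \wedge T}\|h(s,\cdot)\|^2_{L_2(\fR^d)}\,\mathrm{d}s < \infty \quad (a.s.),
\]
which completes the verification, so the stochastic term is well-defined by the standard theory of It\^o integration against a sequence of independent one-dimensional Brownian motions. The subtlety to watch is that $\cF[h(s,\cdot)\eta^k]$ itself need not have compact support, exactly the difficulty flagged in the remark following Definition \ref{space weak solution}; it is therefore essential to localize the frequency integration to $\supp \cF[\varphi] = B_{R_\varphi}$ before applying Cauchy--Schwarz, and to use Lemma \ref{l2 con lem} rather than attempting to bound each $\cF[h(s,\cdot)\eta^k]$ separately.
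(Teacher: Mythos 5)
Your proof is correct and follows essentially the same route as the paper's: both arguments localize the frequency-side pairing to the compact set $\supp \cF[\varphi]$ and then invoke Lemma \ref{l2 con lem} to collapse the sum over the orthonormal basis into $(2\pi d)^{d}\|h(s,\cdot)\|^{2}_{L_2(\fR^d)}$, after which the hypothesis $\int_0^\tau\|h(s,\cdot)\|^2_{L_2(\fR^d)}\,\mathrm{d}s<\infty$ $(a.s.)$ finishes the argument. The only cosmetic difference is that you use Cauchy--Schwarz componentwise followed by Tonelli where the paper uses the generalized Minkowski inequality on the $l_2$-valued integral (yielding an equivalent, slightly different constant), and you additionally spell out the predictability of the integrand, which the paper leaves implicit.
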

\begin{proof}
Let $\varphi \in \cF^{-1}\cD(\fR^d)$.
It suffices to show that for any $T \in (0,\infty)$,
\begin{align*}
\int_0^{T }  1_{\opar 0,\tau \cbrk}(t) \left|\left\langle h (t,\cdot) \eta^k(\cdot),\varphi\right\rangle \right|_{l_2}^2 \mathrm{d}t < \infty \quad (a.s.).
\end{align*}
For each $t \in (0,\infty)$, applying Lemma \ref{l2 con lem}, we have
\begin{align}
										\label{20240627 01}
\left|\cF[\eta h(t,\cdot)](\xi)\right|^2_{l_2} = (2\pi d)^{d} \|h(t,\cdot)\|^2_{L_2(\fR^d)} \quad (a.e.)~ \xi \in \fR^d,
\end{align}
In particular, $\left|\cF[\eta h(t,\cdot)](\xi)\right|_{l_2}$ is locally integrable on $\fR^d$.
Thus we have
\begin{align*}
\eta(\cdot) h(t,\cdot)  \in \cF^{-1}\cD'(\fR^d;l_2).
\end{align*}
and for each $\varphi \in \cF^{-1}\cD(\fR^d)$ and $k \in \fN$,
\begin{align}
										\label{20240627 02}
\left\langle h (t,\cdot) \eta^k(\cdot),\varphi\right\rangle
= \int_{\fR^d} \cF[h(t,\cdot) \eta^k](\xi) \overline{\cF[\varphi](\xi)} \mathrm{d}\xi
= \int_{\supp \cF[\varphi]} \cF[h(t,\cdot) \eta^k](\xi) \overline{\cF[\varphi](\xi)} \mathrm{d}\xi.
\end{align}
Therefore, combining \eqref{20240627 01} and \eqref{20240627 02} with the generalized Minkowski inequality, for each $T \in (0,\infty)$, we have
\begin{align*}
\int_0^{T }  1_{\opar 0,\tau \cbrk}(t) \left|\left\langle h (t,\cdot) \eta^k(\cdot),\varphi\right\rangle \right|_{l_2}^2 \mathrm{d}t 
\leq  (2\pi d)^d \left|\supp \cF[\varphi] \right| \sup_{\xi \in \fR^d} |\cF[\varphi](\xi)| \int_0^\tau \|h(t,\cdot)\|^2_{L_2(\fR^d)}
\mathrm{d}t < \infty \quad (a.s.),
\end{align*}
where $\left|\supp \cF[\varphi] \right|$ denotes the Lebesgue measure of $\supp \cF[\varphi]$ and is finite since $\cF[\varphi] \in \cD(\fR^d)$.
The corollary is proved.
\end{proof}

Now, Theorem \ref{main thm} follows as a corollary of Theorem \ref{time thm}.

\vspace{2mm}
{\bf Proof of Theorem \ref{main thm}}
\vspace{2mm}

Recall that $\{\eta_k : k \in \fN\}$ is an $L_2(\fR^d)$-orthonomal basis so that $\eta_k \in \cS(\fR^d)$ for all $k \in \fN$.
Put
$$
g=\left(g^1,g^2, \ldots \right) = \left(\eta_1 h, \eta_2 h, \ldots \right).
$$
Then for each $R \in (0,\infty)$, applying  H\"older's inequality, Fubini's theorem, and Lemma \ref{l2 con lem}, we have
\begin{align*}
 \int_{B_R} \left(\int_0^{\tau} |\cF[g(t,\cdot)](\xi)|^2_{l_2}\mathrm{d}t\right)^{1/2} \mathrm{d} \xi 
\leq  R \left(\int_0^\tau \|h(t,\cdot)\|^2_{L_2(\fR^d)} \mathrm{d}t\right)^{1/2} < \infty.
\end{align*}
Thus
$$
g \in \cF^{-1}\bL^{\omega,\xi,t}_{0,1,2,\ell oc}\left( \opar 0,  \tau \cbrk \times \fR^d, \cP \times \cB(\fR^d) ; l_2\right).
$$
Therefore,  Theorem \ref{main thm} is obtained from Theorem \ref{time thm}.

\vspace{2mm}
{\bf Proof of Corollary \ref{main cor}}
\vspace{2mm}

Put
$$
g=\left(g^1,g^2, \ldots \right) = \left(\eta_1 h, \eta_2 h, \ldots \right)
$$
as in the proof of Theorem \ref{main thm}.
Then for each $R,T \in (0,\infty)$, applying  H\"older's inequality, Fubini's theorem, and Lemma \ref{l2 con lem}, we have
\begin{align}
									\label{20240524 30}
 \bE\left[\int_{B_R} \left(\int_0^{\tau \wedge T} |\cF[g(t,\cdot)](\xi)|^2_{l_2}\mathrm{d}t\right)^{1/2} \mathrm{d} \xi \right] 
&\leq  R \cdot \bE\left[ \left(\int_0^{\tau \wedge T} \|h(t,\cdot)\|^2_{L_2(\fR^d)} \mathrm{d}t\right)^{1/2} \right]  
< \infty.
\end{align}
Thus
$$
g \in \cF^{-1}\bL^{\omega,\xi,t}_{1,1,2,\ell oc}\left( \opar 0,  \tau \cbrk \times \fR^d, \cP \times \cB(\fR^d)|_{\opar 0,  \tau \cbrk \times \fR^d} ; l_2\right).
$$
Therefore,  Corollary \ref{main cor} is obtained from Corollary \ref{time corollary} with \eqref{20240524 30}.

\bibliographystyle{plain}

\end{document}